\documentclass{amsart}

\usepackage{amssymb}
\usepackage{amsmath}
\usepackage{amsfonts}
\usepackage{latexsym}
\usepackage{amsthm}
\usepackage{amscd}
\usepackage{enumerate}
\usepackage{graphics}
\usepackage{pstricks,pstricks-add,pst-math,pst-xkey}
\usepackage{subfigure}

\usepackage[margin=1in]{geometry}

%\usepackage[inactive]{srcltx} % SRC Specials for DVI Searching

% Over-full v-boxes on even pages are due to the \v{c} in author's name
\vfuzz2pt % Don't report over-full v-boxes if over-edge is small

% THEOREM Environments ---------------------------------------------------
 \newtheorem{thm}{Theorem}[section]
 \newtheorem{thm*}{Main Theorem}
 \newtheorem{cor*}[thm*]{Main Corollary}
 \newtheorem{cor}[thm]{Corollary}
 \newtheorem{lem}[thm]{Lemma}
 \newtheorem{prop}[thm]{Proposition}
 
 \theoremstyle{definition}
 \newtheorem{defn}[thm]{Definition}
 \theoremstyle{remark}
 \newtheorem{rem}[thm]{Remark}
 \newtheorem{exam}[thm]{Example}
 \numberwithin{equation}{section}

% MATH -------------------------------------------------------------------

 \newcommand{\A}{\mathcal{A}}

 \newcommand{\C}{\ensuremath{\mathbb{C}}}

 \newcommand{\pp}{\ensuremath{\mathbb{P}}}

 \newcommand{\derp}[2]{\ensuremath{\frac{\partial #1}{\partial #2}}}

 \newcommand{\bir}{\ensuremath{\mathsf{\mathbf{Bir}^3}}}
 \newcommand{\birq}{\ensuremath{\mathsf{\mathbf{Bir}^3_2}}}
 \newcommand{\birr}[1]{\ensuremath{\mathsf{\mathbf{Bir}^3_{2,#1}}}}
 \newcommand{\lin}{\ensuremath{\mathsf{\mathbf{lin}}}}
 \newcommand{\jac}{\ensuremath{\mathrm{Jac}}}
 \newcommand{\gen}[1]{\ensuremath{\mathsf{\mathbf{gen}^{[2]}(#1)}}}
 \newcommand{\tang}[1]{\ensuremath{\mathsf{\mathbf{tan}^{[2]}(#1)}}}
 \newcommand{\osc}{\ensuremath{\mathsf{\mathbf{osc}^{[2]}(\times)}}}
 \newcommand{\ind}{\ensuremath{\mathrm{Ind}}}
 \newcommand{\exc}{\ensuremath{\mathrm{Exc}}}
%%% ----------------------------------------------------------------------

\begin{document}

\title[Flows of birational quadratic transformations in $\pp^3$]
 {Flows of birational quadratic transformations in $\pp^3$}

\author{ M\`{o}nica Manjar\'{\i}n}

\address{IRMAR, Campus de Beaulieu, 35042 Rennes, France}

\email{monica.manjarin-arcas@univ-rennes1.fr}

%\subjclass{Primary ; Secondary , }

\keywords{birational quadratic transformations, Cremona group, germs of flows}

\date{18th February 2010}

\dedicatory{}

\commby{}

%%% ----------------------------------------------------------------------

\begin{abstract}
We study and classify up to a linear conjugation germs of flows in the space of quadratic birational transformations
of the complex projective space of dimension 3. As a consequence we show that every quadratic flow preserves a pencil
of planes through a line.
\end{abstract}

%%% ----------------------------------------------------------------------
\maketitle
%%% ----------------------------------------------------------------------

\section{Introduction}

Let $\pp^m$ denote the complex projective space of dimension $m$.
Consider $m+1$ polynomial functions $\varphi_i(x_0,...,x_m)$ of $m+1$ variables with complex coefficients, homogeneous
of the same degree, without common factors and not all identically zero. They define a \emph{rational transformation} $\varphi\in \mathsf{\mathbf{Rat}}^m$ by
$$\varphi:\pp^m\dashrightarrow\pp^m; \quad [x_0,...,x_m]\dashrightarrow [\varphi_0(x_0,...,x_m),\ldots, \varphi_m (x_0,...,x_m)].$$

We define the degree of $\varphi$, denoted by $\deg(\varphi)$, as the degree of the polynomials $\varphi_i$.
We say that $\varphi$ is a \emph{birational transformation} or a \emph{Cremona transformation} if there exists a rational transformation 
$\psi:\pp^m\dashrightarrow \pp^m$ such that $\psi\circ \varphi$ is the identity. The study of these transformations, introduced by L. Cremona, was initiated
in the 19th century by Cremona, Noether and Jonqui\`eres among others. The group of birational transformations of $\pp^m$
or \emph{Cremona group} will be denoted by $\mathsf{\mathbf{Bir}}^m$.

We denote by $\ind(\varphi)$ the \emph{indeterminacy locus} of $\varphi$, i.e. the points where all the polynomials $\varphi_0,\ldots,\varphi_m$ vanish. 
We define the \emph{Jacobian} of $\varphi$ as the determinant of the matrix $\big(\derp{\varphi_i}{x_j}\big)$ and it is denoted by $\jac(\varphi)$. 
Finally $\exc(\varphi)$ denotes the \emph{exceptional locus} of $\varphi$, i.e. zero set of $\jac(\varphi)$. 

\begin{exam}
The \emph{Cremona involution} $\sigma$ in $\pp^2$ is defined by $\sigma[x,y,z]=[yz,xz,xy]$. Its indeterminacy locus consists of three distinct points
$P_0=[1,0,0]$, $P_1=[0,1,0]$ and $P_2=[0,0,1]$ and $\sigma$ blows down the three coordinate lines $x=0$, $y=0$ and $z=0$, i.e. $\ind(\varphi)=\{P_0,P_1,P_2\}$ 
and $\exc(\varphi)=\{[x,y,z]\in \pp^2:\, x\cdot y \cdot z=0\}$.
\end{exam}

In $\pp^2$ birational transformations have been extensively studied, for monographs on the subject we refer the reader
to \cite{Hudson}, \cite{God}, \cite{Albe} and \cite{Des1}. Among the most significant results we find the following classical theorem 
proved by Noether in 1871:

\begin{thm}[Noether]
$\mathsf{\mathbf{Bir}^2}$ is generated by $\mathrm{Aut}_{\C}(\pp^2)=\mathrm{PGL}(3,\C)$ and the Cremona involution $\sigma$.
\end{thm}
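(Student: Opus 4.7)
The plan is induction on $d = \deg \varphi$. The case $d=1$ is trivial since degree-one birational transformations of $\pp^2$ are precisely the elements of $\mathrm{PGL}(3,\C)$. For the inductive step, given $\varphi \in \mathsf{\mathbf{Bir}^2}$ of degree $d \geq 2$, my goal is to produce an element $\tau$ in the subgroup generated by $\sigma$ and $\mathrm{PGL}(3,\C)$ such that $\deg(\tau \circ \varphi) < d$; iterating this reduction proves the theorem.

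The central tool is the analysis of the homaloidal linear system $\Lambda \subset |\mathcal{O}_{\pp^2}(d)|$ defining $\varphi$. Writing $m_1 \geq m_2 \geq \cdots \geq m_k$ for the multiplicities of $\Lambda$ at its (possibly infinitely near) base points, the requirements that $\Lambda$ be two-dimensional, that two general members meet in exactly one free point, and that the general member be rational translate into the homaloidal identities
$$\sum_i m_i = 3(d-1), \qquad \sum_i m_i^2 = d^2 - 1.$$
A short convexity argument exploiting $m_i \leq m_3$ for $i \geq 3$ then yields Noether's inequality
$$m_1 + m_2 + m_3 \geq d + 1.$$

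Supposing first that the three highest-multiplicity base points $p_1, p_2, p_3$ are proper and pairwise distinct, I pick $A \in \mathrm{PGL}(3,\C)$ sending them to the indeterminacy locus $\{P_0, P_1, P_2\}$ of $\sigma$. A direct computation of how the linear system transforms under $\sigma$ shows that $\deg(\sigma \circ A \circ \varphi) = 2d - m_1 - m_2 - m_3$, which by Noether's inequality is at most $d - 1$, and the induction closes.

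The main obstacle is the configuration in which some of $p_2, p_3$ are infinitely near earlier base points, so that no element of $\mathrm{PGL}(3,\C)$ can simultaneously spread them out onto $P_0, P_1, P_2$. The remedy is to replace $\sigma$ by a \emph{degenerate} quadratic Cremona transformation adapted to the configuration, for instance $[x,y,z]\mapsto[y^2, xy, xz]$ when a single base point is infinitely near to $p_1$, or $[x,y,z]\mapsto[x^2, xy, y^2 - xz]$ in the fully tangential case. One must then verify both that the resulting composition has degree $2d - m_1 - m_2 - m_3 < d$, and that each such degenerate quadratic map itself factors as a product of $\sigma$ with elements of $\mathrm{PGL}(3,\C)$, typically by realising it as a limit of conjugates of $\sigma$ under a one-parameter family of projective changes of coordinates. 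This case-by-case identification of degenerate quadratic transformations inside the subgroup generated by $\sigma$ and $\mathrm{PGL}(3,\C)$ is the technical heart of Noether's argument.
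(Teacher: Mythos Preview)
The paper does not prove Noether's theorem; it is quoted in the introduction as a classical background result, attributed to Noether (1871), with no argument supplied. There is therefore no paper proof to compare your proposal against.

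That said, your outline is the standard Noether--Castelnuovo reduction and is essentially correct: the homaloidal identities, Noether's inequality $m_1+m_2+m_3\geq d+1$, and the degree formula $2d-m_1-m_2-m_3$ for the composite with a quadratic based at three of the base points are all right. Two remarks. First, a historical one: the infinitely-near case that you flag as ``the technical heart of Noether's argument'' is precisely the gap in Noether's 1871 paper; it was only closed by Castelnuovo in 1901, so the theorem is often called the Noether--Castelnuovo theorem. Second, a mathematical one: your suggestion to exhibit the degenerate quadratic maps as \emph{limits} of $\mathrm{PGL}(3,\C)$-conjugates of $\sigma$ does not by itself place them in the subgroup generated by $\sigma$ and $\mathrm{PGL}(3,\C)$, since that subgroup need not be closed in any obvious sense. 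One instead writes each degenerate quadratic explicitly as a short word in $\sigma$ and linear maps (for example, $[y^2,xy,xz]$ equals a composite $A_1\,\sigma\,A_2\,\sigma\,A_3$ for suitable $A_i\in\mathrm{PGL}(3,\C)$), or bypasses the issue via Castelnuovo's argument using de Jonqui\`eres maps. With that adjustment your plan goes through.
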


Alternatively, Noether's theorem can be stated by saying that up to an element of $\mathrm{PGL}(3,\C)$ every birational transformation of
$\pp^2$ is a composition of quadratic transformations. 

In sharp contrast, in $\pp^m$ for $m\geq 3$, I. Pan proved the following in \cite{Pan} (a similar result had also been previously 
proved by H. Hudson for $m=3$, see \cite{Hudson}):

\begin{thm}[Pan]
 Let $m\geq 3$. Every set of generators of  $\mathsf{\mathbf{Bir}}^m$ must contain an uncountable number of transformations of
degree $>1$. 
\end{thm}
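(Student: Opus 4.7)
The plan is to attach to each birational transformation $\varphi$ an algebraic invariant, its exceptional locus $\exc(\varphi)$, and to show that if $\mathsf{\mathbf{Bir}}^m$ were generated by $\mathrm{PGL}(m+1,\C)$ together with a countable set of higher-degree transformations, then all exceptional loci of elements of the group would fit in a countable union of constructible algebraic families on the Hilbert scheme of hypersurfaces of $\pp^m$, contradicting the genuinely uncountable moduli of such hypersurfaces when $m\geq 3$.

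First I would establish how $\exc$ behaves under composition. The chain rule gives $\jac(\varphi\circ\psi)=(\jac(\varphi)\circ\psi)\cdot\jac(\psi)$, so
\[
\exc(\varphi\circ\psi)\subseteq\exc(\psi)\cup\psi^{-1}\bigl(\exc(\varphi)\bigr).
\]
Iterating along a word $\varphi=\psi_1\circ\cdots\circ\psi_k$, every irreducible component of $\exc(\varphi)$ is the strict transform, under a rational self-map of $\pp^m$ built from the $\psi_j$'s, of some irreducible component of $\exc(\psi_j)$ or $\exc(\psi_j^{-1})$. Linear transformations, which are their own inverses up to composition, contribute no exceptional divisor, so in any word only the higher-degree generators produce ``new'' components.

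Now assume for contradiction that $\mathsf{\mathbf{Bir}}^m$ is generated by $\mathrm{PGL}(m+1,\C)$ together with a countable family $\{\varphi_i\}_{i\in\N}$ of transformations of degree $>1$. Form the countable collection $\mathcal{H}_0$ of all irreducible components of the $\exc(\varphi_i^{\pm 1})$. By the previous step, every irreducible component of $\exc(\varphi)$ arising from an element $\varphi$ of the generated group is obtained from some $H\in\mathcal{H}_0$ by successive strict transforms under words in the generators. Working word-by-word and grouping by the bounded combinatorial data (length of the word, which $\varphi_i$ are used, at which positions PGL elements act), one sees that for each fixed $H\in\mathcal{H}_0$ and each fixed word pattern, the set of resulting hypersurfaces traces out a constructible subvariety of the Hilbert scheme of bounded-degree hypersurfaces of $\pp^m$, parametrised by the finitely many PGL factors in the word. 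The union over all countably many patterns and all $H\in\mathcal{H}_0$ is therefore a countable union of constructible families.

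The hard part, and the place where the hypothesis $m\geq 3$ really enters, is to exhibit uncountably many $\varphi\in\mathsf{\mathbf{Bir}}^m$ whose exceptional components cannot all lie in such a countable union. The natural candidates are birational transformations whose exceptional locus contains an irreducible hypersurface with genuine projective moduli, for example smooth cubic surfaces in $\pp^3$ embedded as exceptional divisors of suitable cubo-cubic maps; such surfaces sweep out a positive-dimensional moduli space modulo $\mathrm{PGL}(4,\C)$. Since only countably many $\mathrm{PGL}$-orbits in this moduli space can be produced from $\mathcal{H}_0$, uncountably many $\varphi$ escape, giving the contradiction. The delicate technical check is that the uncountably many exceptional hypersurfaces we produce really are projectively inequivalent to everything in the countable union, i.e.\ that the moduli invariants we read off are not collapsed by the algebraic families cut out of the Hilbert scheme at each bounded word-length.
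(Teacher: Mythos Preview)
The paper does not prove this theorem; it merely quotes it from \cite{Pan} as background. So there is no proof in the paper to compare against, but your proposal can be measured against Pan's actual argument.

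Your chain-rule step is correct and is exactly the engine of Pan's proof: from $\jac(\varphi\circ\psi)=(\jac\varphi\circ\psi)\cdot\jac\psi$ one deduces that every irreducible hypersurface in $\exc(\psi_1\circ\cdots\circ\psi_k)$ is \emph{birational} to an irreducible component of some $\exc(\psi_j)$. This already gives a discrete invariant: the birational type of each exceptional component. If the generating set outside $\mathrm{PGL}$ is countable, then only countably many birational types of hypersurfaces can occur among all exceptional divisors of the whole group.

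The gap is in the second half of your argument. You abandon the birational invariant and switch to \emph{projective} moduli via constructible pieces of Hilbert schemes, and this is where things break. For a fixed word pattern, varying the interspersed $\mathrm{PGL}$ factors gives a family parametrised by a product of copies of $\mathrm{PGL}(m+1)$, whose dimension grows without bound with the word length. There is no reason these families should meet only countably many $\mathrm{PGL}$-orbits in the Hilbert scheme; a single such family can already sweep out a positive-dimensional piece of moduli. Your ``delicate technical check'' is therefore not a detail but the whole difficulty, and it cannot be completed along these lines. Concretely, your chosen example of smooth cubic surfaces in $\pp^3$ fails outright: every smooth cubic surface is rational, so all of them are birational to one another and carry no birational moduli at all---yet birational type is the only invariant your chain-rule lemma actually controls.

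Pan's argument stays with the birational invariant. For $m\ge 3$ one constructs, for each smooth plane curve $\Gamma$ of genus $g\ge 2$ sitting in a hyperplane of $\pp^m$, a Cremona transformation whose base locus contains $\Gamma$ (equivalently, whose inverse has an exceptional hypersurface birational to $\Gamma\times\pp^1$). The moduli space $M_g$ is positive-dimensional, so there are uncountably many birational classes of such curves (hence of the corresponding ruled hypersurfaces). Since a countable generating set outside $\mathrm{PGL}$ contributes only countably many birational classes of exceptional components, one obtains the contradiction. The fix to your proof is thus to replace ``projective moduli of cubic surfaces'' by ``birational moduli of curves of genus $\ge 2$'' and to drop the Hilbert-scheme bookkeeping entirely.
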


This result gives us a hint of the reason why so little is known on Cremona transformations in higher dimensions. 

There is a certain number of classical problems regarding the Cremona group, although they have been addressed mostly in dimension 2. 
We mention briefly some of them, the bibliography included is by no means exhaustive, see the monographs mentioned above for more
references. On the one hand we find the study of special families of transformations, such as involutions (see \cite{Bert} or \cite{BayBeau}), 
De Jonqui\`eres maps, i.e. elements of the plane Cremona group which preserve a fixed rational fibration (see \cite{Isk} for
a presentation of the Cremona group with generators the De Jonqui\`eres maps and $\mathrm{Aut}_{\C}(\pp^1\times \pp^1)$, for instance) or transformations
that preserve a fixed curve. Castelnuovo proved in 1892 (c.f. \cite{Cast}) that an element of $\mathsf{\mathbf{Bir}}^2$ preserving an
irreductible curve of genus $>1$ must be either conjugated to a De Jonqui\`eres map or of degree 2, 3 or 4 (see also \cite{BPV} for
a more recent and precise version). On the other hand, Kantor began the classification of finite subgroups up to a birrational conjugation 
(c.f. \cite{Kan}), a history of the classification of these groups can be found in the recent paper \cite{DolgIsk}.
Demazure and Umemura studied in the 70's and the 80's (see \cite{Dema} and \cite{Ume}, \cite{Ume1}, and \cite{Ume2} respectively) algebraic subgroups of 
$\mathsf{\mathbf{Bir}}^m$ of maximal rank (for $m=2$ and $m=3$ the problem goes back again to the end of the 19th century with the
work of Enriques and Fano, see \cite{EnrFan} and \cite{Fan}). More recently, Cerveau-D\'eserti have introduced a new approach to study maximal uncountable 
Abelian algebraic subgroups of $\mathsf{\mathbf{Bir}}^2$ (cf. \cite{CerDes}) by means of germs of flows. 

\begin{defn}
A \emph{germ of flow} in $\mathsf{\mathbf{Bir}}^m$ is a germ of holomorphic map $t\mapsto \varphi_t\in\mathsf{\mathbf{Bir}}^m$ such that
$$\varphi_{t+s}=\varphi_t \circ \varphi_s \quad \mathrm{and} \quad \varphi_0=\mathrm{id}.$$
From now on $\varphi_t$ will denote a germ of flow and we will talk of \emph{flow} for shortness.
\end{defn}

We denote by $\langle \varphi_t \rangle \subset \mathsf{\mathbf{Bir}}^m$ the subgroup generated by $\{ \varphi_t \}$ and by 
$\overline{\langle \varphi_t\rangle}^{Z}$ its Zariski closure in $\mathsf{\mathbf{Bir}}^m$. Cerveau-D\'eserti have studied the maximal 
Abelian algebraic subgroup of $\mathsf{\mathbf{Bir}}^2$ which contains $\overline{\langle \varphi_t\rangle}^{Z}$. Note that if that $G$ 
is a subgroup of $\mathsf{\mathbf{Bir}}^m$, then its Zariski closure in $\mathsf{\mathbf{Bir}}^m$ is algebraic and therefore has a finite 
number of connected components. Thus, if $G$ is uncountable the identity component of its closure $\overline{G}^{Z}$ must have dimension 
at least 1, which implies that $G$ contains flows. 

They essentially tackle the cases where the transformations are of small degree, that is quadratic or cubic. 
Although birational transformations of a given degree of $\mathsf{\mathbf{Bir}}^m$ do not form a subgroup (note that the degree 
of the composition might be bigger) they contain groups.  
In \cite{CerDes} they classify the quadratic flows up to linear conjugation and prove 
that every germ of flow (of arbitrary degree) preserves a fibration by lines. Their notation of flow (drooping the germ) is justified
by the classification, that shows that the time coordinate $t$ can actually be taken in $\C$.

The main tool consists roughly speaking in understanding the behavior of the familes of points obtained taking the indeterminacy
points of each quadratic transformation in the flow $\varphi_t$ and the famlies of curves obtained taking the elements of the exceptional locus of each
transformation $\varphi_t$. Another crucial ingredient, which allows to derive consequences for a flow of arbitrary degree, is a result
by Cantat-Favre (see \cite{CanFav}. Theorem 1.2) on the group of birational transformations which preserve a foliation on $\pp^2$.

In this paper we carry out a similar (although more involved) study on $\pp^3$, namely we classify up to linear conjugation germs of flows of quadratic 
birational transformations in $\pp^3$. We recall that in $\pp^2$ it is long been known that, 
up to linear automorphisms (acting on the right and on the left), i.e. up to changes of coordinates in the target and in the origin space,
there are only three quadratic transformations, depending on the number of points of $\ind(\varphi)$ (which might be 3, as for the Cremona involution,
2 or 1). Recently, an analogous classification was obtained for quadratic birational transformations in $\pp^3$ by 
Pan-Ronga-Vust (c.f. \cite{PRV}). We shall use that result as a departing point to achieve a classification of flows of 
quadratic birational transformations in $\pp^3$. As in $\pp^2$, the type of a quadratic transformations in $\pp^3$ up to linear automorphisms
is detemined by its indeterminacy locus. Note that we shall not be able to make use of any result regarding foliations on $\pp^3$, for no such
result is available, and hence we will restrict ourselves to the case of degree 2.

Recall that the degree of the inverse of a birational transformation $\varphi$ of $\pp^m$ is lower or equal than $(\deg \varphi)^{m-1}$.
%%%%Indeed, to compute the degree of $\varphi^{-1}$ we must compute
%%%%the degree of the intersection of the image by $\varphi^{-1}$ of one hyperplane of the target space with one line at the origin, or equivalently the intersection of one hyperplane at the origin with the image by $\varphi$ of one line. As a line in $\pp^m$ is the intersecton of $(m-1)$ hyperplanes $H_1,...,H_{m-1}$ we must compute the degree of the intersection $\varphi(H_1)\cap \ldots \cap \varphi(H_{m-1})$, which is lower or equal than $(\deg \varphi)^{m-1}$.
The pair $ (\deg(\varphi), \deg (\varphi^{-1}))$ is called \emph{bidegree of a birational transformation} $\varphi$ and is denoted by
$\textrm{bideg}(\varphi)$. Note that inverses of quadratic transformations in $\pp^2$ are always quadratic, while in $\pp^3$ if we exclude the 
linear transformations we obtain three possibilities for the bidegree: $(2,2)$, $(2,3)$ and $(2,4)$. We denote by $\birq$ the set of quadratic 
birational transformations of $\pp^3$ and by $\lin$ the linear transformations in $\birq$, which can be seen in the following form: choose 
$\ell_0,\ell_1,\ell_2,\ell_3,\ell\in A_1$, where $A_1$ denotes the vector space of linear forms on $\C^4$, $\ell$ is not zero and 
$\ell_0,\ell_1,\ell_2,\ell_3$ are linearly independent, then $(\ell \ell_0,\ell \ell_1, \ell \ell_2, \ell \ell_3)\subset\birq$. The subset of 
of $\birq$ of transformations of bidegree $(2,2)$, which also contains $\lin$, is denoted by $\birr{2}$.

Note that if $\varphi_t$ is a germ of flow in $\birq$ then $\varphi_t \subset \birr{2}$. Indeed, given $\varphi_t\in \birq$ its inverse 
$\varphi_{-t}$ must be quadratic. 
%Analogously, if $G$ is an algebraic group contained in $\birq$ it must be contained in $\birr{2}$. 
It follows that for our purposes we can focus our attention on the transformations of $\birr{2}$, rather than on $\birq$.

\begin{defn}
Let $\varphi_t$ be a flow in $\bir$. We say that $\varphi_t$ is \emph{quadratic} if $\varphi_t\in \birr{2}$ and it is not contained in $\lin$ for
a generic $t$.
\end{defn}

The indeterminacy locus of a Cremona transformation $\varphi\in \birr{2}\backslash \lin$ is either $C_I \cup P_I$ or $C_I$, where $C_I$ 
is a plane conic and $P_I$ a point. In the first case we say that $\varphi$ is \emph{generic} (and the point $P_I$ is not contained in the plane 
of the conic) and in the second case that it is \emph{non-generic}. Alternatively 
we can define $P_I$ as the only point blown up to an hyperplane by $\varphi_t$, which determines the point $P_I$ even in the non-generic case, 
in which $P_I\in C_I$. When $\varphi_t$ is non-generic we distinguish two cases depending on whether the multiplicity of the conic $C_I$ at $P_I$ 
is 1 or 2. These three possibilities (generic, non-generic with multiplicity 1, and non-generic with multiplicity 2) determine three different 
behaviours of the germs of flows whose elements are of the corresponding types. Pan-Ronga-Vust prove that there are 7 types of transformations of 
$\birr{2}\backslash \lin$ up to changes of coordinates in the origin and target spaces that they denote by $\gen{O}$, $\gen{\times}$, $\gen{/\!/}$, 
$\tang{O}$, $\tang{\times}$, $\tang{/\!/}$ and $\osc$. The symbols $O$, $\times$ and $/\!/$ make reference to the type of the conic $C_I$. 
Briefly, the first three cases correspond to the generic case, $\tang{O}$ and $\tang{\times}$ are non-generic with conic $C_I$ of multiplicity 1 at $P_I$ and $\tang{/\!/}$ 
and $\osc$ are non-generic with conic $C_I$ of multiplicity 2 at $P_I$. We denote by $H$ the plane containing $C_I$ which is the only hyperplane contracted 
to a point and by $S$, if it exists, the only surface contracted to a curve by $\varphi$. 

We devote section 2 to studying the geometrical properties of the elements of $\birr{2}$ which are necessary for our purposes, namely the indeterminacy
and the exceptional loci. In section 3 we consider quadratic flows $\{\varphi_t\}$. Given a germ of flow we will consider the family
$\{ {P_I}_t \}$ of points, where ${P_I}_t$ is the only point blown up to a plane by $\varphi_t$, and analogously for the families $\{{C_I}_t\}$, 
$\{H_t\}$ and also, when it exists, $\{ S_t\}$. The behaviour of these families (whose elements belong either to the indeterminacy locus or 
to the exceptional locus of the corresponding $\varphi_t$) will play a central role in our proof of the classification of germs of quadratic transformations
in $\mathsf{\mathbf{Bir}}^3$. We prove the following:

\begin{thm*}
Let $\varphi_t$ be a quadratic flow in $\mathsf{\mathbf{Bir}}^3$. Then one of the following possibilities hold:
\begin{enumerate}[\bf a)]
\item $\varphi_t\in \gen{/\!/}\cup \lin$, $P_I$, $S$, $C_I$ are fix and $H_t$ is mobile,
\item $\varphi_t \in \tang{O}\cup \tang{\times}\cup \lin$, $P_I$, $S$ are fix and $H_t$, ${C_I}_t$ are mobile,
\item $\varphi_t \in \tang{/\!/}\cup \lin$, $H$, $C_I$ are fix and ${P_I}_t$ can be either fix or mobile,
\item $\varphi_t \in \osc \cup \lin$, $H$, $P_I$ are fix and $C_I$ can be either fix or mobile.
\end{enumerate}
Moreover, if $H$ is fix then $\varphi_t$ is a polynomial flow, i.e. ${\varphi_t}_{|\pp^3\backslash H}: \pp^3\backslash H \rightarrow \pp^3 \backslash H$ is
polynomial for each $t$. In particular there are no flows in $\gen{O}\cup \gen{\times}\cup \lin$.
\end{thm*}

This result gives us the possible configurations and will be the first step to achieve a classification. 

We also prove the following higher dimensional analogous in $\pp^3$ of the result in $\pp^2$ that states that any flow
preserves a rational fibration (see \cite{CerDes}):

\begin{thm*}
Let $\varphi_t$ be a quadratic flow in $\mathsf{\mathbf{Bir}}^3$. Then:
\begin{enumerate}[\bf i)]
\item There exists a line $L$ such that $\varphi_t$ preserves the family of hyperplanes through $L$. Moreover in cases {\bf a)} and {\bf c)} 
we can choose $L=C_I$.
\item If $P_I$ is fix then $\varphi_t$ preserves the family of hyperplanes through $P_I$ (in particular the family of lines through $P_I$).
\end{enumerate}
\end{thm*}

In the generic case we deduce a geometrical description of the behaviour of a flow:

\begin{cor*}
Let $\varphi_t$ a quadratic generic flow in $\mathsf{\mathbf{Bir}}^3$. Then $\varphi_t$ is determined by a linear flow $\eta_t$ on the $\pp^2$ of the net of lines 
through $P_I$ and a linear flow $\chi_t$ on the $\pp^1$ of the pencil of planes through the line $C_I$. Namely
$$\varphi_t(P)=\eta_t(P\vee P_I) \cap \chi_t(P\vee C_I).$$ 
\end{cor*}

Finally,

\begin{thm*}
Let $\varphi_t$ be a quadratic flow in $\mathsf{\mathbf{Bir}}^3$. Then, up to linear conjugation, $\varphi_t$ is in one (and only one) of the lists of theorems 
\ref{thm:NFgen} (generic case), \ref{thm:NFtangox1} (non-generic with conic $C_I$ of multiplicity 1 at $P_I$) and 
\ref{thm:NFtangllosc} (non-generic with conic $C_I$ of multiplicity 2 at $P_I$).  
\end{thm*}

In particular one concludes that if $\varphi_t$ is a germ of quadratic flow then $\varphi_t$ is defined for every $t\in \C$, 
which justifies the use of the term flow.  

\medskip
The author is indebeted to D. Cerveau for suggesting the problem and to both him and J. D\'eserti for explaining their work, for helpful 
discussions and hints and also for comments on this manuscript. This paper was written during a postdoctoral research stay at the IRMAR, 
the author would also like to thank the members of Analytic Geometry team for their warm hospitality. 

\section{$\birr{2}$}

In this section  we discuss results on $\birr{2}$, mainly from \cite{PRV}.
We are particularly interested in which points, curves and surfaces are contracted or exploded by a given
$\varphi\in \birr{2}$. We begin by recalling an elementary result:

\begin{lem}\label{lem:stop}
Let $S=\{s=0\}$ be a surface contracted to a point $P$ by $\varphi\in \bir\backslash \lin$, where $s$ is a
homogeneous polynomial. Then $\deg s=1$, i.e. $S$ is a hyperplane. Moreover there is at most one hyperplane
contracted to a point by $\varphi\in\birr{2}$.
\end{lem}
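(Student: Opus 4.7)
The plan is to use polynomial representatives $\varphi = [\varphi_0 : \varphi_1 : \varphi_2 : \varphi_3]$ with no common factor and common degree $d$, and to translate the contraction hypothesis into divisibility relations on $2\times 2$ minors. Concretely, ``$S$ is contracted to $P = [a_0{:}\cdots{:}a_3]$'' means that $a_i\varphi_j - a_j\varphi_i$ vanishes on $S$ for every $i,j$. Assuming $s$ is irreducible, we get $s \mid a_i\varphi_j - a_j\varphi_i$ for all $i,j$, and since each minor has degree $\le d$, already $\deg s \le d$; in our setting $\varphi \in \birq\setminus\lin$, so $d=2$ and $\deg s\le 2$.

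The decisive step is to rule out the extremal case $\deg s = d$. Equality would force constants $c_{ij}$ with $a_i\varphi_j - a_j\varphi_i = c_{ij}\,s$; fixing $i_0$ with $a_{i_0}\ne 0$ and solving for each $\varphi_j$ yields
\[
  \varphi \;=\; \lambda\, A + s\, B,
\]
where $A$ is the constant vector representing $P$, $B=(c_{i_0 j}/a_{i_0})_j$ is another constant vector, and $\lambda$ is a single polynomial of degree $d$. But then the image of $\varphi$ is contained in the projective line $[A]\vee [B]\subset\pp^3$, contradicting dominance. Hence $\deg s<d$, and in the quadratic case $\deg s = 1$, i.e.\ $S$ is a hyperplane.

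For the ``at most one hyperplane'' claim, suppose $H_i=\{h_i=0\}$, $i=1,2$, are distinct hyperplanes contracted by $\varphi\in\birr{2}$ to points $P_i$. If $P_1 = P_2$, then $h_1h_2$ divides every minor $a_i\varphi_j - a_j\varphi_i$; since $\deg(h_1h_2)=2=\deg\varphi$ this is precisely the forbidden extremal case of the previous paragraph, giving a contradiction. If $P_1\ne P_2$, I would choose coordinates with $P_1=[1{:}0{:}0{:}0]$ and $P_2=[0{:}1{:}0{:}0]$; the contraction conditions then read $h_1\mid\varphi_1,\varphi_2,\varphi_3$ and $h_2\mid\varphi_0,\varphi_2,\varphi_3$, so $h_1h_2\mid\varphi_2$ and $h_1h_2\mid\varphi_3$. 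Comparing degrees yields $\varphi_2=c_2h_1h_2$ and $\varphi_3=c_3h_1h_2$, so $c_3\varphi_2-c_2\varphi_3\equiv 0$ and $\varphi$ takes values in a fixed hyperplane of $\pp^3$, once again contradicting birationality.

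The one step needing a little care is the rank-two rearrangement ``$s$ divides every $a_i\varphi_j-a_j\varphi_i$'' $\Longrightarrow$ ``$\varphi = \lambda A + s B$''; this small piece of linear algebra drives both parts of the lemma and is elementary once one normalizes by a nonzero coordinate of $A$.
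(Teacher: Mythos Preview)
Your proof is correct and follows essentially the same approach as the paper. The paper normalizes $P=[0,0,0,1]$ so that the divisibility reads $s\mid\varphi_0,\varphi_1,\varphi_2$ directly, whereas you phrase the same thing coordinate-free via the $2\times2$ minors $a_i\varphi_j-a_j\varphi_i$; both then observe that $\deg s=2$ would force three components of $\varphi$ to span at most a $1$-dimensional space, contradicting dominance. Your treatment of two hyperplanes is in fact more explicit than the paper's: you spell out the case $P_1\ne P_2$ (concluding $\varphi_2,\varphi_3$ are proportional), while the paper only says ``proceed in a similar way''. The one cosmetic point is your assumption that $s$ is irreducible; this is harmless here since a reducible degree-$2$ surface is either a doubled hyperplane (so the reduced equation already has degree~$1$) or a union of two hyperplanes contracted to the same point, which your second paragraph rules out.
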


\begin{proof}
Up to a change of coordinates we can assume that $P=[0,0,0,1]$. Then if $S$ is contracted to $P$ we have
$$ \varphi_0=a_0\cdot s,\quad \varphi_1=a_1 \cdot s, \quad \varphi_2=a_2 \cdot s,$$
where $a_0,a_1,a_2$ are homogeneous polynomials. It follows that $\deg s\leq 2$. If $\deg s=2$ then
$a_0,a_1,a_2\in \C$ and $\varphi_0,\varphi_1,\varphi_2$ are linearly dependent, which contradicts
the hypothesis that $\varphi$ is birational. Assume now that $H=(h=0)$ and $H'=(h'=0)$ are two different hyperplanes contracted
to a point $P$. As before we can assume that $P=[0,0,0,1]$, then
$$\varphi_0=a_0\cdot h \cdot h', \quad \varphi_1=a_1 \cdot h \cdot h',\quad \varphi_2=a_2 \cdot h \cdot h'$$
where $a_0,a_1,a_2\in \C$, so again $\varphi_0,\varphi_1,\varphi_2$ would be linearly dependent, which is a
contradiction. If $\varphi(H)\neq \varphi(H')$ we would proceed in a similar way.
\end{proof}

\medskip

We recall next a result by Pan-Ronga-Vust (c.f. \cite{PRV}) and we derive some consequences. 
We denote by $A_2$ the vector space of quadratic forms on $\C^4$. Given $\varphi\in \birq$ such that in a given set of coordinates
it is written as $\varphi[x_0,x_1,x_2,x_3]=[\varphi_0,\varphi_1,\varphi_1,\varphi_2]$ we define $M_{\varphi}$ as the vector subspace
of $A_2$ of dimension 4 generated by $(\varphi_i)_{i=1}^3$. Then classifying elements of $\birr{2}$ up to linear changes of coordinates at
the origin and at the target space is equivalent to studying the action of GL$(4)$ on the subspaces $M$ of dimension 4 of $A_2$ such
that
$$\varphi_M:\pp^3\dashrightarrow \mathbb{P}(M^{\vee}),\qquad x\dashrightarrow \{f\in M: f(x)=0\}$$
is birational, where $M^{\vee}$ denotes the dual space of $M$.

\begin{prop}[Pan-Ronga-Vust]
Let $\varphi \in \birr{2}\backslash \lin$. Then there exist a quadric $Q=\{q=0\}$ of rank lower or equal than 3 and a plane $H=\{h=0\}$ such 
that if $C_I$ denotes the conic $Q\cap H$ then 
$$M_{\varphi}\subseteq \{f\in A_2:\, C_I\subset (f=0)\}.$$
\end{prop}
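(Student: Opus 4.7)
The plan is to identify $C_I$ as the fixed one-dimensional component of the linear system $M_\varphi$, then exploit the birationality hypothesis to show it must be planar, and finally construct $Q$ as a cone over it.

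First, I would locate the base curve via Bezout. Let $L'\subset\pp^3$ be a generic line in the target, written as $L'=H_1'\cap H_2'$. Then $\varphi^{-1}(L')=Q_1\cap Q_2$ with $Q_i=\varphi^{*}H_i'\in M_\varphi$; this complete intersection of two quadrics has degree $4$. On the other hand, since $\psi:=\varphi^{-1}$ is quadratic and birational, its restriction to $L'\cong\pp^1$ is given by four binary quadrics and is generically injective, so the strict image $\tilde L:=\psi(L')$ is a rational curve of degree exactly $2$. Thus $Q_1\cap Q_2=\tilde L\cup R$, where $R$ is a residual curve of degree $2$ that does not depend on the choice of $L'$; in particular $R$ is contained in every quadric of $M_\varphi$.

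The main step will be to show that $R$ lies in a plane. A pure one-dimensional subscheme of $\pp^3$ of degree $2$ is either a plane conic (smooth, two concurrent lines, or a planar double line), a pair of skew lines, or a non-planar double line; I would use birationality to exclude the last two cases. In the skew-lines case $R=L_1\sqcup L_2$, choosing coordinates so that $L_1=V(x_0,x_1)$ and $L_2=V(x_2,x_3)$, one computes $I(R)_2=\langle x_0x_2,x_0x_3,x_1x_2,x_1x_3\rangle$, a $4$-dimensional space that must then coincide with $M_\varphi$; but the corresponding map $[x_0x_2,x_0x_3,x_1x_2,x_1x_3]$ factors through the quadric surface $\{y_0y_3-y_1y_2=0\}$, hence is not dominant, contradicting birationality. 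The non-planar double-line case is handled similarly by normalising coordinates on $R$ and checking that every candidate $\varphi\in M_\varphi\subseteq I(R)_2$ fails to be dominant. Hence $R$ is a plane conic; set $C_I:=R$ and let $H=\{h=0\}$ be its plane.

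Once this is done, the rest is immediate: pick any point $P\notin H$ and let $Q$ be the cone with apex $P$ over $C_I$. Choosing coordinates with $H=\{x_3=0\}$ and $P=[0,0,0,1]$, the equation of $Q$ is the equation of $C_I$ inside $H$ regarded as a quadric in $x_0,x_1,x_2$ only; being independent of $x_3$ it has rank at most $3$ (rank $3$, $2$, or $1$ according as $C_I$ is irreducible, a pair of distinct lines, or a double line). Clearly $Q\cap H=C_I$ by construction, and the inclusion $M_\varphi\subseteq\{f\in A_2:\,C_I\subset V(f)\}$ is the conclusion of the Bezout step. The principal obstacle is the planarity argument: Bezout delivers the existence of a degree-$2$ residual curve essentially for free, but forcing it into a plane genuinely requires the birationality of $\varphi$ applied globally to rule out the non-planar configurations of the base curve.
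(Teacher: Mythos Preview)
Your proposal is correct but takes a different route from the paper. Both proofs start with the same B\'ezout set-up, writing the intersection of two generic members of $M_\varphi$ as a moving conic $\tilde L$ (the strict transform of a line under $\varphi^{-1}$) plus a fixed degree-$2$ cycle $R$. The divergence is in \emph{which} piece you analyse. The paper observes that the moving part $\tilde L$ is automatically a smooth plane conic (irreducible, rational, degree~$2$), puts coordinates so that $\tilde L=(q(x_0,x_1,x_2)=x_3=0)$ with $q$ of rank~$3$, and then any second generic member $q'$ of $M_\varphi$ satisfies $q'=\alpha q+x_3h$; the residual $R$ is then \emph{visibly} the plane conic $(q=h=0)$, with no case analysis required. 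You instead attack the fixed part $R$ directly and argue planarity by excluding the two non-planar degree-$2$ configurations. This works, but it costs you the purity/liaison step (why is $R$ pure one-dimensional?) and the hand-waved treatment of the non-planar double line, whereas the paper's trick of reading off $R=(q=h=0)$ from the explicit pencil structure bypasses all of that. Your final construction of $Q$ as a cone over $C_I$ is fine; the paper simply takes $Q=(q(x_0,x_1,x_2)=0)$, which already has rank $\leq 3$ since $q$ depends on three variables.
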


\begin{proof}
Let $q,q'$ two generic elements of $M_{\varphi}$. Then the intersection of the quadrics $(q=0)$ and $(q'=0)$ is the strict transform
$B$ by $\varphi^{-1}$ of a general line and an effective 1-cycle which depends only on $M$ and whose support is contained
in $\mathrm{Ind}(\varphi)$. The curve $B$ is rational and irreductible and as $\varphi^{-1}$ is quadratic $B$ is a smooth conic and 
we can assume that $B=(q(x_0,x_1,x_2)=x_3=0)$. Therefore there exist $h\in A_{1}$ and $\alpha\in \C$ such that $q'=\alpha q + x_3 h$. 
Then $B_0=(q=h=0)$ and $M_{\varphi}\subseteq \{ f\in A_2 :\, B_0\subset (f=0)\}$.
\end{proof}

\begin{prop} \label{cor:basicbir223}
Let $\varphi \in \birr{2}\backslash \lin$. With the same notation as above:
\begin{enumerate}[\bf (i)]
\item We can choose coordinates such that $H=(x_3=0)$, $Q=(q(x_0,x_1,x_2)=0)$ and $\ell_0,\ell_1,\ell_2,\ell_3\in A_1$ such that
$$\varphi[x_0,x_1,x_2,x_3]=[\ell_0 x_3, \ell_1 x_3, \ell_2 x_3, q+\ell_3 x_3].$$
\item The hyperplane $H$ is the only surface contracted to a point. We denote $P=\varphi(H)$.
\item There is exactly one point $P_I\in \ind (\varphi)$ which is blown up to a surface, which is a hyperplane and will be denoted by $H_I$.
\item We denote by $H^{-1}$ the hyperplane contracted to a point $P^{-1}$ by $\varphi^{-1}$ and by $P_I^{-1}\in \ind (\varphi^{-1})$ the
point blown up to a hyperplane $H_I^{-1}$. Then $$H=H_I^{-1},\quad H_I=H^{-1},\quad P=P_I^{-1}, \quad P_I=P^{-1}.$$
\label{prop:4iv}
\item Either $\ind(\varphi)=C_I$ or $\ind(\varphi)=C_I\cup P_I$. In the second case we will say that $\varphi$ is \emph{generic}.
\item $\varphi$ is generic if and only if we can choose coordinates such that
$$\varphi[x_0,x_1,x_2,x_3]=[x_0x_3,x_1x_3,x_2x_3,q(x_0,x_1,x_2)]$$
and $\varphi$ is non generic if and only if we can choose coordinates such that
$$\varphi[x_0,x_1,x_2,x_3]=[x_0x_3,x_1x_3+q(x_0,x_1,x_2),x_2x_3,x_3^2].$$
\item With the above coordinates, if $\varphi$ is generic then $$\jac(\varphi)=-2 x_3^2 q(x_0,x_1,x_2)$$ and if $\varphi$ is non-generic then 
$$\jac(\varphi)=2 x_3^3 \bigg(x_3+\derp{q}{x_1}\bigg).$$
\item If $\varphi$ is generic then the quadric $S=(q=0)$ is contracted to a conic $C$. Moreover $S$ is the only surface contracted to a curve
and $q\in M_{\varphi}$.
\item If $\varphi$ is non-generic and $\derp{q}{x_1}\neq 0$ then the hyperplane $\Pi=(x_3+\derp{q}{x_1}=0)$ is contracted to a conic $C$. Moreover 
$\Pi$ is the tangent plane to the quadric $x_1x_3+q\in M_{\varphi}$ at the point $(0,1,0,0)=P_I$ and it is the only surface contracted to a curve.
\item If $\varphi$ is non-generic and $\derp{q}{x_1}=0$ there exist no surfaces contracted to curves.
\item The image by $\varphi\in \birr{2} \backslash \lin$ of a hyperplane $L$ is a hyperplane if and only if $P_I\in L$ or $L\cap H\subset C_I$ 
(which implies $\mathrm{rank}\, C_I \leq 2$).\label{prop:4xi}
\item If $C_I$ is blowed to a quadric $S_I \not\supset H_I$ then the image by $\varphi$ of a quadric containing $C_I$ is a quadric.
\item The strict transform by $\varphi^{-1}$ of a line through $P_I$ is a conic through $P_I$.
\end{enumerate}
\end{prop}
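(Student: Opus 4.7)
The plan is to derive each item from the normal form produced by the preceding Pan-Ronga-Vust proposition, via increasingly explicit coordinate calculations. For (i), I fix coordinates so that $H=(x_3=0)$ and $Q=(q(x_0,x_1,x_2)=0)$; the space $\{f\in A_2:C_I\subset(f=0)\}$ containing $M_\varphi$ is then spanned by $q$ together with $x_3 A_1$, and hence has dimension $5$. Since $\dim M_\varphi=4$ but $M_\varphi\not\subset x_3 A_1$ (otherwise $x_3$ would divide every component of $\varphi$, forcing $\varphi\in\lin$), a basis of $M_\varphi$ must take the form $\{x_3\ell_0,x_3\ell_1,x_3\ell_2,q+x_3\ell_3\}$. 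Item (ii) is then immediate: restriction of the resulting $\varphi$ to $H$ gives $[0,0,0,1]$, and Lemma \ref{lem:stop} furnishes uniqueness. For (iii) and (iv), the hypothesis $\varphi\in\birr{2}$ ensures $\varphi^{-1}\in\birr{2}\setminus\lin$, so applying (ii) to $\varphi^{-1}$ yields a unique hyperplane $H^{-1}$ contracted to a point $P^{-1}$; setting $P_I:=P^{-1}$ and $H_I:=H^{-1}$ gives a point of $\ind(\varphi)$ blown up to $H_I$, and the symmetric identities $H=H_I^{-1}$ and $P=P_I^{-1}$ are immediate from the definitions.

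For (v), I read $\ind(\varphi)$ directly off (i): on $H$ the equations reduce to $q=0$, recovering $C_I$, while off $H$ the system $\ell_0=\ell_1=\ell_2=q+\ell_3 x_3=0$ has either no solution (the non-generic case, $\ind(\varphi)=C_I$) or exactly one (the generic case, $\ind(\varphi)=C_I\cup P_I$). To obtain the two explicit forms of (vi), I perform further simultaneous changes of coordinates in source and target: in the generic case the linear independence of the $\ell_i$ lets me take them to be $x_0,x_1,x_2$ and absorb $\ell_3$ into $q$; in the non-generic case the linear dependence among the $\ell_i$ forces $P_I\in C_I$, and after placing $P_I=[0,1,0,0]$ and adjusting the basis of $M_\varphi$ one arrives at $[x_0x_3,\,x_1x_3+q,\,x_2x_3,\,x_3^2]$. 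Item (vii) is then a direct computation of the $4\times 4$ Jacobian determinant in each form.

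Items (viii)--(x) identify the remaining components of $\exc(\varphi)$ by factoring $\jac(\varphi)$ from (vii). In the generic case, besides $x_3^2$ the factor $q$ defines the quadric $S=\{q=0\}$, whose image under $\varphi$ is $\{[x_0:x_1:x_2:0]:q(x_0,x_1,x_2)=0\}$, a conic in $H_I$; the inclusion $q\in M_\varphi$ is visible from the normal form. In the non-generic case with $\derp{q}{x_1}\neq 0$, the extra factor $x_3+\derp{q}{x_1}$ defines the hyperplane $\Pi$, and tangency of $\Pi$ to $\{x_1x_3+q=0\}$ at $P_I=[0,1,0,0]$ reduces to computing the gradient of $x_1x_3+q$ at that point. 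When $\derp{q}{x_1}=0$ the Jacobian is a scalar multiple of $x_3^4$, so no surface beyond $H$ is exceptional.

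Finally, items (xi)--(xiii) are direct consequences of the normal forms. For (xi), writing $L=\{h=0\}$ and pulling back, $h\circ\varphi$ is a quadric that factors into two linear pieces precisely when either $P_I\in L$ (producing an extra factor of $x_3$) or $L\cap H\subset C_I$ (forcing $\mathrm{rank}\, C_I\leq 2$). Item (xii) is analogous: any quadric through $C_I$ has the form $\alpha q+x_3 m$, and its image under (vi) is again a quadric unless a further contraction intervenes, which is excluded by the hypothesis $S_I\not\supset H_I$. For (xiii), since $\deg\varphi^{-1}=2$ the strict transform of a generic line is a conic, and the passage through $P_I$ reflects the fact that $\varphi$ blows up $P_I$ to $H_I$, so the transform of any line meeting the image of $P_I$ must touch the exceptional divisor over $P_I$. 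The principal obstacle is the simultaneous normalization in (v) and (vi): the coordinate changes in source and target must track both the structure of $M_\varphi$ and the position of $P_I$ relative to $C_I$, with a further split by the multiplicity of $C_I$ at $P_I$ needed to separate the behaviors recorded in (ix) and (x).
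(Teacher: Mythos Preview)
Your proposal is correct and follows essentially the same approach as the paper's proof: both derive each item from the Pan--Ronga--Vust normal form via explicit coordinate manipulations, with (iii)--(iv) obtained by duality from (ii), (vi) by placing the extra indeterminacy point at a coordinate vertex, and (vii)--(x) by factoring the explicit Jacobian. Your argument for (i) via the dimension count $\dim\{f\in A_2:C_I\subset(f=0)\}=5$ is slightly more explicit than the paper's (which simply asserts (i) is clear), while your treatment of (vi) is slightly less detailed than the paper's (which carefully uses the existence of the point $R$ with $x_3(R)\neq 0$ to verify that $\ell_0,\ell_1,\ell_2,x_3$ are independent before changing coordinates); neither difference is substantive.
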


\begin{proof}
It is clear that we can choose coordinates as in $\textbf{(i)}$. Using this expression we see that the hyperplane $H=(x_3=0)$ is
contracted to the point $P=[0,0,0,1]$, by the previous lemma, it must be the only surface contracted to a point, which proves
$\textbf{(ii)}$. To prove $\textbf{(iii)}$ note that a point is blown up to a surface if and only if its inverse $\varphi^{-1}\in \birr{2}$ contracts
the surface to the point. This implies that there is exactly one point with this property, denoted $P_I$, and that
it is blown up to an hyperplane, which we will denote by $H_I$. Moreover, $P_I=P^{-1}$ and $H_I=H^{-1}$ and analogously for the
other equalities in $\textbf{(iv)}$. We have $\varphi(H)=P$, therefore $H=\varphi^{-1}(P)$, which implies $P^{-1}_I=P$ and
$H_I^{-1}=H$. From $\varphi^{-1}(H^{-1})=P^{-1}$ we would prove the other equalities.
The assertion that $\ind(\varphi)$ is either the conic $C_I$ or the conic $C_I$ plus a point is clear in view of the previous
expression in coordinates, it is enough to prove that the extra point of $\ind (\varphi)$ in the generic case is exactly the
point $P_I$ blown up to a hyperplane. As this is an easy consequence of $\textbf{(ix)}$, we proceed to prove $\textbf{(vi)}$. 
Assume that $$\varphi[x_0,x_1,x_2,x_3]=[\ell_0 x_3, \ell_1 x_3, \ell_2 x_3, q+ \ell_3 x_3].$$ A point $R$ belongs to $\ind (\varphi) \backslash C_I$ 
if it verifies the equations:
$$\ell_0 x_3 = \ell_1 x_3 = \ell_2 x_3 = q + \ell_3 x_3 = 0$$
and  $x_3(R)\neq 0$. We now claim that if such an $R$ exists then
$$x_0'=\ell_0,\quad x_1'=\ell_1, \quad x_2'=\ell_2, \quad x_3'=x_3$$
is a change of coordinates. Indeed, if $\ell_0,\ell_1,\ell_2$ were not linearly independent then $\varphi$ would not be birational.
Moreover $\ell_0 (R)= \ell_1 (R)= \ell_2 (R)=0$. Therefore
$$\varphi[x'_0,x'_1,x'_2,x'_3]=[x'_0 x'_3, x'_1 x'_3, x'_2 x'_3, q'(x'_0,x'_1,x'_2,x'_3)]$$
and $R=[0,0,0,1]$. As $R\in \ind(\varphi)$ we conclude that $q'$ has no quadratic terms in $x'_3$ so we can
make a change of coordinates at the target space so that $q'$ does not depend on $x'_3$.
On the other hand, $\varphi$ is non generic if and only if $\ell_0$, $\ell_1$, $\ell_2$ and $x_3$ are linearly independent.
Indeed, for $\varphi$ to be birational $\ell_0$, $\ell_1$ and $\ell_2$ must be linearly independent and the equation
$\ell_0=\ell_1=\ell_2=0$ has a single point $R$ as solution. Then $R\in C_I$ if and only if $x_3(R)=q(R)=0$. Up to a change
of coordinates we can assume that $R=[0,1,0,0]$ and that
$$\varphi[x'_0,x'_1,x'_2,x'_3]=[x'_0x'_3, \varphi'_1(x'_0,x'_1,x'_2,x'_3), x'_2x'_3,x'^2_3].$$
As $\varphi'_1(R)=0$ the polynomial $\varphi'_1$ cannot have terms in $x'^2_1$ and we can also assume that the
only terms in $x'_3$ are of the form $x'_1x'_3$. This allows us to conclude.
The proof of $\textbf{(vi)}$ is a computation. Then $\textbf{(vii)}$ is consequence of the expressions $\textbf{(v)}$ and $\textbf{(vi)}$. 
For $\textbf{(ix)}$ note only that $q(x_0,x_1,x_2)$ does not contain the monomial $x_1^2$ because $P_I=(0,1,0,0)\in \ind(\varphi)$. The 
last statements are clear.
\end{proof}

Let $\varphi\in \birr{2}\backslash \lin$ generic, there are three possibilities for $q(x_0,x_1,x_2)$ which correspond
to rank of $C_I$ equal to 3, 2 and 1. Then one can choose:
$$q_O=x_0^2-x_1x_2, \qquad q_{\times}=x_1x_2, \qquad q_{/\!/}=x_2^2.$$
One checks readily that with these choices of coordinates at the origin and the target space one has $\varphi=\varphi^{-1}$. Similar arguments 
can be applied in the rest of the cases, yielding the following theorem:

\begin{thm}[Pan-Ronga-Vust] Let $\varphi \in \birr{2}\backslash \lin$. Up to a linear automorphism on the right and on the left one has 7 
possibilities for $\varphi$:
\begin{eqnarray*}
\gen{\alpha}&:=& \langle x_0x_3, x_1x_3,x_2x_3,q_{\alpha} \rangle \\
\tang{\alpha}&:=& \langle x_0x_3,x_2x_3, x_3^2,x_1x_3-q_{\alpha}\rangle \\
\osc&:=& \langle x_0x_3-x_1x_2, x_1x_3, x_2x_3,x_3^2 \rangle
\end{eqnarray*}
where $\alpha=O,\times$ or $/\!/$ with $q_O=x_0^2-x_1x_2$, $q_{\times}=x_1x_2$, $q_{/\!/}=x_2^2$.
\end{thm}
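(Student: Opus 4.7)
The plan is to use Proposition~\ref{cor:basicbir223}(vi) as the starting point: it already reduces any $\varphi \in \birr{2}\backslash\lin$ to one of two explicit shapes, the generic shape $[x_0 x_3, x_1 x_3, x_2 x_3, q(x_0,x_1,x_2)]$ or the non-generic shape $[x_0 x_3, x_1 x_3 + q(x_0,x_1,x_2), x_2 x_3, x_3^2]$ (with $q$ having no $x_1^2$ monomial in the latter, reflecting $P_I = [0,1,0,0] \in \ind(\varphi)$). What remains is to catalogue the orbits of the residual group of linear coordinate changes on the quadratic form $q$ appearing in each shape.

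For the generic shape, any $A \in \mathrm{GL}(3,\C)$ acting on $(x_0, x_1, x_2)$ at the source is compensated by the same $A$ on the first three target coordinates, and independent scalings of $x_3$ and of the last target coordinate absorb an overall factor. This induces the substitution action $q \mapsto q \circ A$ on ternary quadratic forms, whose orbits over $\C$ are the three ranks, represented by $q_O = x_0^2 - x_1 x_2$, $q_\times = x_1 x_2$, and $q_{/\!/} = x_2^2$. These yield the three types $\gen{O}$, $\gen{\times}$, $\gen{/\!/}$.

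For the non-generic shape I split by the multiplicity of $C_I = \{x_3 = 0,\, q = 0\}$ at $P_I$, which equals the multiplicity of the plane conic $\{q = 0\}\subset \pp^2$ at $[0,1,0]$. Multiplicity one (equivalently, $q$ contains an $x_0 x_1$ or $x_1 x_2$ term) is further refined by the rank of $q$ as a ternary form: under the linear freedom preserving $P_I$ and $H = \{x_3 = 0\}$, I reduce $q$ to $q_O$ in rank three and to $q_\times$ in rank two, yielding $\tang{O}$ and $\tang{\times}$ respectively. Multiplicity two means $q$ involves no $x_1$, so $q = q(x_0, x_2)$ is a nonzero binary form; over $\C$ it is either a perfect square, reducing to $q_{/\!/} = x_2^2$ and hence to $\tang{/\!/}$, or a product of two distinct linear forms, reducing to $q = x_0 x_2$ up to scaling.

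The main obstacle is this last case, which does not visibly fit the $\tang{\alpha}$ template: the generator $x_1 x_3 + x_0 x_2$ has rank four as a quadric in $\C^4$, and the trick is to observe that after exchanging $x_0 \leftrightarrow x_1$ at the source together with a sign change, the spanning set $\langle x_0 x_3,\, x_1 x_3 + x_0 x_2,\, x_2 x_3,\, x_3^2 \rangle$ is carried onto $\langle x_0 x_3 - x_1 x_2,\, x_1 x_3,\, x_2 x_3,\, x_3^2 \rangle$, which is precisely $\osc$. This genuine reshuffling of the generators is what makes $\osc$ appear as a distinct canonical form rather than a further specialisation of $\tang{\times}$; once it is in place, each of the seven normal forms has been reached, and the classification is complete.
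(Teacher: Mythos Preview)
Your argument is correct and follows precisely the approach the paper sketches: reduce via Proposition~\ref{cor:basicbir223}(vi) to the generic or non-generic shape, then classify the residual quadratic form $q$ by its rank (generic case) or by the pair $(\text{multiplicity of }C_I\text{ at }P_I,\ \text{rank})$ (non-generic case). The paper itself gives almost no proof here---it handles only the generic case in one line (``three possibilities for $q$ corresponding to rank $3$, $2$, $1$'') and then says ``similar arguments can be applied in the rest of the cases'', citing \cite{PRV}---so you have correctly fleshed out what the paper leaves implicit, including the key observation that the $q=x_0x_2$ case becomes $\osc$ after swapping $x_0\leftrightarrow x_1$.
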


\begin{cor}
Let $\varphi \in \birr{2}\backslash \lin$. Then $\varphi$ and $\varphi^{-1}$ are in the same class with respect to composition by linear 
automorphisms on the right and on the left.
\end{cor}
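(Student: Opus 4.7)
The plan is to verify the statement directly from the seven normal forms in the preceding Pan--Ronga--Vust theorem, computing $\varphi^{-1}$ in each case and checking that it falls in the same class up to linear changes on either side. Since the three generic classes will turn out to consist of involutions, and since the normal forms for the non-generic classes are sparse enough that one can solve $y = \varphi(x)$ essentially by hand, this is largely a bookkeeping exercise.

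For the generic classes $\gen{\alpha}$ with $\alpha \in \{O,\times,/\!/\}$, the normal form is $\varphi = [x_0x_3,\, x_1x_3,\, x_2x_3,\, q_\alpha(x_0,x_1,x_2)]$, and composing with itself gives
\[
\varphi \circ \varphi = [x_0 x_3\, q_\alpha,\ x_1 x_3\, q_\alpha,\ x_2 x_3\, q_\alpha,\ x_3^2\, q_\alpha] = x_3 q_\alpha \cdot [x_0,x_1,x_2,x_3],
\]
using that $q_\alpha$ is homogeneous of degree $2$ in $x_0,x_1,x_2$. Hence $\varphi^{-1} = \varphi$ and the conclusion is immediate.

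For the four non-generic classes I would solve $y = \varphi(x)$ in homogeneous coordinates, exploiting that one of the coordinates of $\varphi$ is $x_3^2$ to recover $x_3$ up to scale and then to express the remaining $x_i$ as polynomials in $y$. For instance, for $\osc$ the identities
\[
y_3 = x_3^2,\qquad y_i\, y_3 = x_i\, x_3^3 \ \ (i=1,2),\qquad y_0 y_3 + y_1 y_2 = x_0\, x_3^3
\]
show that $\varphi^{-1}$ is given by $[y_0y_3 + y_1y_2,\, y_1y_3,\, y_2y_3,\, y_3^2]$, which is in $\osc$ after the linear change $y_0 \mapsto -y_0$ on the target. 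The three $\tang{\alpha}$ cases are handled analogously, plugging in $q_\alpha$ and using that the fourth coordinate $x_3^2$ again pins down $x_3$ projectively.

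The main obstacle is simply the tedium of the four non-generic computations; there is no real conceptual difficulty. A more structural alternative is to appeal to item \textbf{(iv)} of the preceding proposition, which provides the symmetric pairing $(H,P,P_I,H_I) \leftrightarrow (H_I,P_I,P,H)$ exchanging the roles under $\varphi \mapsto \varphi^{-1}$: since the class of $\varphi$ is determined by the rank of $C_I$ together with the position of $P_I$ relative to $C_I$ (not on the plane of $C_I$, on $C_I$ with multiplicity $1$, or on $C_I$ with multiplicity $2$), one is reduced to a type-by-type check that these invariants agree for $(C_I^{-1},\, P_I^{-1}=P)$ and $(C_I,P_I)$.
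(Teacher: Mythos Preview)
Your approach is correct and is essentially the paper's own: the corollary is stated without a formal proof, but the surrounding text notes that the three generic normal forms are involutions, and the large table immediately following lists $\varphi^{-1}$ explicitly for each of the seven types, from which one reads off the claim. Your computation for $\gen{\alpha}$ and for $\osc$ matches the table exactly (the sign adjustment you mention is a minor bookkeeping detail; e.g.\ $y_1\mapsto -y_1$ on the source together with negating the second output does it), and the remaining $\tang{\alpha}$ cases are handled the same way.
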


\begin{cor}
Let $\varphi \in \birr{2}\backslash \lin$. Then:
\begin{enumerate}[\bf i)]
\item Assume $\varphi$ is generic. Then $C_I$ is blown up to a quadric $S_I$. Moreover, if $\varphi^{-1}$ refers to the corresponding 
element for $\varphi^{-1}$, then
 $$S=S_I^{-1}, \quad C=C_I^{-1}.$$ 
\item Assume $\varphi$ is non generic and $\derp{q}{x_1}\neq 0$, i.e. $\varphi\in \tang{O}\cap\tang{\times}$. 
Then $C_I$ is blown up to a plane ${\Pi_I}$. Moreover, if $^{-1}$ refers to the corresponding element for $\varphi^{-1}$ then 
$$\Pi=\Pi_I^{-1}, \quad C=C_I^{-1}.$$
\end{enumerate}
\end{cor}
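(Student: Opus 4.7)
The plan is to argue this corollary essentially as bookkeeping on top of the preceding proposition, exploiting the fact, just established in the previous corollary, that $\varphi^{-1}$ belongs to the same class as $\varphi$. The general principle I will use repeatedly is that if $\varphi$ contracts a surface $T$ to a curve $D$, then $\varphi^{-1}$ blows up $D$ to $T$; in particular $D$ must lie in $\mathrm{Ind}(\varphi^{-1})$, while $T$ coincides with the surface that $\varphi^{-1}$ blows $D$ up to.

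For part \textbf{i)}, I assume $\varphi$ is generic. By the previous corollary $\varphi^{-1}$ is also generic, so Proposition \ref{cor:basicbir223} applied to $\varphi^{-1}$ furnishes an indeterminacy locus $\mathrm{Ind}(\varphi^{-1})=C_I^{-1}\cup P_I^{-1}$ together with a quadric $S^{-1}=\{q^{-1}=0\}$ contracted by $\varphi^{-1}$ to a conic $C^{-1}$. Applying the general principle to the pair $(\varphi, S, C)$: $\varphi^{-1}$ blows up $C$ to $S$, so $C\subset \mathrm{Ind}(\varphi^{-1})$. Since $C$ is a conic it cannot be the isolated point $P_I^{-1}$, hence $C=C_I^{-1}$, and the surface $C$ is blown up to is by definition $S_I^{-1}$, giving $S=S_I^{-1}$. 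Swapping the roles of $\varphi$ and $\varphi^{-1}$ in the same argument applied to $(\varphi^{-1}, S^{-1}, C^{-1})$ shows that $C^{-1}=C_I$ and that $C_I$ is blown up by $\varphi$ to the quadric $S^{-1}$; defining $S_I:=S^{-1}$ we obtain both that $C_I$ is blown up to a quadric $S_I$ and the remaining identifications.

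For part \textbf{ii)}, the argument is identical with ``quadric'' replaced by ``plane''. Proposition \ref{cor:basicbir223}\textbf{(ix)} guarantees that in the non-generic case with $\partial q/\partial x_1\neq 0$ there is a unique plane $\Pi$ contracted by $\varphi$ to a conic $C$, and by the preceding corollary the same holds for $\varphi^{-1}$, yielding $\Pi^{-1}$ contracted to $C^{-1}$. The same two applications of the general principle give $C=C_I^{-1}$, $\Pi=\Pi_I^{-1}$, and symmetrically $C^{-1}=C_I$, so that $C_I$ is blown up by $\varphi$ to the plane $\Pi^{-1}$, which we rename $\Pi_I$.

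The only genuine point to check, rather than bookkeep, is that the curve we extract must be a conic and not just a point, but this is forced because $C$ (respectively $C^{-1}$) is a conic by construction in \textbf{(viii)}--\textbf{(ix)} and the only curve component of $\mathrm{Ind}(\varphi^{-1})$ available is $C_I^{-1}$. The main potential pitfall, and the place where I would be most careful writing this up, is confusing the four objects $S, S^{-1}, S_I, S_I^{-1}$ (and similarly for $C, \Pi$); keeping a clear convention that the superscript $^{-1}$ always denotes ``the corresponding object for the map $\varphi^{-1}$'' and that the subscript $_I$ denotes ``arising from the indeterminacy locus rather than from the exceptional locus'' makes the four equalities $S=S_I^{-1}$, $C=C_I^{-1}$, $\Pi=\Pi_I^{-1}$, $C=C_I^{-1}$ each a one-line consequence of the principle above.
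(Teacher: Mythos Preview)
Your proof is correct and follows essentially the same route as the paper's: both use that $\varphi^{-1}$ lies in the same class, apply Proposition~\ref{cor:basicbir223} to $\varphi^{-1}$ to obtain a surface contracted to a conic, and then use the principle that contraction by $\varphi^{-1}$ is equivalent to blowing up by $\varphi$ to force this conic to equal $C_I$. Your write-up is in fact more careful with the notation than the paper's, which makes the four identifications $S=S_I^{-1}$, $C=C_I^{-1}$, etc.\ cleaner to track.
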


\begin{proof}
Assume that $\varphi$ is generic (for the other cases similar arguments apply). As $\varphi$ and $\varphi^{-1}$ are in the same class 
there is a surface $S_I$ contracted to a conic $\widetilde{C}$ by $\varphi^{-1}$. As $\widetilde{C}\subset \ind(\varphi)$ we must have 
$\widetilde{C}=C_I$. Thus, $C_I$ is a conic blown up to a surface $S$. Moreover, $S=S_I^{-1}$ and $C=C_I^{-1}$.
\end{proof}

In the figures of the next page we represent the main geometrical objects of each type of transformations.
See the table in page \pageref{thetable} for a study of the seven cases. We will write $\varphi$ instead of 
$\varphi[x_0,x_1,x_2,x_3]$ and analogously for $\varphi^{-1}$. 

It will also be useful to have a geometric description of the elements of the \emph{linear system $\Gamma_{\varphi}$} associated to 
$\varphi$, i.e. the linear system of inverse images of hyperplanes by $\varphi$. Note that if $\varphi=(\varphi_0,\varphi_1,\varphi_2,\varphi_3)$ 
for some choices of coordinates then $(\varphi_i=0)=\varphi^{-1}(x_i=0)\in \Gamma_{\varphi}$. Recall that we say that
a quadric \emph{osculates} at a point $P$ along a germ of curve $C$ if it has a contact of order $2$ with $C$ at $P$, i.e.
if the multiplicity of intersection of the surface with $C$ at $p$ is 3).

\begin{center}
\begin{tabular}{|l|l|}
  \hline
  % after \\: \hline or \cline{col1-col2} \cline{col3-col4} ...
  \textbf{Type} & \textbf{Definition of $\Gamma_{\varphi}$} \\ \hline \hline
  \gen{O} & Quadrics containing a smooth conic $C_I$ and a point $P_I\not\in C_I$ \\ \hline
  \gen{\times} & Quadrics containing a conic $C_I$ of rank 2 and a point $P_I\not\in C_I$ \\ \hline
  \gen{/\!/} & Cones containing a line $C_I$ and a point $P_I\not\in C_I$ and \\
  & tangents to a plane $S_I$ along $C_I$ \\ \hline
  \tang{O} & Quadrics containing a smooth conic $C_I$ and \\
  & tangents to a plane $S$ at a point $P_I\in \C_I$\\ \hline
  \tang{\times} &  Quadrics containing a conic $C_I$ of rank 2 and \\
  &tangents to a plane $S$ at a point $P_I\in \C_I$ \\ \hline
  \tang{/\!/} & Cones containing a line $C_I$, tangents to a plane $H$ \\
  & along $C_I$ and osculating at a point $P_I\in C_I$ \\
  & along a curve $\alpha$ tangent to $H$ at $P_I$\\ \hline
  \osc &  Quadrics containing a conic $C_I$ of rank 2, $C_I=L_1 \cup L_2$, \\
  & and osculating at the point $P_I=L_1\cap L_2$ along a curve \\
  & $\alpha$ tangent to the plane $H=L_1\vee L_2$ at $P_I$ \\  \hline
 \end{tabular}
\end{center}

\medskip

\renewcommand{\thesubfigure}{}

\begin{figure}[h]

\psset{xunit=0.5cm,yunit=0.5cm,algebraic=true,dotstyle=*,dotsize=3pt
0,linewidth=0.4pt,arrowsize=3pt 2,arrowinset=0.25} \scriptsize

\subfigure[$\gen{O}$]{\begin{pspicture*}(2.46,-3.8)(12.54,3.32)
 \psline(3.24,-3.04)(10.22,-3.12)
\psline(10.22,-3.12)(12.26,0.1) \psline[linestyle=dashed,dash=1pt
1pt](5.28,0.18)(12.26,0.1) \psline(5.28,0.18)(3.24,-3.04)
\rput{0.79}(7.81,-1.46){\psellipse[linestyle=dashed,dash=1pt
1pt](0,0)(2.17,0.93)} \psline(5.67,-1.35)(7.8,2.62)
\psline(9.97,-1.34)(7.8,2.62) \psline(5.28,0.18)(6.48,0.17)
\psline(9.17,0.12)(12.26,0.1) \rput[tl](4.16,-2.18){$H=H_I$}
\rput[tl](7.76,3.12){$P_I$} \rput[bl](7.74,-2.12){$C_I$}
\rput[bl](8.92,0.96){$S$} \psdots(7.8,2.62)
\end{pspicture*}}
\subfigure[$\gen{\times}$]{\begin{pspicture*}(2.49,-3.8)(12.55,3.38)
\psline(3.24,-3.04)(10.22,-3.12) \psline(10.22,-3.12)(12.26,0.1)
\psline[linestyle=dashed,dash=1pt 1pt](5.28,0.18)(12.26,0.1)
\psline(5.28,0.18)(3.24,-3.04) \rput[tl](4.16,-2.18){$H=H_I$}
\rput[tl](8.14,2.8){$P_I$} \psline(9.65,2.39)(6.56,1.57)
\psline(6.52,-1.97)(6.56,1.57) \psline[linestyle=dashed,dash=1pt
1pt](6.52,-1.97)(9.69,-0.86) \psline(9.69,-0.86)(9.65,2.39)
\psline(6.91,2.46)(9.36,1.63) \psline(9.36,1.63)(9.36,-1.9)
\psline(9.36,-1.9)(8.32,-1.34) \psline[linestyle=dashed,dash=1pt
1pt](8.32,-1.34)(6.85,-0.59) \psline[linestyle=dashed,dash=1pt
1pt](6.85,-0.59)(6.9,1.66) \psline(6.9,1.66)(6.89,2.45)
\psline[linestyle=dashed,dash=1pt 1pt](8.24,2.01)(8.32,-1.34)
\psline(5.28,0.18)(6.54,0.16) \psline(9.68,0.13)(12.18,0.1)
\psline(9.36,-0.98)(9.69,-0.86) \psline(6.52,-1.97)(8.32,-1.34)
\rput[lt](7.71,-1.75){\parbox{1.15 cm}{$C_I$}}
\rput[bl](9.81,0.77){$S$} \psdots(8.24,2.01)
\end{pspicture*}}
\subfigure[$\gen{/\!/}$]{\begin{pspicture*}(2.9,-3.8)(12.59,2.9)
\psline(3.24,-3.04)(10.22,-3.12) \psline(10.22,-3.12)(12.26,0.1)
\psline[linestyle=dashed,dash=1pt 1pt](5.28,0.18)(12.26,0.1)
\psline(5.28,0.18)(3.24,-3.04) \rput[tl](4.16,-2.18){$H=H_I$}
\rput[tl](8.16,1.0){$P_I$} \psline(9.65,2.39)(6.56,1.57)
\psline(6.52,-1.97)(6.56,1.57) \psline(9.69,-0.86)(9.65,2.39)
\psline(5.28,0.18)(6.54,0.16) \psline(9.68,0.13)(12.18,0.1)
\rput[lt](7.71,-1.75){\parbox{1.14 cm}{$C_I$}}
\psline(9.69,-0.83)(6.52,-1.95) \psline(9.83,-0.9)(9.79,2.35)
\psline(9.79,2.35)(6.7,1.53) \psline(6.66,-2.01)(6.7,1.53)
\psline(9.83,-0.86)(6.66,-1.99) \rput[bl](9.95,0.67){$S$}
\psdots(8,0.8)
\end{pspicture*}}

\end{figure}

\begin{figure}[h]

\psset{xunit=0.6cm,yunit=0.6cm,algebraic=true,dotstyle=*,dotsize=3pt
0,linewidth=0.4pt,arrowsize=3pt 2,arrowinset=0.25} \scriptsize
\newrgbcolor{cccccc}{0.8 0.8 0.8}

\subfigure[$\tang{O}$]{\begin{pspicture*}(2.76,-3.8)(13,3.2)
\psline(3.24,-3.04)(10.22,-3.12)
\psline(10.22,-3.12)(11.32,-0.02)
\psline[linestyle=dashed,dash=1pt 1pt](4.34,0.06)(11.32,-0.02)
\psline(4.34,0.06)(3.24,-3.04)
\rput{0.79}(7.81,-1.46){\psellipse[linestyle=dashed,dash=1pt 1pt](0,0)(2.17,0.93)}
\psline[linecolor=cccccc](5.67,-1.35)(7.8,2.62)
\psline[linecolor=cccccc](9.97,-1.34)(7.8,2.62)
\psline(4.34,0.06)(6.41,0.04)
\rput[tl](10.00,-0.9){$P_I$}
\psline(10.92,2.54)(10.9,0.16)
\psline(10.92,2.54)(9.96,-0.24)
\psline(10.9,0.16)(10.08,-2.66)
\psline(9.96,-0.24)(10.08,-2.66)
\psline[linestyle=dashed,dash=1pt 1pt](9.96,-0.24)(9.6,-1.74)
\psline[linestyle=dashed,dash=1pt 1pt](9.6,-1.74)(9.78,-3.88)
\psline[linestyle=dashed,dash=1pt 1pt](9.78,-3.88)(10.08,-2.66)
\psline(9.23,0)(10.04,-0.01)
\psline(11.32,-0.02)(10.85,-0.01)
\rput[bl](6.2,-2.78){$H$}
\rput[bl](7.74,-2.12){$C_I$}
\psdots(9.97,-1.34)
\rput[bl](11,1.88){$H_I=\Pi$}
\rput[tl](7.76,3.12){$Q$}
\end{pspicture*}}
\subfigure[$\tang{\times}$]{\begin{pspicture*}(2.08,-3.8)(13,2.87)
\psline(3.24,-3.04)(10.22,-3.12) \psline(10.22,-3.12)(12.26,0.1)
\psline[linestyle=dashed,dash=1pt 1pt](5.28,0.18)(12.26,0.1)
\psline(5.28,0.18)(3.24,-3.04) \rput[lt](8.17,-1.8){\parbox{1.16
cm}{$C_I$}} \psline[linestyle=dashed,dash=1pt
1pt](6.4,-0.77)(9.28,-2.05)
\psline[linecolor=cccccc](9.28,-2.05)(9.21,1.36)
\psline[linecolor=cccccc](6.4,-0.77)(6.44,2.37)
\psline[linestyle=dashed,dash=1pt 1pt](10.12,-0.79)(6.95,-2.38)
\psline[linecolor=cccccc](6.95,-2.38)(6.95,1.22)
\psline[linecolor=cccccc](6.95,1.22)(10.08,2.21)
\psline[linestyle=dashed,dash=1pt
1pt,linecolor=cccccc](10.08,2.21)(10.12,-0.79)
\psline[linecolor=cccccc](6.44,2.37)(9.21,1.36)
\psline[linestyle=dashed,dash=1pt
1pt,linecolor=cccccc](8.36,1.67)(8.39,-1.66)
\rput[tl](9.54,-0.26){$P_I$} \psline(10.12,-0.79)(11.8,1.51)
\psline(6.95,-2.38)(8.72,0.36) \psline(8.72,0.36)(11.8,1.51)
\psline(5.28,0.18)(6.41,0.17) \psline(10.78,0.12)(12.26,0.1)
\psline(6.4,-0.77)(6.95,-1.01) \psline(6.95,-2.38)(8.39,-1.66)
\psline(9.28,-2.05)(8.39,-1.66) \psline(9.27,-1.22)(10.12,-0.79)
\psline[linecolor=cccccc](8.36,1.66)(8.38,-0.17)
\psline[linestyle=dashed,dash=1pt
1pt,linecolor=cccccc](9.22,0.55)(8.39,-1.66)
\psline[linecolor=cccccc](10.08,2.21)(10.1,0.88)
\rput[bl](5.29,-2.73){$H$} \psdots(9.76,-0.97)
\rput[bl](11.43,0.58){$H_I=\Pi$}
\rput[tl](8.14,2.5){$Q$}
\end{pspicture*}}
\end{figure}

\begin{figure}[h]

\psset{xunit=0.6cm,yunit=0.6cm,algebraic=true,dotstyle=*,dotsize=3pt
0,linewidth=0.4pt,arrowsize=3pt 2,arrowinset=0.25} \scriptsize
\newrgbcolor{cccccc}{0.8 0.8 0.8}

\subfigure[$\tang{/\!/}$]{\begin{pspicture*}(2.67,-3.5)(12.52,2.5)
\psline(3.24,-3.04)(10.22,-3.12)
\psline(10.22,-3.12)(12.26,0.1)
\psline[linestyle=dashed,dash=1pt 1pt](5.28,0.18)(12.26,0.1)
\psline(5.28,0.18)(3.24,-3.04)
\rput[tl](4.2,-2.3){$H=H_I$}
\rput[tl](9.01,-1.21){$P_I$}
\psline[linecolor=cccccc](9.65,2.39)(6.56,1.57)
\psline[linecolor=cccccc](6.52,-1.97)(6.56,1.57)
\psline[linecolor=cccccc](9.69,-0.86)(9.65,2.39)
\psline(5.28,0.18)(6.54,0.16)
\psline(9.68,0.13)(12.18,0.1)
\rput[lt](7.71,-1.85){\parbox{1.16 cm}{$C_I$}}
\psline(9.69,-0.83)(6.52,-1.95)
\psline[linecolor=cccccc](9.83,-0.9)(9.79,2.35)
\psline[linecolor=cccccc](9.79,2.35)(6.7,1.53)
\psline[linecolor=cccccc](6.66,-2.01)(6.7,1.53)
\rput[bl](9.95,0.67){$Q$}
\psline(9.83,-0.86)(6.66,-1.99)
\psdots(8.91,-1.18)
\end{pspicture*}}
\subfigure[$\osc$]{\begin{pspicture*}(3.11,-3.5)(12.46,2.9)
\psline(3.24,-3.04)(10.22,-3.12)
\psline(10.22,-3.12)(12.26,0.1)
\psline[linestyle=dashed,dash=1pt 1pt](5.28,0.18)(12.26,0.1)
\psline(5.28,0.18)(3.24,-3.04)
\rput[tl](4.2,-2.3){$H=H_I$}
\rput[tl](8.19,-1.55){$P_I$}
\psline[linecolor=cccccc](9.65,2.39)(6.56,1.57)
\psline[linecolor=cccccc](6.52,-1.97)(6.56,1.57)
\psline[linestyle=dashed,dash=1pt 1pt](6.52,-1.97)(9.69,-0.86)
\psline[linecolor=cccccc](9.69,-0.86)(9.65,2.39)
\psline[linecolor=cccccc](6.91,2.46)(9.36,1.63)
\psline[linecolor=cccccc](9.36,1.63)(9.36,-1.9)
\psline(9.36,-1.9)(8.32,-1.34)
\psline[linestyle=dashed,dash=1pt 1pt](8.32,-1.34)(6.85,-0.59)
\psline[linestyle=dashed,dash=1pt 1pt,linecolor=cccccc](6.85,-0.59)(6.9,1.66)
\psline[linecolor=cccccc](6.9,1.66)(6.89,2.45)
\psline[linestyle=dashed,dash=1pt 1pt,linecolor=cccccc](8.24,2.01)(8.32,-1.34)
\psline(5.28,0.18)(6.54,0.16)
\psline(9.68,0.13)(12.18,0.1)
\psline(9.36,-0.98)(9.69,-0.86)
\psline(6.52,-1.97)(8.32,-1.34)
\rput[lt](7.2,-1.8){\parbox{1.15 cm}{$C_I$}}
\psdots(8.32,-1.34)
\rput[bl](9.81,0.77){$Q$}
\end{pspicture*}}
\end{figure}

\label{thetable}
\rotatebox{90}{\begin{tabular}{|c|c|c|c|c|}
  \hline \textbf{Type, expression} & $\mathbf{\ind(\varphi)}$ & $\mathbf{\jac(\varphi)},$ & $P_I, \, H_I$ & \textbf{$H$, $S$, $\Pi$, $C$}   \\
  \textbf{and inverse} & & $\mathbf{\jac(\varphi^{-1})}$ &   & \\  \hline\hline

   \gen{O}  & $(x_3=x_0^2-x_1x_2=0)$ & $-2x_3^2(x_0^2-x_1x_2),$   & $P_I$,   &$\varphi(H)=P=P_I,$\\
   \small{$\varphi=[x_0x_3,x_1x_3,x_2x_3,x_0^2-x_1x_2]$} & $\cup P_I$ & Idem & $H_I=(x_3=0)=H$ & $S=(x_0^2-x_1x_2=0)$, \\
   \small{$\varphi^{-1}=\varphi$} & with $P_I=[0,0,0,1]$ &&& $\varphi(S)=S\cap H=C$ \\ \hline

   \gen{\times} & $(x_3=x_1x_2=0)$ & $-2x_3^2x_1x_2,$  & $P_I$, &  $\varphi(H)=P=P_I,$\\
   \small{$\varphi=[x_0x_3,x_1x_3,x_2x_3,x_1x_2]$} & $\cup P_I$ & Idem &  $H_I=(x_3=0)=H$ & $S=(x_1x_2=0)=0$, \\
    \small{$\varphi^{-1}=\varphi$} & with $P_I=[0,0,0,1]$ &&& $\varphi(S)=S \cap H=C$ \\ \hline

    \gen{/\!/} & $(x_3=x_2^2=0)$ & $-2x_3^2x_2^2,$  & $P_I$, & $\varphi(H)=P=P_I,$\\
   \small{$\varphi=[x_0x_3,x_1x_3,x_2x_3,x_2^2]$} & $\cup P_I$ & Idem &  $H_I=(x_3=0)=H$ & $S=(x^2_2=0)$, \\
    \small{$\varphi^{-1}=\varphi$} & with $P_I=[0,0,0,1]$ &&& $\varphi(S)=S\cap H=C$ \\ \hline \hline

  \tang{O}  & $(x_3=x_0^2-x_1x_2=0)$ & $2x_3^3(x_2+x_3),$  &  $P_I=[0,1,0,0]$,  & $H=(x_3=0), \, \varphi(H)=P=P_I,$\\
  \small{$\varphi=[x_0x_3,x_1x_3+x_1x_2-x_0^2,x_2x_3,x_3^2]$} & & $2(x_2+x_3)^2x_3$ &  $H_I=(x_2+x_3=0)$  & $\Pi=(x_2+x_3=0)$, \\
   \small{$\varphi^{-1}=[x_0(x_2+x_3),x_1x_3+x_0^2,x^2_2+x_2x_3,x_2x_3+x_3^2]$} &&&& $\varphi(\Pi)=(x_2+x_3=x_0^2-x_1x_3=0)=C$ \\ \hline

  \tang{\times} & $(x_3=x_1x_2=0)$ & $2x_3^3(x_3-x_2),$  & $P_I=[0,1,0,0]$,  &$H=(x_3=0), \, \varphi(H)=P=P_I,$\\
  \small{$\varphi=[x_0x_3,x_1x_3-x_1x_2,x_2x_3,x_3^2]$} && $2(x_3-x_2)^3x_3$ & $H_I=(x_3-x_2=0)$ &  $\Pi=(x_2-x_3=0),$ \\
  \small{$\varphi^{-1}=[x_0(x_3-x_2),x_1x_3,x_2x_3-x^2_2,x^2_3-x_2x_3]$} &&&& $\varphi(\Pi)=(x_3-x_2=x_1=0)=C$ \\ \hline \hline

  \tang{/\!/} & $(x_3=x^ 2_2=0)$ & $2x_3^4,$  &  $P_I=[0,1,0,0]$, & $\varphi(H)=P=P_I$\\
  \small{$\varphi=[x_0x_3,x_1x_3-x_2^2,x_2x_3,x_3^2]$} && Idem &  $H_I=(x_3=0)=H$ & \\
  \small{$\varphi^{-1}=[x_0x_3,x_1x_3+x_2^2,x_2x_3,x^2_3]$} && && \\ \hline

   \osc & $(x_3=x_1x_2=0)$ & $2x_3^4,$   & $P_I=[1,0,0,0]$, & $\varphi(H)=P=P_I$ \\
   \small{$\varphi=[x_0x_3-x_1x_2,x_1x_3,x_2x_3,x_3^2]$} && Idem & $H_I=(x_3=0)=H$ &  \\
   \small{$\varphi^{-1}=[x_0x_3+x_1x_2,x_1x_3,x_2x_3,x^2_3]$} &&  && \\  \hline
\end{tabular}}

\begin{rem}
In the case $\varphi\in \tang{/\!/}$ we can take $\alpha$ as $(x_1 x_3 -x_2^2=x_0=0)$ (with the previous
expression) and for $\varphi\in \osc$ we can choose $\alpha=(x_0x_3-x_2^2=x_1-x_2=0)$. Note however that these
choices are not unique, for instance for $\varphi\in \tang{/\!/}$ we could also choose $\alpha$ as $(x_1 x_3 -x_2^2=x_0+x_2=0)$.
\end{rem}

\begin{prop}\label{thm:3types}
Let $\varphi$ be a quadratic rational map of $\pp^3$. Then $\varphi$ is birational of bidegree $(2,2)$ if and only if one of the following 
possibilities holds:
\begin{enumerate}[\bf i)]
\item $\jac(\varphi)=h^2 \cdot q$ with $h\in A_1$, $q\in A_2$ of rank $\leq 3$, $(h,q)=1$, $Sing\,(q=0)\not\subseteq (h=0)$ and such that 
$\varphi(h=0)=P$ is a point and $\varphi(q=0)=C$ a plane conic of the same rank as $C_I$ such that $P\not\in C$. Then $\varphi\in \gen{\alpha}$ 
and $\alpha=O,\times, /\!/$ with rank $\alpha=$ rank $q$.
\item $\jac(\varphi)=h^3 \cdot l$ with $h,l\in A_1$, $(h,l)=1$ and such that $\varphi(h=0)$ is a point $P$ and $\varphi(l=0)$ is a conic 
$C$ such that $P\in C$. Moreover there exists a point $P_I\in \ind(\varphi)$ such that for every $q\in \Gamma_{\varphi}$ the plane $\Pi=(\ell=0)$ 
is the tangent plane of the quadric $(q=0)$ at $P_I$. Then $\varphi\in \tang{O}\cup \tang{\times}$.
\item $\jac(\varphi)=h^4$ with $h \in A_1$ and such that $\varphi(h=0)$ is a point $P$ and there exists a point $P_I\in \ind(\varphi)$ such 
that the strict transform of a line in $\pp^3$ by $\varphi^{-1}$ is a curve by the point $P_I$. Then $\varphi\in \tang{/\!/}\cup \osc \cup \lin$.
\end{enumerate}
\end{prop}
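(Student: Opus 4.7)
The plan is to prove both implications of the proposition. For the forward direction I would invoke the Pan-Ronga-Vust classification, which reduces any $\varphi \in \birr{2}\setminus \lin$ to one of the seven normal forms $\gen{O}, \gen{\times}, \gen{/\!/}, \tang{O}, \tang{\times}, \tang{/\!/}, \osc$. The Jacobian computations of Proposition~\ref{cor:basicbir223}, summarised in the table on page~\pageref{thetable}, then match each normal form with one of the three cases: the generic types give $\jac(\varphi)=-2x_3^2 q_\alpha$ of form \textbf{i)}; the types $\tang{O}, \tang{\times}$ give $\jac(\varphi)=2x_3^3(x_3\pm x_2)$ of form \textbf{ii)}; and $\tang{/\!/}, \osc$ give $\jac(\varphi)=2x_3^4$ of form \textbf{iii)}. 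The geometric hypotheses (contraction of $(h=0)$ to the point $P$, contraction of the residual factor to a conic $C$, the relative position of $P$ and $C$, the tangency of $\Pi$ to all quadrics in $\Gamma_{\varphi}$, and the behaviour of strict transforms of lines through $P_I$) are read off from Proposition~\ref{cor:basicbir223} and the same table. The case $\varphi \in \lin$ fits into \textbf{iii)} by a direct computation giving $\jac(\varphi)=c\,\ell^4$ for a nonzero constant $c$.

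For the reverse direction I would argue case by case, extracting a normal form from the factorisation of $\jac(\varphi)$ together with the stated geometric data. In \textbf{i)}, since $(h=0)$ is contracted to a point, Lemma~\ref{lem:stop} forces $\deg h=1$; choosing coordinates so that $h=x_3$ and $P=[0,0,0,1]$, we may write $\varphi=[x_3\ell_0,x_3\ell_1,x_3\ell_2,\varphi_3]$ with $\ell_0,\ell_1,\ell_2\in A_1$ linearly independent (else $\varphi$ would not be dominant) and $\varphi_3\in A_2$. Absorbing the $\ell_i$ into a source change of coordinates and using $P\notin C$ to cancel the $x_3^2$ monomial in $\varphi_3$ by a change at the target reduces $\varphi$ to $[x_0x_3,x_1x_3,x_2x_3,q(x_0,x_1,x_2)]$; the equality $\mathrm{rank}\,q=\mathrm{rank}\,C$ identifies the class $\gen{\alpha}$. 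In \textbf{ii)} the same set-up, together with the tangency condition that $(l=0)$ be the common tangent plane to all members of $\Gamma_{\varphi}$ at $P_I$, pins down $\varphi$ to $[x_0x_3,\,x_1x_3\pm q_\alpha,\,x_2x_3,\,x_3^2]$, giving $\tang{O}$ or $\tang{\times}$ according to the rank of $C_I$.

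In \textbf{iii)}, $\jac(\varphi)=h^4$ rules out any second contracted hyperplane, so by Proposition~\ref{cor:basicbir223} no surface is contracted to a curve and $\varphi\in\tang{/\!/}\cup\osc\cup\lin$; the auxiliary hypothesis that the strict transform of a generic line by $\varphi^{-1}$ passes through $P_I$ then distinguishes the two non-linear subcases according to the multiplicity of $C_I$ at $P_I$, while the degenerate case $\varphi \in \lin$ corresponds to the absence of the conic $C_I$. The main obstacle lies in this last case: since the Jacobian alone records only one contracted hyperplane, separating $\tang{/\!/}$ from $\osc$ requires examining how members of $\Gamma_{\varphi}$ osculate along the curve $\alpha$ through $P_I$, which in turn demands a careful study of the fibres of $\varphi^{-1}$ over a generic line together with the configuration of $\ind(\varphi)$ at $P_I$.
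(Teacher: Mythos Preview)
Your forward direction and your handling of cases \textbf{i)} and \textbf{ii)} in the converse are essentially what the paper does (the paper simply writes ``it is enough to prove the converse implications'', taking the forward direction as a consequence of the classification already in place, and then reduces to normal form in each case much as you outline).

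The genuine gap is in case \textbf{iii)}. You write that ``$\jac(\varphi)=h^4$ rules out any second contracted hyperplane, so by Proposition~\ref{cor:basicbir223} no surface is contracted to a curve and $\varphi\in\tang{/\!/}\cup\osc\cup\lin$''. But Proposition~\ref{cor:basicbir223} is stated for $\varphi\in\birr{2}\setminus\lin$; invoking it here assumes exactly the birationality you are meant to establish. The paper's argument avoids this circularity by building the normal form directly from the hypotheses: taking $h=x_3$ and $P=P_I=[0,1,0,0]$, the contraction of $(x_3=0)$ to $P$ forces $\varphi=[\ell_0 x_3,\ell_1 x_3+q,\ell_2 x_3,\ell_3 x_3]$ with $q\in A_2(x_0,x_1,x_2)$. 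The strict-transform hypothesis is then used \emph{structurally}, not as an afterthought: writing $(\varphi_i=\varphi_1=0)$ explicitly and imposing that the residual curve pass through $P_I$ forces $\ell_0,\ell_2,\ell_3\in A_1(x_0,x_2,x_3)$ and kills the $x_1^2$ term of $q$. Only now does the explicit Jacobian
\[
\jac(\varphi)=2x_3^3\Big(\tfrac{\partial\ell_1}{\partial x_1}x_3+\tfrac{\partial q}{\partial x_1}\Big)\det\Big(\tfrac{\partial\ell_i}{\partial x_j}\Big)_{i,j\in\{0,2,3\}}
\]
combine with $\jac(\varphi)=x_3^4$ to give $\partial q/\partial x_1=0$ and the linear independence of $\ell_0,\ell_2,\ell_3$. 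A change of coordinates then brings $q$ to $-x_2^2$, $-x_0x_2$ or $0$, and in each case the resulting map is visibly birational with quadratic inverse.

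Your final paragraph also misidentifies the difficulty: the statement does not ask you to separate $\tang{/\!/}$ from $\osc$; membership in their union with $\lin$ is all that is claimed. The osculation along $\alpha$ is irrelevant here---what matters is deploying the strict-transform condition early enough to constrain the shape of $\varphi$ \emph{before} concluding birationality, rather than citing a result that already presupposes it.
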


\begin{proof}
It is enough to prove the converse implications.
\begin{enumerate}[\bf i)]
\item We can assume that $h=x_3$, $q=q(x_0,x_1,x_2)$, $C=(x_3=q=0)$ and $P=[0,0,0,1]$. Note that $(q=x_3=0)\subset C_I$. If $(h=0)$ is contracted 
to a point then we can assume that there exist $\ell_0,\ell_1,\ell_2\in A_1$ such that $\varphi_0=\ell_0 x_3$, $\varphi_1=\ell_1 x_3$ and 
$\varphi_2=\ell_2 x_3$. On the other hand as $(q=0)$ is contracted to the plane conic $C$ we have $\varphi_3=\alpha q$ with $\alpha\in \C^{\ast}$. 
Now, the condition $\jac(\varphi)=x_3^2 q$ implies that $\ell_0,\ell_1,\ell_2\in A_1(x_0,x_1,x_2)$ and that they are linearly independent. 
Therefore, we can choose coordinates $x'_0=\ell_0, x'_1=\ell_1, x'_2=\ell_2, x_3'=x_3$ such that
$$\varphi[x'_0,x'_1,x'_2,x'_3]=[x'_0x'_3, x'_1x'_3, x'_2x'_3, q'(x'_0,x'_1,x'_2)].$$
Then $\varphi$ is clearly birational and generic.
\item Without loss of generality we can assume that $h=x_3$, $\varphi(x_3=0)=[0,1,0,0]=P$, $l=x_2+x_3$, $\varphi(l)\subset (x_2+x_3=0)$ and 
$P_I=P=[0,1,0,0]$. As $\Pi$ is the tangent plane at the point $P_I$ we have that $\varphi_i=\alpha_i x_1(x_2+x_3)+q_i(x_0,x_2,x_3)$, with 
$\alpha_i\in \C$ and $q_i\in A_2(x_0,x_2,x_3)$. Using that $(x_3=0)$ is contracted to the point $P$  we conclude that there exist
    $\ell_0,\ell_2,\ell_3\in A_1(x_0,x_2,x_3)$ such that $\varphi_0=\ell_0 x_3$, $\varphi_2=\ell_2 x_3$ and $\varphi_3=\ell_3 x_3$. On the
    other hand, $\jac(\varphi)=(x_2+x_3)x_3^3$ implies that $\ell_0,\ell_2,\ell_3$ are linearly independent. Finally, $\Pi=(x_2+x_3=0)$ is 
contracted to a plane conic. Up to a linear change of coordinates at the origin and target space one can assume that
    $$\varphi[x'_0,x'_1,x'_2,x'_3]=[x'_0x'_3,x'_1(x'_2+x'_3)+\epsilon {x'}_0^2+(1-\epsilon) x'_0x'_2, x'_2x'_3, {x'}_3^2],$$
    with $\epsilon=0,1$. 
\item We can assume that $h=x_3$ and $P=P_I=[0,1,0,0]$. Then $\varphi_0=\ell_0 x_3$, $\varphi_1=\ell_1 x_3+ q$, $\varphi_2=\ell_2 x_3$, 
$\varphi_3=\ell_3 x_3$ with $\ell_i\in A_1(x_0,x_1,x_2,x_3)$ for $i=0,1,2,3$ and $q\in A_2(x_0,x_1,x_2)$. On the other hand
$$(\varphi_i=0) \cap (\varphi_j=0)=( x_3=q=0) \cup (\ell_i=\ell_1 x_3+q=0)$$
for $i=0,2,3$. As the strict transform by $\varphi^{-1}$ of the line $(x_1=x_i=0)$ is the curve 
$(\ell_i=\ell_1 x_3 + q=0)=C_i$ for $i=0,2,3$, imposing $P_I \in C_i$ yields $\ell_i \in A_1(x_0,x_2,x_3)$ for 
$i=0,2,3$ and that $q$ has no term in $x_1^2$. Therefore
$$\jac(\varphi)=\left|\begin{array}{cccc} \derp{\ell_0}{x_0} x_3 & \derp{\ell_1}{x_0} x_3+\derp{q}{x_0} & \derp{\ell_2}{x_0} x_3 & \derp{\ell_3}{x_0} x_3 \\ 0 & \derp{\ell_1}{x_1} x_3+\derp{q}{x_1} & 0 & 0\\
\derp{\ell_0}{x_2} x_3 & \derp{\ell_1}{x_2} x_3+\derp{q}{x_2} & \derp{\ell_2}{x_2} x_3 & \derp{\ell_3}{x_2} x_3 \\ \derp{\ell_0}{x_3} x_3 & \derp{\ell_1}{x_3} x_3+\derp{q}{x_3} & \derp{\ell_2}{x_3} x_3 & \derp{\ell_3}{x_3} x_3 \end{array}\right|=2 x_3^3 \bigg(\derp{\ell_1}{x_1}x_3+\derp{q}{x_1}\bigg)\left|\begin{array}{ccc} 
\derp{\ell_0}{x_0} & \derp{\ell_2}{x_0} & \derp{\ell_3}{x_0} \\ 
\derp{\ell_0}{x_2} & \derp{\ell_2}{x_2} & \derp{\ell_3}{x_2} \\ 
\derp{\ell_0}{x_3} & \derp{\ell_2}{x_3} & \derp{\ell_3}{x_3}
\end{array}\right|.$$
It follows that $\derp{q}{x_1}\neq 0$ and that $\{ \ell_i \}_{i=0}^3$ are linearly independent. Up to a linear change of coordinates on $x_0,x_2$ 
at the origin we can assume that $q$ is either $-x_2^2$ or $-x_0x_2$ (or $0$, which yields a linear transformation). Now, a change of coordinates 
at the target space allows to assume that $\ell_i=x_i$ for $i=0,1,2,3$. Therefore, $\varphi$ is birational and $\varphi\in \tang{/\!/}\cup \osc \cup \lin$.
\end{enumerate}
\end{proof}

\section{Flows in $\birr{2}$}

Let $\varphi_t$ be a quadratic flow.

We denote by $H_t$ the hyperplane contracted by $\varphi_t$ to a point $P_t:=\varphi_t(H_t)$, 
by ${C_I}_t$ the plane conic contained in $\ind (\varphi_t)$ (recall that ${C_I}_t\subset H_t$) 
and by ${P_I}_t$ the point blown up by $\varphi_t$ to a hyperplane ${H_I}_t$.

We denote, if it exists, the surface $S_t \neq H_t$ contracted to a plane conic $C_t$, and by 
${S_I}_t \neq H_t$, if it exists i.e if $\varphi_t\in \gen{\cdot}\cup \tang{O}\cup \tang{\times}\cup \lin$, 
the surface to which is blown up the conic ${C_I}_t$. Note that for $S_t \neq H_t$ (resp. ${S_I}_t\neq {H_I}_t$) 
we mean that there is at least one $t_0$ such that $S_{t_0} \neq H_{t_0}$ (resp. ${S_I}_{t_0}\neq {H_I}_{t_0}$). 
Note also that there exists a surface $S_t\neq H$ contracted to a curve $C_t$ if and only if the curve ${C_I}_t$ 
is blown up to a surface ${S_I}_t\neq {H_I}_t$, for a map $\varphi\in \birr{2}$ and its inverse $\varphi^{-1}$ 
are of the same type.

Note that the set of hyperplanes contracted to a point by $\varphi_t$, i.e. $\{H_t\}$, are a germ of analytic 
set in ${\pp^3}^\vee$ (the dual space of $\pp^3$). We can therefore consider the family of hyperplanes contracted 
to a point, and since there is a unique $H_t$ for each $t$ we have an analytic germ of map 
$$t\mapsto H_t \in {\pp^3}^\vee.$$

Moreover $H_t$ is well defined for $t=0$ for one can easily see that if 
$\varphi_0[x_0,x_1,x_2,x_3]=[\alpha x_0, \alpha x_1, \alpha x_2, \alpha x_3]$ then $H_t\rightarrow (\alpha=0)$ when
$t\rightarrow 0$. 

Analogously, we can consider the germ of map $t\rightarrow S_t$ where $S_t$ are the surfaces contracted to a conic by $\varphi_t$ 
(which might be quadrics, hyperplanes or not exist) in the set of quadrics of $\pp^3$ and the family of $S_t$. 
Similarly, we can consider the families of $P_t$, ${P_I}_t$, $H_t$, ${S_I}_t$ and ${C_I}_t$.

\begin{defn}
Let $\varphi_t$ be a quadratic flow and $H_t$ as above. We say that the family $H_t$ is \emph{fix} if it does not depend on 
$t$ and that it is \emph{mobile} otherwise. Analogously for the rest of the elements above.
\end{defn}

Nevertheless, ${P_I}_t$ and ${C_I}_t$ might not be defined for $t=0$ when the family is mobile, as we will see in the
classification of flows that we shall obtain (for $P_I$ see Theorem \ref{thm:NFtangllosc}, case \textbf{I} and for ${C_I}_t$ see 
Theorem \ref{thm:NFtangox1}, case \textbf{I}). 

Regarding ${S_I}_t$ we will see, also as a consequence of the classification, that if the family exists it is fix (see Theorem \ref{thm:fixmb}),
in particular, it will be defined for $t=0$.

\begin{exam}\label{exam:genll}
From the affine flow $(x,y,z)\mapsto \big(\frac{x}{1-tx},y+t,z+t,1 \big)$ we obtain
$$\varphi_t[x_0,x_1,x_2,x_3]=[x_0x_3, (x_1+tx_3)(x_3-tx_0), (x_2+tx_3)(x_3-tx_0), x_3 (x_3-tx_0)]$$
with
$\ind (\varphi_t)=(x_0=x_3=0) \cup \{[1,0,0,0]=: P_I\}, \,
C_I=(x_0=x_3=0),\, P_t=\varphi_t (H_t)=P_I, \, H_t=(x_3+tx_0=0),\,
S_t=(x_3=0)$ and $C_t=\varphi_t (S_t)=C_I.$
In particular $\varphi_t \in \gen{/\!/}$ for $t\neq 0$.
\end{exam}

\begin{exam}\label{exam:tango}
Let
$$\varphi_t[x_0,x_1,x_2,x_3]=[x_0(x_3+tx_0),e^tx_1x_3+(e^t-1)x^2_2,x_2(x_3+tx_0),(x_3+tx_0)^2].$$
Then
$\ind(\varphi_t)=(x_3+tx_0=e^tx_1x_3+(e^t-1)x_2^2=0)={C_I}_t, \,
P_I=[0,1,0,0],\, H_t=(x_3+tx_0=0)$ and $S_t=(x_3=0)$.
In particular $\varphi_t\in \tang{O}$ for $t\neq 0$.
\end{exam}

\begin{exam}\label{exam:tangx}
Let
$$\varphi_t[x_0,x_1,x_2,x_3]=[x_0(x_3+tx_0),e^tx_1x_3,x_2(x_3+tx_0),(x_3+tx_0)^2].$$
Then $\ind(\varphi_t)={C_I}_t=(x_1=x_3+tx_0=0)\cup (x_3=x_0=0),\, 
H_t=(x_3+tx_0=0),\, P_I=[0,1,0,0]$ and $S_t=(x_3=0)$.
In particular $\varphi_t\in \tang{\times}$ for $t\neq 0$.
\end{exam}

\begin{exam}\label{exam:tangll}
Let
$$\varphi_t[x_0,x_1,x_2,x_3]=[tx_3^2+x_0x_2, (x_1+tx_2)x_2, x_2^2, x_2x_3].$$
Then $\ind (\varphi_t)=(x_2=x_3^2=0),\, H_t=(x_2=0),\, P_t=\varphi_t(H_t)=[1,0,0,0]=P_I$ and 
$C_I=(x_2=x_3^2=0)$.
In particular $\varphi_t\in \tang{/\!/}$ for $t\neq 0$.
\end{exam}

\begin{exam}\label{exam:osc}
Let $$\varphi_t[x_0,x_1,x_2,x_3]=[x_0x_3-tx_1x_2, x_1x_3, x_2x_3, x_3^2].$$
Then
$\ind (\varphi_t)=(x_3=x_1x_2=0)=C_I,\, H=H_I=(x_3=0)$ and $P_I=[1,0,0,0]$.
In particular $\varphi_t\in \osc$ for $t\neq 0$.
\end{exam}

In fact we will show that there are no flows in $\gen{O}\cup \gen{\times}\cup \lin$.

\begin{rem}\label{rem:fixmb}
Let $\varphi_t$ be a quadratic flow. Then:
\begin{enumerate}[\bf i)]
\item $P_t$ is fix if and only if ${P_I}_t$ is fix. Moreover in this case $P=P_I$.
\item $H_t$ is fix if and only if ${H_I}_t$ is fix. Moreover in this case $H=H_I$.
\item $C_t$ is fix if and only if ${C_I}_t$ is fix. Moreover in this case $C=C_I$.
\item $S_t$ is fix if and only if ${S_I}_t$ is fix. Moreover in this case $S=S_I$.
\end{enumerate}
\end{rem}

Note that the condition that $H_t$ is fix, i.e. $H_t=H$, does clearly not imply that $P_t=\varphi_t(H)$ is fix.

\begin{proof}
It is a consequence of corollary \ref{cor:basicbir223}, since ${P_I}_t=P^{-1}_t=P_{-t}$.
One proceeds analogously to prove the other cases.
\end{proof}

\begin{lem}\label{lem:PmobHfix}
Let $\varphi_t$ be a quadratic flow. If ${P_I}_t$ is a mobile point blown up by $\varphi_t$ into ${H_I}_t$, then
${H_I}_t$ is fix.
\end{lem}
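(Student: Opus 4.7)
The plan is first to translate the statement, via the inverse reciprocity, into one about the image point and the contracted hyperplane. By Proposition \ref{cor:basicbir223} we have $P_I = P^{-1}$ and $H_I = H^{-1}$, and since the flow relation forces $\varphi_t^{-1}=\varphi_{-t}$, this yields ${P_I}_t = P_{-t}$ and ${H_I}_t = H_{-t}$. Thus the lemma is equivalent to: \emph{if $P_t=\varphi_t(H_t)$ is mobile, then $H_t$ is fix}, and it is this formulation I would prove.

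I would argue by contradiction, supposing that both $P_t$ and $H_t$ are mobile. The proof exploits the flow relation $\varphi_{t+s}=\varphi_t\circ\varphi_s$ together with the uniqueness part of Lemma \ref{lem:stop}. Applying it to $H_{t+s}$ yields
\[
\varphi_t\bigl(\varphi_s(H_{t+s})\bigr)=\varphi_{t+s}(H_{t+s})=P_{t+s}.
\]
For generic small $s,t$ with $t\neq 0$, mobility of $H$ gives $H_{t+s}\neq H_s$, so by the uniqueness of the contracted hyperplane in Lemma \ref{lem:stop} the image $\varphi_s(H_{t+s})$ cannot be a point. Granting that it is a full $2$-dimensional surface, it is then a surface contracted to the single point $P_{t+s}$ by $\varphi_t$; by Lemma \ref{lem:stop} it must be a hyperplane, and the unique such hyperplane is $H_t$. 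Hence $\varphi_s(H_{t+s})=H_t$ and
\[
P_{t+s}=\varphi_t(H_t)=P_t.
\]
Analytic continuation then forces $P$ to be constant, contradicting its mobility.

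The main obstacle is ruling out the degenerate alternative that $\varphi_s(H_{t+s})$ has dimension $1$, which would require $H_{t+s}$ to coincide with the second exceptional component of $\varphi_s$, namely $S_s$ (or with $\Pi_s$ in the non-generic cases where this surface happens to be a hyperplane). I would handle this by an analyticity argument: if such a coincidence held for $(s,t)$ in a nontrivial open set, then fixing $s$ and varying $t$ would relate the analytic families $\{H_u\}$ and $\{S_u\}$ so rigidly as to force $H_u$ to be constant in $u$, contradicting the mobility of $H$ once again.
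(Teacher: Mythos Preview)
Your argument is correct in outline and takes a genuinely different route from the paper. The paper works on the \emph{blow-up} side: it starts from
\[
\varphi_t(\varphi_s({P_I}_t))=\varphi_s(\varphi_t({P_I}_t))=\varphi_s({H_I}_t)
\]
and runs a three-case analysis on what $\varphi_s({P_I}_t)$ can be, the delicate case being $\varphi_s({P_I}_t)={P_I}_t$, which forces explicit coordinate computations to rule out two special configurations of flows. You instead work on the \emph{contraction} side, pushing the hyperplane $H_{t+s}$ through $\varphi_s$ and invoking Lemma~\ref{lem:stop} to pin $\varphi_s(H_{t+s})$ down as $H_t$, whence $P_{t+s}=P_t$. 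This is cleaner and avoids the coordinate work entirely; what it costs is that your exceptional case (image of dimension $\le 1$) must be disposed of, whereas the paper's exceptional cases are of a different nature.

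Two points deserve tightening. First, in the generic case $S_s$ is a \emph{quadric}, not a hyperplane, so $H_{t+s}$ cannot ``coincide'' with $S_s$; the correct degenerate scenario is $H_{t+s}\subset S_s$, which forces $\operatorname{rank} S_s\le 2$ and $H_{t+s}$ to be one of its (at most two) irreducible components. Your analyticity argument still applies: fixing $s$, continuity in $t$ traps $H_{t+s}$ in a single component, hence $H$ is locally constant. Second, you should note that in the types $\tang{/\!/}$ and $\osc$ there is no second exceptional surface at all, so the degenerate alternative reduces to $H_{t+s}=H_s$, which you already excluded. With these clarifications the sketch becomes a complete proof.
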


\begin{proof}
We have
$$\varphi_t(\varphi_s({P_I}_t))=\varphi_s(\varphi_t({P_I}_t))=\varphi_s({H_I}_t).$$
Note that as ${P_I}_t$ is mobile we can assume that $\varphi_s({P_I}_t)$ is a point.
We will discuss the different possibilities:
\begin{enumerate}[\bf 1)]
\item $\varphi_t(\varphi_s({P_I}_t))$ is a point and ${H_I}_t=H_s$, then $H_{-t}=H_s$ and we conclude that $H_t$
is fix.
\item $\varphi_s({P_I}_t)={P_I}_t$ and $\varphi_s({H_I}_t)={H_I}_t$. However if ${H_I}_t$ is mobile then
$\varphi_s({H_I}_t)$ must be generically a quadric (note that $\varphi_s({P_I}_t)={H_I}_t$  is a contradiction
for ${P_I}_t$ is mobile and this would imply ${P_I}_t={P_I}_s$) unless ${P_I}_s\in {H_I}_t$ or ${C_I}_s\subset {H_I}_t$
for every $t,s$. All is left to do is to exclude these two cases. Note that it is equivalent to assume that
${P_I}_s\in H_t$ or that ${C_I}_s\subset H_t$ respectively for every $t,s$ for $H_t={H_I}_{-t}$.
\begin{itemize}
\item Let us begin by showing that there are no quadratic flows $\varphi_t$ such that ${P_I}_t$, $H_t$ are mobile,
${P_I}_t\in {H_s}$, $\varphi_s ({P_I}_t)={P_I}_t$ and $\varphi_s(H_t)=H_t$.

Note that the flow $\varphi_t$ must be non-generic. Moreover, we must have ${P_I}_t\subset L$ line and $H_t$ contained
in the family of hyperplanes through $L$. The points of the segment of $L$ described by ${P_I}_t$ are fixed by $\varphi_s$
and the plans through $L$ too. Without loss of generality we can assume that $L=(x_0=x_1=0)$. Then
$H_t=( \alpha_t x_0 + \beta_t x_1 =0 )$ with $\alpha_t,\beta_t\in \C$ and
$$\varphi_t[x_0,x_1,x_2,x_3]=[x_0 \cdot \ell_t, x_1 \cdot \ell_t, x_2 \cdot \ell'_t, x_3\cdot \ell'_t]$$
with $\ell_t,\ell'_t\in A_1(x_0,x_1,x_2,x_3)$. Moreover $\ind (\varphi_t)=(x_0=x_1=\ell'_t=0)\cup (x_2=x_3=\ell_t=0)$.
If $x_0,x_1,\ell'_t$ are linearly independent their intersection is a point which does not belong to
$(x_2=x_3=\ell_t=0)$. We conclude that
$$\ell'_t=a_t x_0+b_t x_1; \qquad \ell_t=c_t x_2+d_t x_3.$$
Therefore this would imply that $\ind(\varphi_t)$ are two lines in $\pp^3$ that do not intersect, which is a contradiction.

\item Let now see that there are no quadratic flows $\varphi_t$ such that $C_I$ is a fix line, $C_I\subset H_t$ for every $t$, 
$H_t$ and ${P_I}_t$ are mobile, $\varphi_s ({P_I}_t)={P_I}_t$ and $\varphi_s(H_t)=H_t$.

We can assume that $\varphi_t\in \gen{/\!/}\cup \lin$ and that ${P_I}_t\notin C_I$ for any $t$, otherwise we are in the previous case. 
The hyperplane $S_t=C_I \vee {P_I}_t$ is contracted by $\varphi_t$ into $C=C_I$. Moreover, as 
$\varphi_s({P_I}_t)={P_I}_t$ and $\varphi_s(S_t)$ must be a hyperplane through $C_I$ (by proposition \ref{cor:basicbir223} 
\ref{prop:4xi}) we conclude that $\varphi_s(S_t)=S_t$ for every $t,s$. As 
$${S_I}_t=\varphi_s({S_I}_t)=\varphi_s \circ \varphi_t (C_I)= \varphi_t \circ \varphi_s(C_I)=\varphi_t({S_I}_s)={S_I}_s$$
we conclude that the hyperplanes $S$ and $S_I$ are fix, which contradicts $\varphi_s(S_t)=C_I$.
\end{itemize}
\item $\varphi_s({P_I}_t)\subset \ind (\varphi_t) \backslash {P_I}_t$. In particular, $\varphi_t$ is generic. 
Then $\varphi_t(\varphi_s({P_I}_t))={P_I}_t\vee \varphi_s({P_I}_t)$. Therefore, ${H_I}_t$ is contracted to the line 
${P_I}_t \vee \varphi_s({P_I}_t)$, which implies that $\varphi_t \in \gen{/\!/}\cup \lin$ and $H_t=S_s$ for every $t,s$. Again we have 
reached a contradiction for ${P_I}_t\in S_t=H_t\not\ni {P_I}_t$. 
\end{enumerate}
\end{proof}

\begin{cor}
Let $\varphi_t$ be a quadratic flow. If $H_t$ is mobile then ${P_I}_t$ is fix.
\end{cor}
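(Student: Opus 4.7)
The corollary is essentially the contrapositive of Lemma \ref{lem:PmobHfix} combined with Remark \ref{rem:fixmb}.

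My plan is as follows. First, I would invoke Remark \ref{rem:fixmb} \textbf{ii)}, which establishes the equivalence that $H_t$ is fix if and only if ${H_I}_t$ is fix. Since $H_t$ is assumed mobile, this forces ${H_I}_t$ to be mobile as well.

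Next, I would apply the contrapositive of Lemma \ref{lem:PmobHfix}. The lemma asserts that mobility of ${P_I}_t$ (the point blown up by $\varphi_t$ into ${H_I}_t$) implies that ${H_I}_t$ is fix. Contrapositively, if ${H_I}_t$ is mobile, then ${P_I}_t$ cannot be mobile, i.e., ${P_I}_t$ is fix. Combining this with the previous step completes the argument.

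There is no genuine obstacle here, since the heavy lifting has already been done in Lemma \ref{lem:PmobHfix}, whose proof carefully ruled out every possibility (generic and non-generic cases, the situations ${P_I}_s\in H_t$ and ${C_I}_s\subset H_t$) under which the mobility of both ${P_I}_t$ and ${H_I}_t$ could coexist. The corollary is simply the convenient reformulation of that lemma in terms of the hyperplane $H_t$ contracted by $\varphi_t$ rather than the blown-up image ${H_I}_t$, using the duality between $\varphi_t$ and $\varphi_{-t}$ recorded in Proposition \ref{cor:basicbir223} \textbf{(iv)}.
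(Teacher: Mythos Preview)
Your argument is correct and is exactly the intended derivation: the paper states this as an immediate corollary of Lemma~\ref{lem:PmobHfix}, and your use of Remark~\ref{rem:fixmb}~\textbf{ii)} to pass from $H_t$ mobile to ${H_I}_t$ mobile, followed by the contrapositive of the lemma, is precisely the right way to unpack it.
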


\begin{cor}
Let $\varphi_t$ be a quadratic flow. Then either $H_t$ or ${P_I}_t$ are fix.
\end{cor}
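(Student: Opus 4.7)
The statement is essentially a contrapositive restatement of the immediately preceding corollary, so the argument is a one-line deduction once that corollary is in hand.

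The plan is as follows. Suppose, for the sake of the dichotomy, that $H_t$ is not fix, i.e., that $H_t$ is mobile. Then the previous corollary (which itself follows from Lemma \ref{lem:PmobHfix} via Remark \ref{rem:fixmb}, using that $H_t$ is fix iff ${H_I}_t$ is fix since $H_t = {H_I}_{-t}$) gives immediately that ${P_I}_t$ is fix. Hence in either case at least one of the two families $H_t$, ${P_I}_t$ is fix, which is the required conclusion.

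The only thing to note is that this is a genuine dichotomy and not an exclusive ``or'': both $H_t$ and ${P_I}_t$ can simultaneously be fix (indeed, this will happen in several of the cases of the Main Theorem, e.g.\ in cases \textbf{c)} and \textbf{d)} when the mobility options fall on the side of $C_I$ rather than $P_I$). There is no obstacle; all the real work has already been carried out in Lemma \ref{lem:PmobHfix}, whose three-case analysis (the image $\varphi_s({P_I}_t)$ being a point with ${H_I}_t = H_s$, being fixed by $\varphi_s$, or lying in $\ind(\varphi_t)\setminus\{{P_I}_t\}$) rules out the only scenario in which both ${P_I}_t$ and ${H_I}_t$ could be mobile simultaneously.
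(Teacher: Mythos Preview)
Your proposal is correct and matches the paper's approach: the corollary is stated without proof in the paper, being an immediate logical consequence of the preceding corollary (if $H_t$ is mobile then ${P_I}_t$ is fix), exactly as you observe. Your additional remarks about non-exclusivity and the underlying three-case analysis in Lemma~\ref{lem:PmobHfix} are accurate supplementary context.
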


\begin{lem}
Let $\varphi_t$ be a quadratic flow in $\gen{\cdot}\cup \lin$. If ${C_I}_t$ is fix then $\mathrm{rank}\, C_I\leq 1$.
\end{lem}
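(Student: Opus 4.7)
The plan is to argue by contradiction: suppose $\mathrm{rank}\, C_I \geq 2$. Then the plane $H$ containing $C_I$ is uniquely determined by $C_I$, so $H$ is fixed, and since $\varphi_t \in \gen{\cdot}$ gives $H = H_I$, the hyperplane $H_I$ is fixed too. I would then fix coordinates so that $H = (x_3 = 0)$ and $C_I = (x_3 = q = 0)$ with $q \in A_2(x_0, x_1, x_2)$ a fixed form of rank $\geq 2$. By Proposition \ref{cor:basicbir223}(i), and since $C_I$ being fixed determines $q$ up to a scalar, each member of the flow takes the form
$$\varphi_t = [\ell_0^t x_3,\; \ell_1^t x_3,\; \ell_2^t x_3,\; \mu_t q + \ell_3^t x_3]$$
with $\ell_i^t \in A_1$, $\mu_t \in \C$, and $\mu_t \neq 0$ for generic $t$ (else $\varphi_t \in \lin$).

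The first step is to show that ${P_I}_t$ must be fixed. Writing $\ell_i^s = \sum_j a_{ij}^s x_j$, a direct computation of $\varphi_s \circ \varphi_t$ gives
$$(\varphi_s \circ \varphi_t)_i \;=\; \bigl(x_3 M_i^{s,t} + a_{i3}^s \mu_t q\bigr)\bigl(\mu_t q + \ell_3^t x_3\bigr),\qquad i = 0, 1, 2,$$
for certain linear forms $M_i^{s,t}$. The expansion contains a term $a_{i3}^s \mu_t^2 q^2$ with no $x_3$ factor, whereas the flow identity forces $(\varphi_s \circ \varphi_t)_i = \lambda \cdot \ell_i^{s+t} x_3$, so every term must be divisible by $x_3$. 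Since $q$ is independent of $x_3$, this forces $a_{i3}^s \mu_t^2 = 0$, and hence $a_{i3}^s = 0$ for all $s$ by analyticity in $s$. Thus $\ell_i^s \in A_1(x_0, x_1, x_2)$ for $i = 0, 1, 2$; their common zero is $[0, 0, 0, 1]$, which must belong to $\ind(\varphi_s)$, forcing $a_{33}^s = 0$ as well. So $P_I = [0, 0, 0, 1]$ is fixed throughout the flow.

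With $P_I$, $C_I$, and $H$ all fixed, set $\tilde M_i^{s,t} = \sum_{j=0}^2 a_{ij}^s \ell_j^t$, $\tilde q^{\,t} = q(\ell_0^t, \ell_1^t, \ell_2^t)$ and $y_3 = \mu_t q + \ell_3^t x_3$; the composition simplifies to
$$(\varphi_s \circ \varphi_t)_i = x_3\, \tilde M_i^{s,t}\, y_3,\; i = 0, 1, 2,\qquad (\varphi_s \circ \varphi_t)_3 = x_3\bigl(\mu_s x_3\, \tilde q^{\,t} + \tilde M_3^{s,t}\, y_3\bigr).$$
After dividing by the obvious $x_3$ factor, the GCD of the first three components is generically $y_3$, and for the flow identity $y_3$ must also divide $\mu_s x_3 \tilde q^{\,t}$. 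Since $x_3 \nmid y_3$ (because $q$ has no $x_3$ and $\mu_t \neq 0$, regardless of whether $y_3$ is irreducible), a degree comparison forces $y_3 = c(t)\, \tilde q^{\,t}$; comparing $x_3$-content then yields $\ell_3^t = 0$ and $\tilde q^{\,t} = c'(t)\, q$ for some $c'(t) \in \C^\ast$, the option $\tilde q^{\,t}=0$ being excluded because $\ell_0^t, \ell_1^t, \ell_2^t$ are linearly independent. Then $y_3 = \mu_t q$, so $q$ divides all four components, the full common factor of $\varphi_s \circ \varphi_t$ is at least $x_3 q$ (degree $3$), and the reduced composition has degree at most $1$ — contradicting the quadratic nature of $\varphi_{s+t}$. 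Hence $\mathrm{rank}\, C_I \leq 1$.

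The main subtlety is the common-factor bookkeeping in the last step: one must extract both the $x_3$ factor (forced by the $a_{i3}^s \mu_t^2 q^2$ term in Step 1) and then the $q$ factor (forced by $\ell_3^t=0$), while ensuring the divisibility $y_3 \mid x_3\, \tilde q^{\,t}$ is analysed without assuming irreducibility of $y_3$; the observation $x_3\nmid y_3$ rules out the otherwise annoying factorisation $y_3 = x_3 \cdot \ell$.
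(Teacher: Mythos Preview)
Your initial form for $\varphi_t$ is not justified, and this is where the argument breaks. Proposition~\ref{cor:basicbir223}\textbf{(i)} puts a \emph{single} map into the shape $[\ell_0 x_3,\ell_1 x_3,\ell_2 x_3, q+\ell_3 x_3]$ only after a change of coordinates on both the source \emph{and} the target; in a flow you have one fixed coordinate system, so you cannot invoke a target change (which would moreover have to depend on~$t$). What you do obtain from $C_I$ fixed of rank~$\geq 2$ (hence $H=(x_3=0)$ fixed) is only that every component lies in $\C\, q + x_3\,A_1$, i.e.
\[
\varphi_{i,t} \;=\; \alpha_{i,t}\, q \;+\; \ell_{i,t}\, x_3,\qquad i=0,1,2,3,
\]
with \emph{all} $\alpha_{i,t}$ a priori possibly nonzero. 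Your special form amounts to $\alpha_{0,t}=\alpha_{1,t}=\alpha_{2,t}=0$, equivalently $\varphi_t(H)=[0,0,0,1]$ for every $t$, i.e.\ $P_t$ (and hence ${P_I}_t=P_{-t}$) already fixed. So ``Step~1'' is circular: the divisibility argument for the $q^2$ term uses that $(\varphi_s\circ\varphi_t)_i$ equals $\lambda\cdot \ell_i^{s+t} x_3$, which again relies on $\varphi_{s+t}$ having the special form.

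The paper's route avoids this entirely by working with the general form $\varphi_{i,t}=\alpha_{i,t} q+\ell_{i,t}\ell$ and exploiting only the constraint $\varphi_0=\mathrm{Id}$: at $t=0$ the four components $\alpha_{i,0}q+\ell_{i,0}\ell$ must share a common linear factor, and a short case split (either all $\alpha_{i,0}=0$, forcing the $\ell_{i,t}$ linearly independent for small $t$ and hence $\varphi_t$ non-generic; or some $\alpha_{i,0}\neq 0$, forcing $\ell\mid q$ in contradiction to $(\ell,q)=1$) yields the result without ever fixing $P_t$ or analysing compositions. Your composition analysis could conceivably be repaired starting from the general form, but as written the argument assumes its conclusion.
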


\begin{proof}
Let $\varphi_t=[{\varphi_0}_t,{\varphi_1}_t,{\varphi_2}_t,{\varphi_3}_t]$, if ${C_I}$ has rank greater or equal than 2
one has ${C_I}=(q=\ell=0)$ where $q$ is a homogeneous quadratic polynomial of rank lower or equal than 3, $\ell\in A_1$, 
$(\ell, q)=1$ and ${\varphi_i}_t={\alpha_i}_t q +{\ell_i}_t \ell$ where ${\alpha_i}_t\in \C$, ${\ell_i}_t\in A_1$. 
As $\varphi_0=Id$ the quadratic polynomials ${\alpha_i}_0 q + {\ell_i}_0 \ell$ must
have a common factor. There are two possibilities. If ${\alpha_i}_0=0$ for $i=0,1,2,3$ then for $t$ small enough ${\ell_i}_t$ 
are linearly independent, which implies that $\ind (\varphi)=(\ell=q=0)$, i.e., $\varphi$ is not generic. Otherwise, we use that
$${\alpha_j}_t {\varphi_i}_t - {\alpha_i}_t {\varphi_j}_t = ({\alpha_j}_t {\ell_i}_t - {\alpha_i}_t {\ell_j}_t )\cdot \ell$$
and not all the coefficients ${\alpha_j}_t {\ell_i}_t - {\alpha_i}_t {\ell_j}_t$ can be zero (recall that
$\dim\langle {\ell_i}_t \rangle =3$). As $\varphi_0=\mathrm{Id}$ we deduce that $q=\ell \cdot \ell'$ with
$\ell'\in A_1$. Indeed,
$$({\alpha_j}_0 x_i - {\alpha_i}_0 x_j) \cdot \alpha={\alpha_j}_0 {\varphi_i}_0 - {\alpha_i}_0 {\varphi_j}_0 = ({\alpha_j}_0 {\ell_i}_0 - {\alpha_i}_0 {\ell_j}_0 )\cdot \ell$$
with $\alpha\in A_1$ such that $\varphi_0=(\alpha x_0, \alpha x_1, \alpha x_2, \alpha x_3)$. It follows that
$\alpha=a \cdot \ell$ with $a\in \C^{\ast}$. Therefore, $\ell$ must divide $q$, i.e. $q=\ell\cdot \ell'$ with $\ell'\in A_1$, which
is a contradiction.
\end{proof}

%\begin{rem}
%There is also a geometrical argument which shows that $\varphi_1,\varphi_2\in \gen{O}$ with common conic $C_I$ can not commute unless
%${P_I}_1\in S_2$ (equiv. ${P_I}_2\in S_1$). Indeed, if ${P_I}_1\not\in S_2$ then $S_1\cap S_2=C_I \cup \gamma$, where
%$\gamma$ is a plane conic of rank 3. As
%$$\varphi_1\circ \varphi_2(\gamma)=\varphi_1(C_I)=S_1$$
%and $\varphi_1\circ \varphi_2 = \varphi_2 \circ \varphi_1$ we must have $S_1=S_2$, which implies ${P_I}_1={P_I}_2$.
%\end{rem}

\begin{lem}
Let $\varphi_t$ be a quadratic flow in $\gen{\cdot}\cup \lin$. Then either $H_t$ or ${P_I}_t$ are mobile.
\end{lem}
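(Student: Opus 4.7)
The plan is to argue by contradiction. Suppose both families $\{H_t\}$ and $\{{P_I}_t\}$ are fix, and write $H_t=H$ and ${P_I}_t=P_I$. By Remark~\ref{rem:fixmb}~\textbf{i)} we then also have $P_t=P=P_I$, so for every $t$ with $\varphi_t\in\gen{\cdot}$ the map $\varphi_t$ contracts $H$ to the point $P_I$. Choose coordinates on $\pp^3$ (once and for all, independently of $t$) such that $H=(x_3=0)$ and $P_I=[0,0,0,1]$, and write $\varphi_t=[\varphi_{0,t},\varphi_{1,t},\varphi_{2,t},\varphi_{3,t}]$ with each $\varphi_{i,t}\in A_2$ depending holomorphically on $t$.

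Next I would translate the two hypotheses into polynomial (hence closed) conditions on the coefficients of $\varphi_t$. The contraction $\varphi_t(H)=[0,0,0,1]$ forces $x_3\mid\varphi_{i,t}$ for $i=0,1,2$, so one may write
$$\varphi_{i,t}=\ell_{i,t}\,x_3\quad(i=0,1,2),\qquad \varphi_{3,t}=q_t+\ell_{3,t}\,x_3,$$
with $\ell_{i,t}\in A_1$ and $q_t\in A_2(x_0,x_1,x_2)$. The condition $P_I=[0,0,0,1]\in\ind(\varphi_t)$ then amounts to $\ell_{i,t}(0,0,0,1)=0$ for $i=0,1,2,3$, i.e.\ each $\ell_{i,t}$ in fact lies in $A_1(x_0,x_1,x_2)$.

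Both of these are closed conditions in $t$, so by holomorphic continuity they persist at $t=0$. But $\varphi_0=\mathrm{id}$ means $\varphi_{i,0}=\alpha\,x_i$ for some common nonzero $\alpha\in A_1$. Comparing the first three components, $\ell_{i,0}\,x_3=\alpha\,x_i$ forces $x_3\mid\alpha$, hence $\alpha=c\,x_3$ with $c\in\C^*$ and $\ell_{i,0}=c\,x_i$ for $i=0,1,2$. Comparing the fourth component, $q_0+\ell_{3,0}\,x_3=c\,x_3^2$; since both $q_0$ and $\ell_{3,0}$ are independent of $x_3$, this forces $q_0=0$ and $\ell_{3,0}\,x_3=c\,x_3^2$, which is incompatible with $\ell_{3,0}\in A_1(x_0,x_1,x_2)$ and $c\neq 0$.

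The only delicate step is justifying the passage to the limit $t=0$: the geometric conditions ``$\varphi_t\in\gen{\cdot}$'' and ``$P_I\in\ind(\varphi_t)$'' both degenerate at $t=0$, but the coefficient equations extracted above ($x_3\mid\varphi_{i,t}$ for $i=0,1,2$ and $\ell_{i,t}(P_I)=0$) are genuinely closed on the coefficients and hence survive at $t=0$. Everything else is a short explicit computation, and the contradiction is forced by the limiting identity $\varphi_0=\mathrm{id}$ being incompatible with the constrained shape of the $\varphi_{i,t}$.
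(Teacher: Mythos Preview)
Your argument is correct and follows essentially the same route as the paper's proof: assume both $H$ and $P_I$ are fix, normalize coordinates, read off the constrained shape of $\varphi_t$ from Proposition~\ref{cor:basicbir223}, and check that this shape is incompatible with $\varphi_0=\mathrm{id}$. The paper places $P_I=[1,0,0,0]$ and $H=(x_0=0)$ and phrases the contradiction via the induced map on $[x_1,x_2,x_3]$, whereas you place $P_I=[0,0,0,1]$ and $H=(x_3=0)$ and extract the contradiction directly from the coefficient of $x_3^2$ in the last component; these are the same computation up to relabelling. Your version is more explicit about why the coefficient constraints persist at $t=0$, a point the paper leaves implicit.
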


\begin{proof}
We assume that $P_I=[1,0,0,0]$ is fix and that $H_t=(x_0=0)$ is fix. Using the expressions of the previous section we can
assume that
$$\varphi_t[x_0,x_1,x_2,x_3]=
\bigg[ \frac{{g_2}_t(x_1,x_2,x_3)+x_0 {g_1}_t(x_1,x_2,x_3)}{x_0},
{\ell_1}_t, {\ell_2}_t,{\ell_3}_t \bigg],$$
where $[x_1,x_2,x_3]\mapsto [{\ell_1}_t, {\ell_2}_t,{\ell_3}_t]$ is a plane Cremona transformation, so clearly it
is impossible to have $\varphi_0=\mathrm{Id}$.
\end{proof}

\begin{lem}
Let $\varphi_t$ be a quadratic flow. If ${C_I}_t$ is a mobile plane conic blown up by $\varphi_t$ into a surface ${S_I}_t \neq {H_I}_t$, then ${S_I}_t$ is fix.
\end{lem}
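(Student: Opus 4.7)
The plan is to compare two interpretations of the flow relation $\varphi_{s+t}=\varphi_s\circ\varphi_t$ at a generic point $P\in {C_I}_{s+t}$, for small $s\neq 0$. A preliminary ingredient I will need is that ${C_I}_t\subset H_t$ by Proposition \ref{cor:basicbir223}\textbf{(i)} and that a plane conic lies in a unique plane; consequently the mobility of ${C_I}_t$ forces $H_t$ to be mobile as well.

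Since ${C_I}_t$ is mobile, ${C_I}_{s+t}\neq {C_I}_t$ for $s\neq 0$, and these two conics share only finitely many points, so $\varphi_t$ is regular at a generic $P\in {C_I}_{s+t}$. Set $Q:=\varphi_t(P)$. Because $\varphi_{s+t}$ blows up $P$ onto the surface ${S_I}_{s+t}$, the rational map $\varphi_s\circ\varphi_t$ must also fail to be regular at $P$, which forces $Q\in\ind(\varphi_s)={C_I}_s\cup\{{P_I}_s\}$; i.e.\ $\varphi_t({C_I}_{s+t})\subset {C_I}_s\cup\{{P_I}_s\}$. I would then rule out $\varphi_t({C_I}_{s+t})=\{{P_I}_s\}$: that would mean $\varphi_t$ contracts the curve ${C_I}_{s+t}$ to a point, and by Lemma \ref{lem:stop} together with Proposition \ref{cor:basicbir223}\textbf{(ii)} the only surface contracted by $\varphi_t$ to a point is $H_t$, so we would obtain ${C_I}_{s+t}\subset H_t$ and hence $H_{s+t}=H_t$, contradicting the mobility of $H_t$. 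Therefore $\varphi_t({C_I}_{s+t})={C_I}_s$.

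With $Q$ varying over a dense subset of ${C_I}_s$ as $P$ varies, $\varphi_s$ blows up $Q$ onto ${S_I}_s$, so $\varphi_s\circ\varphi_t$ blows up $P$ onto ${S_I}_s$. Matching with the a priori description of the blow-up of $\varphi_{s+t}$ at $P$ onto ${S_I}_{s+t}$ gives ${S_I}_{s+t}={S_I}_s$ for every admissible pair $(s,t)$ with $s\neq 0$. Hence $u\mapsto {S_I}_u$ is locally constant on its germ of definition, and therefore constant; that is, ${S_I}_t$ is fix.

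The point I expect to be the main obstacle is making the blow-up identification fully rigorous: one must argue that the ``image'' ${S_I}_{s+t}$ of $P\in{C_I}_{s+t}$ coincides, as a set, with the image ${S_I}_s$ obtained by resolving $\varphi_s\circ\varphi_t$ at $P$ via $Q\in{C_I}_s$. I would carry this out by testing against holomorphic arcs $\gamma(\tau)\to P$ transverse to ${C_I}_{s+t}$: the limits $\lim_{\tau\to 0}\varphi_{s+t}(\gamma(\tau))$ sweep a dense subset of ${S_I}_{s+t}$, while the parallel limits $\lim_{\tau\to 0}\varphi_s(\varphi_t(\gamma(\tau)))$ sweep a dense subset of ${S_I}_s$ through the blow-up of $\varphi_s$ at $Q$, and the equality of the two rational maps forces the two sets to coincide.
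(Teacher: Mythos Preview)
Your overall strategy—forcing $\varphi_t(P)\in\ind(\varphi_s)$ via the flow identity and then matching the blown-up surfaces through an arc argument—is reasonable and can be made to work, but the argument as written has a real gap in the step where you rule out $\varphi_t({C_I}_{s+t})=\{{P_I}_s\}$.

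You invoke Lemma~\ref{lem:stop} and Proposition~\ref{cor:basicbir223}\textbf{(ii)} to conclude that a curve contracted by $\varphi_t$ to a point must lie in $H_t$. But those statements concern \emph{surfaces} contracted to a point, not curves. In fact $\varphi_t$ does contract curves to points outside of $H_t$: the fibres of the contraction $S_t\to C_t$ (or $\Pi_t\to C_t$ in the $\tang{\cdot}$ cases) are lines through ${P_I}_t$ lying in $S_t$, each collapsed to a point of $C_t$. So you have not excluded the possibility that (the support of) ${C_I}_{s+t}$ is such a fibre. Relatedly, your preliminary claim that mobility of ${C_I}_t$ forces mobility of $H_t$ uses that ``a plane conic lies in a unique plane''; this fails set-theoretically in the rank-$1$ case ($\gen{/\!/}$), where ${C_I}_t$ is a double line and many planes contain its support. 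Both issues bite precisely in the $\gen{/\!/}$ situation, which is allowed by the hypothesis ${S_I}_t\neq {H_I}_t$.

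The paper avoids this difficulty by arguing on the \emph{surface} side rather than on the curve side. From
\[
\varphi_t\big(\varphi_s({C_I}_t)\big)=\varphi_s\big(\varphi_t({C_I}_t)\big)=\varphi_s({S_I}_t),
\]
one observes that generically the left-hand side is a curve (since ${C_I}_t$ is mobile, $\varphi_s({C_I}_t)$ is a curve not equal to ${C_I}_t$, hence not blown up by $\varphi_t$). Therefore $\varphi_s({S_I}_t)$ is a curve, and the only surface $\varphi_s$ contracts to a curve is $S_s={S_I}_{-s}$; thus ${S_I}_t={S_I}_{-s}$ for generic $t,s$ and ${S_I}$ is fix. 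This sidesteps any discussion of which \emph{curves} get contracted to points. If you want to repair your approach, one clean fix is to note that if $\varphi_t({C_I}_{s+t})=\{{P_I}_s\}$ then your own arc argument would give ${S_I}_{s+t}={H_I}_s$ (since ${P_I}_s$ is blown up to ${H_I}_s$), which for varying $t$ forces ${S_I}$ to be constant and equal to ${H_I}_s$, contradicting ${S_I}_s\neq{H_I}_s$.
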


\begin{proof}
We have
$$\varphi_t( \varphi_s ({C_I}_t))=\varphi_s (\varphi_t ({C_I}_t))=\varphi_s ({S_I}_t).$$
If $C_t$ is mobile we can assume that $\varphi_s({C_I}_t)$ is a mobile curve. There are two
possibilities:
\begin{enumerate}[\bf 1)]
\item $\varphi_s ({C_I}_t)={C_I}_t$ and then either ${C_I}_t$ is a curve blown up by $\varphi_s$ into
a surface $\varphi_s ({S_I}_t)$, which implies ${C_I}_t={C_I}_s$ (a contradiction for we are assuming
that ${C_I}_t$ is mobile) or ${S_I}_t=S_s={S_I}_{-s}$ and ${S_I}_t$ and $S_t$ are fix.
\item $\varphi_t( \varphi_s ({C_I}_t))$ is a curve and then ${S_I}_t=S_s={S_I}_{-s}$ and ${S_I}_t$ and $S_t$ are fix.
\end{enumerate}
\end{proof}

\begin{cor}
Let $\varphi_t$ be a quadratic flow. If there exists a mobile family of surfaces $S_t$ contracted to
a curve then ${C_I}_t$ is fix.
\end{cor}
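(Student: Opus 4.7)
My plan is to deduce this corollary from the preceding lemma by a short contrapositive argument, using Remark \ref{rem:fixmb} to translate between the direct and inverse families.

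First, I would observe that the hypothesis ``$S_t$ is a mobile family of surfaces contracted to a curve'' forces in particular $S_t \neq H_t$ (generically in $t$), because by Lemma \ref{lem:stop} the hyperplane $H_t$ is the \emph{unique} surface contracted by $\varphi_t$ to a \emph{point}, whereas $S_t$ is contracted to a curve. Therefore we are in one of the cases $\gen{\cdot}\cup\tang{O}\cup\tang{\times}$, and the associated ``$S_I$'' family exists: the conic ${C_I}_t$ is blown up by $\varphi_t$ to a surface ${S_I}_t$ with ${S_I}_t \neq {H_I}_t$ (again generically in $t$), in view of the equivalence between the existence of $S_t \neq H_t$ and the existence of ${S_I}_t \neq {H_I}_t$ recalled at the beginning of the section.

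Next I would invoke Remark \ref{rem:fixmb}\,\textbf{iv)}: $S_t$ is fix if and only if ${S_I}_t$ is fix. Since by hypothesis $S_t$ is mobile, this gives that ${S_I}_t$ is also mobile.

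Finally, I would apply the contrapositive of the preceding lemma (the one immediately above the corollary), which asserts that if ${C_I}_t$ is a mobile plane conic blown up by $\varphi_t$ into a surface ${S_I}_t \neq {H_I}_t$, then ${S_I}_t$ is fix. Suppose for contradiction that ${C_I}_t$ is mobile. Combined with ${S_I}_t \neq {H_I}_t$ already established, the lemma would force ${S_I}_t$ to be fix, contradicting the previous step. Hence ${C_I}_t$ is fix, which is what we wanted. There is no real obstacle here; the only subtlety to make explicit is that mobility/fixedness on the two sides ($S_t$ vs.\ ${S_I}_t$, and $C_t$ vs.\ ${C_I}_t$) match because $\varphi_t^{-1} = \varphi_{-t}$, which is precisely the content of Remark \ref{rem:fixmb}.
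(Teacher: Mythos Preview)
Your proof is correct and follows exactly the intended route: the corollary is stated in the paper without proof precisely because it is the contrapositive of the preceding lemma, combined with Remark~\ref{rem:fixmb}\,\textbf{iv)} to pass from $S_t$ mobile to ${S_I}_t$ mobile. Your care in checking that $S_t\neq H_t$ (and hence ${S_I}_t\neq {H_I}_t$) so that the lemma applies is appropriate and matches the discussion at the beginning of the section.
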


\begin{cor}
Let $\varphi_t$ be a quadratic flow in $\gen{\cdot}\cup \lin$. Then either $S_t$ or $H_t$ are fix.
\end{cor}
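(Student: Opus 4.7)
The plan is to argue the contrapositive: if $S_t$ is mobile then $H_t$ is fix. So assume that $S_t$ is mobile. The previous corollary (mobile $S_t$ forces ${C_I}_t$ to be fix) gives that ${C_I}_t$ is fix. Since $\varphi_t\in \gen{\cdot}\cup \lin$, the lemma earlier in this section then forces $\mathrm{rank}\, C_I\leq 1$, so $\varphi_t\in \gen{/\!/}\cup \lin$ and $C_I$ is a fixed line. This is the setup that I will exploit, bringing in next the dichotomy between the loci $P_I$ and $H$.

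Now appeal to the earlier corollary: either $H_t$ or ${P_I}_t$ is fix. If $H_t$ is fix we are done. Otherwise ${P_I}_t$ is fix and $H_t$ is mobile, and I claim we reach a contradiction with the mobility of $S_t$. The key geometric input is that for any $\varphi\in \gen{/\!/}$ the plane $S$ contains $P_I$. This can be read off the Pan--Ronga--Vust normal form $\varphi=[x_0x_3,x_1x_3,x_2x_3,x_2^2]$, where $S=(x_2=0)$ and $P_I=[0,0,0,1]$, so $P_I\in S$; equivalently, $S=\mathrm{span}(C_I,P_I)$, which is well defined because $P_I\notin C_I$ in the generic case. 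Since both $C_I$ and $P_I$ are fix, this unique plane $S_t$ must also be fix, contradicting the mobility of $S_t$.

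The argument is essentially a bookkeeping combination of the preceding three lemmas and corollaries; the only piece of new geometric content is the intrinsic incidence ${P_I}_t\in S_t$ for maps in $\gen{/\!/}$, which is immediate from the normal form but deserves to be stated explicitly as it is what rules out the case ``$H_t$ mobile, ${P_I}_t$ fix, $S_t$ mobile.''
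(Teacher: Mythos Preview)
Your argument is correct and follows essentially the same route as the paper: assume $S_t$ mobile, get $C_I$ fix, then use that $H_t$ mobile forces $P_I$ fix, and conclude that $S_t$ is determined by the fixed pair $(C_I,P_I)$, a contradiction. The only difference is that you first invoke the rank lemma to land in $\gen{/\!/}$ so that $S_t=\mathrm{span}(C_I,P_I)$ is a plane, whereas the paper skips this step and observes directly (valid in all three generic types) that $S_t$ is the cone $P_I\vee C_I$ over the conic $C_I$ with vertex $P_I$; your detour is harmless but not needed.
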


\begin{proof}
We have already noted that if both $H_t$ and $S_t$ are mobile then $P_I$ and $C_I$ are fix, and since $P_I \vee C_I =S_t$ we
conclude that $S_t$ is fix, which contradicts the hypothesis.
\end{proof}

\begin{cor}\label{cor:tangox}
Let $\varphi_t$ be a quadratic flow in $\tang{\times}\cup \tang{O}\cup \lin$. Then either $S_t$ or $H_t$ are fix.
\end{cor}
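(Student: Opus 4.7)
The plan is to imitate the argument of the immediately preceding corollary (the analogous statement in the generic case $\gen{\cdot}\cup \lin$), replacing the identity $S_t = P_I \vee C_I$ valid there by a geometric fact specific to the types $\tang{O}$ and $\tang{\times}$: the plane conic $C_{I,t}$ has rank at least $2$, so it determines its plane uniquely, and that plane is precisely $H_t$.

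More concretely, I would argue by contradiction. Assume $S_t$ is mobile (otherwise there is nothing to prove). By the corollary proved just above, a mobile family of surfaces contracted to a curve forces the conic $C_{I,t}$ to be fix, so we may write $C_{I,t} = C_I$ for all $t$. Now for $\varphi_t\in \tang{O}$ the conic $C_I$ has rank $3$ (a smooth plane conic) and for $\varphi_t\in \tang{\times}$ it has rank $2$ (two distinct lines meeting at $P_I$); in either case $C_I$ is a plane curve that is not contained in any line, so it determines a unique plane in $\pp^3$.

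Since ${C_I}_t \subset H_t$ by construction for every $t$, and $C_I$ is fix, the plane $H_t$ must coincide with the unique plane containing $C_I$ for every $t$. Hence $H_t$ is fix, proving the claim. The one minor point to verify is that when $\varphi_t \in \tang{O}\cup\tang{\times}\cup \lin$ is a flow with $\varphi_t \not\in \lin$ generically, the type of $\varphi_t$ is constant (either $\tang{O}$ for generic $t$ or $\tang{\times}$ for generic $t$), which is clear from the lower semicontinuity of the rank of $C_{I,t}$; in both possible cases the rank is at least $2$, which is all we need. There is no real obstacle here: the whole argument rests on the trivial remark that a plane conic of rank $\geq 2$ lies in a unique plane, together with the previously established corollary on mobility of $S_t$.
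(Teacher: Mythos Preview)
Your proof is correct and is essentially the same argument as the paper's. The paper's one-line proof (``${H_I}_t$ mobile implies ${C_I}_t$ mobile, whereas ${S_I}_t$ mobile implies $C_I$ fix'') is just the contrapositive formulation of what you wrote: the first implication unpacks, via Remark~\ref{rem:fixmb} and the fact that a rank~$\geq 2$ plane conic determines its plane, to exactly your statement that $C_I$ fix forces $H_t$ fix, and the second implication is the previously established corollary you cite.
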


\begin{proof}
Note that ${H_I}_t$ mobile implies ${C_I}_t$ mobile, whereas ${S_I}_t$ mobile implies $C_I$ fix.
\end{proof}

\begin{lem}
Let $\varphi_t$ be a quadratic flow in $\gen{\cdot}\cup \lin$. Then either $S_t$ or $H_t$ are mobile.
\end{lem}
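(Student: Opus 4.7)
The plan is to argue by contradiction: suppose both $H_t$ and $S_t$ are fix. Choose source coordinates making $H = (x_3 = 0)$, and use the $\mathrm{GL}(3)$-freedom preserving $H$ to normalize the fixed quadratic form $q \in A_2(x_0, x_1, x_2)$ defining $S$ to one of $x_0^2 - x_1 x_2$, $x_1 x_2$, $x_2^2$ according to $\mathrm{rank}\, C_I$. In every case, $q$ is divisible neither by $x_0$ nor by $x_3$. By Proposition \ref{cor:basicbir223}\,\textbf{(viii)}, $q \in M_{\varphi_t}$; moreover the 3-dimensional subspace $M_{\varphi_t} \cap (x_3 \cdot A_1)$ of forms vanishing on $H$ (the pull-backs of hyperplanes through $P_t$) is a complement to $\langle q \rangle$ in $M_{\varphi_t}$. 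Since $\varphi_t$ is generic, $x_3^2 \notin M_{\varphi_t}$, whence there exist $\ell_{0,t}, \ell_{1,t}, \ell_{2,t} \in A_1$ with $\{\ell_{0,t}, \ell_{1,t}, \ell_{2,t}, x_3\}$ a basis of $A_1$, such that
\[
\varphi_t = N_t \cdot \bigl(x_3 \ell_{0,t},\; x_3 \ell_{1,t},\; x_3 \ell_{2,t},\; q\bigr)^T
\]
for some $N_t \in \mathrm{GL}(4, \C)$.

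I then evaluate at $t = 0$: since $\varphi_0 = \mathrm{id}$ projectively, there is a common linear factor $\alpha \in A_1$ with $\varphi_{0,i} = \alpha\, x_i$ for $i = 0, 1, 2, 3$. Write $\alpha = a x_3 + \beta$ with $\beta \in A_1(x_0, x_1, x_2)$. For $i = 0$, the identity
\[
(a x_3 + \beta)\, x_0 \;=\; x_3 \sum_{j=0}^{2} (N_0)_{0,j}\, \ell_{j,0} \;+\; (N_0)_{0,3}\, q
\]
separates into an $x_3$-free part $\beta\, x_0 = (N_0)_{0,3}\, q$ lying in $A_2(x_0, x_1, x_2)$ and an $x_3$-divisible remainder. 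Since $x_0 \nmid q$, this forces $(N_0)_{0,3} = 0$ and $\beta = 0$; thus $\alpha = a x_3$. Substituting into the $i = 3$ identity, $x_3 \nmid q$ yields $(N_0)_{3,3} = 0$, and then $a\, x_3 = \sum_{j=0}^{2} (N_0)_{3,j}\, \ell_{j,0} \in V_0 := \mathrm{span}(\ell_{0,0}, \ell_{1,0}, \ell_{2,0})$. As $x_3 \notin V_0$ by the basis property, $a = 0$, hence $\alpha = 0$, contradicting $\varphi_0 = \mathrm{id}$.

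The main technical challenge is the careful handling of divisibility by $x_3$ in these quadratic identities at $t = 0$, which relies on two crucial features of the generic case: the non-divisibility of the standard $q$ by either $x_0$ or $x_3$ in each of the three types, and the relation $x_3 \notin V_0$ (equivalently $x_3^2 \notin M_{\varphi_0}$), which encodes the genericity of the flow and supplies the final contradiction.
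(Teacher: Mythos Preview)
There is a genuine gap at the step where you ``evaluate at $t=0$'' and invoke the basis property $x_3 \notin V_0$. Your representation
\[
\varphi_t = N_t \cdot \bigl(x_3 \ell_{0,t},\; x_3 \ell_{1,t},\; x_3 \ell_{2,t},\; q\bigr)^T, \qquad N_t \in \mathrm{GL}(4,\C), \quad \{\ell_{0,t},\ell_{1,t},\ell_{2,t},x_3\} \text{ a basis of } A_1,
\]
is justified only for those $t$ with $\varphi_t \in \gen{\cdot}$, since both $q \in M_{\varphi_t}$ and $x_3^2 \notin M_{\varphi_t}$ rest on genericity (Proposition~\ref{cor:basicbir223}\,\textbf{(viii)} and the proof of \textbf{(vi)}). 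But $\varphi_0 = \mathrm{id} \in \lin$ is \emph{not} generic. In fact, since $H$ is assumed fix and $H_t \to (\alpha=0)$ as $t\to 0$, one already knows $\alpha = c\,x_3$ with $c \neq 0$; hence $M_{\varphi_0} = x_3 \cdot A_1$, which contains $x_3^2$ and does \emph{not} contain $q$. Thus any holomorphic extension of your data to $t=0$ must have either $N_0$ singular (its last column forced to vanish) or $x_3 \in V_0$. Either way the implication ``$a\,x_3 \in V_0$ and $x_3 \notin V_0$, hence $a=0$'' is unsupported. Indeed your purported conclusion $\alpha = 0$ is false under the standing hypotheses, so the argument cannot be valid as written.

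The paper's proof avoids this trap by using the cruder decomposition $\varphi_{i,t} = \alpha_{i,t}\,q + \ell_{i,t}\,x_3$ with \emph{four} linear forms $\ell_{i,t}$ and no basis claim, and by placing the contradiction at nearby $t\neq 0$ rather than at $t=0$: from $\varphi_0 = \mathrm{id}$ one deduces that $\ell_{0,0},\ldots,\ell_{3,0}$ are linearly independent (an open condition), hence so are $\ell_{0,t},\ldots,\ell_{3,t}$ for small $t$; but then $q \notin M_{\varphi_t}$, contradicting genericity of $\varphi_t$ for $t\neq 0$. The key methodological difference is that the paper transports an open condition from $t=0$ to small $t\neq 0$ and contradicts genericity there, whereas you attempt to transport a consequence of genericity back to $t=0$, where it simply fails.
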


\begin{proof}
Assume now that both $H_t$ and $S_t$ are fix. Without loss of
generality we can choose coordinates such that $H=(x_3=0)$ and $S=(q(x_0,x_1,x_2)=0)$, where $q$ is an homogeneous polynomial
of degree 2. Therefore if $\varphi_t=({\varphi_0}_t,{\varphi_1}_t, {\varphi_2}_t,{\varphi_3}_t)$ we have
$${\varphi_i}_t={\alpha_i}_t q + {\ell_i}_t x_3 \qquad (\ast)$$
with ${\ell_i}_t\in A_1(x_0,x_1,x_2,x_3)$ such that $\dim \langle {\ell_i}_t \rangle_t =3$ for every $t$ such that $\varphi_t$ is
generic and ${\alpha_i}_t\in \C$. As $\varphi_0=Id$ we have that
$${\alpha_i}_0 q + {\ell_i}_0 x_3= \ell x_i$$
for $i=0,1,2,3$ and $\ell\in A_1$. If ${\alpha_i}_0=0$ for $i=0,1,2,3$ then $\ell$ must be a constant multiple of $x_3$ and
${\ell_i}_0$ must be linearly independent. Therefore for $t$ small enough ${\ell_i}_t$ would be linearly independent and $\varphi_t$ is not generic,
 which is a contradiction. We conclude that at least one ${\alpha_i}_0\neq 0$, but then $\ell$ would be a constant multiple of $x_3$ and it is 
easy to see that the condition $(\ast)$ would lead to contradiction.
\end{proof}

%%%%%%%%%%%%%%%%%%%%%%%%%%%%%%%%%%%%%%%%%%%%%%%%%%%%%%%%%%%%%%%%%%%%%%%%%%%%%%%%%%%%%%%%%%%%%%%%%%%%%%%%
%%%%%%%%%%%%%%%% TIPUS 1. GEN.
%%%%%%%%%%%%%%%%%%%%%%%%%%%%%%%%%%%%%%%%%%%%%%%%%%%%%%%%%%%%%%%%%%%%%%%%%%%%%%%%%%%%%%%%%%%%%%%%%%%%%%

\subsection{Generic flows}

\begin{thm}\label{thm:gen}
Let $\varphi_t$ be a quadratic flow in $\gen{\cdot}\cup \lin$. Then
\begin{enumerate}[\bf i)]
\item $H_t$ is mobile and $P_I$, $S$ and $C_I$ are fix.
\item $\varphi_t\in \gen{/\!/}\cup \lin$.
\item $\varphi_t$ preserves the family of hyperplanes through $C_I$, which contains $\{H_s\}$.
\item $\varphi_t$ preserves the family of hyperplanes through $P_I$, which does not contain the
family of hyperplanes through $C_I$.
\end{enumerate}
In particular, there are no quadratic flows in $\gen{O}\cup \gen{\times}\cup \lin$.
\end{thm}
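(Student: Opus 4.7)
The approach is to combine the four dichotomy lemmas of this section with the flow identity $\varphi_{s+t}=\varphi_s\circ\varphi_t$, using Proposition~\ref{cor:basicbir223}\ref{prop:4xi} (the criterion for the image of a hyperplane to remain a hyperplane) as the main geometric tool. The preceding lemmas force, in the generic case, that exactly one of $\{H_t,{P_I}_t\}$ is fix and exactly one of $\{H_t,S_t\}$ is fix, giving two configurations to analyse.

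I would first rule out the subcase ``$H_t=H$ fix''. In this configuration $P_I(t)$ and $S_t$ are both mobile, so by the corollary $C_I$ is fix, and by the rank lemma $\varphi_t\in\gen{/\!/}\cup\lin$ with $H_I=H$ by Remark~\ref{rem:fixmb}. Choosing coordinates with $H=(x_3=0)$ and $C_I=(x_2=x_3=0)$, Proposition~\ref{cor:basicbir223}(i) forces $\varphi_t=[\ell_0^{t}x_3,\ell_1^{t}x_3,\ell_2^{t}x_3,\lambda^{t}x_2^{2}+\ell_3^{t}x_3]$. An orbit argument via the flow identity $\varphi_v(P_I(t))=\varphi_{v-t}(H)=P_I(t-v)$ (valid for $v\neq t$) and the continuity of $v\mapsto\varphi_v(P_I(t))$ at $v=t$ (where the image must lie on the blown-up hyperplane $H$), combined with the explicit form and $\varphi_0=\mathrm{id}$, yields a contradiction. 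Hence $H_t$ is mobile, $P_I$ is fix, $S_t=S$ is fix, and $P_t=\varphi_t(H_t)=P_I$.

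To finish (i) and (ii) I would apply the flow identity once more: $\varphi_{s+t}(H_t)=\varphi_s(P_I)={H_I}_s$ is a hyperplane, and since $P_I\notin H_t$ in the generic case, Proposition~\ref{cor:basicbir223}\ref{prop:4xi} forces the projective line $H_t\cap H_{s+t}$ to lie in $C_I(s+t)$. A smooth conic contains no lines (so rank~$3$, i.e.\ $\gen{O}$, is excluded outright); in rank~$2$ ($\gen{\times}$) continuity forces one component of $C_I$ fix, and a secondary flow calculation on the remaining mobile component $L^2_t$ (whose image $\varphi_v(L^2_t)=L^2_{t-v}$ is a mobile line for $v\neq t$ but collapses to a plane component of $S$ at $v=t$) contradicts the rank lemma; only rank~$1$ ($\gen{/\!/}$) survives, with $C_I$ the fix double line through which every $H_t$ passes. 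For (iii) and (iv), a double application of \ref{prop:4xi} (once to $\varphi_t$, once to $\varphi_{-t}$) shows that hyperplanes through $C_I$ (resp.\ through $P_I$) are sent to hyperplanes of the same kind, so both pencils are preserved; distinctness follows from $P_I\notin C_I$, and the final sentence of the theorem is then immediate since $\gen{O}\cup\gen{\times}$ both require $\mathrm{rank}\,C_I\geq 2$.

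The main obstacle I anticipate is the orbit/limit argument ruling out $H_t$ fix: the configuration is consistent with the dichotomy lemmas themselves, and ruling it out requires a careful interaction between the mobile point $P_I(t)$, the blow-up structure at $v=t$, and the degree-drop of $\varphi_t$ at $t=0$, all bridged by the explicit form from Proposition~\ref{cor:basicbir223}(i). A secondary but similar obstacle is the dimension-clash computation in the rank-$2$ subcase, where one must carefully track that the generic image of the mobile component is a line while its image at the indeterminacy time is a plane.
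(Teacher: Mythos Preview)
Your overall strategy matches the paper's: reduce to two configurations via the dichotomy lemmas, then kill each with the flow identity plus Proposition~\ref{cor:basicbir223}\ref{prop:4xi}. However, two of your key steps diverge from the paper in ways that leave gaps.

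\textbf{The ``$H_t$ mobile'' case and the exclusion of rank $2$.} You apply \ref{prop:4xi} to the composite $\varphi_{s+t}$, obtaining only that the \emph{line} $H_t\cap H_{s+t}$ lies inside the conic $C_I(s+t)$. This is too weak to dispose of rank $2$ cleanly; your ``secondary flow calculation'' on the mobile component $L^2_t$ is not spelled out, and the claimed identity $\varphi_v(L^2_t)=L^2_{t-v}$ and the ``collapse to a plane component of $S$'' need justification. The paper instead factors $\varphi_t\circ\varphi_s(H_t)=\varphi_s(P_I)={H_I}_s$ and applies \ref{prop:4xi} to the \emph{intermediate} map $\varphi_s$ with $L=H_t$. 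Since $P_I\notin H_t$ (generic case), this yields the much stronger containment ${C_I}_s\subset H_t$: the entire conic, not just a line of it. In rank $\geq 2$ the conic spans its plane, so ${C_I}_s\subset H_t$ forces $H_s=H_t$, contradicting mobility in one line. Rank $1$ then follows, and since ${C_I}_s\subset H_t$ for \emph{all} $t,s$ with $H_t$ mobile, $C_I$ is fix. This is both simpler and complete; I recommend you switch to applying \ref{prop:4xi} at the intermediate step.

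\textbf{The ``$H_t$ fix'' case.} Your orbit argument $\varphi_v(P_I(t))=P_I(t-v)$ is correct for $v\neq t$, but the passage to a contradiction is not explained: you say continuity at $v=t$ together with the explicit form and $\varphi_0=\mathrm{id}$ ``yields a contradiction'', but what precisely fails is left open (note that $P_I(0)$ need not even be defined when $P_I$ is mobile). The paper handles this case by the direct computation you set up but do not finish: with $H=(x_1=0)$, $C_I=(x_0=x_1=0)$, $S_t=((x_0+a(t)x_1)^2=0)$, one writes ${\varphi_i}_t=\ell_i^t x_1+\alpha_i^t(x_0+a(t)x_1)^2$ and imposes $\varphi_0=\mathrm{id}$; a short elimination on the $x_0^2$ terms forces all $\alpha_i^0=0$ and the $\ell_i^0$ linearly independent, whence $\varphi_t$ is non-generic for small $t$, contradiction. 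This is the argument you should carry out.

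Finally, your double-\ref{prop:4xi} sketch for (iii)--(iv) is morally right but incomplete: applying \ref{prop:4xi} to $\varphi_{-t}$ on $\varphi_t(L)$ gives the alternative $P_I\in\varphi_t(L)$ \emph{or} $C_I\subset\varphi_t(L)$, and you have not excluded the first. The paper proves (iv) by writing $\varphi_t$ in coordinates adapted to $P_I,S,C_I$ and observing that $\ell_1^t,\ell_2^t,\ell_3^t\in A_1(x_1,x_2,x_3)$; this simultaneously shows the pencil through $P_I$ is preserved and that it is distinct from the pencil through $C_I$.
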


\begin{proof}
Assume that $H_t$ is mobile, then $S_t$ and $P_t$ are fix and $P=P_I$ (by proposition \ref{cor:basicbir223} \ref{prop:4iv}). 
Moreover, all the hyperplanes through the point $P_I$ are sent by $\varphi_t$ to hyperplanes through the point $P$. 
As $\varphi_t$ and $\varphi_s$ commute we have
$$\varphi_t(\varphi_s(H_t))=\varphi_s(\varphi_t(H_t))=\varphi_s(P)=\varphi_s(P_I)=H_s.$$
As $H_t$ is mobile $H_t \neq H_s$ and we have the following possibilities:
\begin{itemize}
\item $H_t\subset S_s$ (in particular rank ${C_I}_t\leq 2$) and $\varphi_s(H_t)\subset {C_I}_s$.
In this case, as $H_t$ is mobile we must have $S_t$ mobile, which is a contradiction. 
\item $H_t \not\subset S_s$. Then $\varphi_s(H_t)$ is a hyperplane sent by $\varphi_t$ to the hyperplane
$H_s$. This implies that $\varphi_s(H_t)$ contains either $P_I$ or ${C_I}_t$. On the other hand
$\varphi_s(H_t)$ is a hyperplane if and only if $P_I\in H_t$ or ${C_I}_s\subset H_t$. The first option is 
impossible for we are in the generic case. If rank ${C_I}_s \geq 2$ then ${C_I}_s \subset
H_t$ implies $H_s=H_t$, which is also impossible for we are assuming that $H_t$ is mobile. Therefore 
rank ${C_I}_t=1$. Finally, as ${C_I}_s\subset H_t$ for every $t$ and $s$ and $H_t$ is mobile, we
conclude that ${C_I}_t$ is fix, and that rank $C_I=1$ and clearly $\varphi_t$ preserves the family of 
hyperplanes which contain $C_I$, which includes the $H_t$. Moreover $\varphi_t\in \gen{/\!/}\cup \lin$.
\end{itemize}

Assume now that $H_t$ is fix. Then $S_t$ is mobile and $C_I$ is fix. We have seen that a generic flow with $C_I$ fix verifies rank
$C_I=1$. Therefore $\varphi_t \in \gen{/\!/}\cup \lin$. Without loss of generality we can assume that
$$H=(x_1=0), \quad C_I=(x_0=x_1=0), \quad S_t=((x_0+a(t) x_1)^2=0).$$
Then if $\varphi=(\varphi_0,\varphi_1,\varphi_2,\varphi_3)$ we have
$${\varphi_i}_t={\ell_i}_t \cdot x_1+ {\alpha_i}_t \cdot (x_0+ a(t) x_1)^2$$
with ${\alpha_i}_t\in \C,\, {\ell_i}_t\in A_1(x_0,x_1,x_2,x_3)$ for $i=0,1,2,3$. As usual
we impose the condition $\varphi_0=\mathrm{Id}$. For ${\varphi_0}_0$ there are two possibilities:
\begin{itemize}
\item ${\alpha_0}_0=0$ and ${\varphi_0}_0=\alpha x_0 x_1$ with $\alpha\in \C^{\ast}$,
\item ${\alpha_0}\neq 0$ and ${\varphi_0}_0= \alpha x_0 (x_0+a(0) x_1)$ with $\alpha\in \C^{\ast}$.
\end{itemize}
In both cases we conclude that ${\alpha_i}_0=0$ for $i=1,2,3$ (because otherwise a term in $x_0^2$ would
appear). It is not difficult to see that then ${\alpha_0}_0=0$ and ${\ell_i}_0$ are linearly independent, which
is a contradiction with the hypothesis of $\varphi$ generic.

Let us finally prove \textbf{iv)}. We know that $C_I$, $S$ and $P_I$ are fix and that $H_t$ is mobile.
Without loss of generality we can assume that $P_I=[1,0,0,0]$, $S=(x_3^2=0)$, $C_I=(x_0=x_3=0)$ and
$H_t=( \ell_t(x_0,x_3)=a(t)x_0+b(t)x_3=0 )$. Then
$$\varphi_t[x_0,x_1,x_2,x_3]=[\alpha_t \cdot x_3^2+\ell_t \cdot {\ell_0}_t, \ell_t \cdot {\ell_1}_t, \ell_t \cdot {\ell_2}_t, \ell_t\cdot {\ell_3}_t]$$
with $\alpha_t\in \C^{\ast}$ and ${\ell_i}_t\in A_1(x_0,x_1,x_2,x_3)$ for $i=0,1,2,3$. Moreover, $({\ell_1}_t={\ell_2}_t={\ell_2}_t=0)=[1,0,0,0]$, which
implies that ${\ell_i}_t$ does not depend on $x_0$ for $i=1,2,3$. Therefore the family of hyperplanes through $P_I$ is
invariant by $\varphi_t$, i.e. if $L$ is a hyperplane such that $P_I\in L$ then $\varphi_t(L)$ is a hyperplane
such that $P_I\in L$ then $\varphi_t(L)$ is a hyperplane for every $t$ and $P_I\in \varphi_t(L)$ for every $t$. It is
clear that the only hyperplane $L$ containing $C_I$ and $P_I$ is $(x_3=0)$.
\end{proof}

\begin{rem}
In the example \ref{exam:genll} we have seen that there exist quadratic flows in $\gen{/\!/}\cup \lin$ such that neither the
hyperplanes containing $C_I$ nor the hyperplanes containing $P_I$ are fixed one to one.
\end{rem}

\newpage

\begin{thm}\label{thm:NFgen}
Let $\varphi_t$ be a generic quadratic flow. Then up to a linear conjugation $\varphi_t$ is of the following types:
\begin{enumerate}[\bf a)]
\item $\varphi_t[x_0,x_1,x_2,x_3]=[x_0x_3, (x_0 t+x_3)\cdot {\ell_1}_t, (x_0 t+x_3)\cdot {\ell_2}_t,
(x_0 t +x_3)\cdot x_3]$, or
 \item  $\varphi_t[x_0,x_1,x_2,x_3]=[x_0x_3+a(t)x_3^2, (a(t)x_0+x_3)\cdot {\ell_1}_t, (a(t)x_0+x_3)\cdot {\ell_2}_t,
(a(t)x_0+x_3)\cdot x_3]$, 
\end{enumerate}
where $a(t)=\frac{e^{\alpha t}-1}{e^{\alpha t}+1}$ and $\alpha\in \C^{\ast}$ and $\Psi_t[x_1,x_2,x_3]=[{\ell_1}_t,{\ell_2}_t,x_3]$ 
is one of the following linear flows in $\pp^2$:
\begin{enumerate}[\bf i)]
\item $\Psi_t[x_1,x_2,x_3]=[x_1,x_2+tx_3,x_3]$,
\item $\Psi_t[x_1,x_2,x_3]=[x_1 e^{\beta t} ,x_2+t x_3,x_3]$,
\item $\Psi_t[x_1,x_2,x_3]=[x_1+x_2(e^{\beta t}-1)+t x_3,x_2 e^{\beta t},x_3]$,
\item $\Psi_t[x_1,x_2,x_3]=[x_1 e^{\gamma_1 t}, x_2 e^{\gamma_2 t},x_3]$,
\item $\Psi_t[x_1,x_2,x_3]=[(x_1+t x_2) e^{\beta t},x_2 e^{\beta t},x_3]$,
\item $\Psi_t[x_1,x_2,x_3]=[x_1+t x_2 + \frac{t^2}{2} x_3 ,x_2+t x_3,x_3]$,
\end{enumerate}
with $\beta\in \C^{\ast}, \gamma_1,\gamma_2 \in \C$. Moreover, given a pair of flows in the previous list they are linearly conjugated 
if and only if they are both of type \textbf{a) iv)} or \textbf{b) iv)} with $\gamma_1=\gamma_2$ and the conjugation switches $x_1$ and $x_2$.  
\end{thm}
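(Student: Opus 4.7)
The plan is to reduce the classification to that of two independent linear flows via the Main Corollary, and then assemble the normal forms. By Theorem~\ref{thm:gen}, any generic quadratic flow $\varphi_t$ lies in $\gen{/\!/}\cup\lin$, with $P_I$, $S$, $C_I$ fixed, $H_t$ mobile, and $\varphi_t$ preserving both the pencil of planes through $C_I$ and the net of lines through $P_I$; moreover $C_I$ is a double line contained in the fixed plane $S$. I choose coordinates so that
$$P_I=[1,0,0,0],\qquad C_I=(x_0=x_3=0),\qquad S=(x_3=0).$$
The projection from $P_I$ identifies the net of lines through $P_I$ with $\pp^2$ of coordinates $[x_1{:}x_2{:}x_3]$, and the projection to the pencil of planes through $C_I$ identifies it with $\pp^1$ of coordinates $[x_0{:}x_3]$. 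By the Main Corollary, $\varphi_t$ is determined by, and determines, the induced pair of linear flows $\chi_t\in\mathrm{PGL}(2,\C)$ on the pencil and $\Psi_t\in\mathrm{PGL}(3,\C)$ on the net. The classification thus reduces to classifying $\chi_t$ and $\Psi_t$ independently, modulo the residual linear freedom on $\pp^3$ stabilizing the triple $(P_I,C_I,S)$.

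First I would normalize $\chi_t$. Since $S$ corresponds to the pencil point $[0{:}1]$ and is fixed, $\chi_t$ fixes $[0{:}1]$; since $H_t$ is mobile, $\chi_t$ is a non-trivial one-parameter subgroup of the stabilizer of $[0{:}1]$ in $\mathrm{PGL}(2,\C)$, so it is either unipotent or semisimple with a second fixed point. In the unipotent case, the residual rescaling $x_0\mapsto\lambda x_0$ puts $\chi_t$ in the form sending $[0{:}1]$ to $[t{:}1]$, giving $H_t=(tx_0+x_3=0)$ and case \textbf{a)}. In the semisimple case I write $\chi_t=\exp(tA)$ with $A$ diagonalizable and normalize the second fixed point symmetrically; a short M\"obius computation starting from $\mathrm{diag}(e^{\alpha t},1)$ in suitable coordinates yields $H_t=(a(t)x_0+x_3=0)$ with $a(t)=\tfrac{e^{\alpha t}-1}{e^{\alpha t}+1}$, $\alpha\in\C^*$, giving case \textbf{b)}.

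Next I would normalize $\Psi_t$, a one-parameter subgroup of $\mathrm{PGL}(3,\C)$ preserving the line $(x_3=0)\subset\pp^2$ (the image of $S$). Such a flow is represented by a matrix of block form $\bigl(\begin{smallmatrix}A&v\\ 0&c\end{smallmatrix}\bigr)$ modulo scalars; conjugating by the stabilizer of this line reduces $A$ to its Jordan normal form (two distinct eigenvalues, equal eigenvalues diagonalizable, equal eigenvalues with a Jordan block, or nilpotent Jordan block) and absorbs $v$ whenever $v\in\mathrm{Im}(A-c\cdot\mathrm{Id})$. A direct enumeration of the surviving possibilities produces exactly the six normal forms \textbf{i)}--\textbf{vi)}. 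Reassembling $(\chi_t,\Psi_t)$ into $\varphi_t$ via the Main Corollary then gives the twelve families listed.

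For the uniqueness clause, the residual linear automorphisms preserving all the chosen normalizations form a finite group generated by the involution $x_1\leftrightarrow x_2$; this involution acts non-trivially on the list only in case \textbf{iv)} and only when $\gamma_1=\gamma_2$. The hardest part will be the careful bookkeeping in the $\Psi_t$-step: tracking exactly which scalar and linear freedoms remain after the simultaneous normalization of $\chi_t$ and $\Psi_t$, so as to confirm that no identifications among the twelve listed families arise beyond the $x_1\leftrightarrow x_2$ exchange in case \textbf{iv)} with $\gamma_1=\gamma_2$.
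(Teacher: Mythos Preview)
Your approach via the geometric decomposition into a pair of linear flows is a genuinely different and more conceptual route than the paper's: the paper works by writing $\varphi_t$ explicitly in coordinates, imposing $\varphi_s\circ\varphi_t=\varphi_{t+s}$, and carefully analysing which common factors can appear in the cubic components of the composite. Your idea of invoking the decomposition $\varphi_t(P)=\eta_t(P\vee P_I)\cap\chi_t(P\vee C_I)$ is attractive, and it can indeed be derived directly from Theorem~\ref{thm:gen} (parts \textbf{iii)} and \textbf{iv)}), so the apparent circularity with the Main Corollary (which in the paper is stated as a \emph{consequence} of Theorem~\ref{thm:NFgen}) is avoidable.

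However, there is a genuine error in your normalization of $\chi_t$. You assert that $\chi_t$ fixes the pencil point corresponding to $S$, on the grounds that $S$ is fixed. But $S$ is the surface \emph{contracted} by $\varphi_t$ to the curve $C_I$, and the induced map $\chi_t$ on the pencil is defined by strict transform: for planes $\pi_\epsilon\to S$ in the pencil one has $\varphi_t(\pi_\epsilon)\to H_{-t}$, so $\chi_t(S)=H_{-t}$, which \emph{moves}. In the paper's case \textbf{a)} one computes $\chi_t[\rho{:}\sigma]=[\rho-t\sigma:\sigma]$, whose unique fixed point is $[1{:}0]$ (the plane $x_0=0$), not $S=[0{:}1]$; in case \textbf{b)} the fixed points are $[1{:}\pm1]$, again neither equal to $S$. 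So your normalization of $\chi_t$ as a flow in the stabilizer of $[0{:}1]$ is based on a false premise and would produce the wrong normal forms.

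A second, related difficulty: you treat $\chi_t$ and $\Psi_t$ as if they can be classified independently, modulo the stabilizer of $(P_I,C_I,S)$. But this stabilizer does not split cleanly into a product of actions on the pencil and on the net (they share the scaling of $x_3$), and more importantly, the allowed conjugations of $\Psi_t$ are only by the stabilizer of the line $(x_3=0)\subset\pp^2$, not by all of $\mathrm{PGL}(3)$. Under this restricted group, nilpotent rank-one flows fall into \emph{two} orbits (represented by $[x_1,x_2+tx_3,x_3]$ and $[x_1+tx_2,x_2,x_3]$), not one. Your ``direct enumeration'' would therefore need to explain why the second does not appear, or how it is absorbed by the remaining freedom coming from the $\chi_t$-normalization; this is exactly the ``careful bookkeeping'' you flag as the hardest part, and it cannot be finessed away. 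The paper sidesteps all of this by deriving the coupling between the first and last coordinates (the relation $f(t,s)=g(t,s)$ forcing $\delta_t=1$) through direct computation, which is less elegant but leaves no such loose ends.
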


\begin{rem}
Up to a normalisation ot the time coordinate $t$ by an homotecy one can assume that $\alpha$ is 1 in case \textbf{a)} and that either
$\beta=1$ or $\gamma_1=1$ or $(\gamma_1,\gamma_2)=(0,1)$ in case \textbf{b)}. 
\end{rem}

\begin{proof}
By theorem $\ref{thm:gen}$ we know that $\varphi\in \gen{/\!/}$ with $P=P_I,C_I$ and $S$ fix.
We can therefore assume that $P_I=[1,0,0,0]$, $C_I=(x_0=x_3=0)$, $S=(x_3^2=0)$ and $H_t=(\ell_t(x_0,x_3)=a(t) x_0+
b(t) x_3=0)$, where $a(t), b(t)$ are holomorphic on $t$. Therefore, there exist $\alpha_t\in \C^{\ast}$, ${\ell_0}_t\in A_1(x_0,x_1,x_2,x_3)$ and ${\ell_1}_t,{\ell_2}_t,{\ell_3}_t\in A_1(x_1,x_2,x_3)$ such that 
$$\varphi_t[x_0,x_1,x_2,x_3]=[\alpha_t x_3^2+\ell_t \cdot {\ell_0}_t, \ell_t \cdot {\ell_1}_t,\ell_t \cdot {\ell_2}_t,
\ell_t \cdot {\ell_3}_t].$$
We must impose that 
\begin{align} \varphi_s(\varphi_t)(x)&= [\alpha_s\cdot \ell_t^2 \cdot {\ell_3}_t^2+\ell_s(\varphi_t)\cdot {\ell_0}_s(\varphi_t),
\{\ell_s(\varphi_t) \cdot \ell_t \cdot {\ell_i}_s({\ell_1}_t,{\ell_2}_t,{\ell_3}_t)\}_{i=1,2,3}]\label{tmess} \\
&= [\alpha_{t+s} x_3^2+\ell_{t+s} \cdot {\ell_0}_{t+s},\{ {\ell_i}_{t+s}\cdot \ell_{t+s} \}_{i=1,2,3}]. \nonumber
\end{align}
Note that 
\begin{equation}\label{lsfit}
\ell_s(\varphi_t)=a(s) \alpha_t x_3^2+ \ell_t (a(s) {\ell_0}_t+b(s) {\ell_3}_t). 
\end{equation}
As $\varphi_s\circ \varphi_t$ is quadratic the components of the expression \ref{tmess} must have a common
factor $\rho_{t,s}(x)$ of degree 2 (which might depend on $t$ and $s$). There are two possibilities for $\rho_{t,s}$
(up to a holomorphic function depending on $t$ and $s$ taking values in $\C^{\ast}$):
\begin{enumerate}[\bf (a)]
 \item $\rho_{t,s}(x)=\ell_s(\varphi_t)$,
 \item $\rho_{t,s}(x)$ is the product of $\ell_t$ and a factor $q_{t,s}(x)$ of $\ell_s(\varphi_t)$.
\end{enumerate}
In the first case we would have 
$$ \varphi_s(\varphi_t)(x)=\bigg[\alpha_s \frac{\ell_t^2\cdot{\ell_3}_t^2}{\ell_s(\varphi_t)},\big\{\ell_t \cdot {\ell_i}_s({\ell_1}_t,{\ell_2}_t,{\ell_3}_t)\big\}_{i=1,2,3} \bigg].$$
It follows that $\ell_{t+s}$ is a multiple of $\ell_t$ by an element of $\C^{\ast}$, which implies that $H_t$ is
fix and contradicts the hypothesis. Therefore, we can assume that $\rho_{t,s}(x)$ is a product of $\ell_t$ and a factor 
$q_{t,s}(x)$ of $\ell_s(\varphi_t)$ and
\begin{equation}\label{lmess2}
\varphi_s(\varphi_t)(x)=\bigg[\alpha_s \cdot \ell_t \cdot {\ell_3}_t^2+\frac{\ell_s(\varphi_t)\cdot{\ell_0}_s(\varphi_t)}{\ell_t},\big\{\ell_s(\varphi_t) \cdot {\ell_i}_s({\ell_1}_t,{\ell_2}_t,{\ell_3}_t)\big\}_{i=1,2,3} \bigg].
\end{equation}
As $q_{t,s}(x)$ must be a common factor of $\ell_s(\varphi_t)$ and $\ell_t\cdot {\ell_3}_t^2$ we conclude that
either $\ell_t$ or ${\ell_3}_t$ are factors of $\ell_s(\varphi_t)$. From \ref{lsfit} we derive that $\ell_t$ cannot be 
a factor of $\ell_s(\varphi_t)$ unless $a(t)=0$ (which contradicts $H_t$ mobile) or $\alpha_t=0$ (which would imply 
$\varphi_t$ linear). Thus $\ell_t$ is a factor of ${\ell_0}_s(\varphi_t)$ (see expression \ref{lmess2}) and ${\ell_3}_t$ is a factor 
of $\ell_s(\varphi_t)$. Expression \ref{lsfit} implies that if ${\ell_3}_t$ is a factor of $\ell_s(\varphi_t)$ then it is also 
a factor of $\alpha_t x_3^2+\ell_t \cdot {\ell_0}_t$. If 
$${\ell_0}_t=A_0(t) x_0+A_1(t) x_1+A_2(t) x_2+A_3(t) x_3$$
then 
$${\ell_0}_s(\varphi_t)=A_0(s) (\alpha_t x_3^2+\ell_t \cdot {\ell_0}_t)+A_1(s) \ell_t \cdot {\ell_1}_t+A_2(s) \ell_t \cdot {\ell_2}_t+A_3(s) \ell_t \cdot {\ell_3}_t.$$
From the fact that $\ell_t$ is a factor of ${\ell_0}_s(\varphi_t)$ we conclude that $A_0(t)=0$. We impose next that the quadric $\alpha_t x_3^2+\ell_t \cdot {\ell_0}_t$ on $x_1,x_2,x_3$ has rank 2 (for it admits ${\ell_3}_t$ as a factor). 
Imposing this condition and using its expression in terms of $a(t)$, $b(t)$, $\alpha_t$ and $A_i(t)$ one concludes that $A_1(t)=A_2(t)=0$. Thus ${\ell_0}_t=\beta_t x_3$ and 
\begin{equation}
\alpha_t x_3^2+\ell_t \cdot {\ell_0}_t=x_3 \cdot \big( (\alpha_t+\beta_t \cdot b(t))x_3+a(t) \cdot \beta_t x_0 \big) 
\end{equation}
As ${\ell_3}_t\in A_1(x_1,x_2,x_3)$ is a factor of $\alpha_t x_3^2+\ell_t \cdot {\ell_0}_t$ we have ${\ell_3}_t=\delta_t x_3$. Note that in particular in the first component of $\varphi_t$ appear only the monomials $x_0x_3$ and $x_3^2$. From now on we will use a more natural notation. Namely,
$$\varphi_t[x_0,x_1,x_2,x_3]=[A(t) x_0x_3+B(t)x_3^2, \ell_t \cdot {\ell_1}_t, \ell_t \cdot {\ell_2}_t, \ell_t \cdot {\delta}_t x_3]$$
with $A(t)$, $B(t)$ depending holomorphically on $t$. From $\varphi_0=Id$ we conclude that 
$$A(0)\neq 0, \quad B(0)=0, \quad a(0)=0, \quad b(0)\neq 0$$
and a normalization allows us to assume that $A(0)=b(0)=1$. Moreover, for small values of $t$ one has $A(t)\neq 0$, 
$b(t)\neq 0$. Dividing all the components of $\varphi_t$ by by $A(t)$ and modifying ${\ell_1}_t$, ${\ell_2}_t$ and
$\delta_t$ if necessary we can assume that $A(t)=b(t)=1$ for all $t$ so
\begin{equation}
\varphi_t[x_0,x_1,x_2,x_3]=[x_0x_3+B(t)x_3^2, (a(t)x_0+x_3){\ell_1}_t, (a(t)x_0+x_3){\ell_2}_t, (a(t)x_0+x_3)\delta_t x_3] 
\end{equation}
and $B(0)=a(0)=1$. Now, 
$$\ell_s(\varphi_t)=(a(s)+\delta_t a(t)) x_0x_3+(\delta_t+a(s)B(t))x_3^2$$
and
\begin{multline}
\varphi_s(\varphi_t)(x)=\bigg[(\delta_t+a(t)B(s)\delta_t^2)x_0x_3+(\delta_tB(t)+\delta_t^2B(s))x_3^2,\\ \frac{\ell_s(\varphi_t)}{x_3} {\ell_1}_s(\ell_{i_t}), \frac{\ell_s(\varphi_t)}{x_3} {\ell_2}_s(\ell_{i_t}), \frac{\ell_s(\varphi_t)}{x_3} \delta_t\delta_s x_3\bigg].
\end{multline}
As $\varphi_s(\varphi_t)(x)=\varphi_{t+s}(x)$ we have 
\begin{align*}
B(t+s)&= \frac{B(t)+\delta_t B(s)}{1+\delta_t a(t)B(s)}\\
a(t+s)&= \frac{a(s)+\delta_t a(t)}{\delta_t+a(s)B(t)}\\
\delta_{t+s}&= \delta_s \frac{\delta_t+a(s)B(t)}{1+a(t)B(s)\delta_t}
\end{align*}
and setting
\begin{align*}
f(t,s)&=\delta_t+ a(t)B(s)\delta_t^2\\
g(t,s)&=\delta_t+a(s)B(t)
\end{align*}
we have 
$$\varphi_s(\varphi_t)(x)=[f(t,s)(x_0x_3+B(t+s)x_3^2), g(t,s)\ell_{t+s}  {\ell_1}_s(\{\ell_i\}_t), g(t,s)\ell_{t+s}  {\ell_2}_s(\{\ell_i\}_t), g(t,s) \ell_{t+s}\delta_{t+s} x_3]. $$
In particular $\Psi_t[x_1,x_2,x_3]=[{\ell_1}_t(x_1,x_2,x_3), {\ell_2}_t(x_1,x_2,x_3), \delta_t x_3]$ is a linear flow. Using the 
classification of linear flows in $\pp^2$ (cf. \cite{CerDes}) one concludes that
$$\delta_t=e^{\delta t}$$
and that $\Psi_t$ belongs to the list given in the statement of the theorem. Therefore,
$$\frac{\ell_s(\varphi_t)}{x_3} \delta_t \delta_s x_3= g(t,s) \ell_{t+s} \delta_{t+s} x_3$$
and it follows that $f(t,s)=g(t,s)$. This equality is equivalent to 
$a(t)B(s)\delta_t^2=a(s)B(t)$. Thus, 
$$\frac{a(t)}{B(t)}\delta_t^2=\frac{a(s)}{B(s)},$$ which implies that $a(t)/B(t)$ is constant and that
$\delta_t=1$. Let $B(t)=c \cdot a(t)$ with $c\in \C$. We are left to solve the equation
\begin{equation}
a(t+s)=\frac{a(t)+a(s)}{1+c \cdot a(t) \cdot a(s)}.
\end{equation}
Assuming that $s$ is a constant and differentiating with respect to $t$ one obtains 
$$\frac{\partial}{\partial t} a(t,s)=a'(t) \frac{1-c \cdot a^2(s)}{1+c \cdot a(t) \cdot a(s)}.$$
Simmetry on $t$ and $s$ implies that
$$a'(t)\frac{1-c \cdot a^2(s)}{1+c \cdot a(t) \cdot a(s)}=a'(s) \frac{1-c \cdot a^2(t)}{1+c \cdot a(t) \cdot a(s)},$$
so 
$$\frac{a'(s)}{1- c \cdot a^2(s)}=\frac{a'(t)}{1- c \cdot a^2(t)}.$$
Again it follows that there is a constant $D\in \C$ such that
$$\frac{a'(t)}{1- c \cdot a^2(t)}=D.$$
If $c=0$ then as $a(0)=0$ one has $a(t)=Dt$. If $c\neq 0$ then one obtains 

$$a(t)=\tilde{c}^{-1} \frac{e^{\tilde{D}t}-1}{e^{\tilde{D}t}+1}, \qquad B(t)=\tilde{c} \frac{e^{\tilde{D}t}-1}{e^{\tilde{D}t}+1},$$
where $\tilde{c},\tilde{D}\in \C$ and $\tilde{c}^2=c$. 

Therefore either $a(t)= a t$ and $B(t)=0$ or $A(t)=a \frac{e^{\alpha t}-1}{e^{\alpha t}+1}.$ In both cases $a\neq 0$ for otherwise $\ell_t$ 
would be fix and we can assume $\alpha \neq 0$ for the same reason. 

Note that in case \textbf{a)} up to a good choice of $\rho_0^0$ we can assume that $a=1$. Moreover if $\varphi_t$ and $\widetilde{\varphi}_t$ 
were conjugated we would have (it suffices to write the condition $A\circ \widetilde{\varphi}_t = \varphi_t \circ A$ for the first coordinate):
$$ \rho_0^0 x_0 x_3 + \rho^0_3 (t x_0 + x_3)x_3= (\rho^0_0 x_0 + \rho^0_3 x_3) \rho_3^3 x_3$$
so $\rho^0_3=0$ and $\rho^3_3=1$. 

To end the proof we must just show that given two flows of the previous list with different choices of the parameters they cannot be conjugated 
unless both flows are in case \textbf{a) iv)} or \textbf{b) iv)} and the conjugation switches $x_1$ and $x_2$ or unless we are in case \textbf{b)} 
with $\alpha=\tilde{\alpha}$ and $A\neq \tilde{A}$ (which in particular shows that we can assume $a=1$). 

Let $\varphi_t$ and $\tilde{\varphi}_t$ be two flows of the previous list and $A=[\mu_0,\mu_1,\mu_2,\mu_3]$  with $\mu_i \in A_1(x_0,x_1,x_2,x_3)$ 
for $i=0,1,2,3$ a linear conjugation such that 

$$A \circ \tilde{\varphi}_t= \varphi_t \circ A.$$

As $P_I=[1,0,0,0]$, $C_I=(x_0=x_3=0)$ and $S=(x_3^2=0)$ are fix and common for both flows we conclude that $\mu_1(x), \mu_2(x) \in A_1 (x_1,x_2,x_3)$ 
and $\mu_3=\mu_3^3 x_3$. Moreover as $\ell_t=(a(t) x_0 + x_3=0)$ is conjugated to $\tilde{\ell}_t=(\tilde{a}(t) x_0  + x_3=0)$ one has 
$\mu_0(x)= \mu^0_0 x_0 + \mu^0_3 x_3$. In particular if we define $\tilde{A}(x_1,x_2,x_3):=[\mu_1,\mu_2,\mu_3]$ we obtain a conjugation of the 
correponding flows $\Psi_t$ and $\tilde{\Psi}_t$ in $\pp^2$, which implies that $\Psi_t=\tilde{\Psi}_t$ excepting in the case \textbf{iv)} with a 
switch of the coordinates $x_1$ and $x_2$.  

Therefore in case \textbf{b)} one has (again writing the condition $A\circ \tilde{\varphi}_t = \varphi_t \circ A$ for the first coordinate) 
$$\mu^0_0 (x_0x_3+\tilde{a}(t) x_3^2)+\mu^0_3(\tilde{a}(t)x_0+x_3)x_3=(\mu^0_0 x_0 + \mu^0_3 x_3) \mu^3_3 x_3+ a(t) (\mu^3_3)^2 x_3^2.$$
Equivalently, 
$$\mu_0^0 \mu_3^3 x_0x_3+ \big(\mu^0_3 \mu^3_3 + a(t) (\mu^3_3)^2 \big) x_3^2= (\mu^0_0+ \tilde{a}(t) \mu^0_3) x_0x_3 + (\tilde{a}(t) \mu^0_0 + \mu^0_3) x_3^2$$
which yields $\mu^0_3=0$ so
$$\mu^0_0 \mu^3_3 x_0 x_3+ a(t) (\mu^3_3)^2 x_3^2 = \mu^0_0 x_0 x_3 + \tilde{a}(t) \mu^0_0 x_3^2.$$ 
Using that $a(t)= a \frac{e^{\alpha t}-1}{e^{\alpha t}+1}$ and $\tilde{a}(t)=\tilde{a} \frac{e^{\tilde{\alpha} t}-1}{e^{\tilde{\alpha} t}+1}$ 
we conclude that $\mu^3_3=1$ and $\tilde{a}(t)= \mu^0_0 a(t)$ (with the exception already mentioned). 
\end{proof}

\begin{cor}\label{cor:gen}
Let $\varphi_t$ a quadratic generic flow. Then $\varphi_t$ is determined by a linear flow $\eta_t$ on the $\pp^2$ of the net of lines through $P_I$
and by a linear flow $\chi_t$ on the $\pp^1$ of the pencil of planes through the line $C_I$. Namely, for a point $P\in \pp^3$ we have
$$\varphi_t(P)=\eta_t(P \vee P_I) \cap \chi_t(P\vee C_I).$$
\end{cor}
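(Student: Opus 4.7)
My plan is to leverage the structural results of Theorem \ref{thm:gen} together with the explicit normal forms in Theorem \ref{thm:NFgen}. First, I would note that by Theorem \ref{thm:gen}, a generic quadratic flow belongs to $\gen{/\!/}\cup \lin$, that $P_I$ and $C_I$ are fix (with $\mathrm{rank}\,C_I=1$, so $C_I$ really is a line), and that $\varphi_t$ preserves both the family of hyperplanes through $P_I$ and the pencil of planes through $C_I$.

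Next I would descend these two invariances to linear flows. Because $\varphi_t$ preserves the hyperplanes through the fixed point $P_I$, it also preserves intersections of such hyperplanes, so it sends lines through $P_I$ to lines through $P_I$; this gives a map $\eta_t$ on the $\pp^2$ parametrising lines through $P_I$, and the identity $\varphi_{t+s}=\varphi_t\circ\varphi_s$ forces $\eta_{t+s}=\eta_t\circ\eta_s$. The same argument applied to the pencil of planes through the line $C_I$ gives a germ of flow $\chi_t$ on $\pp^1$.

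The main step, and the place where real work is needed, is to show that $\eta_t$ and $\chi_t$ are \emph{linear}, not merely birational. For this I would read them off from the normal forms. In the coordinates of Theorem \ref{thm:NFgen}, where $P_I=[1,0,0,0]$ and $C_I=(x_0=x_3=0)$, a line through $P_I$ is parametrised by $[x_1:x_2:x_3]$ and a plane through $C_I$ by $[x_0:x_3]$. After removing the common factor $(x_0 t+x_3)$ (resp.\ $(a(t)x_0+x_3)$) from the last three components of $\varphi_t$, the triple of last three coordinates becomes exactly $\Psi_t[x_1,x_2,x_3]=[{\ell_1}_t,{\ell_2}_t,x_3]$, which is explicitly listed as a linear flow on $\pp^2$; so $\eta_t=\Psi_t$. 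Likewise, projecting onto $[x_0':x_3']$ yields $[x_0:x_0 t+x_3]$ in case \textbf{a)} and $[x_0+a(t)x_3:x_3]$ in case \textbf{b)} (after cancellation), which are linear flows on $\pp^1$; so $\chi_t$ is linear.

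Finally I would assemble the formula. For $P\in \pp^3$ with $P\neq P_I$ and $P\notin P_I\vee C_I$, the line $P\vee P_I$ is not contained in the plane $P\vee C_I$ (since $P_I\notin C_I$ in the generic case), so these two loci meet exactly at $P$. Applying $\varphi_t$ and using that images of lines through $P_I$ and planes through $C_I$ are given by $\eta_t$ and $\chi_t$ respectively,
\[
\varphi_t(P)=\varphi_t(P\vee P_I)\cap \varphi_t(P\vee C_I)=\eta_t(P\vee P_I)\cap \chi_t(P\vee C_I),
\]
which is the claimed formula; the generic-point identity extends to the whole domain of definition of $\varphi_t$ by continuity. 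The only delicate point is the linearity check, which is ultimately a direct inspection of the normal forms.
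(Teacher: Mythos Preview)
Your proposal is correct and follows essentially the same route as the paper: both arguments work in the coordinates of Theorem~\ref{thm:NFgen}, identify $\eta_t$ with the linear flow $\Psi_t$ on the last three coordinates, and read off $\chi_t$ from the pair $(x_0',x_3')$. You add a slightly more explicit justification of the reconstruction formula $\varphi_t(P)=\eta_t(P\vee P_I)\cap\chi_t(P\vee C_I)$ via general position, whereas the paper simply writes down $\eta_t$ and $\mu_t$ and leaves this implicit. One small slip: in case~\textbf{b)} the projection onto $[x_0':x_3']$ is $[x_0+a(t)x_3:\,a(t)x_0+x_3]$, not $[x_0+a(t)x_3:x_3]$; this does not affect your conclusion since it is still linear.
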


\begin{proof}
With the coordinates of the previous theorem we have $P_I=[1,0,0,0]$ and $C_I=(x_0=x_3=0)$. If $L$ is a line through $P_I$ we define $\eta_t$ by
$$\eta_t(L)=P_I \vee \Psi_t (L\cap(x_0=0)). $$
As a plane $\pi$ through $C_I$ is given by an equation $\rho x_0+\sigma x_3=0$ we check easily that the strict transformation of $\pi$ by $\varphi_t$ is
$$\begin{array}{lll}
  (\rho-t \sigma) x_0 + \sigma x_3 =0 & \textrm{for} & \alpha=0 \\
  (\rho+a(-t) \sigma) x_0 + (a(-t) \rho + \sigma) x_3=0 & \textrm{for} & \alpha\in \C^{\ast}
\end{array}$$
with $a(t)=\frac{e^{\alpha t}-1}{e^{\alpha t}+1}$. Thus we define
$$\mu_t[\rho,\sigma]:=\left\{ \begin{array}{lll}
{[\rho-t\sigma,\sigma]} & \textrm{for} & \alpha=0 \\
{[\rho+a(-t)\sigma,a(-t)\rho+\sigma]} & \textrm{for} & \alpha \in \C^{\ast}
\end{array}\right. . $$
\end{proof}

%%%%%%%%%%%%%%%%%%%%%%%%%%%%%%%%%%%%%%%%%%%%%%%%%%%%%%%%%%%%%%%%%%%%%%%%%%%%%%%%%%%%%%%%%%%%%%%%%%
%%%%%%%%%%% SECCIO 2. CAS TAN(O) I TAN(X)
%%%%%%%%%%%%%%%%%%%%%%%%%%%%%%%%%%%%%%%%%%%%%%%%%%%%%%%%%%%%%%%%%%%%%%%%%%%%%%%%%%%%%%%%%%%%%%%%%%%

\subsection{Flows in $\tang{O}\cup \tang{\times}\cup \lin$}

\begin{lem}\label{lem:tangox}
Let $\varphi_t$ be a quadratic flow in $\tang{\times}\cup \lin$ or $\tang{O}\cup \lin$. Then either $S_t$ or $H_t$ are mobile.
\end{lem}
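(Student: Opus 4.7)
The plan is to argue by contradiction: assume both $H_t$ and $S_t$ are fix. The key preliminary observation is that in $\tang{O}\cup\tang{\times}$ we have $S=\Pi=H_I$ (the contracted surface $\Pi$ coincides with the hyperplane to which $P_I$ is blown up; see the table on page \pageref{thetable} and Proposition \ref{cor:basicbir223}~(ix)). Hence both $H$ and $H_I$ are fix hyperplanes, a strong constraint.

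After a linear change of coordinates, put $H=(x_3=0)$. By Proposition \ref{cor:basicbir223}~(i) the flow then reads
$$\varphi_t=[\ell_{0,t}\,x_3,\ \ell_{1,t}\,x_3,\ \ell_{2,t}\,x_3,\ q_t+\ell_{3,t}\,x_3],$$
with $\ell_{i,t}\in A_1$ and $q_t\in A_2(x_0,x_1,x_2)$; non-genericity (Proposition \ref{cor:basicbir223}~(vi)) forces $\ell_{0,t},\ell_{1,t},\ell_{2,t},x_3$ to be linearly independent whenever $\varphi_t\notin\lin$. The identity $\varphi_0=\mathrm{Id}$ normalises $\ell_{i,0}=x_i$ ($i=0,1,2$), $\ell_{3,0}=x_3$ and $q_0\equiv 0$.

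Next I would extract $H_{I,t}$ as a linear form in the target coordinates, computed from the data $(\ell_{i,t},q_t)$ by analysing the blow-up of $P_{I,t}$ in a chart centred on it. In the normal form of Proposition \ref{cor:basicbir223}~(vi) this is the formula $H_I=(x_3+\partial q/\partial x_1=0)$ of Proposition \ref{cor:basicbir223}~(ix); in our more general expression it becomes a polynomial expression in the coefficients of the $\ell_{i,t}$ and $q_t$. Imposing that this linear form is, up to a scalar, independent of $t$ rigidifies $(\ell_{i,t},q_t)$ further. At this stage a linear change on the target preserving $H$ lets me pin $H_I=(x_2+x_3=0)$ in the $\tang{O}$ case (and $H_I=(x_3-x_2=0)$ in the $\tang{\times}$ case), matching the table.

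Finally, I would exploit the group law $\varphi_{t+s}=\varphi_t\circ\varphi_s$ by matching coefficients, in direct analogy with the proof of the corresponding statement in $\gen{\cdot}\cup\lin$ carried out just above. Together with $\varphi_0=\mathrm{Id}$ (so $q_0\equiv0$), the linear independence of $\ell_{0,t},\ell_{1,t},\ell_{2,t},x_3$ and the invariance of $H_I$ should force $q_t\equiv 0$ for all $t$ near $0$, i.e.\ $\varphi_t\in\lin$ generically, contradicting the definition of a quadratic flow. The main obstacle is the bookkeeping in extracting the defining linear form of $H_I$ from the general (non normal-form) expression of $\varphi_t$ and then carrying out the coefficient matching dictated by the flow condition; both steps reduce to calculations entirely parallel to the generic case treated in the preceding lemma.
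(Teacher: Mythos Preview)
Your key opening observation is incorrect, and this undermines the whole plan. The identity $\Pi=H_I$ that you read off the table holds only for the specific Pan--Ronga--Vust normal form, which is obtained after \emph{independent} linear changes on the source and the target. For a general $\varphi\in\tang{O}\cup\tang{\times}$, the surface $\Pi$ (in the source) and the hyperplane $H_I$ (in the target) have no reason to coincide: if $\varphi'=A\varphi_0 B$ then $\Pi(\varphi')=B^{-1}\Pi(\varphi_0)$ while $H_I(\varphi')=A\,H_I(\varphi_0)$. In fact, under your very hypothesis that $H_t$ is fix, Remark~\ref{rem:fixmb} gives $H_{I,t}=H_{-t}=H$, so $H_I=H\neq S$ (the two factors of $\jac(\varphi_t)$ are distinct). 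You can also see this directly in Example~\ref{exam:tango}, where $S_t=(x_3=0)$ whereas $H_{I,t}=H_{-t}=(x_3-tx_0=0)$. So the ``strong constraint'' you announce --- two distinct fix hyperplanes $H$ and $H_I$ --- never materialises; you only have the single fix hyperplane $H(=H_I)$ together with $S$.

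The paper's route is different and more concrete. From $H,S$ both fix it extracts the fix line $L=H\cap S$ and uses the geometric characterisation of $\Gamma_{\varphi_t}$ (quadrics through $C_{I,t}$ tangent to $S$ at $P_{I,t}$, with $P_{I,t}\in L$) to write, in coordinates $H=(x_3=0)$, $S=(x_0=0)$, the flow as
\[
\varphi_t=[x_3{\ell_0}_t,\ a_t x_0x_1+r_t,\ b_t x_0x_1+s_t,\ x_3{\ell_3}_t],\qquad r_t,s_t\in A_2(x_0,x_3,f(t)x_1-x_2).
\]
Then $\varphi_0=\mathrm{Id}$ forces $x_3{\ell_0}_0=\alpha x_0x_3$, hence the common factor is $x_3$, and the second component would have to satisfy $a_0x_0x_1+r_0=\alpha x_1x_3$, which is impossible. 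If you want to salvage your approach, the correct replacement for ``$S=H_I$'' is the intrinsic fact $P_{I,t}\in S$ (Proposition~\ref{cor:basicbir223}(ix): $S=\Pi$ is the tangent plane at $P_I$), which is exactly the hook the paper uses.
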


\begin{proof}
Assume that both $S_t$ and $H_t$ are fix, then $L=H\cap S$ is a fix line and ${P_I}_t\in L$. Without loss of generality we
can choose coordinates such that $H=(x_3=0)$ and $S=(x_0=0)$, then ${P_I}_t=[0,1,f(t),0]$. Therefore
$$\varphi_t[x_0,x_1,x_2,x_3]=\phantom{aixo esta aqui per tirar equacio totalment a esquerra}$$
$$[x_3 \cdot {\ell_0}_t, a_t x_0 x_1 + r_t (x_0, x_3, f(t)x_1-x_2), b_t x_0 x_1 + s_t (x_0,x_3,f(t)x_1-x_2), x_3\cdot {\ell_3}_t]$$
where $r_t,s_t\in A_2 (x_0,x_3, f(t)x_1-x_2)$ (recall that $\varphi_t(H)={P_I}_{-t}=P_t$ and that the
tangent plane of $({\varphi_i}_t=0)$ at ${P_I}_t$ is $x_0=0$).
Then $\varphi_0=\mathrm{Id}$ implies that $x_3\cdot {\ell_0}_0=x_0 \cdot \ell$ with $\ell\in A_1 (x_0,x_1,x_2,x_3)$.
Therefore $\ell=\alpha \cdot x_3$ with $\alpha\in \C^{\ast}$. Now it is not difficult to reach a contradiction with
$$a_0 x_0x_1 + r_0 (x_0,x_3, f(t)x_1-x_2)= \alpha x_1 x_3.$$
\end{proof}

\begin{thm}\label{thm:tangox}
Let $\varphi_t$ be a quadratic flow in $\tang{O}\cup \lin$ or $\tang{\times}\cup \lin$. Then:
\begin{enumerate}[\bf i)]
\item $H_t$ and ${C_I}_t$ are mobile and $P_I$ and $S_I$ are fix.
\item ${L_I}_t=S \cap {H_t}$ is a fix line through $P_I$.
\item $\varphi_t$ preserves the family of hyperplanes through $L_I$, which includes  $\{H_s\}$.
\item $\varphi_t$ preserves the family of hyperplanes through $P_I$, which includes the family of
hyperplanes through $L_I$.
\end{enumerate}
\end{thm}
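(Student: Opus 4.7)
The plan is to assemble the previously established lemmas to restrict the admissible configurations, then pin down the remaining fine structure by one explicit coordinate calculation. For part \textbf{(i)}, combining Lemma \ref{lem:tangox} (at least one of $S_t,H_t$ is mobile) with Corollary \ref{cor:tangox} (at least one is fix) forces exactly one of the two to be mobile. To rule out the case $H_t$ fix and $S_t$ mobile: the corollary immediately following the ``mobile-${C_I}$ implies ${S_I}$ fix'' lemma gives that $S_t$ mobile implies ${C_I}_t$ fix, and as the conic has rank $\geq 2$ in $\tang{O}\cup \tang{\times}$ it determines its containing plane, so $H_t=H$ is automatically fix (consistent). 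I fix coordinates $H=(x_3=0)$, $P_I=[0,1,0,0]$ and write $\varphi_t$ in the form allowed by Proposition \ref{cor:basicbir223}\textbf{(vi)} with these fixed data; imposing $\varphi_0=\mathrm{Id}$ and the flow identity $\varphi_s\circ\varphi_t=\varphi_{s+t}$ will force $S_t$ to also be fix, contradicting the assumption. The surviving configuration yields \textbf{(i)}: $H_t$ and $C_I^t$ mobile, $S_t$ fix; then $P_I$ is fix by the corollary to Lemma \ref{lem:PmobHfix}, and $S_I$ is fix by Remark \ref{rem:fixmb}\textbf{(iv)}.

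For part \textbf{(ii)}, I interpret $L_I^t=S\cap H_t$ geometrically as the tangent line at $P_I$ to the conic $C_I^t$ inside $H_t$: indeed $S$ is the tangent plane at $P_I$ to the quadric of $M_{\varphi_t}$ containing $C_I^t$, so $S\cap H_t$ is the tangent to $C_I^t$ at $P_I$; in particular $P_I\in L_I^t$. To see this line is constant in $t$, I fix coordinates with $P_I=[0,1,0,0]$ and $S=(x_2=0)$ and write $\varphi_t$ in the most general form compatible with the data of \textbf{(i)} and with the tangent-plane-at-$P_I$ characterization of $S$. The commutation $\varphi_s\circ\varphi_t=\varphi_{s+t}$ together with $\varphi_0=\mathrm{Id}$ then pin down that the moving plane $H_t$ must rotate around a single fixed line $L_I\subset S$ through $P_I$, which is the claimed axis.

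Parts \textbf{(iii)} and \textbf{(iv)} then follow from Proposition \ref{cor:basicbir223}\textbf{(xi)}: any hyperplane $L\ni P_I$ is sent by $\varphi_t$ to a hyperplane, and since $L\cap H_t$ is a line meeting the conic $C_I^t\subset H_t$ in at most two points, it contains points outside $\mathrm{Ind}(\varphi_t)$ that are contracted to $P_I$, giving $P_I\in\varphi_t(L)$ and establishing \textbf{(iv)}. For \textbf{(iii)}, the identity $\varphi_u(H_t)=\varphi_u(\varphi_{-t}(P_I))=\varphi_{u-t}(P_I)=H_{t-u}$ shows that $\varphi_u$ permutes the family $\{H_t\}_{t\in\C}$; by \textbf{(ii)} this is a Zariski-dense one-parameter family in the pencil of hyperplanes through $L_I$, so by continuity $\varphi_u$ preserves the whole pencil. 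The main obstacle will be the coordinate analysis in \textbf{(ii)} that pins down the common axis $L_I$ of the moving pencil $\{H_t\}$; once that is in hand, \textbf{(iii)} and \textbf{(iv)} follow routinely from Proposition \ref{cor:basicbir223}\textbf{(xi)} and flow commutativity.
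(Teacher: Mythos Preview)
Your outline for parts \textbf{(iii)} and \textbf{(iv)} is sound and in fact cleaner than the paper's route: the paper derives both statements from an explicit coordinate expression of $\varphi_t$, whereas you obtain them directly from Proposition~\ref{cor:basicbir223}\ref{prop:4xi} and the flow relation $\varphi_u(H_t)=H_{t-u}$. That is a genuine simplification once \textbf{(i)} and \textbf{(ii)} are in hand.

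The serious gap is in part \textbf{(i)}. To exclude the case ``$H$ fix, $S_t$ mobile'' you propose to set $H=(x_3=0)$, $P_I=[0,1,0,0]$ and invoke Proposition~\ref{cor:basicbir223}\textbf{(vi)}. Two problems arise. First, at this stage ${P_I}_t$ may be mobile (Lemma~\ref{lem:PmobHfix} only gives $H$ fix when $P_I$ is mobile, not the converse), so you cannot pin $P_I$ to a single point for all $t$. Second, Proposition~\ref{cor:basicbir223}\textbf{(vi)} produces the normal form only after \emph{independent} linear changes at source and target; for a flow these must coincide, so the form is not available uniformly in $t$. What the paper actually does is quite different and substantially longer: starting from ${S_I}_{t+s}=\varphi_t({S_I}_s)$ it forces either ${P_I}_t\in{S_I}_s$ or a line of $C_I$ to lie in ${S_I}_t$, then separates the cases $\mathrm{rank}\,C_I=3$ and $\mathrm{rank}\,C_I=2$, lets ${P_I}_t=[f(t),0,1,0]$ move, writes down the most general $\varphi_t$ compatible with the fixed $H$, $C_I$ and the tangency of $\Gamma_{\varphi_t}$ at ${P_I}_t$ to $S_t$, and only then shows via the flow equation that no common linear factor of $\varphi_s\circ\varphi_t$ can exist. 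None of this is captured by your one-line plan.

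For part \textbf{(ii)} you also defer to an unspecified coordinate computation, but here the paper's argument is much shorter and purely geometric: since ${L_I}_t\subset H_t$ one has $\varphi_t({L_I}_t)=P_I$; applying commutativity to $\varphi_t(P_I)={H_I}_t$ and using that $P_I$ (not ${C_I}_s$) is the only object whose blow-up by $\varphi_s$ can yield the \emph{mobile} plane ${H_I}_t$, one gets $\varphi_s({L_I}_t)=P_I$, hence ${L_I}_t\subset H_s$ for all $s$, and mobility of $H$ forces ${L_I}_t$ to be the fixed axis $\bigcap_s H_s$. This avoids any coordinate calculation and you could adopt it directly.
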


\begin{rem}
In this case as $P_I$ is contained in the line $L_I$ we cannot determine $\varphi_t$ using flows on the net of lines through $P_I$
and the pencil of planes through $L_I$ as in Corollary \ref{cor:gen}. The list in Theorem \ref{thm:NFtangllosc} shows that there are not other
choices of a pair point/line which allows to do so either.

\end{rem}

\begin{proof}
By corollary \ref{cor:tangox} and lemma \ref{lem:tangox} to show \textbf{i)} it is enough to prove that $H_t$ is mobile and $S$ fix. Note that if ${H_t}$ mobile and $S_I$ fix it is clear that ${C_I}_t$ must be mobile. Assume that $H$ is fix and ${S_I}_t$ mobile. Then $C_I$ is fix and
$${S_I}_{t+s}=\varphi_{t+s} (C_I)= \varphi_t \circ \varphi_s (C_I) = \varphi_t ({S_I}_s).$$
As ${S_I}_t$ are hyperplanes we conclude that either ${P_I}_t\in {S_I}_s$ for every $t,s$ or
$C_I \subset {S_I}_t$ for every $t$.

As $C_I$ is a fix plane conic of rank greater or equal than 2 the
second possibility contradicts $S_t$ mobile unless $C_I=L_1 \cup L_2$, $L_1, L_2$ lines and $L_1 \subset {S_I}_t$
for every $t$.

If ${P_I}_t\in {S_I}_s$ for every $t,s$ and rank $C_I=3$ it is not difficult to conclude that $P_I$ must be fix, which
contradicts $H_I$ fix.

If ${P_I}_t\in {S_I}_s$ for every $t,s$ and rank $C_I=2$, i.e. $C_I=L_1 \cup L_2$, we conclude that ${P_I}_t\in L_1$
and $L_1 \subset S_t$ for every $t$.

Assume then that $C_I=L_1 \cup L_2$, $L_1, L_2$ lines and $L_1 \subset {S_I}_t$ for every $t$. Without loss of generality
we can choose coordinates such that
$$H=(x_3=0), \quad L_1=(x_1=x_3=0), \quad L_2=(x_2=x_3=0).$$
Then ${P_I}_t=[f(t),0,1,0]$ and $S_t=(a(t) x_1+b(t)x_3=0)$, where $a(t), b(t)$ are germs of holomorphic functions on $t$. If $\varphi_t=({\varphi_0}_t,{\varphi_1}_t,{\varphi_2}_t,{\varphi_3}_t)$ using $\ind(\varphi_t)=L_1\cup L_2$ and that $({\varphi_i}_t=0)$
are quadrics tangent to $S_t$ at the point ${P_I}_t$ we obtain
\begin{multline*}
\varphi_t[x_0,x_1,x_2,x_3]=\\
[{\alpha_0}_t \cdot x_2 (a(t)x_1+b(t) x_3)+ x_3 \cdot {\ell_0}_t, x_3 \cdot {\ell_1}_t,
{\alpha_2}_t \cdot x_2 (a(t)x_1+b(t) x_3)+ x_3 \cdot {\ell_2}_t, x_3 \cdot {\ell_3}_t]
\end{multline*}
with ${\alpha_i}_t\in \C$ and ${\ell_i}_t \in A_1 (x_1,x_3, x_0-f(t)x_2)$. 

The condition $\varphi_0=\mathrm{Id}$ implies that (dividing by an element of $\C^{\ast}$ if
necessary)
$$\varphi_0=[x_0x_3,x_1x_3,x_2x_3,x_3^2]$$
so
\begin{eqnarray*}
x_0x_3&=&{\alpha_0}_0 x_2 (a(0)x_1+b(0) x_3)+x_3 {\ell_0}_0 \\
x_1 &=& {\ell_1}_0 \\
x_2x_3 &=& {\alpha_2}_0 x_2 (a(0)x_1+b(0) x_3)+ x_3 {\ell_2}_0 \\
x_3&=& {\ell_3}_0
\end{eqnarray*}
and ${\ell_i}_0\in A_1(x_0-f(0)x_2)$ for $i=1,2,3,4$. One concludes that
$${\ell_0}_0=\beta_0 (x_0-f(0) x_2), \, a(0)=0, \, {\alpha_0}_0b(0)-\beta_0f(0)=0,\, {\ell_2}_0=0,\, {\ell_1}_0=x_1, \, {\ell_3}_0=x_3, \, {\alpha_0}_0 b(0)=1.$$
We can therefore assume that $\ell_t=a(t)x_1+x_3$ and the previous conditions for $t=0$ translate into
$${\alpha_0}_0=f(0), \, {\ell_0}_0=x_0-f(0)x_2, {\ell_1}_0=x_1,\, {\alpha_2}_0=1, \,{\ell_2}_0=1, \, {\ell_3}_0=x_3.$$
On the other hand $\varphi_t(H)=\varphi_t(x_3=0)=P_t={P_I}_{-t}=[f(-t),0,1,0]$, which implies that
$${\alpha_0}_t=f(-t) {\alpha_2}_t.$$
Normalizing if necessary we have
\begin{equation}
\varphi_t(x)=[(f(-t)x_2\ell_t+x_3 {\ell_0}_t, x_3 {\ell_1}_t, x_2 \ell_t + x_3 {\ell_2}_t, x_3 {\ell_3}_t]
\end{equation}
and
\begin{equation}
{\ell_0}_0=x_0-x_2, \, {\ell_1}_0=x_1, \, {\ell_2}_0=0, \, {\ell_3}_0=x_3, \, f(0)=1.
\end{equation}
Therefore
\begin{multline}
\varphi_s(\varphi_t(x))=[f(-s) (x_2 \ell_t + x_3 {\ell_2}_t) \ell_s(\varphi_t)+x_3 {\ell_3}_t {\ell_0}_s(\varphi_t),\\
x_3 {\ell_3}_t {\ell_1}_s(\varphi_t), (x_2 \ell_t+x_3 {\ell_2}_t) \ell_s(\varphi_t)+x_3 {\ell_3}_t {\ell_2}_s(\varphi_t),
x_3 {\ell_3}_t {\ell_3}_s (\varphi_t)] 
\end{multline}
and
\begin{equation}
\ell_s\circ\varphi_t(x)=x_3(a(s) {\ell_1}_t + {\ell_3}_t).
\end{equation}
Dividing by $x_3$ we obtain
\begin{multline}
\varphi_s\circ\varphi_t(x)=\bigg[f(-s) (x_2 \ell_t+ x_3 {\ell_2}_t) \frac{\ell_s(\varphi_t)}{x_3}+{\ell_3}_t {\ell_0}_s(\varphi_t),\\ {\ell_3}_t {\ell_1}_s(\varphi_t), (x_2 \ell_t+x_3 {\ell_2}_t) \frac{\ell_s(\varphi_t)}{x_3}+{\ell_3}_t {\ell_2}_s(\varphi_t), {\ell_3}_t {\ell_3}_s(\varphi_t)].
\end{multline}
As $\varphi_s\circ\varphi_t=\varphi_{t+s}$, which is a quadratic flow, we must have a linear form dividing all the
components of $\varphi_s(\varphi_t)$. This form can be either \textbf{(1)} ${\ell_3}_t$ or \textbf{(2)} another linear
form $\tilde{\ell}_t$. 

Let us study case \textbf{(1)}. We see that ${\ell_3}_t$ divides either $\frac{\ell_s(\varphi_t)}{x_3}=a(s){\ell_1}_t+{\ell_3}_t$ or $x_2 \ell_t+x_3 {\ell_2}_t$. We can clearly exclude
the first possibility. On the other hand
$$x_2 \ell_t + x_3 {\ell_2}_t= x_2 (a(t) x_1+x_3)+x_3 {\ell_2}_t (x_1,x_3,x_0-f(t)x_2)$$
and taking into account that ${\ell_3}_t\in A_1(x_1,x_3, x_0-f(t)x_2)$ one can check that ${\ell_3}_t$ cannot
divide this term either. 

Assume now that we are in case \textbf{(2)}, i.e. there exists a form $\tilde{\ell}_t$ which is not a multiple of
${\ell_3}_t$ which divides all the components of $\varphi_s\circ\varphi_t$. In particular $\tilde{\ell}_t$ must divide 
${\ell_1}_s(\varphi_t)$, ${\ell_3}_s(\varphi_t)$ and $x_3 ({\ell_0}_s(\varphi_t)-f(-s){\ell_2}_s(\varphi_t))$. Note that if $\tilde{\ell}_t=x_3$ then ${\ell_1}_t,{\ell_3}_t\in A_1(x_1,x_3)$. But in this case $\frac{\ell_s(\varphi_t)}{x_3}\in A_1(x_1,x_3)$ and we easily reach a contradiction. Therefore $\tilde{\ell}_t$ divides also ${\ell_0}_s(\varphi_t)-f(-s){\ell_2}_s(\varphi_t)$. 

If ${\ell_1}_t=A^0_1(t) (x_0-f(t)x_2)+A^1_1(t) x_1+A^3_1(t) x_3$ then 

$${\ell_1}_s(\varphi_t)=A_1^0(s) \big((f(-t)-f(s))x_2 \ell_t+x_3 ({\ell_0}_t-f(s) {\ell_2}_t)\big) + A^1_1(s) x_3 {\ell_1}_t+A^3_1(s) x_3 {\ell_3}_t.$$

A direct computation shows that for ${\ell_1}_s(\varphi_t)$ to have rank 2 either
\begin{equation}
A^0_1(s) (f(-t)-f(s)) a(t)=0 \quad \textrm{and} \quad \tilde{\ell}_t=x_3
\end{equation}
or
\begin{equation}\label{eq:ici}
A^0_1(s) \big( (f(-t)-f(s))x_2+{\ell_0}_t -f(s) {\ell_2}_t \big)+ A^1_1(s) {\ell_1}_t + A^3_1(s) {\ell_3}_t=0 \quad \textrm{and} \quad \tilde{\ell}_t=x_1 \, \textrm{or} \, x_2.
\end{equation}
An analogous argument applies to ${\ell_3}_s(\varphi_t)$ and ${\ell_0}_s(\varphi_t)-f(-s){\ell_2}_s(\varphi_t)$. 
As we have already discussed the possibility $\tilde{\ell}_t=x_3$ we focus on \ref{eq:ici}. The linear forms
${\ell_1}_t$, ${\ell_3}_t$ and ${\ell_0}_t-f(-t){\ell_2}_t$ are linearly independent because
$$\ind(\varphi_t)=\{ {\ell_1}_t={\ell_3}_t={\ell_0}_t-f(-t){\ell_2}_t=x_2 \ell_t+x_3 {\ell_2}_t=0\}$$
and as $\varphi_t$ is not generic $\ind(\varphi_t)=L_1 \cup L_2$ for every $t\neq 0$. Now, on \ref{eq:ici} we take 
$s=-t$. Then
$$A^0_1(-t)({\ell_0}_t-f(-t){\ell_2}_t)+A^1_1(-t) {\ell_1}_t+ A^3_1(-t) {\ell_3}_t=0$$
and as ${\ell_1}_t$, ${\ell_3}_t$ and ${\ell_0}_t-f(-t){\ell_2}_t$ are linearly independent we conclude that
$$A^0_1(-t)=A^1_1(-t)=A^3_1(-t)=0$$
so ${\ell_1}_t=0$, which is a contradiction. We have thus proved that $H_t$ is mobile and that $P$ and $S$ are fix.

Therefore we can assume that $H_t$ and ${C_I}_t$ are mobile and $S=S_I$, $P=P_I$ are fix. Let us now show that the line ${L_I}_t:=S_I\cap H_t$ is fix. As ${L_I}_t$ is a line by the point $P_I$ contained in $H_t$ we have $\varphi_t({L_I}_t)=P_I$. Thus,
$$H_I=\varphi_t(P_I)=\varphi_s(\varphi_t({L_I}_s))=\varphi_t(\varphi_s({L_I}_t)).$$
It follows that $\varphi_t({L_I}_s)=P_I$, which implies that ${L_I}_s\subset H_t$ for any $t,s$. As
$H_t$ is mobile we have $L_I$ fix. 

We are left to prove statements \textbf{iii)} and \textbf{iv)}. We can assume that $P=[0,1,0,0]=P_I$, $L_I=(x_0=x_3=0)$, $S=(x_3=0)$ and $H_t=(\ell_t=a(t) x_0 + b(t) x_3=0)$, where $a(t)$ and $b(t)$ are germs of holomorphic functions. Then
$$\varphi_t[x_0,x_1,x_2,x_3]=[{\ell_0}_t\cdot \ell_t, \alpha_t x_1x_3+q_t(x_0,x_2,x_3), {\ell_2}_t\cdot \ell_t, {\ell_3}_t\cdot \ell_t]$$
with ${\ell_i}_t\in A_1(x_0,x_2,x_3)$ (recall that all the elements of the linear system $\Gamma_{\varphi_t}$ are quadrics tangent to the plane $S$ at the point $P$). In particular the statement in \textbf{iv)} is proved. 

As $\varphi_0=\mathrm{Id}$ we conclude (dividing by an element of $\C^{\ast}$ if necessary) that 
$$\alpha_0=1, \, q_0=0,\, {\ell_0}_0=x_0,\, {\ell_2}_0=x_2, \, {\ell_3}_0=x_3, \, {\ell_1}_0=x_3, \, a(0)=0, \, b(0)=1.$$
We can therefore assume that $b(t)=1$. Moreover,

\begin{multline}
\varphi_s(\varphi_t(x))=[\ell_t\cdot {\ell_0}_s ({\ell_i}_t) \ell_s ({\ell_i}_t), \\
\alpha_s \alpha_t x_1 x_3 {\ell_3}_t + \alpha_s q_t (x_0,x_2,x_3) {\ell_3}_t + \ell_t q_s ({\ell_i}_t), 
\ell_t {\ell_2}_s({\ell_i}_t)\ell_s({\ell_i}_t), \ell_t {\ell_3}_s({\ell_i}_t) \ell_s({\ell_i}_t)]
\end{multline}
where $\ell_t=a(t)x_0+x_3$ and 
$$\ell_s({\ell_i}_t)=a(t) {\ell_0}_t + {\ell_3}_t.$$

The components of $\varphi_s\circ\varphi_t$ must admit a common linear factor for $\varphi_{t+s}$ is quadratic and the factor must be either $\ell_t$ or $\ell_s({\ell_i}_t)$ (up to an element of $\C^{\ast}$). On the other hand from the coefficient of $x_1x_3$ one concludes that the factor must be ${\ell_3}_t$. Therefore, the possibilities are:
\begin{enumerate}[\bf (I)]
\item ${\ell_3}_t=\beta_t \cdot \ell_t$,
\item ${\ell_3}_t=\beta_t \cdot \ell_s({\ell_i}_t)$.
\end{enumerate}
The second one would imply that $H_t$ is fix so we can assume that \textbf{(I)} holds. Therefore,
$$\varphi_s(\varphi_t(x))=[{\ell_0}_s({\ell_i}_t)\ell_s({\ell_i}_t), \alpha_s\alpha_t\beta_t x_1x_3+\beta_t\alpha_s q_t(x_0,x_2,x_3)+q_s({\ell_i}_t),{\ell_2}_s({\ell_i}_t)\ell_s({\ell_i}_t), \beta_t \ell_s^2({\ell_i}_t) ].$$ 
In particular,
\begin{eqnarray}
\ell_s({\ell_i}_t)&=&f(t,s) \cot \ell_{t+s} \label{eq:ferst}\\
\alpha_t\cdot \alpha_s \cdot \beta_t &=& f^2(t,s) \cdot \alpha_{t+s}
\end{eqnarray}
where $f(t,s)$ is holomorphic on $t$ and $s$. 
From \ref{eq:ferst} one concludes that ${\ell_0}_t\in A_1(x_1,x_3)$, which proves \textbf{iii)}. 
\end{proof}

\begin{thm}\label{thm:NFtangox1}
Let $\varphi_t$ be a quadratic flow in $\tang{O}\cup\tang{\times}\cup \lin$. Then, up to linear conjugation $\varphi_t$ is of one of the following types:
\begin{enumerate}[\bf I)]
\item $\varphi_t[x_0,x_1,x_2,x_3]=[x_0(x_3+tx_0),e^{\alpha t} x_1x_3+q_t(x_0,x_2,x_3), e^{\beta t} x_2 (x_3+tx_0), (x_3+tx_0)^2]$ where:
\begin{itemize}
\item[\bf a)] $\alpha=\beta=0$,
$$q_t(x_0,x_2,x_3)=\frac{c_{33}}{3}t^3 x_0^2 + c_{22} t x_2^2 + c_{33} t x_3^2 + \frac{c_{23}}{2} t^2 x_0x_2 + c_{33} t^2 x_0x_3 + c_{23} t x_2x_3$$
and either $c_{22},c_{23},c_{33}\in \{0,1\}$ or $c_{22}=c_{33}=1$ and $c_{23}\in\C^{\ast}$. 
\item[\bf b)] $\alpha=0, \, \beta \neq 0$, 
$$q_t(x_0,x_2,x_3)=\frac{c_{33}}{3}t^3 x_0^2+c_{22} (1-e^{2\beta t})x_2^2+c_{33} t x_3^2+ (1-e^{\beta t}) c_{02} x_0x_2 + c_{33} t^2 x_0 x_3$$
and $c_{22}, c_{33}, c_{02}\in \C$.
\item[\bf c)] $\alpha \neq 0$, $\beta=0$,
$$q_t(x_0,x_2,x_3)=(e^{\alpha t}-1)(c_{00} x_0^2 + c_{22} x_2^2 + c_{02} x_0 x_2)$$
and $(c_{00},c_{22},c_{02})=(0,0,0), \, (1,0,0),\,(0,0,1)$ or $(c_{00},1,0)$ with $c_{00}\in \C^{\ast}$.
\item[\bf d)] $\alpha \neq 0$, $\beta\neq 0$, 
$$q_t(x_0,x_2,x_3)=c_{00} (e^{\alpha t}-1)x_0^2+ c_{22} (e^{\alpha t}-e^{2\beta t}) x_2^2+c_{02} (e^{\alpha t}-e^{\beta t}) x_0x_2$$
and $c_{00}, c_{22}, c_{02}\in \C$.
\end{itemize}
\item $\varphi_t[x_0,x_1,x_2,x_3]=[x_0(x_3+tx_0),e^{\alpha t} x_1x_3+q_t(x_0,x_2,x_3), (x_2+tx_3+\frac{t^2}{2}x_0) (x_3+tx_0), (x_3+tx_0)^2]$ where 
\begin{itemize}
\item[\bf e)] $\alpha =0$,
\begin{multline*}
q_t(x_0,x_2,x_3)= t (c_{22} x_2+c_{02} x_0 +c_{23} x_3) x_2+ \bigg( \frac{c_{02}}{6} t^3 +\frac{c_{23}}{8} t^4 +\frac{c_{22}}{20} t^5 \bigg) x_0^2\\+
\bigg(\frac{c_{23}}{2} t^2 + \frac{c_{22}}{3}t^3 \bigg)(x_3^2+x_0x_2)+ \bigg( \frac{c_{02}}{2} t^2 + \frac{c_{23}}{2} t^3 + \frac{c_{22}}{4} t^4\bigg) x_0x_3 
+ c_{22} t^2 x_2 x_3
\end{multline*}
and $(c_{02}, c_{22}, c_{23})=(0,1,c_{23}), \, (c_{02},0,0)$ or $(c_{02},0,1)$ with $c_{23},c_{02} \in \C$.
\item[\bf f)] $\alpha \neq 0$,
\begin{multline*}
q_t(x_0,x_2,x_3)=(e^{\alpha t}-1) (c_{00} x_0^2+c_{22} x_2^2 + c_{02} x_0x_2) -\bigg(\frac{c_{22}}{4} t^4 +\frac{c_{02}}{2} t^2 \bigg) x_0^2\\-
c_22 t^2 (x_3^2+x_0x_2) - 2 c_22 t x_2 x_3 - (c_{02} t + c_{22} t^3) x_0x_3 
\end{multline*}
and $(c_{00},c_{22},c_{02})=(0,0,1),\,(1,0,0)$ or $(c_{00},0,1)$ with $c_{00}\in \C$.
\end{itemize} 
\end{enumerate}
Moreover two such different flows $\varphi_t$ and $\widetilde{\varphi}_t$ are linearly conjugated if and only if we are 
in cases \textbf{b)} or \textbf{d)} and there exists $\lambda \in \C^{\ast}$ such that $\tilde{q}_t=\lambda \cdot q_t$,
\end{thm}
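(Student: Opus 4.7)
The plan is to build on the normalization already achieved in Theorem \ref{thm:tangox}: after a linear conjugation I may assume $P_I=[0,1,0,0]$, $L_I=\{x_0=x_3=0\}$, $S=\{x_3=0\}$ and $H_t=\{a(t)x_0+x_3=0\}$, and (from the end of that proof) $\varphi_t$ already has the shape
\[
\varphi_t=[\ell_0^{\,t}\,\ell_t,\;\alpha_t\,x_1x_3+q_t(x_0,x_2,x_3),\;\ell_2^{\,t}\,\ell_t,\;\beta_t\,\ell_t^{\,2}]
\]
with $\ell_t=a(t)x_0+x_3$, $\ell_0^{\,t}\in A_1(x_0,x_3)$ and $\ell_2^{\,t}\in A_1(x_0,x_2,x_3)$. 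By an inner normalization of $x_1$ and of the target $x_3$-coordinate I may further take $\beta_t=1$. The first step is to extract the \emph{linear skeleton}: since $\varphi_t$ preserves the pencil of hyperplanes through $P_I$ and the pencil of hyperplanes through $L_I$, the projection from $P_I$ to $\pp^2$ with coordinates $[x_0,x_2,x_3]$ induces a linear flow $\Lambda_t=[\ell_0^{\,t},\ell_2^{\,t},\ell_t]$ that fixes the point $[0,1,0]$ (the image of $L_I$) and preserves the parametric line $\{\ell_t=0\}$.

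Invoking the classification of linear flows in $\pp^2$ of Cerveau--D\'eserti (already used in Theorem \ref{thm:NFgen}) and restricting to those $\Lambda_t$ with a marked fixed point and a parametrically moving invariant line, one obtains exactly two families: a diagonal family giving the skeleton $[x_0(x_3+tx_0),\,\cdot\,,\,e^{\beta t}x_2(x_3+tx_0),\,(x_3+tx_0)^2]$ of case \textbf{I}, and a $3\times 3$ Jordan family giving the skeleton $[x_0(x_3+tx_0),\,\cdot\,,\,(x_2+tx_3+\tfrac{t^2}{2}x_0)(x_3+tx_0),\,(x_3+tx_0)^2]$ of case \textbf{II}. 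A homothety in $t$ absorbs the normalization of $a(t)$ to $a(t)=t$; at this stage the only remaining unknowns are $\alpha_t\in\C$ and the polynomial $q_t\in A_2(x_0,x_2,x_3)$.

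Imposing $\varphi_s\circ\varphi_t=\varphi_{t+s}$ on the second coordinate and cancelling the common factor coming from $\ell_t$ produces, after elementary manipulation, the multiplicative equation $\alpha_{t+s}=\alpha_t\alpha_s$ (so $\alpha_t=e^{\alpha t}$ for some $\alpha\in\C$) together with a one-cocycle equation
\[
q_{t+s}(x)=\alpha_s\,q_t(x)+q_s\bigl(\Lambda_t(x)\bigr),
\]
where $\Lambda_t$ is the skeleton just determined. Differentiating at $s=0$ converts this into the linear ODE $\dot q_t=\alpha\,q_t+Q\circ\Lambda_t$ with $Q:=\dot q_0$, and the general solution is an explicit integral of products of $e^{\alpha t}$ against the eigendata of $\Lambda_t$. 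Splitting into the four subcases of \textbf{I} according to $(\alpha,\beta)\in\{(0,0),(0,\neq 0),(\neq 0,0),(\neq 0,\neq 0)\}$ and the two subcases of \textbf{II} according to $\alpha=0$ or $\alpha\neq 0$, and substituting the relevant Jordan/diagonal $\Lambda_t$, yields exactly the polynomials $q_t$ of cases \textbf{a)}--\textbf{f)}.

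The last step is to remove residual freedom. The subgroup of $\mathrm{PGL}(4,\C)$ preserving the marked data $(P_I,L_I,S,\{H_t\})$ and the chosen skeleton reduces to a one-parameter family of diagonal rescalings of $x_1$ and $x_2$; working out its induced action on the finite-dimensional spaces of admissible $q_t$ shows that all normalized coefficients $c_{ij}$ in cases \textbf{a)}, \textbf{c)}, \textbf{e)}, \textbf{f)} can be scaled into $\{0,1\}$-representatives (as stated), while cases \textbf{b)} and \textbf{d)} retain a one-dimensional scaling freedom $q_t\mapsto \lambda q_t$ — which is precisely the conjugation relation asserted at the end of the theorem. I expect the main obstacle to lie not in any conceptual step but in the bookkeeping of the cocycle ODE, especially in case \textbf{II} where the Jordan skeleton forces $q_t$ to acquire polynomial-in-$t$ coefficients of degree up to $5$, producing the rather lengthy formulas of case \textbf{e)}; the verification that the list is exhaustive and non-redundant is then a finite (if tedious) check.
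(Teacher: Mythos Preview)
Your overall strategy coincides with the paper's: start from the normal form obtained in Theorem~\ref{thm:tangox}, identify the induced linear flow $\Lambda_t=\psi_t$ on the $\pp^2$ with coordinates $[x_0,x_2,x_3]$, classify it (this produces the two skeletons \textbf{I} and \textbf{II} with $a(t)=t$, $\ell_0^{\,t}=x_0$, $\beta_t=1$, $\alpha_t=e^{\alpha t}$), and then solve the remaining cocycle equation for $q_t$. The paper solves this cocycle by writing out the six functional equations for the monomial coefficients $q_{ij}(t)$ and exploiting the $t\leftrightarrow s$ symmetry, whereas you differentiate at $s=0$ and integrate the resulting linear ODE; the two methods are equivalent and yield the same six-parameter families of $q_t$ before normalization.

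There is, however, a concrete error in your final step. The stabilizer of the data $(P_I,L_I,S,\{H_t\})$ together with the chosen skeleton is \emph{not} merely a one-parameter family of diagonal rescalings of $x_1,x_2$. In the paper's computation the admissible conjugations $A=[\mu_0,\mu_1,\mu_2,\mu_3]$ satisfy $\mu_0=x_0$, $\mu_3=x_3$, $\mu_2=\mu_2^0x_0+\mu_2^2x_2$ (with $\mu_2^0=0$ or $\mu_2^2=1$ depending on $\beta$), and $\mu_1=\mu_1^0x_0+\mu_1^1x_1+\mu_1^2x_2+\mu_1^3x_3$; the shear parameters $\mu_1^0,\mu_1^2,\mu_1^3$ (and in some cases $\mu_2^0$) are essential for killing the coefficients $c_{00},c_{02},c_{03}$ that appear in the general cocycle solution but are absent from the final lists \textbf{a)}--\textbf{f)}. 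A single scaling parameter cannot reduce a six-parameter family to the three-parameter normal forms stated. So your plan is sound up to the point where you describe the residual gauge group; there you need to allow the full affine group in $x_1$ (and the compatible piece in $x_2$), recompute its action on the $c_{ij}$, and redo the case-by-case normalization---this is where the asymmetry between cases \textbf{b)},\textbf{d)} (which retain a genuine $\C^\ast$-moduli) and \textbf{a)},\textbf{c)},\textbf{e)},\textbf{f)} actually comes from.
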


\begin{proof}
We will use the same notation as in theorem \ref{thm:tangox}. Recall that we had chosen coordinates such that
$P=P_I=[0,1,0,0]$, $L_I=(x_0=x_3=0)$, $S=(x_3=0)$ and $H_t=(a(t)x_0+x_3=0)$. Moreover, we had concluded that
$$\varphi_t[x_0,x_1,x_2,x_3]=[{\ell_0}_t \ell_t, \alpha_t x_1 x_3+q_t(x_0,x_2,x_3), {\ell_2}_t \ell_t, \beta_t \ell_t^2]$$
with ${\ell_0}_t\in A_1(x_0,x_3)$, ${\ell_2}_t\in A_1(x_0,x_2,x_3)$ such that 
$$\alpha_0=1, \, q_0=0,\, {\ell_0}_0=x_0, \, {\ell_2}_0=x_2, \, {\ell_3}_0=x_3, \, {\ell_1}_0=x_3,\, a(0)=0$$
and 
\begin{eqnarray}
\ell_s({\ell_i}_t)&=&f(t,s) \cdot \ell_{t+s}, \label{eq:first}\\
\alpha_t\cdot \alpha_s \cdot \beta_t &=& f^2(t,s) \cdot \alpha_{t+s}. \label{eq:second}
\end{eqnarray}
Moreover,
$$\varphi_s(\varphi_t(x))=[{\ell_0}_s({\ell_i}_t)\ell_s({\ell_i}_t), \alpha_s\alpha_t\beta_t x_1x_3+\beta_t\alpha_s q_t(x_0,x_2,x_3)+q_s({\ell_i}_t),{\ell_2}_s({\ell_i}_t) \ell_s({\ell_i}_t), \beta_t \ell_s^2({\ell_i}_t)]$$ 
and $\ell_t$ is a common factor of all the components of $\varphi_s \circ \varphi_t$. 

We denote now ${\ell_0}_t=A_0^0(t)x_0+A^3_0(t)x_3$. We can rewrite \ref{eq:first} as
$$(a(s)A^0_0(t)+a(t))x_0+(a(s)A^3_0(t)+1)x_3=f(t,s)(a(t+s)x_0+x_3)$$
so
\begin{eqnarray*}
a(s) A^0_0(t)+a(t) &=& f(t,s) a(t+s)\\
a(s) A^3_0(t)+1 &=& f(t,s).
\end{eqnarray*}
As $a(0)=0$ we have $f(t,0)=1$. On the other hand, as $\alpha_0=1$ one has
$$\alpha_t \beta_t = f^2(t,0) \alpha_t$$
so $\beta_t=1$. We can define $\psi_t[x_0,x_2,x_3]=[{\ell_0}_t,{\ell_2}_t,\ell_t]$, which is a linear flow. Moreover, 
$\ell_t=a(t)x_0+x_3$ and ${\ell_0}_t\in A_1(x_0,x_3)$. From the classification of linear flows on $\pp^2$ we derive that
either
$$a(t)=t, \, {\ell_0}_s=x_0, \, {\ell_2}_s=x_2 e^{\beta t}$$
or 
$$a(t)=t, \, {\ell_0}_s=x_0, \, {\ell_2}_s=x_2+tx_3+\frac{t^2}{2} x_0,$$
which in both cases yields $f(t,s)=1$ and $\alpha_t=e^{\alpha t}$ with $\alpha, \beta\in \C$. Thus, either
$$\varphi_t[x_0,x_1,x_2,x_3]=[x_0(x_3+tx_0),e^{\alpha t} x_1x_3+q_t(x_0,x_2,x_3), e^{\beta t} x_2 (x_3+tx_0), (x_3+tx_0)^2]$$
and
\begin{multline}
\varphi_s(\varphi_t)(x)=[x_0(x_3+(t+s)x_0), e^{\alpha(t+s)}x_1x_3+e^{\alpha s} q_t(x)+q_t({\ell_i}_t), \\
e^{\beta(t+s)}x_2(x_3+(t+s)x_0), (x_3+(t+s)x_0)^2]
\end{multline}
or 
$$\varphi_t[x_0,x_1,x_2,x_3]=\bigg[x_0(x_3+tx_0),e^{\alpha t} x_1x_3+q_t(x_0,x_2,x_3), \bigg(x_2+tx_3+\frac{t^2}{2}x_0\bigg) (x_3+tx_0), (x_3+tx_0)^2\bigg]$$
and 
\begin{multline}
\varphi_s(\varphi_t)(x)=\bigg[x_0(x_3+(t+s)x_0), e^{\alpha(t+s)}x_1x_3+e^{\alpha s} q_t(x)+q_t({\ell_i}_t), \\
\bigg(x_2+(t+s)x_3+\frac{(t+s)^2}{2}x_0\bigg)(x_3+(t+s)x_0), (x_3+(t+s)x_0)^2\bigg].
\end{multline}
We must impose now that either 
\begin{equation}\label{eq:A}
e^{\alpha s} q_t (x_0,x_2,x_3)+q_s(x_0, e^{\beta t}x_2, x_3+tx_0)=q_{t+s}(x_0,x_2,x_3)
\end{equation}
or
\begin{equation}\label{eq:B}
e^{\alpha s} q_t (x_0,x_2,x_3)+q_s(x_0, x_2+t x_3+\frac{t^2}{2} x_0, x_3+tx_0)=q_{t+s}(x_0,x_2,x_3).
\end{equation}
We will denote
$$q_t(x_0,x_2,x_3)=q_{00}(t) x_0^2+ q_{02}(t) x_0x_2+ q_{03}(t) x_0x_3+ q_{22}(t) x_2^2+ q_{23}(t) x_2x_3+q_{33}(t) x_3^2.$$
From \ref{eq:A} we obtain
\begin{eqnarray*}
q_{00}(t+s)&=&e^{\alpha s} q_{00}(t)+q_{00}(s)+t^2 q_{33}(s) + t q_{03}(s),\\
q_{02}(t+s)&=&e^{\alpha s} q_{02}(t)+e^{\beta t} (q_{02}(s)+tq_{23}(s)),\\
q_{03}(t+s)&=&e^{\alpha s} q_{03}(t) + q_{03}(s)+2t q_{33}(s),\\
q_{22}(t+s)&=&e^{\alpha s} q_{22}(t)+e^{2\beta t}q_{22}(s),\\
q_{23}(t+s)&=&e^{\alpha s} q_{23}(t) + e^{\beta t} q_{23}(s),\\
q_{33}(t+s)&=&e^{\alpha s} q_{33}(t)+q_{33}(s).
\end{eqnarray*}
Using the symmetry on $t,s$ on the previous equations one concludes that $q_t[x_0,x_2,x_3]$ verifies \ref{eq:A} if and only if it is of one forms:

\begin{itemize}
\item[\textbf{a)}] $(c_{00}t+\frac{c_{03}}{2} t^2+ \frac{c_{33}}{3} t^3) x_0^2 +c_{22} t x_2^2+ c_{33} t x_3^2+(c_{02}t+\frac{c_{23}}{2} t^2) x_0x_2+(c_{03} t+ c_{33} t^2) x_0x_3  + c_{23} t x_2x_3 ,$ for $\alpha=\beta=0$,
\item[\textbf{b)}] $(c_{00}t+\frac{c_{03}}{2} t^2+ \frac{c_{33}}{3} t^3) x_0^2 + c_{22} (1-e^{2 \beta t}) x_2^2 + c_{33} t x_3^2+((1-e^{\beta t}) c_{02}-c_{23}te^{\beta t}) x_0x_2+(c_{03} t + c_{33} t^2) x_0x_3  + c_{23} (1-e^{\beta t}) x_2x_3$, 
for $\alpha=0, \, \beta \neq 0$, or
\item[\textbf{c)}] $\big( c_{00} (e^{\alpha t} -1) - c_{03} t -c_{33} t^2 \big)x_0^2+ c_{22} (e^{\alpha t}- 1) x_2^2+ c_{33} (e^{\alpha t}-1) x_3^2 + \big( c_{02} (e^{\alpha t}-1) -c_{23} t \big)x_0x_2 + \big( c_{03}  (e^{\alpha t}-1) - 2 c_{33} t \big) x_0 x_3   + c_{23} (e^{\alpha t}-1)x_2 x_3$ for $\alpha \neq 0$ and $\beta=0$, 
\item[\textbf{d)}] $\big( c_{00} (e^{\alpha t} -1) - c_{03} t -c_{33} t^2 \big)x_0^2+ c_{22} (e^{\alpha t}- e^{2 \beta t}) x_2^2+ c_{33} (e^{\alpha t}-1) x_3^2 + \big( c_{02} (e^{\alpha t}-e^{\beta t}) -c_{23} t e^{\beta t} \big)x_0x_2 + \big( c_{03}  (e^{\alpha t}-1) - 2 c_{33} t \big) x_0 x_3   + c_{23} (e^{\alpha t}-e^{\beta t})x_2 x_3$  for $\alpha \neq 0$, $\beta \neq 0$, 
\end{itemize}
and $c_{ij}$ are constant. Note for instance that the last equation yields
$$e^{\alpha s} q_{33}(t)+q_{33}(s)=e^{\alpha t} q_{33}(s) + q_{33}(t)$$
which implies that $\frac{q_{33}(t)}{(e^{\alpha t}-1)}$ is constant for $\alpha \neq 0$.

A similar computation allows to conclude that $q_t$ verifies equation \ref{eq:B} if and only if $q_t(x_0,x_2,x_3)$ is 
\begin{itemize}
\item[\textbf{e)}] 
$ t q_{0} + \big(\frac{c_{03}}{2} t^2+ (c_{33}+\frac{c_{02}}{2}) \frac{t^3}{3} + \frac{c_{23}}{8} t^4 + \frac{c_{22}}{20} t^5\big)x_0^2 +  \big(\frac{c_{23}}{2} t^2 + \frac{c_{22}}{3} t^3 \big) x_3^2 + \big(\frac{c_{23}}{2} t^2 + \frac{c_{22}}{3} t^3  \big)x_0x_2 + \big((c_ {33}+\frac{c_{02}}{2})t^2 + \frac{c_{23}}{2} t^3+ \frac{c_{22}}{4} t^4 \big)x_0x_3 +c_{22} t^2 x_2 x_3$, for $\alpha=0$  
\item[\textbf{f)}]
$(e^{\alpha t}-1) q_0 - \big(\frac{c_{22}}{4}t^4 + \frac{c_{23}}{2} t^3 + (c_{33}+\frac{c_{02}}{2}) t^2 + c_{03} t \big) x_0^2- (tc_{23}+t^2 c_{22})(x_3^2+x_0x_2) - 2t c_{22} x_2x_3 - \big(tc_{02}+2tc_{33}+\frac{3 c_{23}}{2} t^2 + c_{22} t^3\big) x_0x_3 $, for $\alpha \neq 0$.
\end{itemize} 
where
$$q_0(x_0,x_2,x_3)=c_{00}x_0^2+c_{22}x_2^2+ c_{33}x_3^2+c_{02} x_0x_2+ c_{03} x_0x_3+c_{23} x_2x_3$$
and $c_{ij}$ are constant.

Let now $\varphi_t$ and $\tilde{\varphi}_t$ be two flows of one of the previous types: \textbf{a)},$\ldots$, \textbf{f)},  and $A=[\mu_0,\mu_1,\mu_2,\mu_3]$  with $\mu_i \in A_1(x_0,x_1,x_2,x_3)$ for $i=0,1,2,3$ a linear conjugation such that 

$$A \circ \varphi_t= \tilde{\varphi}_t \circ A. \qquad \qquad (\ast)$$

Using that $P_I=[0,1,0,0]$, $L_I=(x_0=x_3=0)$ and $S=(x_3^2=0)$ are fix and common for both flows, $H_t=(tx_0+x_3=0)$ and imposing $(\ast)$ on the last component we conclude that
$$\mu_0=x_0;\quad \mu_1=\mu_1^0 x_0+\mu_1^1 x_1 + \mu_1^2 x_2 + \mu_1^3 x_3; \quad \mu_2=\mu^0_2 x_0+\mu^2_2 x_2 + \mu^3_2 x_3;\quad \mu_3=x_3,$$
up to product by an element of $\C^{\ast}$.

If $\varphi_t$ and $\tilde{\varphi}_t$ are both of type \textbf{I)} then imposing now $(\ast)$ on the third component of the flows one obtains 

$$\mu^0_2 e^{\tilde{\beta}t} x_0 + \mu^2_2 e^{\tilde{\beta}t} x_2 + \mu^3_2 e^{\tilde{\beta}t} x_3 =\mu^0_2 x_0 +\mu_2^2 e^{\beta t} x_2 +\mu_2^3 (x_3+tx_0).$$

As $\mu^2_2\neq 0$ we conclude that $\beta=\tilde{\beta}$ and $\mu^3_2=0$. Moreover either $\beta$ or $\mu^0_2$ are zero and if $\mu^0_2=0$ then $\mu^2_2=1$. 

If both flows are of type \textbf{II)} analogously one obtains $\mu^3_2=0$ and $\mu_2^2=1$. 

Finally, it also implies that one flow of type \textbf{I)} and one flow of type \textbf{II)} cannot be conjugated. 

Imposing condition $(\ast)$ on the second component for two linearly conjugated flows of either type \textbf{I)} or two flows of type \textbf{II)} one concludes easily that in both cases $\alpha=\tilde{\alpha}$.
Moreover, writing explicitely this condition for the six cases \textbf{a)} to \textbf{f)} one obtains the list of the statement. Let us assume for instance that $\alpha=\beta=0$, one obtains:

\begin{multline*}
(\mu^0_1 x_0 + \mu^1_1 x_1 + \mu_1^2 x_2 + \mu_1^3 x_3) x_3 + 
(\tilde{c}_{00} t + \frac{\tilde{c}_{03}}{2} t^2 + \frac{\tilde{c}_{33}}{3} t^3) x_0^2+ (\tilde{c}_{02}t+ \frac{\tilde{c}_{23}}{2} t^2) x_0 (\mu^0_2 x_0 + \mu_2^2 x_2) + \\ \tilde{c}_{22} t (\mu_2^0 x_0 + \mu_2^2 x_2)^2+
\tilde{c}_{23} t (\mu^0_2 x_0 + \mu^2_2 x_2) x_3 + \tilde{c}_{33} t x_3^2+(\tilde{c}_{03} t + \tilde{c}_{33} t^2) x_0x_3=\\
\mu^0_1 x_0 (x_3+tx_0) + \mu_1^1 \big(x_1x_3+ (c_{00} t + \frac{ c_{03}}{2} t^2 + \frac{c_{33}}{3}t^3)x_0^2+
(c_{02} t + \frac{c_{23}}{2} t^2) x_0x_2 + c_{22} t x_2^2 + c_{23} t x_2x_3+ c_{33} t x_3^2 + \\ (c_{03} t + c_{33} t^2)x_0x_3\big) + \mu_1^2 x_2 (x_3+tx_0)+\mu_1^3(x_3+tx_0)^2,
\end{multline*}
which implies
\begin{eqnarray*}
\tilde{c}_{00} &=& \mu_1^1 c_{00}-\mu^0_2 \mu^1_1 (\mu_2^2)^{-1} c_{02} +\mu^0_2 (\mu^2_2)^{-2} \mu^1_1 c_{22} -\mu^0_2\mu^2_1 (\mu^2_2)^{-1}+\mu^0_1\\
\tilde{c}_{22} &=& (\mu_2^2)^{-2} \mu_1^1 c_{22} \\
\tilde{c}_{33} &=& \mu_1^1 c_{33} \\
\tilde{c}_{02} &=&  (\mu_2^2)^{-1} \mu_1^1 c_{02}-2 \mu^0_2 (\mu_2^2)^{-2} \mu_1^1 c_{22}+\mu_1^2 (\mu_2^2)^{-1}\\
\tilde{c}_{03} &=& \mu_1^1 c_{03} -\mu_2^0 (\mu_2^2)^{-1} \mu_1^1 c_{23}+2\mu_1^3\\
\tilde{c}_{23} &=& (\mu_2^2)^{-1} \mu_1^1 c_{23}.
\end{eqnarray*}
It is not difficult to see that up to a linear conjugation one can choose the parameters $c_{ij}$ as
in the statement. And analogous computation gives the rest of the flows.
\end{proof}

\begin{rem}
Computing $\ind(\varphi_t)$ it is not difficult to verify that the flows in \ref{thm:NFtangox1} belong indeed to $\tang{O}\cup\tang{\times}\cup\lin$  
\end{rem}

\begin{rem}
As in the generic case, up to a normalisation ot the time coordinate $t$ by an homotecy one can assume that $\alpha$ is either 0 or 1. 
\end{rem}

%%%%%%%%%%%%%%%%%%%%%%%%%%%%%%%%%%%%%%%%%%%%%%%%%%%%%%%%%%%%%%%%%%%%%%%%%%%%%%%%%%%%%%%%%%%%%%%%%%%%%%%%%%%%%%%%%%
%%%%%%%%%%%% SUBSECCIO, EL TERCER CAS, TAN(//) I OSC(X)
%%%%%%%%%%%%%%%%%%%%%%%%%%%%%%%%%%%%%%%%%%%%%%%%%%%%%%%%%%%%%%%%%%%%%%%%%%%%%%%%%%%%%%%%%%%%%%%%%%%%%%%%%%%%%%%%%%

\subsection{Flows in $\tang{/\!/}\cup \osc\cup \lin$}

\begin{defn}
Let $\varphi_t$ be a quadratic flow. We say that $\varphi_t$ is a \emph{polynomial flow} if there is an invariant chart
$\C^3$ such that ${\varphi_t}_{|\C^3}:\C^3\rightarrow \C^3$ is polynomial for each $t$.
\end{defn}

The following result was proved by D. Cerveau and J. Deserti for quadratic flows in $\pp^2$. The proof
for $\pp^3$ is almost identical, we include it for the sake of clarity.

\begin{prop}
Let $\varphi_t$ be a quadratic flow such that $H_t$ is fix. Then $\varphi_t$ is a polynomial flow.
\end{prop}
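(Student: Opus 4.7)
My plan is first to narrow the PRV class of $\varphi_t$ using the structural results of Section~3, and then to read off polynomiality from the PRV normal forms by a symmetric conjugation argument. By Theorem~\ref{thm:gen} every generic quadratic flow has $H_t$ mobile, and by Theorem~\ref{thm:tangox} the same holds for flows in $\tang{O}\cup\tang{\times}\cup\lin$; so the hypothesis $H_t$ fix forces $\varphi_t\in\tang{/\!/}\cup\osc\cup\lin$. Inspection of the table further shows that in these two non-linear classes $H=H_I$, hence the flow also fixes $H_I$. I would fix once and for all projective coordinates on $\pp^3$ so that $H=(x_3=0)$, and the goal becomes to show that $\varphi_t$ preserves the affine chart $\C^3=\pp^3\setminus H$ and is polynomial there.

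For each $t$ with $\varphi_t\notin\lin$, Pan-Ronga-Vust provides a decomposition $\varphi_t=N_t\circ\psi_t\circ M_t$, where $\psi_t$ is the PRV normal form in $\tang{/\!/}$ or $\osc$ and $M_t,N_t\in\mathrm{PGL}(4,\C)$. The central claim will be that both $M_t$ and $N_t$ may be chosen to preserve $H$. For $M_t$: since $\psi_t$ contracts $(x_3=0)$ to a point, $\varphi_t$ contracts $M_t^{-1}((x_3=0))$; by Lemma~\ref{lem:stop} this must coincide with the unique contracted hyperplane $H=(x_3=0)$, whence $M_t(H)=H$. For $N_t$: the same reasoning applied to $\varphi_t^{-1}=\varphi_{-t}=M_t^{-1}\circ\psi_t^{-1}\circ N_t^{-1}$ works, because the source-contracted hyperplane of $\varphi_t^{-1}$ is $H_I=H$ and, as visible in the table, $\psi_t^{-1}$ still has fourth component proportional to $x_3^2$ and hence contracts $(x_3=0)$.

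Once both $M_t$ and $N_t$ preserve $H$ they restrict to affine automorphisms of $\C^3$, while the two normal forms dehomogenize along $x_3$ to the plainly polynomial maps $(y_0,y_1-y_2^2,y_2)$ and $(y_0-y_1y_2,y_1,y_2)$ of $\C^3$; so $\varphi_t=N_t\circ\psi_t\circ M_t$ is a composition of polynomial self-maps of $\C^3$, hence polynomial. For the isolated values of $t$ at which $\varphi_t\in\lin$, polynomiality extends by continuity in $t$, since the condition that the fourth component of $\varphi_t$ be proportional to $x_3^2$ is closed in the flow parameter. The main obstacle I foresee is controlling both conjugating factors simultaneously on the appropriate side of the composition, which is precisely what the symmetric treatment of $\varphi_t$ and $\varphi_t^{-1}$ takes care of.
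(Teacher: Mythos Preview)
Your proof is correct and follows essentially the same strategy as the paper: write $\varphi_t$ via the PRV decomposition $A_t\,\psi\,B_t$ and show the conjugating linear maps preserve $H$. The paper only explicitly argues that $B_t$ preserves $H$ and then concludes somewhat tersely from $\ind(\varphi_t)\subset H$; your symmetric argument for $N_t$ via $\varphi_t^{-1}=\varphi_{-t}$ is precisely what is needed to see that $\varphi_t(\C^3)\subset\C^3$, and makes the conclusion cleaner.

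One small correction: your justification ``inspection of the table shows $H=H_I$'' is not quite right, since the table records the normal forms only, and for a single map $\varphi=N\psi M$ in $\tang{/\!/}$ or $\osc$ one has $H=M^{-1}(x_3=0)$ while $H_I=N(x_3=0)$, which need not coincide. The equality $H=H_I$ you need follows instead from Remark~\ref{rem:fixmb}(ii) (or directly from Proposition~\ref{cor:basicbir223}\ref{prop:4iv}): since $H_t$ is fix, $H_{I,t}=H_{-t}=H$. Equivalently, you can bypass $H_I$ altogether and simply say that $\varphi_t^{-1}=\varphi_{-t}$ contracts $H_{-t}=H$, which is all your argument for $N_t$ actually uses.
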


\begin{proof}
We can assume that $\varphi_t$ is not linear and that $H=(x_3=0)$. Therefore for every $t$ such that $\varphi_t$ is 
quadratic we can write the flow as 
$$A_t (x_0x_3, x_1x_3-x_2^2, x_2x_3, x_3^2) B_t$$
or 
$$A_t (x_0x_3-x_1x_2, x_1x_2,x_2x_3,x_3^2) B_t$$
where $A_t,B_t\in \mathrm{PGL}_4(\C)$ and $B_t$ preserves the hyperplan $x_3=0$. As $\ind(\varphi_t)$ is contained in 
$H$, i.e. at the infinity of the affine chart $x_3=1$ for almost every $t$ and thus for every $t$, we have that $\varphi_t$ is polynomial.
\end{proof}

\begin{lem}\label{lem:previthmtanosc}
Let $\varphi_t$ a quadratic flow in $\osc \cup \lin$. Then $P_I$ is fix.
\end{lem}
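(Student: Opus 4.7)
The strategy is by contradiction: assume that ${P_I}_t$ is mobile. By Lemma~\ref{lem:PmobHfix}, the hyperplane ${H_I}_t$ to which $\varphi_t$ blows ${P_I}_t$ up is then fix; but for a map in $\osc$, the table on page~\pageref{thetable} gives the identification $H_I=H$, so $H$ itself must be fix. By the preceding proposition, $\varphi_t$ is therefore a polynomial flow. Furthermore, the same table shows the crucial coincidence $P_t=\varphi_t(H)={P_I}_t$ in the $\osc$ case, so the fixed hyperplane $H$ is contracted by $\varphi_t$ onto the \emph{moving} point ${P_I}_t\in H$.

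The heart of the argument exploits the commutativity $\varphi_t\circ\varphi_s=\varphi_{t+s}$. For a generic $x\in H$ (i.e.\ $x\notin{C_I}_s$) one has $\varphi_s(x)={P_I}_s$; applying $\varphi_t$ and comparing with $\varphi_{t+s}(x)={P_I}_{t+s}$ (which holds for the same generic $x\in H$) gives
\[
\varphi_t({P_I}_s)={P_I}_{t+s}
\]
in the birational sense. If moreover ${P_I}_s\notin{C_I}_t=\ind(\varphi_t)$, then ${P_I}_s$ lies in $H\setminus{C_I}_t$ and is sent by $\varphi_t$ to the contraction point ${P_I}_t$. Combining these two evaluations, ${P_I}_{t+s}={P_I}_t$ for every pair $(t,s)$ satisfying ${P_I}_s\notin{C_I}_t$; the analytic map $s\mapsto{P_I}_s$ would then be constant on an open set and hence everywhere, contradicting the assumption.

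The only remaining possibility is that ${P_I}_s\in{C_I}_t$ for all $s,t$ near $0$. Since ${C_I}_t=L_{1,t}\cup L_{2,t}$ is a union of two lines meeting at ${P_I}_t$, the irreducible $1$-dimensional analytic set $\Gamma=\overline{\{{P_I}_s\}}\subset H$ must coincide, for each $t$, with one of these components; by continuity this pins down a \emph{fixed} line $L\subset H$, say $L=L_{1,t}$ for all $t$, along which ${P_I}_t$ moves, while $L_{2,t}$ is a mobile second line through ${P_I}_t$. The main obstacle is to rule out this residual configuration. The plan here is to choose coordinates with $H=(x_3=0)$, $L=(x_2=x_3=0)$ and ${P_I}_t=[1,p(t),0,0]$ with $p$ non-constant, and to write $\varphi_t$ in the form imposed by the $\osc$ structure (Proposition~\ref{cor:basicbir223} together with the osculation condition and the coincidence $P_t={P_I}_t$): one obtains $\varphi_t=[F_{t,0},F_{t,1},x_3A_t,x_3B_t]$ with $F_{t,0}|_H=q_t$, $F_{t,1}|_H=p(t)\,q_t$ for a quadratic form $q_t$ cutting out $L\cup L_{2,t}$ in $H$. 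Imposing the normalization $\varphi_0=\mathrm{id}$ together with the flow equation $\varphi_{t+s}=\varphi_t\circ\varphi_s$ on the last two components yields a functional equation forcing $p(t)$ to be constant, which is the desired contradiction.
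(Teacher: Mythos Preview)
Your approach is more geometric than the paper's computational one, and the overall structure (reduce to $H$ fixed, then contradict mobility of the contracted image, then handle a residual case with one fixed line in $C_I$) is sound. However there are two genuine problems.

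\textbf{The identity $P_t={P_I}_t$ is false.} The table entry ``$\varphi(H)=P=P_I$'' refers to the \emph{specific} representative chosen there, which happens to be essentially self-inverse; it is \emph{not} a class invariant of $\osc$. For a general $\varphi=A\,\varphi_0\,B\in\osc$ one has $P=A(P_0)$ while $P_I=B^{-1}(P_{I,0})$, and these need not agree. What is true (Proposition~\ref{cor:basicbir223}\,\ref{prop:4iv} and Remark~\ref{rem:fixmb}) is $P_t={P_I}_{-t}$. Your argument survives this correction: for generic $x\in H$ one has $\varphi_s(x)=P_s\in H$ (indeed $P_s={P_I}_{-s}\in{C_I}_{-s}\subset H$), and then $\varphi_t(P_s)=P_{t+s}$; if $P_s\notin{C_I}_t$ one also gets $\varphi_t(P_s)=P_t$, hence $P_{t+s}=P_t$ and $P$ is constant. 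So the first half of your argument is valid once you replace ${P_I}_s$ by $P_s$ throughout.

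\textbf{The residual case is not proved.} When $P_s\in{C_I}_t$ for all $s,t$ you correctly deduce that one line $L$ of ${C_I}_t$ is fixed, but the remaining step---ruling out that the second line $L_{2,t}$ stays mobile---is only asserted, not carried out. This is precisely the non-trivial part. The paper's route here is different and more efficient: it works in the affine chart $x_3=1$, writes $\widetilde\varphi_t=\widetilde q_t\,P_t+\widetilde L_t$ with $\widetilde q_t$ the (rank~$\le 2$) quadratic part, and expands $\widetilde q_s\circ\widetilde\varphi_t$. Requiring $\widetilde\varphi_s\circ\widetilde\varphi_t$ to remain quadratic kills the degree-$4$ and degree-$3$ terms, giving
\[
\widetilde q_s(P_t)=0\qquad\text{and}\qquad P_t\cdot A_{\widetilde q_s}\cdot\widetilde L_t^{\,T}=0.
\]
The first condition recovers your fixed line $L$ (so $\widetilde q_t=y\cdot\widetilde m_t$ up to coordinates and $P_t$ moves in $\{y=0\}$). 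The \emph{second} condition then forces $\widetilde m_s(P_t)=0$ for all $s,t$; since $P_t$ sweeps out the line $\{y=0\}$, this pins $\widetilde m_s$ to be a multiple of $y$ as well, i.e.\ $\widetilde q_s$ is a perfect square and $\varphi_t\in\tang{/\!/}\cup\lin$. Your sketch (writing out $\varphi_t$ with an osculation ansatz and a moving $p(t)$, then imposing the flow relation on the last two components) does not isolate the analogue of this degree-$3$ obstruction, and without it there is no evident mechanism that forces $p(t)$ to be constant. You should either supply that computation in full or, more simply, borrow the degree argument above for the residual case.
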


\begin{proof}
Assume that $\varphi_t\in \tang{/\!/}\cup \osc \cup \lin$ and that ${P_I}_t$ is mobile. We will show that 
$\varphi_t\in \tang{/\!/}\cup \lin$. By lemma \ref{lem:PmobHfix} we have that $H_t$ is fix. Without loss of generality 
we can assume that $H=(x_3=0)$ and $P_t={P_I}_{-t}=[\alpha_t,\beta_t,\gamma_t,0]$ (because ${P_I}_t\in H$). Therefore 
$$\varphi_t(x)=[\alpha_t q_t + {\ell_0}_t x_3, \beta_t q_t + {\ell_1}_t x_3, \gamma_t q_t+{\ell_2}_t x_3, {\ell_3}_t x_3]$$
with ${\ell_i}_t\in A_1(x_0,x_1,x_2,x_3)$ for $i=0,1,2,3$, $q_t\in A_2(x_0,x_1,x_2)$, $\mathrm{rank}\, q_t \leq 2$ and 
$\jac(\varphi_t)=f(t) x_3^4$ with $f(t)\in \C^{\ast}$. Moreover, by the previous lemma $\varphi_t$ is a polynomial flow. 
In the affine coordinates $x=\frac{x_0}{x_3}$, $y=\frac{x_1}{x_3}$ and $z=\frac{x_2}{x_3}$, the flow $\varphi_t$ has the expression:
$$\widetilde{\varphi}_t(x)=\bigg( \frac{\alpha_t\frac{q_t}{x_3^2}+\frac{{\ell_0}_t}{x_3}}{\frac{{\ell_3}_t}{x_3}},
\frac{\beta_t \frac{q_t}{x_3^2}+\frac{{\ell_1}_t}{x_3}}{\frac{{\ell_3}_t}{x_3}},\frac{\gamma_t \frac{q_t}{x_3^2}+\frac{{\ell_2}_t}{x_3}}{\frac{{\ell_3}_t}{x_3}} \bigg).$$
As $\widetilde{\varphi}_t$ is a polynomial we must have ${\ell_3}_t=\delta_t x_3$ so (dividing by $\delta_t$ if 
necessary) we have ${\ell_3}_t=x_3$. Thus, in affine coordinates
$$\widetilde{\varphi}_t=(\alpha_t \tilde{q}_t +{\tilde\ell}_{0_t}, \beta_t \tilde{q}_t +{\tilde\ell}_{1_t}, 
\gamma_t \tilde{q}_t +{\tilde\ell}_{2_t}),$$
where ${\tilde\ell}_{i_t}=A^i_t x + B^i_t y + C^i_t z + D^i_t$ with $A^i_t,B^i_t,C^i_t,D^i_t\in \C$ for $i=0,1,2$ and $\tilde{q}_t$ 
is a homogeneous polynomial of degree 2 in $x,y,z$. On the other hand $\widetilde{\varphi}_s \circ \widetilde{\varphi}_t$ must still 
be a quadratic flow. Let $A_{\widetilde{q}_t}$ be the bilinear form associated to $\widetilde{q}_t$, then denoting 
$\widetilde{L}_t=({\tilde\ell}_{0_t},{\tilde\ell}_{1_t},{\tilde\ell}_{2_t})$ one has
$$\tilde{q}_s\big(\widetilde{\varphi}_t(x,y,z)\big)=(\tilde{q}_t P_t + \widetilde{L}_t)\cdot A_{\tilde{q}_s} \cdot
(\tilde{q}_t P_t + \widetilde{L}_t)^t= \tilde{q}_t^2 (P_t \cdot A_{\tilde{q}_s} \cdot P_t^T)+ 2 \tilde{q}_t (P_t \cdot A_{\tilde{q}_s} \cdot \widetilde{L}_t^T)+\widetilde{L}_t \cdot A_{\tilde{q}_s} \cdot \widetilde{L}^T_t.$$
As the two first terms have degrees 4 and 3 respectively they must be zero. We conclude that
\begin{eqnarray}
P_t \cdot A_{\tilde{q}_s} \cdot P_t^T &=& \tilde{q}_s(P_t)=0   \label{eqn:prima}\\
P_t \cdot A_{\tilde{q}_s} \cdot \tilde{L}_t^T&=&0 \label{eqn:seconda}
\end{eqnarray}
As $P_t$ is mobile condition \ref{eqn:prima} implies that $\tilde{q}_s$ has a fixed component. We can assume that $q_t= x_1 \cdot m_t$ 
where $m_t\in A_1 (x_0,x_1,x_2)$ and $\beta_t=0$, i.e. $P_t=(\alpha_t,0,\gamma_t)$. Thus
$$\tilde{\varphi}_t(x,y,z)=\big(\alpha_t \cdot y \cdot \widetilde{m}_t + {\tilde\ell}_{0_t}, {\tilde\ell}_{1_t}, 
\gamma_t \cdot y \cdot \widetilde{m}_t + {\tilde\ell}_{2_t}\big),$$
where $\widetilde{m}_t$ is the homogeneous polynomial of degree 1 on $x,y,z$ obtained from $m_t$. Now, condition \ref{eqn:seconda} 
says that either ${\tilde\ell}_{1_t}=0$ or $\frac{1}{2} \widetilde{m}_s(P_t)=P_t \cdot A_{\tilde{q}_s}=0$. In the first case 
$\widetilde{\varphi}_t$ wouldn't be an automorphism, which contradicts the hypothesis. In the second case, as $P_t$ is mobile, 
$\widetilde{m}_s$ must be fix and in fact a multiple of $y$. Thus, one can assume 
$$\widetilde{\varphi}_t(x,y,z)=(\alpha_t y^2+ {\tilde\ell}_{0_t}, {\tilde\ell}_{1_t}, \gamma_t y^2+ {\tilde\ell}_{2_t})$$ and 
$\varphi_t\in \tang{/\!/}\cup \lin$, which ends the proof.
\end{proof}

\begin{lem}\label{lem:previthmosctanbis}
Let $\varphi_t$ be a quadratic flow in $\tang{/\!/}\cup \osc\cup\lin$. Then $H_t$ is fix.
\end{lem}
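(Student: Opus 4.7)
The plan is to split into two cases depending on whether ${P_I}_t$ is fix or mobile, using the fact from the table on page~\pageref{thetable} that $H=H_I$ for both $\tang{/\!/}$ and $\osc$, so that $H_t={H_I}_t$ at every $t$ where $\varphi_t$ is quadratic.

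If ${P_I}_t$ is mobile, Lemma~\ref{lem:previthmtanosc} prohibits $\varphi_t\in\osc\cup\lin$, so $\varphi_t\in\tang{/\!/}\cup\lin$; Lemma~\ref{lem:PmobHfix} then yields that ${H_I}_t$ is fix, and the identity $H=H_I$ gives that $H_t$ is fix. It remains to treat the case where ${P_I}_t$ equals a fixed point $P_I$, in which case $P=P_I$ by Remark~\ref{rem:fixmb}(i).

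In this remaining case, for $t\neq 0$ with $\varphi_t$ quadratic, $\varphi_t$ contracts $H_t$ to $P=P_I$, while for $s\neq 0$ the point $P_I$ is blown up by $\varphi_s$ to ${H_I}_s=H_s$. Applying the flow identity $\varphi_s\circ\varphi_t=\varphi_{s+t}$ to $H_t$ thus produces
\begin{equation}\label{eq:HsPIplan}
\varphi_r(H_a)=H_{r-a},\qquad a\neq 0,\ r\neq a.
\end{equation}
Next I would apply $\varphi_c\circ\varphi_a=\varphi_{c+a}$ to a generic $H_b$ and evaluate each side via~\eqref{eq:HsPIplan}: this yields $H_{c-a+b}=H_{c+a-b}$ for generic parameters. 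Setting $u=c-a+b$ and $v=c+a-b$, the assignment $(a,b,c)\mapsto (u,v)$ is a surjective linear map $\C^3\to\C^2$, so as $(a,b,c)$ ranges over the Zariski open set on which the non-degeneracy conditions of~\eqref{eq:HsPIplan} hold, $(u,v)$ covers an open subset of $\C^2$. The analyticity of $t\mapsto H_t$ then forces this family to be constant, i.e.\ $H_t$ is fix.

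The main obstacle will be keeping careful track of the non-degeneracy conditions that make~\eqref{eq:HsPIplan} meaningful (we must avoid $t=0$, where $\varphi_0=\mathrm{id}$ contracts nothing, and also the diagonal $a=r$, where $\varphi_r(H_r)=P_I$ degenerates to a point rather than a hyperplane), and then verifying that the surviving parameter region is still a Zariski open subset of $\C^3$, so that the final density argument applies.
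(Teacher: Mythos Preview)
Your argument is correct, and for the case $P_I$ fix it takes a genuinely different route from the paper. The paper proceeds by assuming $H_t=(\ell_t=0)$ mobile, writing $\varphi_t$ explicitly as $[\delta_t x_0\ell_t+q_t,\,{\ell_1}_t\ell_t,\,{\ell_2}_t\ell_t,\,{\ell_3}_t\ell_t]$, and then expanding $\varphi_s\circ\varphi_t$ to extract a divisibility relation on $\ell_t,\ell_{t+s},\ell_s({\ell_i}_t)$ that forces $H_t$ to be fix. Your argument instead stays at the level of incidence geometry: you use the flow relation twice to produce the functional identity $H_{c-a+b}=H_{c+a-b}$, whose genericity forces constancy. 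Your approach avoids all coordinate computation and makes transparent that the only ingredients are $P=P_I$, $H={H_I}$, and the commutativity of the flow; the paper's computation, on the other hand, also sets up the normal form used later in the classification (Theorem~\ref{thm:NFtangllosc}), so it is doing double duty.

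Two minor remarks. First, in the ${P_I}_t$-mobile case, invoking Lemma~\ref{lem:previthmtanosc} is unnecessary: Lemma~\ref{lem:PmobHfix} already gives ${H_I}_t$ fix for \emph{any} quadratic flow with ${P_I}_t$ mobile, and the identity $H_t={H_I}_t$ then finishes. Second, your derivation of $\varphi_r(H_a)=H_{r-a}$ passes through $\varphi_{r-a}(P_I)$, which is not a point evaluation since $P_I\in\ind(\varphi_{r-a})$; strictly one should argue by continuity (for generic $x_0\in H_a$ and $x_n\to x_0$ off $H_a$, the limit $\varphi_r(x_0)=\lim\varphi_{r-a}(\varphi_a(x_n))$ lies on the blow-up ${H_I}_{r-a}=H_{r-a}$, whence $\varphi_r(H_a)\subset H_{r-a}$, and both sides are hyperplanes since $P_I\in H_a$ by Proposition~\ref{cor:basicbir223}\ref{prop:4xi}). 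This is exactly the level of rigor used elsewhere in the paper (e.g.\ in the proof of Lemma~\ref{lem:PmobHfix}), so your presentation is consistent with the ambient style. You could also shorten step~2: since $H_t={H_I}_t=H_{-t}$, the single identity $\varphi_r(H_a)=H_{r-a}$ applied to $H_a=H_{-a}$ already yields $H_{r-a}=H_{r+a}$ for generic $r,a$, which suffices.
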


\begin{proof}
As ${P_I}_t$ mobile implies that $H$ is fix it is enough to consider the case $P_I$ fix. Assume that $H_t=(\ell_t=0)$ is mobile 
and $P_I=[1,0,0,0]$. Therefore
$$\varphi_t(x)=[{\ell_0}_t \ell_t+q_t, {\ell_1}_t \ell_t, {\ell_2}_t \ell_t, {\ell_3}_t \ell_t],$$
with ${\ell_0}_t, {\ell_1}_t, {\ell_2}_t, {\ell_3}_t \in A_1(x_0,x_1,x_2,x_3)$. Note also that $P_I\in (\ell_t=0)$ and that the 
strict transform of a line through $P_I$ is a conic through $P_I$, so in fact ${\ell_0}_t, {\ell_1}_t, {\ell_2}_t, {\ell_3}_t \in A_1(x_1,x_2,x_3)$. 
Moreover $P_I\in \ind(\varphi_t)$ implies that $q_t$ has no term in $x_0^2$. Note that for $\varphi_t$ to be birational we must have 
${\ell_1}_t, {\ell_2}_t, {\ell_3}_t$ linearly independent and note also that we can assume that ${\ell_0}_t=\delta_t x_0$ (modifying $q_t$ if necessary). Then
$$\varphi_t[x_0,x_1,x_2,x_3]=[\delta_t x_0+\frac{q_t(x_1,x_2,x_3)}{\ell_t(x_1,x_2,x_3)}, {\ell_1}_t, {\ell_2}_t, {\ell_3}_t].$$
As $\varphi_0=Id$ we have $q_0=0$, $\delta_0=1$ and ${\ell_i}_0=x_i$ for $i=1,2,3$. We impose now the condition 
$\varphi_s \circ \varphi_t=\varphi_{t+s}$. We have 
$$\varphi_s \circ \varphi_t (x)= \bigg[\delta_s \delta_t x_0+\delta_s \frac{q_t(x)}{\ell_t(x)}+
\frac{q_s({\ell_i}_t)}{{\ell_s}({\ell_i}_t)},{\ell_1}_s({\ell_i}_t),{\ell_2}_s({\ell_i}_t),{\ell_3}_s({\ell_i}_t)\bigg].$$
In particular $\Psi_t[x_1,x_2,x_3]=[{\ell_1}_t,{\ell_2}_t,{\ell_3}_t]$ is a linear flow in $\pp^2$. 
Thus we can assume that $\delta_t \cdot \delta_s=\delta_{t+s}$ and 
$$\delta_s\frac{q_t(x_1,x_2,x_3)}{\ell_t(x_1,x_2,x_3)}+\frac{q_s({\ell_i}_t)}{{\ell_s}({\ell_i}_t)}=\frac{q_{t+s}(x)}{\ell_{t+s}(x)}$$
so
\begin{equation}
(\delta_s \cdot  q_t \cdot \ell_s({\ell_i}_t)+q_s({\ell_i}_t)\cdot \ell_t)\cdot \ell_{t+s} = q_{t+s} \cdot \ell_s({\ell_i}_t) \cdot \ell_t \label{eqn:terza}.
\end{equation}
As $H_t$ is mobile $\ell_t$ cannot divide $\ell_{t+s}$ so it divides the first factor of the left part of equation \ref{eqn:terza}, 
which implies that $\ell_t$ divides the product $q_t \cdot \ell_s({\ell_i}_t)$. If $\ell_t$ divides $q_t$ for all $t$ the flow is linear, 
which is a contradiction so we conclude that $\ell_t$ is a non-zero scalar multiple of $\ell_s({\ell_i}_t)$. A similar argument shows that 
$\ell_{t+s}$ is a non-zero scalar multiple of $\ell_s({\ell_i}_t)$. Again this would imply that $H_t$ is fix. 
\end{proof}

\begin{thm}\label{thm:NFtangllosc}
Let $\varphi_t$ be a quadratic flow in $\tang{/\!/}\cup \osc \cup \lin$. Then:
\begin{enumerate}[\bf i)]
\item $H$ is fix.
\item $\varphi_t$ is a polynomial flow.
\item If $\varphi_t\in \tang{/\!/}\cup \lin$ then $C_I$ is fix.
\item If $\varphi_t\in \osc\cup \lin$ then $P_I$ is fix. 
\item If $P_I$ is fix then $\varphi_t$ preserves the family of hyperplanes through $P_I$.
\item If $C_I$ is fix then $\varphi_t$ preserves the family of hyperplanes through a line $L$ contained in $C_I$.
\end{enumerate}
Moreover, up to linear conjugation, $\varphi_t$ belongs to the following list:
\begin{enumerate}[\bf I)]
 \item If ${P_I}_t$ is mobile and $\varphi_t\in \tang{/\!/}\cup \lin$.
\begin{enumerate}[\bf a)]
\item $\varphi_t(x)=[a (e^{2 \mu t}-e^{\alpha t}) x_1^2+ e^{\alpha t} x_0 x_3, e^{\mu t} x_1x_3, 
b(e^{2\mu t}-e^{\beta t}) x_1^2+e^{\beta t} x_2x_3, x_3^2]$,
\item $\varphi_t(x)=[a(e^{2 \mu t}-e^{\mu t}) x_1^2+ e^{\mu t} (x_0+tx_1)x_3, e^{\mu t} x_1x_3, 
b(e^{2\mu t}-e^{\beta t})x_1^2+e^{\beta t} x_2x_3, x_3^2]$,
\item $\varphi_t(x)=[a(e^{2 \mu t}-e^{\alpha t}) x_1^2+e^{\alpha t} x_0x_3, e^{\mu t} x_1x_3, 
b(e^{2 \mu t}-1)x_1^2+x_2x_3+tx_3^2, x_3^2]$,
\item $\varphi_t(x)=[a(e^{2\mu t}-e^{\mu t})x_1^2+e^{\mu t}(x_0+t x_1)x_3, e^{\mu t}x_1x_3,
b(e^{2 \mu t}-1) x_1^2+x_2x_3+tx_3^2, x_3^2]$.
\end{enumerate}
with $a,b\neq 0$.
\item If $P_I$ is fix then:
\begin{enumerate}[\bf 1)]
%1
\item $\varphi_t(x)=[e^{\delta t} x_0x_3+ 2tB e^{\alpha t}x_1x_3+
2tC x_2x_3+ t^2 C x_3^2+ A(e^{2 \alpha t}-e^{\delta t})x_1^2+2B(e^{\alpha t}-e^{\delta t})x_1x_2+C(1-e^{\delta t})x_2^2, 
e^{\alpha t}x_1x_3, x_2x_3+tx_3^2, x_3^2]$ with $\delta \neq 0$, $\delta \neq \alpha$, $\delta\neq 2 \alpha$, \label{flow:ix3first}
%2
\item $\varphi_t(x)=[x_0x_3+2t B e^{\alpha t}x_1x_3+
(bt+C t^2)x_2x_3+(\frac{b}{2}t^2+\frac{C}{3}t^3)x_3^2+A(e^{2 \alpha t}-1)x_1^2+2B(e^{\alpha t}-1)x_1x_2, 
e^{\alpha t} x_1x_3, x_2x_3+tx_3^2, x_3^2]$ with $\alpha \neq 0$, \label{flow:ix3second},
%3
\item $\varphi_t(x)=[x_0x_3+(a t+B t^2)x_1x_3+(b t +C t^2)x_2x_3+(\frac{b}{2}t^2+\frac{C}{3}t^3)x_3^2+At x_1^2+2Bt x_1x_2+C t x_2^2, 
x_1x_3, x_2x_3+tx_3^2, x_3^2]$,\label{flow:ix3third}
%4
\item $\varphi_t(x)=[e^{\delta t} x_0x_3+(a t+B t^2) e^{\delta t}x_1x_3+2 C tx_2x_3+C t^2 x_3^2+
A ( e^{2\delta t}-e^{\delta t}) x_1^2+2B t e^{\delta t} x_1x_2+C(1-e^{\delta t})x_2^2, e^{\delta t}x_1x_3,
x_2x_3+tx_3^2, x_3^2]$ with $\delta \neq 0$,\label{flow:ix3fourth}
%5
\item $\varphi_t(x)=[e^{\delta t} x_0x_3+2tB e^{\frac{\delta t}{2}} x_1x_3+2 C t x_2x_3+ C t^2 x_3^2+
A t e^{\delta t} x_1^2+2B(e^{\frac{\delta t}{2}}-e^{\delta t})x_1x_2+C(1-e^{\delta t})x_2^2, \\ e^{\frac{\delta t}{2}}x_1x_3,
x_2x_3+tx_3^2, x_3^2]$ with $\delta \neq 0$, \label{flow:ix3fifth}
%6
\item  $\varphi_t(x)=[e^{\delta t}x_0x_1+ A(e^{-\alpha t}-e^{\delta t})x_2^2+2(B(e^{-\alpha t}-e^{\delta t})+A t e^{-\alpha t}) x_2x_3+ 
\big( C(e^{-\alpha t}-e^{\delta t})+(At^2+2 t B)e^{-\alpha t} \big)x_3^2, e^{\alpha t} x_1^2, (x_2+tx_3)x_1,x_3x_1]$ with $\delta \neq 0$, \label{eq:ambx1}\label{flow:ix1first}
%7
\item  $\varphi_t(x)=[e^{\delta t}x_0x_1+ A t e^{\delta t} x_2^2+(2 B t + A t^2)e^{\delta t} x_2x_3+ 
(C t + B t^2 + \frac{A}{3}t^3)e^{\delta t} x_3^2, e^{-\delta t} x_1^2, (x_2+tx_3)x_1,x_3x_1]$ with $\delta \neq 0$, \label{flow:ix1second}
%8
\item  $\varphi_t(x)=[e^{\delta t}x_0x_1+ a t e^{\delta t}x_1^2+A(e^{-\delta t}-e^{\delta t})x_2^2+2(B(e^{-\delta t}-e^{\delta t})+
A t e^{-\delta t}) x_2x_3+ \big( C(e^{-\delta t}-e^{\delta t})+(At^2+2 t B)e^{-\delta t} \big)x_3^2, e^{\delta t} x_1^2, (x_2+tx_3)x_1,x_3x_1]$ 
with $\delta \neq 0$, \label{flow:ix1third}
%9
\item  $\varphi_t(x)=[x_0x_1+ b t x_1x_2+\frac{b t^2}{2}x_1x_3+A(e^{-\alpha t }-1)x_2^2+2(B(e^{-\alpha t}-1)+A t e^{-\alpha t}) x_2x_3
+\big( C(e^{-\alpha t}-1)+(At^2+2 t B)e^{-\alpha t} \big)x_3^2, e^{\alpha t} x_1^2, (x_2+tx_3)x_1,x_3x_1]$ with $\alpha \neq 0$,
\label{eq:ambx1bis}\label{flow:ix1fourth}
%10
\item  $\varphi_t(x)=[x_0x_1+ a t x_1^2+b t x_1x_2+\frac{b t^2}{2}x_1x_3+A t x_2^2+2\big(B t+\frac{A}{2}t^2\big) x_2x_3+
\big(C t+ B t^2 + \frac{A}{3} t^3\big)x_3^2, x_1^2, (x_2+tx_3)x_1,x_3x_1]$,
\label{flow:ix1fifth}
%11
\item $\varphi_t(x)=[e^{\delta t} x_0x_3+ A (e^{2 \alpha t}-e^{\delta t})x_1^2+2 B (e^{(\alpha + \beta)t}-e^{\delta t}) x_1x_2+
C(e^{2 \beta t}-e^{\delta t})x_2^2, e^{\alpha t}x_1x_3, e^{\beta t} x_2x_3, x_3^2]$ with $\delta \neq 0$, 
$\delta \neq \alpha, \beta,\alpha + \beta, 2 \alpha, 2 \beta$, \label{flow:iix3first}
%12
\item $\varphi_t(x)=[x_0x_3+ c t x_3^2+A (e^{2 \alpha t}-1)x_1^2+2 B (e^{(\alpha + \beta)t}-1) x_1x_2+C(e^{2 \beta t}-1)x_2^2, 
e^{\alpha t}x_1x_3, e^{\beta t} x_2x_3, x_3^2]$ with $\alpha \neq 0$ and $\beta \neq 0$, \label{flow:iix3second}\label{eq:exam14}
%13
\item $\varphi_t(x)=[e^{\delta t} x_0x_3+ a t e^{\delta t} x_1x_3+A (e^{2 \delta t}-e^{\delta t})x_1^2+2 B (e^{(\delta + \beta)t}-e^{\delta t}) x_1x_2+
C(e^{2 \beta t}-e^{\delta t})x_2^2, e^{\delta t}x_1x_3, \\ e^{\beta t} x_2x_3, x_3^2]$ with $\delta \neq 0$, $\delta \neq \beta, 2 \beta$,
\label{flow:iix3third}
%14
\item $\varphi_t(x)=[e^{2\alpha t} x_0x_3+ A t e^{2\alpha t} x_1^2+2 B (e^{(\alpha + \beta)t}-e^{2\alpha t}) x_1x_2+C(e^{2 \beta t}-e^{2\alpha t})x_2^2, 
e^{\alpha t}x_1x_3, e^{\beta t} x_2x_3, x_3^2]$ with $\alpha \neq 0$, $\beta \neq 0$, $\alpha \neq \beta$,\label{flow:iix3fourth}
%15
\item $\varphi_t(x)=[e^{\delta t} x_0x_3+ a t e^{\delta t} x_1x_3+b t e^{\delta t}x_2x_3+A (e^{2 \delta t}-e^{\delta t})x_1^2+
2 B (e^{2\delta t}-e^{\delta t}) x_1x_2+C(e^{2 \delta t}-e^{\delta t})x_2^2, e^{\delta t}x_1x_3, e^{\delta t} x_2x_3, x_3^2]$ 
with $\delta \neq 0$,  \label{flow:iix3fifth}
%16
\item $\varphi_t(x)=[e^{2\alpha t} (x_0x_3+ A t x_1^2+2 B t x_1x_2+C tx_2^2), e^{\alpha t}x_1x_3, e^{\alpha t} x_2x_3, x_3^2]$ 
with $\alpha \neq 0$, \label{flow:iix3sixth}
%17
\item $\varphi_t(x)=[e^{(\alpha + \beta) t} x_0x_3+ A (e^{2 \alpha t}-e^{(\alpha + \beta) t})x_1^2+2 B t e^{(\alpha + \beta)t} x_1x_2+
C(e^{2 \beta t}-e^{(\alpha+\beta)t})x_2^2, e^{\alpha t}x_1x_3, e^{\beta t} x_2x_3, \\ x_3^2]$ with $\alpha\neq 0$, $\beta \neq 0$, $\alpha\neq \beta$, 
\label{flow:iix3seventh}
%18
\item $\varphi_t(x)=[e^{\delta t} x_0x_3+ a t e^{\delta t} x_1x_3+A (e^{2 \delta t}-e^{\delta t})x_1^2+2 B t e^{\delta t} x_1x_2+C(1-e^{\delta t})x_2^2, 
e^{\delta t}x_1x_3, x_2x_3, x_3^2]$ with $\delta \neq 0$, \label{flow:iix3eight}
%19
\item $\varphi_t(x)=[x_0x_3+ a t x_1x_3+c t x_3^2+A t x_1^2+2 B (e^{\beta t}-1) x_1x_2+C(e^{2 \beta t}-1)x_2^2, x_1x_3, e^{\beta t} x_2x_3, x_3^2]$
 with $\beta \neq 0$,\label{flow:iix3tenth}
%20
\item $\varphi_t(x)=[x_0x_3+t (c x_3^2+\epsilon_1 x_1^2+\epsilon_2 x_2^2), x_1x_3,x_2x_3,x_3^2]$, with $\epsilon_i=0$ or 1 for $i=1,2$,\label{flow:iix3eleventh}
%21
\item $\varphi_t(x)=[e^{\delta t}x_0x_3+2 A t x_1x_3+
\big(2 B te^{\alpha t}-2 A t \big) x_2x_3+ At^2 x_3^2+
A(1-e^{\delta t})x_1^2+2\big(B(e^{\alpha t}-e^{\delta t})-A(1-e^{\delta t})\big)x_1x_2+ 
\big(C(e^{2 \alpha t}-e^{\delta t})-2B(e^{\alpha t}-e^{\delta t})+A(1-e^{\delta t})\big)x_2^2, 
x_1x_3+(e^{\alpha t}-1)x_2x_3+tx_3^2, e^{\alpha t}x_2x_3,x_3^2]$ with $\delta \neq 0, \alpha,2 \alpha, \alpha\neq 0$,
\label{flow:iiix3first}
%22
\item $\varphi_t(x)=[e^{\delta t}x_0x_3+2 A t x_1x_3+
\big( (B t^2+bt)e^{\delta t}-2 A t\big) x_2x_3+ At^2 x_3^2+A(1-e^{\delta t})x_1^2+2\big(B t e^{\delta t}-A(1-e^{\delta t})\big)x_1x_2+ 
\big(C(e^{2 \delta t}-e^{\delta t})-2B t e^{\delta t}+A(1-e^{\delta t})\big)x_2^2, 
x_1x_3+(e^{\delta t}-1)x_2x_3+tx_3^2, e^{\delta t}x_2x_3,x_3^2]$ with $\delta \neq 0$, \label{flow:iiix3second}
%23
\item $\varphi_t(x)=[e^{2\alpha}x_0x_3+2 t A x_1x_3+
\big(2te^{\alpha t}B-2tA \big) x_2x_3+ At^2 x_3^2+
A(1-e^{2\alpha t})x_1^2+2\big(B(e^{\alpha t}-e^{2\alpha t})-A(1-e^{2\alpha t})\big)x_1x_2+ 
\big(C t e^{2 \alpha t}-2B(e^{\alpha t}-e^{2\alpha t})+A(1-e^{2\alpha t})\big)x_2^2, 
x_1x_3+(e^{\alpha t}-1)x_2x_3+tx_3^2, e^{\alpha t}x_2x_3,x_3^2]$ with $\alpha \neq 0$, \label{flow:iiix3third}
%24
\item $\varphi_t(x)=[x_0x_3+(at+At^2)x_1x_3+\big(2 t e^{\alpha t} B- A t^2-at)x_2x_3+
\big(\frac{A}{3} t^3+\frac{a}{2} t^2\big)x_3^2+A t x_1^2+ 2 \big(B(e^{\alpha t}-1)-At \big)x_1x_2+
\big(C(e^{2\alpha t}-1)-2B(e^{\alpha t}-1)+At\big)x_2^2, x_1x_3+(e^{\alpha t}-1)x_2x_3+tx_3^2, 
e^{\alpha t} x_2x_3, x_3^2]$ with $\alpha \neq 0$, \label{flow:iiix3fourth}
%25
\item $\varphi_t(x)=[e^{\delta t}x_0x_2-2A(e^{-\alpha t}-e^{\delta t}) x_1x_2+
A(e^{-\alpha t}-e^{\delta t})x_2^2+
\big(-2 A t e^{-\alpha t}+ 2 B (1-e^{-\alpha t}) \big)x_2x_3+
A(e^{-\alpha t}-e^{\delta t})x_1^2+2 \big(B(e^{-\alpha t}-e^{\delta t})+At e^{-\alpha t}\big)x_1x_3+
\big(C(e^{-\alpha t}-e^{\delta t})+At^2e^{-\alpha t}+2Bt e^{-\alpha t} \big)x_3^2, (x_1+(e^{\alpha t}-1)x_2+tx_3)x_2,
e^{\alpha t} x_2^2, x_2x_3]$ with $\delta \neq 0$, $\alpha\neq 0$, \label{flow:iiix2first}
%26
\item $\varphi_t(x)=[x_0x_2+\big(a t-2A(e^{-\alpha t}-1) \big)x_1x_2+
\big(-a t+A(e^{-\alpha t}-1) \big)x_2^2+
\big(\frac{a}{2}t^2-2 A t e^{-\alpha t}+ 2(1-e^{-\alpha t})B \big)x_2x_3+
A(e^{-\alpha t}-e^{\delta t})x_1^2+2 \big(B(e^{-\alpha t}-1)+At e^{-\alpha t}\big)x_1x_3+
\big(C(e^{-\alpha t}-1)+At^2e^{-\alpha t}+2Bt e^{-\alpha t} \big)x_3^2, (x_1+(e^{\alpha t}-1)x_2+tx_3)x_2,
e^{\alpha t} x_2^2, x_2x_3]$ with $\alpha \neq 0$,\label{flow:iiix2second}
%27
\item $\varphi_t(x)=[e^{\delta t}x_0x_2-2A t e^{\delta t}x_1x_2+
A t e^{\delta t} x_2^2-(2Bt+At^2) e^{\delta t} x_2x_3+
A t e^{\delta t} x_1^2+ \big(2B t + A t^2\big) e^{\delta t} x_1x_3+(C t + B t^2 + \frac{A}{3} t^3)e^{\delta t}
x_3^2, (x_1+(e^{-\delta t}-1)x_2+tx_3)x_2,e^{-\delta t} x_2^2, x_2x_3]$ with $\delta \neq 0$,\label{flow:iiix2third}
%28
\item $\varphi_t(x)=[e^{\delta t}x_0x_2-2A(e^{-\delta t}-e^{\delta t}) x_1x_2+
\big(b t e^{\delta t}+A(e^{-\delta t}-e^{\delta t}) \big)x_2^2+
\big(-2 A t e^{-\delta t}+ 2 B (1-e^{-\delta t}) \big)x_2x_3+
A(e^{-\delta t}-e^{\delta t})x_1^2+2 \big(B(e^{-\delta t}-e^{\delta t})+At e^{-\delta t}\big)x_1x_3+
\big(C(e^{-\delta t}-e^{\delta t})+At^2e^{-\delta t}+2Bt e^{-\delta t} \big)x_3^2, (x_1+(e^{\delta t}-1)x_2+tx_3)x_2,
e^{\delta t} x_2^2, x_2x_3]$ with $\delta \neq 0$,\label{flow:iiix2fourth}
%29
\item $\varphi_t(x)=[e^{\delta t}x_0x_3+ A(e^{2\alpha t}-e^{\delta t})x_1^2+2\big(B(e^{2 \alpha t}-e^{\delta t})+
A t e^{2\alpha t} \big)x_1x_2+ \big(C(e^{2\alpha t}-e^{\delta t})+(A t^2+2 B t)e^{2 \alpha t} \big)x_2^2,e^{\alpha t} (x_1+t x_2)x_3, 
e^{\alpha t} x_2x_3, x_3^2]$ with $\delta \neq 0$,\label{flow:ivx3first}
%30
\item $\varphi_t(x)=[x_0x_3+c t x^3_3+A(e^{2\alpha t}-1)x_1^2+2\big(B(e^{2 \alpha t}-1)+A t e^{2\alpha t} \big)x_1x_2+ 
\big(C(e^{2\alpha t}-1)+(A t^2+2 B t)e^{2 \alpha t} \big)x_2^2,e^{\alpha t} (x_1+t x_2)x_3, e^{\alpha t} x_2x_3, x_3^2]$ 
with $\alpha \neq 0$, \label{flow:ivx3second}
%31
\item $\varphi_t(x)=[e^{\delta t}x_0x_3+ at e^{\delta t} x_1 x_3 + \frac{a}{2} t^2 e^{\delta t} x_2x_3+A(e^{2\delta t}-e^{\delta t})x_1^2+
2\big(B(e^{2 \delta t}-e^{\delta t})+A t e^{2\delta t} \big)x_1x_2+ 
\big(C(e^{2\delta t}-e^{\delta t})+(A t^2+2 B t)e^{2 \delta t} \big)x_2^2,e^{\delta t} (x_1+t x_2)x_3, e^{\delta t} x_2x_3, x_3^2]$ with 
$\delta \neq 0$,\label{flow:ivx3third}
%32
\item $\varphi_t(x)=[e^{2\alpha t}x_0x_3+A t e^{2\alpha t} x_1^2+(2B t +A t^2) e^{2 \alpha t} x_1x_2+ \big( Ct+B t^2+ 
\frac{A}{3} t^3) e^{2\alpha t} \big)x_2^2,e^{\alpha t} (x_1+t x_2)x_3, e^{\alpha t} x_2x_3, \\ x_3^2]$ with $\delta \neq 0$.\label{flow:ivx3fourth}
%33
\item $\varphi_t(x)=[e^{\delta t} x_0x_2+  2 A t e^{\alpha t} x_1x_2+
A t^2 e^{\alpha t} x^2_2+2 t B x_2 x_3+ A(e^{\alpha t}-e^{\delta t})x_1^2+2 B (1-e^{\delta t})x_1 x_3+ C(e^{-\alpha t}-e^{\delta t})x_3^2, 
e^{\alpha t} (x_1+tx_2)x_2, e^{\alpha t} x_2^2, x_2x_3]$ with $\delta \neq 0$,\label{flow:ivx2first}
%34
\item $\varphi_t(x)=[ x_0x_2+ 2 A t e^{\alpha t} x_1x_2+
 A t^2 e^{\alpha t} x^2_2+(ct+B t^2) x_2x_3+ A(e^{\alpha t}-1)x_1^2+2 B t x_1 x_3+ C(e^{-\alpha t}-1)x_3^2, e^{\alpha t} (x_1+tx_2)x_2, 
e^{\alpha t} x_2^2, x_2x_3]$ with $\alpha \neq 0$, \label{flow:ivx2second}
%35
\item $\varphi_t(x)=[e^{\delta t} x_0x_2+  (at +A t^2) e^{\delta t}x_1x_2+ (\frac{a}{2}t^2+ \frac{A}{3} t^3)e^{\delta t} x^2_2+
2 t B x_2 x_3+ A t e^{\delta t} x_1^2+2 B (1-e^{\delta t})x_1 x_3+ C(e^{-\delta t}-e^{\delta t})x_3^2, e^{\delta t} (x_1+tx_2)x_2, 
e^{\delta t} x_2^2, x_2x_3]$ with $\delta \neq 0$,\label{flow:ivx2third}
%36
\item $\varphi_t(x)=[e^{\delta t} x_0x_2+ 2A t e^{-\delta t} x_1x_2+
A t^2 e^{-\delta t} x^2_2+ 2 B t x_2 x_3+ A(e^{-\delta t}-e^{\delta t})x_1^2+2 B (1-e^{\delta t})x_1 x_3+ C t e^{\delta t}x_3^2, 
e^{-\delta t} (x_1+tx_2)x_2, e^{-\delta t} x_2^2, x_2x_3]$ with $\delta \neq 0$.\label{flow:ivx2fourth}
%37
\item $\varphi_t(x)=[e^{\delta t} x_0x_3+\big(At^2+2tB\big)x_1x_3+\big(At^3+3Bt^2+2Ct\big)x_2x_3+\big(\frac{A}{4}t^4 + B t^3+ C t^2 \big) x^2_3 + 
A(1-e^{\delta t})x_1^2+ 2\big(B(1-e^{\delta t})+A t \big)x_1x_2 + \big(C(1-e^{\delta t})+2 B t + A t^2\big) x_2^2 , 
\big(x_1+tx_2+\frac{t^2}{2} x_3 \big)x_3, (x_2+tx_3)x_3, x_3^2]$ with $\delta \neq 0$.\label{flow:vx3first}
\end{enumerate}
\end{enumerate}
where all the parameters are constant complex numbers. Moreover, two such flows $\varphi_t^1$ and $\varphi_t^2$ are linearly conjugated if 
and only if they are of the same type with the same $\delta, \alpha, \beta$ and there exists $\lambda \in \C^{\ast}$ such that 
$(A_1,B_1,C_1,a_1,b_1,c_1)=\lambda (A_2,B_2,C_2,a_2,b_2,c_2)$ unless $\varphi_t^1$ and $\varphi_t^2$ are both in the cases \ref{flow:iix3first} or 
\ref{flow:iix3fourth} and the linear conjugation switches $x_1$ and $x_2$.
\end{thm}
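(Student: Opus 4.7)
My plan is to handle statements \textbf{i)}--\textbf{vi)} first using the lemmas already established in this section, and then carry out the normal form classification by solving the flow equation $\varphi_s\circ\varphi_t=\varphi_{t+s}$ case by case. Lemma \ref{lem:previthmosctanbis} gives \textbf{i)}, the preceding proposition on polynomial flows then yields \textbf{ii)}, and Lemma \ref{lem:previthmtanosc} gives \textbf{iv)}. For \textbf{iii)} I reason dually: if $\varphi_t\in\tang{/\!/}\cup\lin$ and $P_I$ is mobile, the explicit form obtained at the end of the proof of Lemma \ref{lem:previthmtanosc} already exhibits $C_I$ as a fixed line; if instead $P_I$ is fix, an argument parallel to that lemma applied to the family $\{{C_I}_t\}$ (using that $H$ is already fixed and contains ${C_I}_t$) forces $C_I$ fix. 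For \textbf{v)} and \textbf{vi)} I fix coordinates with $H=(x_3=0)$ and $P_I=[1,0,0,0]$ (resp.~with $L=(x_0=x_3=0)\subset C_I$); the polynomial structure of $\varphi_t$ in the chart $x_3=1$, together with the identities $\varphi_t(P_I)=H_I$ and $\varphi_t^{-1}(H_I)=P_I$, forces the last three components of $\varphi_t$ to be independent of $x_0$ (resp.~the last two to depend only on $x_1,x_2,x_3$), which is exactly the claimed invariance.

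The classification itself splits into two big cases according to whether $P_I$ is mobile or fix. In Case \textbf{I} ($P_I$ mobile, hence $\varphi_t\in\tang{/\!/}\cup\lin$), the proof of Lemma \ref{lem:previthmtanosc} already reduces $\varphi_t$ in the affine chart to the form $\tilde\varphi_t=(\alpha_ty^2+\tilde\ell_{0_t},\tilde\ell_{1_t},\gamma_ty^2+\tilde\ell_{2_t})$; imposing $\tilde\varphi_s\circ\tilde\varphi_t=\tilde\varphi_{t+s}$ becomes a system of scalar functional equations of the type $h(t+s)=e^{\mu s}h(t)+e^{\lambda t}h(s)$, whose holomorphic germ solutions are classically $A(e^{\mu t}-e^{\lambda t})$ (with polynomial confluences when $\mu=\lambda$), and this yields the four subcases \textbf{a)}--\textbf{d)}. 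In Case \textbf{II} ($P_I$ fix) the invariance of the pencil of hyperplanes through $P_I$ from \textbf{v)} induces a linear flow $\Psi_t$ on $\pp^2$; by the classification of linear flows of $\pp^2$ (cf.~Theorem \ref{thm:NFgen} or \cite{CerDes}) there are six possibilities for $\Psi_t$. For each one I set up coordinates so that the linear part of $\varphi_t$ takes the corresponding canonical form, leaving the freedom in a single quadratic correction $q_t$ entering the first component. The commutativity condition then becomes a linear system of functional equations for the coefficients of $q_t$, again of the exponential type above, which I solve coefficient by coefficient.

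The main obstacle is not conceptual but combinatorial: for each of the six shapes of $\Psi_t$ one must enumerate the degenerate (``resonant'') values of the exponents ($\delta=0$, $\delta=\alpha$, $\delta=2\alpha$, $\delta=\alpha+\beta$, etc.) at which the generic solution degenerates into a polynomial or mixed exponential--polynomial solution; this accounts for the length of the list \textbf{II 1)}--\textbf{34)} and explains why extra coefficients (the $a,b,c$ multiplying $t$ or $t^2$ terms) must be introduced precisely at resonances. The minimality of the list up to linear conjugation is then obtained by writing the most general $A\in\mathrm{PGL}(4)$ preserving the shared fixed data ($H$, and when present $P_I$, $L$ or $C_I$); this reduces $A$ to block form with only a handful of free entries, and substituting into $A\circ\varphi^1_t=\varphi^2_t\circ A$ forces $(A_1,B_1,C_1,a_1,b_1,c_1)=\lambda(A_2,B_2,C_2,a_2,b_2,c_2)$ for a common $\lambda\in\C^{\ast}$, with the single exception of the symmetric pairs \textbf{II \ref{flow:iix3first}} and \textbf{II \ref{flow:iix3fourth}} which also admit the switch $x_1\leftrightarrow x_2$. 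These final computations are tedious but routine and require no new idea beyond the functional-equation analysis.
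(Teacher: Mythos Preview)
Your overall strategy matches the paper's: reduce to the affine/polynomial form via Lemmas \ref{lem:previthmtanosc} and \ref{lem:previthmosctanbis}, split on whether $P_I$ is mobile or fix, extract the induced linear flow $\Psi_t$ on $\pp^2$, and solve the resulting exponential functional equations. Your treatment of Case \textbf{I)} and of statements \textbf{i)}, \textbf{ii)}, \textbf{iv)}, \textbf{v)} is essentially what the paper does.

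There is, however, a genuine gap in your handling of Case \textbf{II)}. You write that after normalizing $\Psi_t$ to one of the canonical linear flows on $\pp^2$ you ``set up coordinates so that the linear part of $\varphi_t$ takes the corresponding canonical form, leaving the freedom in a single quadratic correction $q_t$ entering the first component.'' This silently assumes you may simultaneously normalize $\Psi_t$ \emph{and} place $H$ at a preferred coordinate hyperplane. But once $P_I=[1,0,0,0]$ and $\Psi_t$ are fixed in canonical form on $(x_1,x_2,x_3)$, the hyperplane $H=(\ell=0)$ is already determined up to the finite symmetry group of that canonical form: the flow condition forces $\ell\circ\Psi_t=\lambda(t)\,\ell$, so $\ell$ must be a dual eigenvector of $\Psi_t$. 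For a generic diagonal $\Psi_t$ all three eigenvectors are conjugate and you may indeed take $\ell=x_3$; but for the Jordan-type forms (your types containing a nilpotent block) the eigenvectors are \emph{not} all conjugate, and the choices $\ell=x_1$, $\ell=x_2$, $\ell=x_3$ lead to genuinely different families. This is precisely what produces the flows in the list whose common factor is $x_1$ or $x_2$ rather than $x_3$ (items \ref{flow:ix1first}--\ref{flow:ix1fifth}, \ref{flow:iiix2first}--\ref{flow:iiix2fourth}, \ref{flow:ivx2first}--\ref{flow:ivx2fourth}). Your enumeration as stated would miss all of these. The paper makes this step explicit: after listing the five canonical $\Psi_t$, it determines for each one which coordinate hyperplanes can serve as $H$ (obtaining $\ell\in\{x_1,x_3\}$ for type (i), $\ell=x_3$ for type (ii) up to symmetry, $\ell\in\{x_2,x_3\}$ for types (iii) and (iv), $\ell=x_3$ for type (v)), and only then solves the functional equation for $q_t$ separately in each sub-case. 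You need to insert this branching before the coefficient analysis; without it the classification is incomplete.

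A minor point: your argument for \textbf{iii)} in the $P_I$-fix sub-case (``an argument parallel to that lemma applied to the family $\{{C_I}_t\}$'') is vague. In the paper, \textbf{iii)} and \textbf{vi)} are not proved a priori but are read off from the explicit normal forms once the classification is complete; you may do the same, but then you should say so rather than invoke an unspecified parallel argument.
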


\begin{rem}
As in the previous cases up to a normalization ot the time coordinate $t$ we can assume that: in case {\textbf I)} either $\alpha, \beta$ o $\mu$ are 1
(if not vanishing) and in case {\textbf II)} that either $\delta=1$ or we have $\delta=0$ and $\alpha$ or $\beta$ are 1 (if not vanishing). 
\end{rem}

\begin{exam}
Note that in case \ref{eq:exam14} for $A=B=C=1$ we have 
${C_I}_t=\big\{ \big( (e^{\alpha t}-1)x_1+(e^{\beta t}-1)x_2\big)\cdot \big( (e^{\alpha t}+1)x_1+(e^{\beta t}+1)x_2\big)=x_3=0\big\}$. Therefore 
$\varphi_t \in \osc$ for all $t\neq 0$ and there is no line $L\subset {C_I}_t$ which is fix. However $\varphi_t$ preserves the families of 
hyperplanes through $x_1=x_3=0$ or $x_2=x_3=0$.
\end{exam}

Indeed we have:

\begin{cor}
Let $\varphi_t$ be a quadratic flow in $\tang{/\!/}\cup \osc\cup \lin$. Then there is (at least) a line $L_I$ such that $\varphi_t$ 
preserves the family of hyperplanes through $L_I$. 
\end{cor}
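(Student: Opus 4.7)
The plan is to split into the two cases $\varphi_t\in \tang{/\!/}\cup\lin$ and $\varphi_t\in \osc\cup\lin$, using parts (i)--(vi) of Theorem \ref{thm:NFtangllosc}, and then, where necessary, appeal to the explicit normal forms from the same theorem.

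First I would treat the case $\varphi_t\in \tang{/\!/}\cup\lin$. By Theorem \ref{thm:NFtangllosc}(iii) the conic $C_I$ is fix. In the $\tang{/\!/}$ case the conic $C_I$ has rank $1$, so $C_I$ is, up to multiplicity, a single line $L_I\subset H$ (as computed in Proposition \ref{cor:basicbir223}(vi) one can write $C_I=(x_2^2=x_3=0)$, i.e.\ $L_I=(x_2=x_3=0)$). Part (vi) of Theorem \ref{thm:NFtangllosc} then applies verbatim and asserts that the pencil of hyperplanes through $L_I$ is $\varphi_t$-invariant. This settles the first case.

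The harder case is $\varphi_t\in \osc\cup\lin$. Here part (iv) gives $P_I$ fix and part (i) gives $H$ fix, so $\varphi_t$ restricts to a flow on the fixed plane $H$ with the fixed point $P_I\in H$. The plan is to exhibit a line $L_I\subset H$ through $P_I$ which is fix and such that moreover the ambient pencil of hyperplanes through $L_I$ is $\varphi_t$-invariant (note: $L_I$ being fix is not automatically enough, since a priori one must still check that hyperplanes through $L_I$ are sent to hyperplanes through $L_I$ and not, say, blown up to quadrics through $L_I$). To produce such $L_I$ I would read off the normal forms in case II of Theorem \ref{thm:NFtangllosc} corresponding to $\osc$ (i.e.\ those whose quadratic part does not reduce to the $\tang{/\!/}$ shape $Ax_1^2$). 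In each of these normal forms the hyperplane $H$ and the point $P_I$ are explicit, and a short computation shows that at least one of the lines $(x_1=x_3=0)$, $(x_2=x_3=0)$ (or the analogous pair when $H=(x_1=0)$ or $H=(x_2=0)$) is a valid choice of $L_I$: substituting $x_i=x_j=0$ into the four components of $\varphi_t$ produces two components identically zero and two whose ratio depends only on the remaining coordinates, which is exactly the condition that the pencil through that line be preserved. The example immediately preceding the corollary is a typical illustration: in case \ref{eq:exam14} with $A=B=C=1$ one has ${C_I}_t$ mobile, yet both $(x_1=x_3=0)$ and $(x_2=x_3=0)$ work.

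The main obstacle is thus not conceptual but bookkeeping: the $\osc$ portion of case II in Theorem \ref{thm:NFtangllosc} contains many normal forms, and one must go through them to identify a good candidate $L_I$ in each. A more conceptual but equivalent viewpoint is the following: the induced flow $\varphi_t|_H$ on the projective plane $H$ fixes $P_I$ and is linear (by the polynomial-flow property of part (ii)); the pencil of lines in $H$ through $P_I$ is therefore a $\pp^1$ on which $\varphi_t$ acts as a linear flow, and this $\pp^1$-flow must have at least one fixed point, yielding a fixed line $L_I\subset H$ through $P_I$. One then verifies from the normal form that this choice of $L_I$ actually satisfies the pencil-preservation property in $\pp^3$, and not merely set-wise invariance inside $H$.
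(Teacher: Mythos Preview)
Your proposal is correct and takes essentially the same route as the paper: both arguments ultimately appeal to the explicit normal forms of Theorem~\ref{thm:NFtangllosc} and read off a suitable line $L_I$ from the coordinates. The paper's proof is a one-line case check (it picks $L_I=\{x_1=x_3=0\}$ for case~\textbf{I)} and for flows~\ref{eq:ambx1},~\ref{eq:ambx1bis}, and $L_I=\{x_2=x_3=0\}$ otherwise), whereas you first short-circuit the $\tang{/\!/}$ case via parts~(iii) and~(vi) of the theorem, which is a mild economy; for the $\osc$ case you end up doing the same bookkeeping as the paper, and your conceptual aside about the induced linear flow on the pencil of lines through $P_I$ in $H$ is a nice heuristic but, as you acknowledge, still needs the normal-form verification to upgrade ``$L_I$ is fix as a line in $H$'' to ``the ambient pencil through $L_I$ is preserved.'' One small caveat: the normal forms in case~\textbf{II)} are not separated into $\tang{/\!/}$ versus $\osc$ subfamilies (the type depends on the parameter values), so in practice you must inspect all of them, exactly as the paper does.
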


\begin{proof}
It is enough to take $L_I=\{x_1=x_3=0\}$ for flows in case \textbf{I)} and for flows \ref{eq:ambx1} and \ref{eq:ambx1bis} in case 
\textbf{II)} and $L_I=\{x_2=x_3=0\}$ for the rest of flows.
\end{proof}

\begin{rem}
Even in the case when $P_I$ is fix one cannot obtain a result in the spirit of Corollary \ref{cor:gen} for $P_I\in L_I$ for every possible 
choice of $L_I$ as in the previous Corollary.
\end{rem}

\begin{proof}(of theorem \ref{thm:NFtangllosc})
We have already seen that $H$ is fix and that $\varphi_t$ is a polynomial flow.
We will start considering the case $P_I$ mobile. We resume the notation and computations of lemma \ref{lem:previthmtanosc}. 
As $\widetilde{\varphi}_0=\mathrm{Id}$ we have $\alpha_0=\gamma_0=0$ and $\tilde{\ell}_{0_0}=x$, $\tilde{\ell}_{1_0}=y$ and 
$\tilde{\ell}_{2_0}=z$. Let us impose now $\widetilde{\varphi}_{t+s}=\widetilde{\varphi}_s \circ \widetilde{\varphi}_t$. 
Using again that $P_t$ is mobile one concludes that $\tilde{\ell}_{1_t}=\mu_t \cdot y$ (note that otherwise the second component 
of the composition $\widetilde{\varphi}_s \circ \widetilde{\varphi}_t$ would have a quadratic term) and, as $\varphi_t$ is a flow,
 $\mu_t=e^{\mu t}$. The first component of $\widetilde{\varphi}_s \circ \widetilde{\varphi}_t$ is
\begin{multline*}
\alpha_s e^{2 \mu t} y^2 + A^0_s \alpha_t y^2+ A^0_s \tilde{\ell}_{0_t}+ B^0_s e^{\mu t} \tilde{\ell}_{1_t}+C^0_s \gamma_t y^2 + C^0_s \tilde{\ell}_{2_t}+D^0_s=\\ (\alpha_s e^{2 \mu t} + A^0_s \alpha_t + C^0_s \gamma_t) y^2 + \tilde{\ell}_{0_s}(\tilde{\ell}_{i_t}),
\end{multline*}
which yields 
\begin{equation}\label{eq:PImobil3}
\alpha_{t+s}=e^{2 \mu t} \alpha_s+ A^0_s \alpha_t + C^0_s \gamma_t
\end{equation}
and $\tilde{\ell}_{0_s}(\tilde{\ell}_{i_t})=\tilde{\ell}_{0_{t+s}}$. Analogously, for the third component one obtains 
\begin{equation}\label{eq:PImobil4}
\gamma_{t+s}=e^{2 \mu t} \gamma_s+ A^2_s \alpha_t + C^2_s \gamma_t
\end{equation}
and $\tilde{\ell}_{2_s}(\tilde{\ell}_{i_t})=\tilde{\ell}_{2_{t+s}}$. In particular $\widetilde{\Psi}_t=(\tilde{\ell}_{0_t}, e^{\mu t} y, \tilde{\ell}_{2_t})$ 
is a linear flow in $\C^3$. A direct computation shows that there are the following possibilities for $\widetilde{\Psi}_t$:
\begin{enumerate}[\bf (i)]
\item $\widetilde{\Psi}_t=(e^{\alpha t} x, e^{\mu t} y, e^{\beta t} z)$,
\item $\widetilde{\Psi}_t=(e^{\mu t}x+t e^{\mu t} y, e^{\mu t} y, e^{\beta t} z)$,
\item $\widetilde{\Psi}_t=(e^{\alpha t} x, e^{\mu t} y, z+t)$,
\item $\widetilde{\Psi}_t=(e^{\mu t} x+ t e^{\mu t} y, e^{\mu t} y, z+t)$.
\end{enumerate}
Therefore one can assume $C^0_t=A^2_t=0$ and equations \ref{eq:PImobil3} and \ref{eq:PImobil4} yield 
$$\alpha_t=a (e^{2 \mu t}-A^0_t), \qquad \gamma_t=b (e^{2\mu t}-C^2_t)$$
and note that $A^0_t$ is equal to $e^{\alpha t}$ or $e^{\mu t}$ and $\gamma_t$ is equal to $e^{\beta t}$ or $1$. 
We can assume $A^0_t=e^{\alpha t}$ and $\gamma_t=e^{\beta t}$, we obtain then the list of flows in \textbf{I)} and the statement in \textbf{vii)}.

\medskip

Let us tackle now the case $P_I$ fix. Note that we can use the same arguments as in lemma \ref{lem:previthmosctanbis} 
to conclude that one can assume $P_I=[1,0,0,0]$ and 
$$\varphi_t[x_0,x_1,x_2,x_3]=\big[\delta_t x_0 + \frac{q_t}{\ell_t}, {\ell_1}_t, {\ell_2}_t, {\ell_3}_t \big]$$
with ${\ell_1}_t, {\ell_2}_t, {\ell_3}_t, \ell_t\in A_1(x_1,x_2,x_3)$, $q_t\in A_2(x_1,x_2,x_3)$, $q_0=0$, $\delta_t=e^{\delta t}$
and $\ell_{i_0}=x_i$ for $i=1,2,3$. Moreover as $\Psi_t(x)=[{\ell_1}_t,{\ell_2}_t,{\ell_3}_t]$ is a linear flow on $\pp^2$ it belongs to the following list: 
\begin{enumerate}[\bf (i)]
\item $\Psi_t(x)=[e^{\alpha t} x_1, x_2+t x_3, x_3]$,
\item $\Psi_t(x)=[e^{\alpha t} x_1, e^{\beta t} x_2, x_3]$,
\item $\Psi_t(x)=[x_1+(e^{\alpha t}-1) x_2 +t x_3, e^{\alpha t} x_2, x_3]$,
\item $\Psi_t(x)=[e^{\alpha t} (x_1+t x_2), e^{\alpha t} x_2, x_3]$,
\item $\Psi_t(x)=[x_1+tx_2 + \frac{t^2}{2} x_3, x_2+tx_3, x_3]$.
\end{enumerate}
The statements in \textbf{v)} and \textbf{vi)} follow immediately. 
The condition $\varphi_{t+s}=\varphi_s\circ \varphi_t$ translates into 
$$\ell({\ell_i}_t)\cdot q_{t+s}= e^{\delta s}\cdot q_t \cdot \ell({\ell_i}_t) + \ell \cdot q_s({\ell_i}_t)$$
or equivalently 
\begin{equation}\label{eq:LaEquacio}
\ell({\ell_i}_t)\cdot ( q_{t+s}-e^{\delta s} q_t)= \ell \cdot q_s ({\ell_i}_t).
\end{equation}
If $(\ell({\ell_i}_t))=0)$ were mobile then $q_{t+s}-e^{\delta s} q_t$ has $\ell$ as a factor, which implies (taking $t=0$) 
that $q_s$ has $\ell$ as a fixed factor. However this would yield $\varphi_t$ linear for every $t$, which is a contradiction.
Therefore
\begin{equation}\label{eq:LaEqA}
\ell({\ell_i}_t)=\lambda(t) \ell  
\end{equation}
and
\begin{equation}\label{eq:LaEqB}
\lambda(t) (q_{t+s}-e^{\delta s} q_t)=q_s({\ell_i}_t).
\end{equation}
% As $\lambda(0)=1$ we can write 
% $$q_{t+s}=e^{\delta s} q_t + \lambda(t)^{-1} q_s({\ell_i}_t)$$
% and symmetry on $t$ and $s$  yields
% $$\lambda(t) (\lambda(s) e^{\delta s} q_t - q_t ({\ell_i}_s) = \lambda(s) ( \lambda(t) e^{\delta t} q_s - q_s({\ell_i}_t).$$
When we impose the condition \ref{eq:LaEqA} we conclude that we have the following possibilities:
\begin{enumerate}[\bf 1)]
\item $\ell=x_3$, $\lambda(t)=1$ and $\Psi_t$ is either of the previous flows,
\item $\ell=x_2$ and $\Psi_t$ is of types \textbf{iii)} or \textbf{iv)} and $\lambda(t)=e^{\alpha t}$,
\item  or $\ell=x_1$, $\lambda(t)=e^{\alpha t}$ and $\Psi_t$ is of type \textbf{i)}.
\end{enumerate}
Indeed, notice that in case \textbf{ii)} up to a change of coordinates one can assume that $\ell=x_3$, which is the reason why 
we have excluded the other possibilities of the previous list. 
We proceed now to determine $q_t$ in each case. 
\begin{enumerate}[\bf 1)]
%%%%%%%%%%%%%%%CAS 1 i)
\item $\ell=x_3$, $\lambda(t)=1$ and $\Psi_t$ of the following type:
\begin{enumerate}[\bf i)]
 \item We can assume $q_t(x)=x_3 v_t + \tilde{q}_t$ where $\tilde{q}_t\in A_2(x_1,x_2)$ and
$v_t\in A_1(x_1,x_2,x_3)$. Equation \ref{eq:LaEqB} is rewritten as 
\begin{equation}\label{eq:here}
x_3(v_{t+s}-e^{\delta s} v_t)+\tilde{q}_{t+s}-e^{\delta_s} \tilde{q}_t=x_3 v_s({\ell_i}_t)+\tilde{q}_s({\ell_i}_t).
\end{equation}
We denote $\tilde{q}_t=A(t) x_1^2+2 B(t) x_1 x_2 + C(t) x_2^2$ and $v_t=a(t) x_1 + b(t) x_2+ c(t) x_3$. Then equation
 \ref{eq:here} is equivalent to
\begin{eqnarray*}
A(t+s)-e^{\delta s} A(t) &=& e^{2 \alpha t} A(s) \\
B(t+s)-e^{\delta s} B(t) &=& e^{\alpha t} B(s) \\
C(t+s)-e^{\delta s} C(t) &=& C(s) \\
a(t+s)-e^{\delta s} a(t) &=& e^{\alpha t} a(s)+2te^{\alpha t} B(s) \\
b(t+s)-e^{\delta s} b(t) &=& b(s)+2tC(s) \\
c(t+s)-e^{\delta s} c(t) &=& c(s) + t^2 C(s) + t b(s)
\end{eqnarray*}
yielding solutions
\begin{eqnarray*}
A(t) &=& A (e^{2 \alpha t}-e^{\delta t}) \quad (\mathrm{or} \quad A t e^{\delta t} \quad \mathrm{for} \quad \delta=2 \alpha) \\
B(t) &=& B (e^{\alpha t}-e^{\delta t})  \quad (\mathrm{or} \quad Bt e^{\delta t} \quad \mathrm{for} \quad \delta=\alpha)  \\
C(t) &=& C (1-e^{\delta t}) \quad \mathrm{(or} \quad Ct \quad \mathrm{for} \quad \delta=0)  \\
a(t) &=& a (e^{\alpha t}-e^{\delta t})+2 t e^{\alpha t} B \quad (\mathrm{or} \quad at e^{\delta t}+Bt^2 e^{\delta t} \quad \mathrm{for} \quad \delta=\alpha)  \\
b(t) &=& b(1-e^{\delta t})+2 t C \quad \mathrm{(or} \quad b t+C t^2 \quad \mathrm{for} \quad \delta=0) \\
c(t) &=& c(1-e^{\delta t})+t b + t^2 C \quad \mathrm{(or} \quad c t+ \frac{b}{2} t^2 + C t^3 \quad \mathrm{for} \quad \delta=0) 
\end{eqnarray*}
and the flows
\begin{itemize}
\item[$\bullet$] $\varphi_t(x)=[e^{\delta t} x_0x_3+\big(a(e^{\alpha t}-e^{\delta t})+2tB e^{\alpha t} \big)x_1x_3+
\big(b(1-e^{\delta t})+2tC \big)x_2x_3+\big(c(1-e^{\delta t})+t^2 C+tb \big)x_3^2+
A(e^{2 \alpha t}-e^{\delta t})x_1^2+2B(e^{\alpha t}-e^{\delta t})x_1x_2+C(1-e^{\delta t})x_2^2, e^{\alpha t}x_1x_3,
x_2x_3+tx_3^2, x_3^2]$ with $\delta \neq 0$, $\delta \neq \alpha$, $\delta\neq 2 \alpha$.
\item[$\bullet$] $\varphi_t(x)=[x_0x_3+(a(e^{\alpha t}-1)+2t B e^{\alpha t})x_1x_3+
(bt+C t^2)x_2x_3+(ct+\frac{b}{2}t^2+\frac{C}{3}t^3)x_3^2+A(e^{2 \alpha t}-1)x_1^2+2B(e^{\alpha t}-1)x_1x_2+Ct x_2^2, 
e^{\alpha t} x_1x_3, x_2x_3+tx_3^2, x_3^2]$ with $\alpha \neq 0$, 
\item[$\bullet$] $\varphi_t(x)=[x_0x_3+(a t+B t^2)x_1x_3+(b t +C t^2)x_2x_3+(ct+ \frac{b}{2}t^2+\frac{C}{3}t^3)x_3^2+
At x_1^2+2Bt x_1x_2+Ctx_2^2, x_1x_3, x_2x_3+tx_3^2, x_3^2]$,
\item[$\bullet$] $\varphi_t(x)=[e^{\delta t} x_0x_3+(a t+B t^2) e^{\delta t}x_1x_3+
\big(b(1-e^{\delta t})+2tC \big)x_2x_3+\big(c(1-e^{\delta t})+t^2 C+tb \big)x_3^2+
A (e^{2\delta t}- e^{\delta t}) x_1^2+2B t e^{\delta t} x_1x_2+C(1-e^{\delta t})x_2^2, e^{\delta t}x_1x_3,
x_2x_3+tx_3^2, x_3^2]$ with $\delta \neq 0$,
\item[$\bullet$] $\varphi_t(x)=[e^{\delta t} x_0x_3+\big(a(e^{\frac{\delta t}{2}}-e^{\delta t})+2tB e^{\frac{\delta t}{2}}\big)x_1x_3+
\big(b(1-e^{\delta t})+2tC \big)x_2x_3+\big(c(1-e^{\delta t})+t^2 C+tb \big)x_3^2+
A t e^{\delta t} x_1^2+2B(e^{\frac{\delta t}{2}}-e^{\delta t})x_1x_2+C(1-e^{\delta t})x_2^2, e^{\frac{\delta t}{2}}x_1x_3,
x_2x_3+tx_3^2, x_3^2]$ with $\delta \neq 0$.
\end{itemize}
\end{enumerate}
\end{enumerate}

For the rest of the cases see the analogous computations in the Appendix A. 

\medskip

We will discuss now when two flows of the previous list are linearly conjugated. Let $\varphi_t^1·$ and $\varphi_t^2$ be two flows of the 
previous lists such that there is a linear conjugation $A=[\mu_0,\mu_1,\mu_2,\mu_3]$ with $\mu_i\in A_1(x_0,x_1,x_2,x_3)$ such that
$$\varphi_t^2= A^{-1} \circ \varphi_t^1 \circ A.$$
It is not difficult to convince oneself that in order to be linearly conjugated $\varphi_t^1$ and $\varphi_t^2$ must be both 
of type \textbf{I)} or \textbf{II)}. 

Assume that they are of type \textbf{I)}. As $H=(x_3=0)$ is fix we can assume that $\mu_3=x_3$ and that $A$ induces a linear conjugation 
between the induced polynomial flows $\widetilde{\varphi}_t^1$ and $\widetilde{\varphi}_t^2$ in $\C^3$. One can check that the only 
possibilities of conjugation are between flows of the same type (\textbf{a)}, \textbf{b)}, \textbf{c)} or \textbf{d)}) with 
$\alpha_1=\alpha_2$ and $\beta_1=\beta_2$. In this case the only conjugations are those which allow to assume that the pair $(a,b)$ is of 
the form $(a,1)$ with $a\in \C^{\ast}$. 

We assume now that $\varphi^1_t, \varphi^2_t$ are two flows of type \textbf{II)} and $A=[\mu_0,\mu_1,\mu_2,\mu_3]$ with 
$\mu_i\in A_1(x_0,x_1,x_2,x_3)$ such that 
$$\varphi_t^2=A^{-1}\circ \varphi_t^1 \circ A.$$
As $P_I=[1,0,0,0]$ and the planes $H^1$ and $H^2$ corresponding to the flows $\varphi_t^1$ and $\varphi_t^2$ respectively are of the
form $H^1=(x_{i_1}=0)$ or $H^2=(x_{i_2}=0)$, we have $\mu_i\in A_i(x_1,x_2,x_3)$ for $i=1,2,3$ and $\mu_{i_2}=\mu^{i_1}_{i_2} x_{i_1}$. 
Moreover we can choose $\mu_{i_2}^{i_1}=1$. This implies that the linear flows in $\Psi_t^1$ , $\Psi_t^2$ in $\pp^2$ corresponding to 
$\varphi_t^1, \varphi_t^2$ (same notation as before) are linearly conjugated by $\tilde{A}=[\mu_1,\mu_2,\mu_3]$. One concludes 
that $\tilde{A}=[x_1,x_2,x_3]$ and $\Psi_t^1=\Psi_t^2$. In particular $i_1=i_2$ so $H^1=H^2$. We denote now
$$\mu_0(x)=\mu_0^0 x_0 + \mu_0^1 x_1 + \mu_0^2 x_2 + \mu_0^3 x_3$$
and impose $A\circ \varphi_t^2=\varphi_t^1 \circ A$ on the first component for each of the previous cases. 
Let us make the computations for the first case (which corresponds to the flow 1):

\begin{multline*}
\mu_0^0 \big( e^{\delta_2 t} x_0x_3 + (a_2 (e^{\alpha_2 t}-e^{\delta_2 t})+2t B_2 e^{\alpha_2 t})x_1x_3+
(b_2(1-e^{\delta_2 t})+2 t C_2) x_2 x_3 +(c_2 (1-e^{\delta_2 t})+t^2 C_2 + t b_2) x_3^2+\\A_2 (e^{2 \alpha_2 t}-e^{\delta_2 t})x_1^2+ 
2 B_2 (e^{\alpha_2 t}-e^{\delta_2 t}) x_1x_2+C_2 (1-e^{\delta_2 t}) x_2^2 \big)+
\mu_0^1 e^{\alpha_2 t} x_1x_3+ \mu_0^2 x_2x_3 + \mu_0^2 t x_3^2 + \mu_0^3 x_3^2= \\
e^{\delta_1 t} (\mu_0^0 x_0 + \mu_0^1 x_1 + \mu_0^2 x_2 + \mu_0^3) x_3 +\big(a_1(e^{\alpha_1 t}-e^{\delta_1 t}) + 2t B_1 e^{\alpha_1 t}\big)x_1x_3 + 
\big(b_1(1-e^{\delta_1 t}) +2t C_1\big) x_2x_3+ \\ \big(  c_1(1-e^{\delta_1 t})+t^2 C_1 + t b_1 \big) x_3^2 + 
A_1 (e^{2 \alpha_1 t}-e^{\delta_1 t}) x_1^2+ 2 B_1 (e^{\alpha_1 t}-e^{\delta_1 t})x_1x_2 + C_1(1-e^{\delta_1 t}) x_2^2.
\end{multline*}

If follows that $\delta_1=\delta_2$ and

\begin{eqnarray*}
A_1 &=& \mu_0^0 A_2 \\
B_1 &=& \mu_0^0 B_2 \\
C_1 &=& \mu_0^0 C_2 \\
a_1 &=& \mu_0^0 a_2 + \mu_0^1 \\
b_1 &=& \mu_0^0 b_2 + \mu_0^2 \\
c_1 &=& \mu_0^0 c_2 + \mu_0^3.
\end{eqnarray*}

As $\mu_0^0\in \C^{\ast}$ and $\mu_0^1,\mu_0^2,\mu_0^3\in \C$ we can assume that
$a_1=b_1=c_1=0$ and $A_1,B_1,C_1$ are defined modulo a $\C^{\ast}$ action. Therefore we obtain the flow in \ref{flow:ix3first} 
and we conclude that two flows of this type are linearly conjugated if and only if there exists $\mu_0^0\in \C^{\ast}$ such that  
$(A_1,B_1,C_1)=\mu^0_0 (A_2,B_2,C_2)$. Analogous calculations yield the flows \ref{flow:ix3second}, \ref{flow:ix3third}, \ref{flow:ix3fourth} 
and \ref{flow:ix3fifth}.

Analogously we obtain the following flows for each of the cases: \textbf{1) ii)} from \ref{flow:iix3first} to \ref{flow:iix3eleventh}, 
\textbf{1) iii)} from \ref{flow:iiix3first}  to \ref{flow:iiix3fourth},
\textbf{1) iv)} from \ref{flow:ivx3first} to \ref{flow:ivx3fourth}, \textbf{1) v)} \ref{flow:vx3first}, \textbf{2) iii)} from 
\ref{flow:iiix2first} to \ref{flow:iiix2fourth}, \textbf{ 2) iv)} from \ref{flow:ivx2first} to
\ref{flow:ivx2fourth} and \textbf{3) iii)} from \ref{flow:ix1first} to \ref{flow:ix1fifth}. Note that in some cases due to simmetry some
extra considerations are necessary.

\end{proof}

\subsection{Conclusions}

We can sum up the results of this section in the following results:

\begin{thm}
Let $\varphi_t$ be a quadratic flow in $\mathsf{\mathbf{Bir}}^3$. Then one of the following possibilities hold:
\begin{enumerate}[\bf a)]
\item $\varphi_t\in \gen{/\!/}\cup \lin$, $P_I$, $S$, $C_I$ are fix and $H_t$ is mobile,
\item $\varphi_t \in \tang{O}\cup \tang{\times}\cup \lin$, $P_I$, $S$ are fix and $H_t$, ${C_I}_t$ are mobile,
\item $\varphi_t \in \tang{/\!/}\cup \lin$, $H$, $C_I$ are fix and ${P_I}_t$ can be either fix or mobile,
\item $\varphi_t \in \osc \cup \lin$, $H$, $P_I$ are fix and $C_I$ can be either fix or mobile.
\end{enumerate}
Moreover, if $H$ is fix then $\varphi_t$ is a polynomial flow, i.e. ${\varphi_t}_{|\pp^3\backslash H}: \pp^3\backslash H \rightarrow \pp^3 \backslash H$ is
polynomial for each $t$. In particular there are no flows in $\gen{O}\cup \gen{\times}\cup \lin$.
\end{thm}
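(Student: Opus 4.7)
The proof is essentially an assembly of the partial results already established in section 3, organized according to the Pan--Ronga--Vust typology. The plan is to observe that since $\varphi_t$ is a quadratic flow, each $\varphi_t$ lies in $\birr{2}\setminus\lin$ for generic $t$, and in particular belongs to exactly one of the seven classes from the Pan--Ronga--Vust theorem. I would then treat each class by invoking the relevant previously-proved result.

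First I would dispatch the generic case $\varphi_t\in\gen{\alpha}\cup\lin$ directly by quoting Theorem \ref{thm:gen}: it already states that $H_t$ is mobile, that $P_I$, $S$, $C_I$ are fix, and crucially that $\alpha=/\!/$, which simultaneously yields case \textbf{a)} and the non-existence assertion for $\gen{O}\cup\gen{\times}\cup\lin$. Next, for $\varphi_t\in\tang{O}\cup\tang{\times}\cup\lin$, Theorem \ref{thm:tangox} \textbf{i)} gives exactly the statement of case \textbf{b)} (that $H_t$ and ${C_I}_t$ are mobile while $P_I$ and $S$ are fix). For $\varphi_t\in\osc\cup\lin$, I would combine Lemma \ref{lem:previthmosctanbis} (which gives $H$ fix) with Lemma \ref{lem:previthmtanosc} (which gives $P_I$ fix), together with the possibility that $C_I$ may or may not be fix as demonstrated by the explicit families in Theorem \ref{thm:NFtangllosc}, to conclude case \textbf{d)}. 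Case \textbf{c)} for $\varphi_t\in\tang{/\!/}\cup\lin$ is handled by Lemma \ref{lem:previthmosctanbis}, which forces $H$ fix; then since in the $\tang{/\!/}$ type the conic $C_I$ is contained in $H$ and determined by the tangency data along the fix line, the classification result Theorem \ref{thm:NFtangllosc} \textbf{iii)} gives $C_I$ fix, while ${P_I}_t$ is either fix or mobile depending on the sub-case of the flow list.

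For the final two assertions: the claim that $H$ fix implies $\varphi_t$ is polynomial is precisely the content of the proposition established in section 3.3 (the proposition extending Cerveau--D\'eserti to $\pp^3$), which applies directly in cases \textbf{c)} and \textbf{d)}. The absence of flows in $\gen{O}\cup\gen{\times}\cup\lin$ is the closing statement of Theorem \ref{thm:gen} and requires no further argument.

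There is essentially no hard step here: the whole proof is a bookkeeping exercise once the four main intermediate theorems and the two lemmas on fixity of $H$ and $P_I$ in the non-generic cases are in place. The mild subtlety, and the only thing that requires attention, is verifying that the lists in Theorems \ref{thm:NFgen}, \ref{thm:NFtangox1}, \ref{thm:NFtangllosc} cover every flow (so that the dichotomy $C_I$ fix/mobile in case \textbf{d)} and ${P_I}_t$ fix/mobile in case \textbf{c)} is indeed exhaustive). This is precisely what the classification theorems assert, so no additional computation is needed.
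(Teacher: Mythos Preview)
Your proposal is correct and matches the paper's approach exactly: the paper presents this theorem in the ``Conclusions'' subsection with the sentence ``We can sum up the results of this section in the following results'' and gives no separate proof, treating it purely as an assembly of Theorems \ref{thm:gen}, \ref{thm:tangox}, \ref{thm:NFtangllosc}, Lemmas \ref{lem:previthmtanosc}, \ref{lem:previthmosctanbis}, and the polynomial-flow proposition. Your identification of which result feeds each case is accurate (in particular, $C_I$ fix in case \textbf{c)} comes directly from Theorem \ref{thm:NFtangllosc} \textbf{iii)}, and $S$ fix in case \textbf{b)} follows from $S_I$ fix via Remark \ref{rem:fixmb}).
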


\begin{thm}
Let $\varphi_t$ be a quadratic flow in $\mathsf{\mathbf{Bir}}^3$. Then:
\begin{enumerate}[\bf i)]
\item There exists a line $L$ such that $\varphi_t$ preserves the family of hyperplanes through $L$. Moreover in cases {\bf a)} and {\bf c)} 
we can choose $L=C_I$.
\item If $P_I$ is fix then $\varphi_t$ preserves the family of hyperplanes through $P_I$ (in particular the family of lines through $P_I$).
\end{enumerate}
\end{thm}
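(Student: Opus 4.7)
The plan is to reduce this statement to a case-by-case consequence of the classification carried out in the previous subsections. The preceding conclusions theorem partitions quadratic flows in $\bir$ into four classes \textbf{a)}--\textbf{d)}, so I would verify both assertions in each class separately, simply by assembling geometric invariance facts already proved along the way.

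For \textbf{i)}, the strategy is as follows. In case \textbf{a)}, Theorem \ref{thm:gen}.\textbf{iii)} already shows that $\varphi_t$ preserves the pencil of hyperplanes through $C_I$; since $\varphi_t\in\gen{/\!/}$ forces rank $C_I=1$, the conic $C_I$ is (the support of) a line, and one takes $L=C_I$. In case \textbf{b)}, Theorem \ref{thm:tangox}.\textbf{iii)} directly exhibits the fixed line $L_I=S\cap H_t$ through which the hyperplane pencil is preserved, so $L=L_I$ works. In cases \textbf{c)} and \textbf{d)} the Corollary following Theorem \ref{thm:NFtangllosc} produces the required line $L_I$; in case \textbf{c)} the fixed conic $C_I$ is itself a line and, inspecting the coordinates used in Theorem \ref{thm:NFtangllosc} (cases \textbf{I)}, and for $P_I$ fix, case \textbf{II)} with $H=(x_3=0)$, $C_I=(x_2=x_3=0)$ up to linear conjugation), one checks that $L_I$ may be taken equal to $C_I$.

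For \textbf{ii)}, I would again run through the four cases under the hypothesis that $P_I$ is fixed. Case \textbf{a)} follows from Theorem \ref{thm:gen}.\textbf{iv)}; case \textbf{b)} (in which $P_I$ is always fix) follows from Theorem \ref{thm:tangox}.\textbf{iv)}; and cases \textbf{c)} and \textbf{d)} with $P_I$ fix follow from Theorem \ref{thm:NFtangllosc}.\textbf{v)}. In each subcase the invariance of the pencil of hyperplanes through $P_I$ is a direct byproduct of the normal form: once the coordinates are chosen so that $P_I=[1,0,0,0]$ (resp.\ $[0,1,0,0]$, $[1,0,0,0]$), the components $\varphi_1,\varphi_2,\varphi_3$ of the normal-form expressions do not involve the corresponding coordinate of $P_I$, which is exactly the statement that hyperplanes through $P_I$ are sent to hyperplanes through $P_I$.

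The genuine work has already been done inside the three classification theorems of Section 3, especially Theorem \ref{thm:NFtangllosc} for the non-generic cases with $C_I$ of multiplicity $2$ at $P_I$, where in the absence of a higher-dimensional analogue of the Cantat--Favre foliation result the lengthy normal-form computation is what guarantees the existence of the invariant line. The present theorem is then a repackaging of those geometric invariance statements, and the only residual verification is the identification of $L$ with $C_I$ in cases \textbf{a)} and \textbf{c)}, which is a direct inspection of the normal forms.
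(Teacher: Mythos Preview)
Your proposal is correct and matches the paper's approach: the theorem is stated in the Conclusions subsection as a direct summary of results already established in Theorems \ref{thm:gen}, \ref{thm:tangox}, and \ref{thm:NFtangllosc} (and its Corollary), with no additional argument given. Your only slight detour is that for case \textbf{c)} you verify $L=C_I$ by inspecting normal forms, whereas one can cite Theorem \ref{thm:NFtangllosc}\,\textbf{vi)} directly (the preserved line lies in $C_I$, and $C_I$ has rank~1 in $\tang{/\!/}$, hence equals that line).
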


\begin{cor}
Let $\varphi_t$ a quadratic generic flow in $\mathsf{\mathbf{Bir}}^3$. Then $\varphi_t$ is determined by a linear flow $\eta_t$ on the $\pp^2$ of the net of lines 
through $P_I$ and a linear flow $\chi_t$ on the $\pp^1$ of the pencil of planes through the line $C_I$. Namely
$$\varphi_t(P)=\eta_t(P\vee P_I) \cap \chi_t(P\vee C_I).$$ 
\end{cor}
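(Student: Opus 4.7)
The plan is to apply the structural results already proved and then verify the formula directly in a normal form. First I would invoke Theorem~\ref{thm:gen}: a generic quadratic flow lies in $\gen{/\!/}\cup\lin$, fixes $P_I$, $C_I$, $S$, and preserves both the net of hyperplanes through $P_I$ and the pencil of hyperplanes through $C_I$. Theorem~\ref{thm:NFgen} then provides coordinates with $P_I=[1,0,0,0]$, $C_I=(x_0=x_3=0)$, $S=(x_3^2=0)$, in which $\varphi_t$ has one of the two explicit shapes (a), (b), each carrying a built-in linear flow $\Psi_t[x_1,x_2,x_3]=[\ell_{1,t},\ell_{2,t},x_3]$ on $\pp^2$.

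Next I would construct $\eta_t$. The net of lines through $P_I$ is naturally identified with $\Lambda:=(x_0=0)\cong\pp^2$ via $L\mapsto L\cap\Lambda$. Substituting $x_0=0$ in the normal form and factoring out the common $x_3$ that then appears in the last three coordinates yields $\varphi_t|_\Lambda=\Psi_t$, so $\eta_t(L):=P_I\vee\Psi_t(L\cap\Lambda)$ is a well-defined linear flow on the $\pp^2$ of lines through $P_I$.

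To produce $\chi_t$ I would parametrize planes through $C_I$ by $\pi_{\rho,\sigma}:=(\rho x_0+\sigma x_3=0)$, so that $P\vee C_I=\pi_{a_3,-a_0}$ for $P=[a_0,a_1,a_2,a_3]$. Because $C_I$ is fixed and the pencil preserved, $\varphi_t(P\vee C_I)=\varphi_t(P)\vee C_I=\pi_{y_3,-y_0}$ with $y=\varphi_t(P)$. The crucial point is that the first and fourth components of the normal form depend only on $x_0,x_3$ and are bilinear, so the assignment $[a_3:-a_0]\mapsto[y_3:-y_0]$ is a $\mathrm{PGL}_2$-action on $\pp^1$, giving explicitly $\chi_t[\rho,\sigma]=[\rho-t\sigma,\sigma]$ in case~(a) and an analogous M\"obius transform with $a(-t)=(e^{-\alpha t}-1)/(e^{-\alpha t}+1)$ in case~(b). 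This linearity of $\chi_t$ is the main technical point, although the elimination involved is elementary.

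Finally I would assemble the identity. In the generic case $P_I\notin C_I$, so for generic $P$ the line $P\vee P_I$ meets the plane $P\vee C_I$ transversally in the single point $P$. By construction $\varphi_t(P)$ lies on $\eta_t(P\vee P_I)$ and on $\chi_t(P\vee C_I)$; these are a generic line through $P_I$ and a generic plane through $C_I$, so they also meet in one point, which must therefore be $\varphi_t(P)$. Both sides of the claimed identity being rational in $P$, their agreement on this open set forces agreement everywhere.
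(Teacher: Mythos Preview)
Your proposal is correct and follows essentially the same route as the paper: pass to the normal form of Theorem~\ref{thm:NFgen}, define $\eta_t$ via $\Psi_t$ on the plane $(x_0=0)$, and compute the induced M\"obius action on the pencil $\rho x_0+\sigma x_3=0$. Your explicit formula $\chi_t[\rho,\sigma]=[\rho-t\sigma,\sigma]$ in case~(a) and its analogue in case~(b) coincide with what the paper writes (there called $\mu_t$), and your final paragraph actually supplies the verification of the intersection formula that the paper leaves implicit.

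One small imprecision: in case~(b) the hyperplane $\Lambda=(x_0=0)$ is \emph{not} $\varphi_t$-invariant (substituting $x_0=0$ gives first component $a(t)x_3^2\neq 0$), so the sentence ``$\varphi_t|_\Lambda=\Psi_t$'' is not literally true. What is true, and what you need, is that the last three components of $\varphi_t$ after dividing by $\ell_t=a(t)x_0+x_3$ are $(\ell_{1,t},\ell_{2,t},x_3)$ and depend only on $(x_1,x_2,x_3)$; hence the line $\varphi_t(P)\vee P_I$ is determined by $\Psi_t$ applied to $(P\vee P_I)\cap\Lambda$. This is exactly your formula $\eta_t(L)=P_I\vee\Psi_t(L\cap\Lambda)$, so the conclusion stands, but the justification should be phrased in terms of the induced action on lines through $P_I$ rather than a restriction to $\Lambda$.
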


\begin{thm}
Let $\varphi_t$ be a quadratic flow in $\mathsf{\mathbf{Bir}}^3$. Then, up to linear conjugation, $\varphi_t$ is in one (and only one) of the lists of theorems 
\ref{thm:NFgen} (generic case), \ref{thm:NFtangox1} (non-generic with conic $C_I$ of multiplicity 1 at $P_I$) and 
\ref{thm:NFtangllosc} (non-generic with conic $C_I$ of multiplicity 2 at $P_I$).  
\end{thm}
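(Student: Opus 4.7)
The plan is to observe that this theorem is essentially a synthesis of the three normal form theorems together with the trichotomy from the first Main Theorem. Given a quadratic flow $\varphi_t$ in $\bir$, the Pan--Ronga--Vust classification tells us that for each generic $t$ the transformation $\varphi_t$ belongs to exactly one of the seven types. Since the type is governed by discrete invariants (rank of $C_I$, whether $P_I\in C_I$, multiplicity of $C_I$ at $P_I$), and these vary upper-semicontinuously in $t$ while the generic type is constant, the type of $\varphi_t$ is constant on a punctured neighborhood of $0$. First I would invoke the first Main Theorem to eliminate $\gen{O}\cup\gen{\times}\cup\lin$ and to split the remaining possibilities into the four exclusive cases \textbf{a)}--\textbf{d)}.

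Next I would apply the appropriate normal-form theorem in each case. In case \textbf{a)} every quadratic flow lies in $\gen{/\!/}\cup\lin$ by Theorem \ref{thm:gen}, so Theorem \ref{thm:NFgen} produces the normal form. In case \textbf{b)} the flow lies in $\tang{O}\cup\tang{\times}\cup\lin$, and Theorem \ref{thm:NFtangox1} applies. In cases \textbf{c)} and \textbf{d)} the flow lies in $\tang{/\!/}\cup\osc\cup\lin$, and Theorem \ref{thm:NFtangllosc} applies. Thus every quadratic flow is, up to linear conjugation, in one of the three lists.

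For the ``only one'' part, I would argue class-disjointness using the geometric invariants that determine the Pan--Ronga--Vust type, coupled with the explicit conjugation statements inside each of the three theorems. Two flows drawn from lists attached to different theorems cannot be linearly conjugated because a linear conjugation preserves the discrete type of the generic element (rank of $C_I$, incidence of $P_I$ with $C_I$, osculating multiplicity), and these invariants are pairwise different across Theorems \ref{thm:NFgen}, \ref{thm:NFtangox1} and \ref{thm:NFtangllosc}. Within each theorem, the last sentence of its statement already spells out exactly when two entries are conjugate, so there is no overlap left to discuss.

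The only point that requires a bit of care, and that I would treat as the main (though mild) obstacle, is the passage from ``$\varphi_t$ is of a given type for generic $t$'' to ``the whole flow fits in the corresponding normal form, including at $t=0$ and along degeneracy loci of the flow''. I would handle this by noting that each of the three theorems has been proved starting from the hypothesis that $\varphi_t$ is a germ of quadratic flow, not from an individual generic $\varphi_{t_0}$; hence the normal forms are valid on the whole germ. The final remark that $\varphi_t$ extends to all $t\in\C$ then follows by inspection of the explicit expressions in the three lists, which are entire in $t$.
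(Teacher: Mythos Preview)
Your proposal is correct and matches the paper's approach: the theorem is presented in the paper's Conclusions section without a separate proof, being understood as the synthesis of the first Main Theorem with Theorems \ref{thm:NFgen}, \ref{thm:NFtangox1}, and \ref{thm:NFtangllosc}. Your added remarks on the ``only one'' clause (disjointness of the three lists via the Pan--Ronga--Vust discrete invariants, together with the conjugation statements inside each normal-form theorem) and on the extension to all $t\in\C$ are exactly the points the paper takes for granted.
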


\begin{rem}
As in $\pp^2$ using the classification one verifies that if $\varphi_t$ is a germ of quadratic flow then $\varphi_t$ is defined for every $t\in \C$. 
\end{rem}

\appendix

\section{The computations of Theorem \ref{thm:NFtangox1}}

We resume here the computations where we left them, using the same notation.
\medskip
\begin{enumerate}[\bf 1)]
%%%%%%%%%%%%%%%CAS 1 i)
\item $\ell=x_3$, $\lambda(t)=1$ and $\Psi_t$ of the following type:
\medskip

\begin{itemize}
%%%%%%%%%%%%% CAS 1) ii)
\item[\bf ii)] One obtains the equations
\begin{eqnarray*}
A(t+s)-e^{\delta s} A(t) &=& e^{2 \alpha t} A(s) \\
B(t+s)-e^{\delta s} B(t) &=& e^{\alpha t} e^{\beta t} B(s) \\
C(t+s)-e^{\delta s} C(t) &=& e^{2 \beta t} C(s) \\
a(t+s)-e^{\delta s} a(t) &=& e^{\alpha t} a(s) \\
b(t+s)-e^{\delta s} b(t) &=& e^{\beta t} b(s) \ \\
c(t+s)-e^{\delta s} c(t) &=& c(s) 
\end{eqnarray*}
and the solutions
\begin{eqnarray*}
A(t) &=& A (e^{2 \alpha t}-e^{\delta t}) \quad \mathrm{(or} \quad At e^{\delta t} \quad \mathrm{for} \quad \delta=2\alpha) \\
B(t) &=& B (e^{(\alpha+\beta) t}-e^{\delta t})  \quad \mathrm{(or} \quad Bt e^{\delta t} \quad \mathrm{for} \quad \delta=\alpha+\beta)  \\
C(t) &=& C (e^{2 \beta t}-e^{\delta t}) \quad \mathrm{(or} \quad Ct e^{\delta t} \quad \mathrm{for} \quad \delta=2\beta)  \\
a(t) &=& a (e^{\alpha t}-e^{\delta t}) \quad \mathrm{(or} \quad a t e^{\delta t} \quad \mathrm{for} \quad \delta=\alpha)  \\
b(t) &=& b(e^{\beta t}-e^{\delta t}) \quad \mathrm{(or} \quad b t e^{\delta t} \quad \mathrm{for} \quad \delta=\beta) \\
c(t) &=& c(1-e^{\delta t}) \quad \mathrm{(or} \quad c t \quad \mathrm{for} \quad \delta=0) 
\end{eqnarray*}
Up to a linear conjugation switching $x_1$ and $x_2$ we obtain the following list of flows from the previous solutions:
\medskip
\begin{itemize}
\item[$\bullet$] $\varphi_t(x)=[e^{\delta t} x_0x_3+ a (e^{\alpha t}-e^{\delta t})x_1x_3+b(e^{\beta t}-e^{\delta t})x_2x_3+c(1-e^{\delta t})x_3^2+A (e^{2 \alpha t}-e^{\delta t})x_1^2+2 B (e^{(\alpha + \beta)t}-e^{\delta t}) x_1x_2+C(e^{2 \beta t}-e^{\delta t})x_2^2, e^{\alpha t}x_1x_3, e^{\beta t} x_2x_3, x_3^2]$ with $\delta \neq 0$, $\delta \neq \alpha,
\beta,\alpha + \beta, 2 \alpha, 2 \beta$,
\item[$\bullet$] $\varphi_t(x)=[x_0x_3+ a (e^{\alpha t}-1)x_1x_3+b(e^{\beta t}-1)x_2x_3+c t x_3^2+A (e^{2 \alpha t}-1)x_1^2+2 B (e^{(\alpha + \beta)t}-1) x_1x_2+C(e^{2 \beta t}-1)x_2^2, e^{\alpha t}x_1x_3, e^{\beta t} x_2x_3, x_3^2]$ with $\alpha \neq 0$ and $\beta \neq 0$,
\item[$\bullet$] $\varphi_t(x)=[e^{\delta t} x_0x_3+ a t e^{\delta t} x_1x_3+b(e^{\beta t}-e^{\delta t})x_2x_3+c(1-e^{\delta t})x_3^2+A (e^{2 \delta t}-e^{\delta t})x_1^2+2 B (e^{(\delta + \beta)t}-e^{\delta t}) x_1x_2+C(e^{2 \beta t}-e^{\delta t})x_2^2, e^{\delta t}x_1x_3, e^{\beta t} x_2x_3, x_3^2]$ with $\delta \neq 0$, $\delta=\alpha$, $\delta \neq \beta, 2 \beta$,
\item[$\bullet$] $\varphi_t(x)=[e^{2\alpha t} x_0x_3+ a (e^{\alpha t}-e^{2\alpha t})x_1x_3+b(e^{\beta t}-e^{2\alpha t})x_2x_3+c(1-e^{2\alpha t})x_3^2+A t e^{2\alpha t} x_1^2+2 B (e^{(\alpha + \beta)t}-e^{2\alpha t}) x_1x_2+C(e^{2 \beta t}-e^{2\alpha t})x_2^2, e^{\alpha t}x_1x_3, e^{\beta t} x_2x_3, x_3^2]$ with $\alpha \neq 0$, $\beta \neq 0$, $\alpha \neq \beta$,
\item[$\bullet$] $\varphi_t(x)=[e^{\delta t} x_0x_3+ a t e^{\delta t} x_1x_3+b t e^{\delta t}x_2x_3+c(1-e^{\delta t})x_3^2+A (e^{2 \delta t}-e^{\delta t})x_1^2+2 B (e^{2\delta t}-e^{\delta t}) x_1x_2+C(e^{2 \delta t}-e^{\delta t})x_2^2, e^{\delta t}x_1x_3, e^{\delta t} x_2x_3, x_3^2]$ with $\delta \neq 0$, 
\item[$\bullet$] $\varphi_t(x)=[e^{2\alpha t} x_0x_3+ a (e^{\alpha t}-e^{2\alpha t})x_1x_3+b(e^{\alpha t}-e^{2\alpha t})x_2x_3+c(1-e^{2\alpha t})x_3^2+A t e^{2 \alpha t} x_1^2+2 B t e^{2\alpha t} x_1x_2+Cte^{2 \alpha t}x_2^2, e^{\alpha t}x_1x_3, e^{\alpha t} x_2x_3, x_3^2]$ with $\alpha \neq 0$, 
\item[$\bullet$] $\varphi_t(x)=[e^{(\alpha + \beta) t} x_0x_3+ a (e^{\alpha t}-e^{(\alpha +\beta) t})x_1x_3+b(e^{\beta t}-
e^{(\alpha + \beta)t})x_2x_3+c(1-e^{(\alpha+\beta) t})x_3^2+A (e^{2 \alpha t}-e^{(\alpha + \beta) t})x_1^2+2 B t e^{(\alpha + \beta)t} x_1x_2+C(e^{2 \beta t}-e^{(\alpha+\beta)t})x_2^2, e^{\alpha t}x_1x_3, e^{\beta t} x_2x_3, x_3^2]$ with $\alpha\neq 0$, $\beta \neq 0$, $\alpha\neq \beta$,
\item[$\bullet$] $\varphi_t(x)=[e^{\delta t} x_0x_3+ a t e^{\delta t} x_1x_3+b (1-e^{\delta t}) x_2x_3+c(1-e^{\delta t})x_3^2+A (e^{2 \delta t}-e^{\delta t})x_1^2+2 B t e^{\delta t} x_1x_2+C(1-e^{\delta t})x_2^2, e^{\delta t}x_1x_3, x_2x_3, x_3^2]$ with $\delta \neq 0$,
\item[$\bullet$] $\varphi_t(x)=[x_0x_3+ a t x_1x_3+b(e^{\beta t}-1)x_2x_3+c t x_3^2+A t x_1^2+2 B (e^{\beta t}-1) x_1x_2+C(e^{2 \beta t}-1)x_2^2, x_1x_3,\\ e^{\beta t} x_2x_3, x_3^2]$ with $\beta \neq 0$,
\item[$\bullet$] $\varphi_t(x)=[x_0x_3+t \big((ax_1+bx_2+cx_3)x_3+(A x_1^2+2 B x_1 x_2+ C x_2^2)\big), x_1x_3,x_2x_3,x_3^2]$, 
\end{itemize}
\medskip
%%%%%%%%%%%%%%%%%%%%%% CAS 1) iii)
\item[\bf iii)] Again with the same notation we obtain:
\begin{eqnarray*}
A(t+s)-e^{\delta s} A(t) &=& A(s) \\
B(t+s)-e^{\delta s} B(t) &=& e^{\alpha t} B(s) +A(s) (e^{\alpha t}-1)\\
C(t+s)-e^{\delta s} C(t) &=& C(s) e^{2 \alpha t}+(e^{\alpha t}-1)^2 A(s)+ 2 B(s)e^{\alpha t}(e^{\alpha t}-1) \\
a(t+s)-e^{\delta s} a(t) &=& a(s)+ 2 t A(s) \\
b(t+s)-e^{\delta s} b(t) &=& e^{\alpha t} b(s) +(e^{\alpha t}-1)a(s)+2 t (e^{\alpha t}-1) A(s) + 2 t e^{\alpha t} B(s) \\
c(t+s)-e^{\delta s} c(t) &=& c(s) + t^2 A(s) + t a(s)
\end{eqnarray*}
yielding solutions
\begin{eqnarray*}
A(t) &=& A (1-e^{\delta t}) \quad \mathrm{(or} \quad At \quad \mathrm{for} \quad \delta=0) \\
B(t) &=& B (e^{\alpha t}-e^{\delta t})-A(1-e^{\delta t}) 
\\ & & \quad \mathrm{(or} \quad B(e^{\delta t}-1)-A t \quad \mathrm{for} \quad \delta=0,\, \alpha\neq 0,
\\ & & \quad \mathrm{or} \quad B t e^{\delta t} - A(1-e^{\delta t}) \quad \mathrm{for} \quad \alpha=\delta\neq 0),
\\
C(t) &=& C (e^{2 \alpha t}-e^{\delta t})-2 B (e^{\alpha t}-e^{\delta t})+A(1-e^{\delta t})  
\\
& &(\mathrm{or} \quad C(e^{2 \alpha t}-1)-2 B(e^{\alpha t}-1)+A t \quad \mathrm{for} \quad \delta=0, \, \alpha\neq 0,  \\
& &\quad \mathrm{or} \quad C(e^{2 \delta t}-e^{\delta t})-2 B t e^{\delta t} +A (1-e^{\delta t}) \quad \mathrm{for} \quad \delta=\alpha\neq 0,  
\\
& &\quad \mathrm{or} \quad C t e^{2 \alpha t}-2 B(e^{\alpha t}-e^{2\alpha t})+A (1-e^{2\alpha t})\quad \mathrm{for} \quad \delta=2\alpha, \, \alpha\neq 0),  
\\
a(t) &=& a (1-e^{\delta t})+2 t A \quad \mathrm{(or} \quad a t + A t^2 \quad \mathrm{for} \quad \delta=0)  \\
b(t) &=& b(e^{\alpha t}-e^{\delta t})+2 t e^{\alpha t} B -a(1-e^{\delta t})-2 t A \\
&& \quad \mathrm{(or} \quad b (e^{\alpha t}-1)+ 2 t e^{\alpha t} B- A t^2- at \quad \mathrm{for} \quad \delta=0,\,\alpha\neq 0, \\
&& \quad \mathrm{or} \quad (B t^2+b t +a) e^{\delta t} - 2 A t - a \quad \mathrm{for} \quad \alpha=\delta\neq 0), \\
c(t) &=& c(1-e^{\delta t})+a t + A t^2 \quad \mathrm{(or} \quad c t+ \frac{a}{2} t^2 + \frac{A}{3} t^3 \quad \mathrm{for} \quad \delta=0).
\end{eqnarray*}
Note that we can assume $\alpha \neq 0$ (otherwise we obtain one of the flows in \textbf{i)} and that the equations obtained for 
$A(t)+B(t)$,$C(t)+2B(t)+A(t)$ and $a(t)+b(t)$ are easy to solve. We obtain the flows:
\medskip

\begin{itemize}
\item[$\bullet$] $\varphi_t(x)=[e^{\delta t}x_0x_3+\big(a(1-e^{\delta t})+2 t A \big) x_1x_3+
\big(b(e^{\alpha t}-e^{\delta t})+2te^{\alpha t}B-a(1-e^{\delta t})-2tA \big) x_2x_3+ \big(c(1-e^{\delta t})+at+At^2 \big)x_3^2+A(1-e^{\delta t})x_1^2+2\big(B(e^{\alpha t}-e^{\delta t})-A(1-e^{\delta t})\big)x_1x_2+ 
\big(C(e^{2 \alpha t}-e^{\delta t})-2B(e^{\alpha t}-e^{\delta t})+A(1-e^{\delta t})\big)x_2^2, 
x_1x_3+(e^{\alpha t}-1)x_2x_3+tx_3^2, e^{\alpha t}x_2x_3,x_3^2]$ with $\delta \neq 0, \alpha,2 \alpha, \alpha\neq 0$,
\item[$\bullet$] $\varphi_t(x)=[e^{\delta t}x_0x_3+\big(a(1-e^{\delta t})+2 t A \big) x_1x_3+
\big( (B t^2+bt+a)e^{\delta t}-2 A t -a\big) x_2x_3+ \big(c(1-e^{\delta t})+at+At^2 \big)x_3^2+
A(1-e^{\delta t})x_1^2+2\big(B t e^{\delta t}-A(1-e^{\delta t})\big)x_1x_2+ 
\big(C(e^{2 \delta t}-e^{\delta t})-2B t e^{\delta t}+A(1-e^{\delta t})\big)x_2^2, 
x_1x_3+(e^{\delta t}-1)x_2x_3+tx_3^2, e^{\delta t}x_2x_3,x_3^2]$ with $\delta \neq 0$,
\item[$\bullet$] $\varphi_t(x)=[e^{2\alpha t}x_0x_3+\big(a(1-e^{2\alpha t})+2 t A \big) x_1x_3+
\big(b(e^{\alpha t}-e^{2\alpha t})+2te^{\alpha t}B-a(1-e^{2\alpha t})-2tA \big) x_2x_3+ \big(c(1-e^{2\alpha t})+at+At^2 \big)x_3^2+A(1-e^{2\alpha t})x_1^2+2\big(B(e^{\alpha t}-e^{2\alpha t})-A(1-e^{2\alpha t})\big)x_1x_2+ 
\big(C t e^{2 \alpha t}-2B(e^{\alpha t}-e^{2\alpha t})+A(1-e^{2\alpha t})\big)x_2^2, 
x_1x_3+(e^{\alpha t}-1)x_2x_3+tx_3^2, e^{\alpha t}x_2x_3,x_3^2]$ with $\alpha \neq 0$,
\item[$\bullet$] $\varphi_t(x)=[x_0x_3+(at+At^2)x_1x_3+\big(b(e^{\alpha t}-1)+2 B t e^{\alpha t} - A t^2-at)x_2x_3+
\big(\frac{A}{3} t^3+\frac{a}{2} t^2+c t\big)x_3^2+A t x_1^2+ 2 \big(B(e^{\alpha t}-1)-At \big)x_1x_2+
\big(C(e^{2\alpha t}-1)-2B(e^{\alpha t}-1)+At\big)x_2^2, x_1x_3+(e^{\alpha t}-1)x_2x_3+tx_3^2, e^{\alpha t} x_2x_3, x_3^2]$ with $\alpha \neq 0$,
\end{itemize}
\medskip
%%%%%%%%%%%%%%%%%%% CAS 1) iv)
\item[\bf iv)] One has the equations:
\begin{eqnarray*}
A(t+s)-e^{\delta s} A(t) &=& e^{2 \alpha t} A(s) \\
B(t+s)-e^{\delta s} B(t) &=& e^{2 \alpha t} B(s) +t e^{2 \alpha t} A(s)\\
C(t+s)-e^{\delta s} C(t) &=& e^{2 \alpha t} (C(s)+2 t B(s) + t^2 A(s) ) \\
a(t+s)-e^{\delta s} a(t) &=& e^{\alpha t} a(s) \\
b(t+s)-e^{\delta s} b(t) &=& e^{\alpha t} b(s) + t e^{\alpha t} a(s) \\
c(t+s)-e^{\delta s} c(t) &=& c(s) 
\end{eqnarray*}
yielding solutions
\begin{eqnarray*}
A(t) &=& A (e^{2 \alpha t}-e^{\delta t}) \quad (\mathrm{or} \quad A t e^{\delta t}\quad \mathrm{for}\quad \delta=2\alpha)  \\
B(t) &=& B (e^{2 \alpha t}-e^{\delta t})+ A t e^{2 \alpha t}  \quad (\mathrm{or} \quad ( B t+ \frac{A}{2} t^2) e^{\delta t} \quad \mathrm{for}\quad \delta=2\alpha)  \\
C(t) &=& C (e^{2 \alpha t}-e^{\delta t})+ (A t^2+2 t B)e^{2 \alpha t} \quad (\mathrm{or} \quad (Ct+B t^2+ \frac{A}{3} t^3) e^{\delta t} \quad \delta=2\alpha)   \\
a(t) &=& a (e^{\alpha t}-e^{\delta t}) \quad (\mathrm{or} \quad a t e^{\delta t}\quad \mathrm{for}\quad \delta=\alpha)  \\
b(t) &=& b(e^{\alpha t}-e^{\delta t})+ at e^{\alpha t} \quad (\mathrm{or} \quad (b t +\frac{a}{2} t^2 )e^{\delta t}\quad \mathrm{for}\quad \delta=\alpha)  \\
c(t) &=& c(1-e^{\delta t}) \quad \mathrm{(or} \quad c t \quad \mathrm{for} \quad \delta=0).
\end{eqnarray*}
As before we can assume $\alpha \neq 0$. We obtain the flows:
\medskip
\begin{itemize}
\item[$\bullet$] $\varphi_t(x)=[e^{\delta t}x_0x_3+ a(e^{\alpha t}-e^{\delta t})x_1x_3+\big(b(e^{\alpha t}-e^{\delta t})+a t e^{\delta t}\big)x_2x_3+c(1-e^{\delta t})x^2_3+A(e^{2\alpha t}-e^{\delta t})x_1^2+2\big(B(e^{2 \alpha t}-e^{\delta t})+A t e^{2\alpha t} \big)x_1x_2+ \big(C(e^{2\alpha t}-e^{\delta t})+(A t^2+2 B t)e^{2 \alpha t} \big)x_2^2,e^{\alpha t} (x_1+t x_2)x_3, e^{\alpha t} x_2x_3, x_3^2]$ with $\delta \neq 0$,
\item[$\bullet$] $\varphi_t(x)=[x_0x_3+ a(e^{\alpha t}-1)x_1x_3+\big(b(e^{\alpha t}-1)+a t \big)x_2x_3+c t x^3_3+A(e^{2\alpha t}-1)x_1^2+2\big(B(e^{2 \alpha t}-1)+A t e^{2\alpha t} \big)x_1x_2+ \big(C(e^{2\alpha t}-1)+(A t^2+2 B t)e^{2 \alpha t} \big)x_2^2,e^{\alpha t} (x_1+t x_2)x_3, e^{\alpha t} x_2x_3, x_3^2]]$ with $\alpha \neq 0$, 
\item[$\bullet$] $\varphi_t(x)=[e^{\delta t}x_0x_3+ a t e^{\delta t} x_1x_3+ \big(b t+ a \frac{t^2}{2}\big) e^{\delta t} x_2x_3+c(1-e^{\delta t})x^2_3+A(e^{2\delta t}-e^{\delta t})x_1^2+2\big(B(e^{2 \delta t}-e^{\delta t})+A t e^{2\delta t} \big)x_1x_2+ \big(C(e^{2\delta t}-e^{\delta t})+(A t^2+2 B t)e^{2 \delta t} \big)x_2^2,e^{\delta t} (x_1+t x_2)x_3, e^{\delta t} x_2x_3, x_3^2]$ with $\delta \neq 0$,
\item[$\bullet$] $\varphi_t(x)=[e^{2\alpha t}x_0x_3+ a(e^{\alpha t}-e^{2\alpha t})x_1x_3+\big(b(e^{\alpha t}-e^{2\alpha t})+a t e^{2\alpha t}\big)x_2x_3+c(1-e^{2\alpha t})x^2_3+A t e^{2\alpha t} x_1^2+2\big(B t +A \frac{t^2}{2}\big) e^{2 \alpha t} x_1x_2+ \big( Ct+B t^2+ A \frac{t^3}{3}) e^{2\alpha t} \big)x_2^2,e^{\alpha t} (x_1+t x_2)x_3, e^{\alpha t} x_2x_3, x_3^2]$ with $\delta \neq 0$.
\end{itemize}
\medskip
%%%%%%%%%%%%%%%%%%%%%%% CAS 1) v)
\item[\bf v)] Finally one has:
\begin{eqnarray*}
A(t+s)-e^{\delta s} A(t) &=& A(s) \\
B(t+s)-e^{\delta s} B(t) &=& B(s) + t A(s) \\
C(t+s)-e^{\delta s} C(t) &=& C(s) + 2 t B(s)+ t^2 A (s) \\
a(t+s)-e^{\delta s} a(t) &=& a(s)+ t^2 A(s) + 2 t B(s) \\
b(t+s)-e^{\delta s} b(t) &=& b(s) + t a(s) + t^3 A(s) + 3 t^2 B(s) + 2 t C(s) \\
c(t+s)-e^{\delta s} c(t) &=& c(s) + t b(s) +\frac{t^2}{2} a(s) + \frac{t^4}{4} A(s) + t^3 B(s) + t^2 C(s).
\end{eqnarray*}
One verifies that if $\delta =0$ the only solutions correspond to linear flows, therefore we can assume $\delta \neq 0$ and we obtain the following solutions:
\begin{eqnarray*}
A(t) &=& A (1-e^{\delta t}) \\
B(t) &=& B (1-e^{\delta t})+ A t \\
C(t) &=& C (1-e^{\delta t})- 2 t B + A t^2 \\
a(t) &=& a (1-e^{\delta t})+ A t^2+ 2 t B  \\
b(t) &=& b (1-e^{\delta t}) + a t+ A t^3 + 3 B t^2+ 2 C t\\
c(t) &=& c(1-e^{\delta t})+b t + \big(\frac{a}{2}+C\big) t^2 + B t^3 + \frac{A}{4} t^4,
\end{eqnarray*}
which yield the flow 
\begin{multline*}\varphi_t(x)=[e^{\delta t} x_0x_3+\big(a(1-e^{\delta t})+At^2+2tB\big)x_1x_2+
\big(b(1-e^{\delta t})+at+At^3+3Bt^2+2Ct\big)x_2x_3+\\ \big(c(1-e^{\delta t})+ b t + \frac{a}{2} t^2 + \frac{A}{4} t^4 + B t^3+ C t^2 \big) x^2_3 + 
A(1-e^{\delta t})x_1^2+ 2\big(B(1-e^{\delta t})+A t \big)x_1x_2 + \\ \big(C(1-e^{\delta t})+2 B t + A t^2\big) x_2^3 , 
\big(x_1+tx_2+\frac{t^2}{2} x_3 \big)x_3, (x_2+tx_3)x_3, x_3^2]
\end{multline*}
with $\delta \neq 0$.
\medskip
\end{itemize}
\medskip
\item  $\ell=x_2$ and $\Psi_t$ of the following type:
\medskip
%%%%%%%%%%%%%%%%%%%%%%%%%%% CAS 2) iii)
\begin{itemize}
 \item[\bf iii)] Then $\lambda(t)=e^{\alpha t}$ and we can assume $\alpha \neq 0$. Then
$$q_t(x)=x_2 v_t + \tilde{q}_t(x)$$
where $\tilde{q}_t\in A_2(x_2,x_3)$ and $v_t\in A_1(x_1,x_2,x_3)$. Then equation \ref{eq:LaEqB} is written as
\begin{equation}\label{eq:hier}
 e^{\alpha t}\big( (v_{t+s} -e^{\delta s} v_t)x_2 + (\tilde{q}_{t+s} -e^{\delta s} \tilde{q}_t) \big) =e^{\alpha t} x_2 v_s({\ell_i}_t) + \tilde{q}_s({\ell_i}_t).
\end{equation}
We denote $\tilde{q}_t=A(t) x_1^2+ 2 B(t) x_1x_3+ C(t) x_3^2$ and $v_t(x)=a(t) x_1+ b(t) x_2 + c(t) x_3$. Then
equation \ref{eq:hier} is written as 
\begin{eqnarray*}
A(t+s)-e^{\delta s} A(t) &=& e^{-\alpha t} A(s) \\
B(t+s)-e^{\delta s} B(t) &=& e^{-\alpha t}(B(s) + t A(s)) \\
C(t+s)-e^{\delta s} C(t) &=& e^{-\alpha t} (C(s) + 2 t B(s)+ t^2 A (s)) \\
a(t+s)-e^{\delta s} a(t) &=& a(s)+ 2 (1-e^{-\alpha t}) A(s) \\
b(t+s)-e^{\delta s} b(t) &=& e^{\alpha t} b(s) + (e^{\alpha t}-1)a(s) +(e^{\alpha t}-1)(1-e^{-\alpha t}) A(s) \\
c(t+s)-e^{\delta s} c(t) &=& c(s) + t a(s) + 2 t (1-e^{-\alpha t}) A(s) + 2(1-e^{-\alpha t}) B(s),
\end{eqnarray*}
yielding (solving the equations corresponding to $a(t)+2A(t)$, $a(t)+b(t)+A(t)$ and $c(t)+2B(t)$:
\begin{eqnarray*}
A(t) &=& A (e^{-\alpha t}-e^{\delta t}) \quad (\mathrm{or} \quad A t e^{\delta t} \quad \delta=-\alpha)\\
B(t) &=& B (e^{-\alpha t}-e^{\delta t})+ A t e^{-\alpha t} \quad (\mathrm{or} \quad ( B t + \frac{A}{2} t^2) e^{\delta t} \quad \delta=-\alpha) \\
C(t) &=& C (e^{-\alpha t}-e^{\delta t})+ 2 B t e^{-\alpha t} + A t^2 e^{-\alpha t} 
\quad (\mathrm{or} \quad (C t +B t^2 + \frac{A}{3} t^3) e^{\delta t} \quad \delta=-\alpha)
\\
a(t) &=& a (1-e^{\delta t}) - 2 A (e^{-\alpha t}- e^{\delta t})\\
&&  \quad \mathrm{(or} \quad at- 2 A(e^{-\alpha t}-1) \quad \mathrm{for} \quad \delta=0,\\
&&  \quad \mathrm{(or} \quad a (1-e^{\delta t}) - 2 A t e^{\delta t} \quad \mathrm{for} \quad \delta=-\alpha) \\
b(t) &=& b (e^{\alpha t}-e^{\delta t}) - a (1-e^{\delta t}) + A(e^{-\alpha t}-e^{\delta t})\\
&& \quad \mathrm{(or} \quad b (e^{\alpha t}-1) - a t + A(e^{-\alpha t}-1) \quad \mathrm{for} \quad \delta=0, \\
&& \quad \mathrm{or} \quad b (e^{-\delta t}-e^{\delta t}) - a (1-e^{\delta t}) + A t e^{\delta t}  \quad \mathrm{for} \quad \delta=-\alpha, \\
&& \quad \mathrm{or} \quad b t e^{\delta t} - a (1-e^{\delta t}) + A t e^{\delta t}  \quad \mathrm{for} \quad \delta=\alpha) \\
c(t) &=& c(1-e^{\delta t})+a t - 2 A t e^{-\alpha t}+2 B (1-e^{-\alpha t}) \\
&& \quad \mathrm{(or} \quad c t + \frac{a}{2} t^2 - 2 A t e^{-\alpha t}+2 B (1-e^{-\alpha t}) \quad \mathrm{for} \quad \delta=0,\\
&& \quad \mathrm{or} \quad c (1-e^{\delta t}) + at - (2 B t + A t^2)e^{\delta t} \quad \mathrm{for} \quad \delta=-\alpha),
\end{eqnarray*}
which yields flows 
\begin{itemize}
\item[$\bullet$] $\varphi_t(x)=[e^{\delta t}x_0x_2+\big(a(1-e^{\delta t})-2A(e^{-\alpha t}-e^{\delta t}) \big)x_1x_2+
\big(b(e^{\alpha t}-e^{\delta t})-a(1-e^{\delta t})+A(e^{-\alpha t}-e^{\delta t}) \big)x_2^2+
\big(c(1-e^{\delta t})+a t -2 A t e^{-\alpha t}+ 2 B (1-e^{-\alpha t}) \big)x_2x_3+
A(e^{-\alpha t}-e^{\delta t})x_1^2+2 \big(B(e^{-\alpha t}-e^{\delta t})+At e^{-\alpha t}\big)x_1x_3+
\big(C(e^{-\alpha t}-e^{\delta t})+At^2e^{-\alpha t}+2Bt e^{-\alpha t} \big)x_3^2, (x_1+(e^{\alpha t}-1)x_2+tx_3)x_2,
e^{\alpha t} x_2^2, x_2x_3]$ with $\delta \neq 0$, $\alpha\neq 0$,
\item[$\bullet$] $\varphi_t(x)=[x_0x_2+\big(a t-2A(e^{-\alpha t}-1) \big)x_1x_2+
\big(b(e^{\alpha t}-1)-a t+A(e^{-\alpha t}-1) \big)x_2^2+
\big(c t+\frac{a}{2}t^2-2 t A e^{-\alpha t}+ 2(1-e^{-\alpha t})B \big)x_2x_3+
A(e^{-\alpha t}-e^{\delta t})x_1^2+2 \big(B(e^{-\alpha t}-1)+At e^{-\alpha t}\big)x_1x_3+
\big(C(e^{-\alpha t}-1)+At^2e^{-\alpha t}+2Bt e^{-\alpha t} \big)x_3^2, (x_1+(e^{\alpha t}-1)x_2+tx_3)x_2,
e^{\alpha t} x_2^2, x_2x_3]$ with $\alpha \neq 0$,
\item[$\bullet$] $\varphi_t(x)=[e^{\delta t}x_0x_2-2A t e^{\delta t} x_1x_2+
\big(b (e^{-\delta t}-e^{\delta t})-a(1-e^{\delta t})+A t e^{\delta t} \big)x_2^2+
\big(c(1-e^{\delta t})+a t -2 A t e^{-\delta t}+ (2Bt+A t^2)e^{\delta t} \big)x_2x_3+
A t e^{\delta t} x_1^2+ \big(2B t + A t^2\big) e^{\delta t} x_1x_3+(C t + B t^2 + \frac{A}{3} t^3)e^{\delta t}
x_3^2, (x_1+(e^{\delta t}-1)x_2+tx_3)x_2,
e^{\delta t} x_2^2, x_2x_3]$ with $\delta \neq 0$,
\item[$\bullet$] $\varphi_t(x)=[e^{\delta t}x_0x_2+\big(a(1-e^{\delta t})-2A(e^{-\delta t}-e^{\delta t}) \big)x_1x_2+
\big(b t e^{\delta t}-a(1-e^{\delta t})+A(e^{-\delta t}-e^{\delta t}) \big)x_2^2+
\big(c(1-e^{\delta t})+a t -2 A t e^{-\delta t}+ 2 B (1-e^{-\delta t}) \big)x_2x_3+
A(e^{-\delta t}-e^{\delta t})x_1^2+2 \big(B(e^{-\delta t}-e^{\delta t})+At e^{-\delta t}\big)x_1x_3+
\big(C(e^{-\delta t}-e^{\delta t})+At^2e^{-\delta t}+2Bt e^{-\delta t} \big)x_3^2, (x_1+(e^{\delta t}-1)x_2+tx_3)x_2,
e^{\delta t} x_2^2, x_2x_3]$ with $\delta \neq 0$.
\end{itemize}
\medskip
%%%%%%%%%%%%%%%%%%%%%%% CAS 2 iv)
\item[\bf iv)] One can assume that $\alpha\neq 0$ for otherwise we are in case \textbf{i)} for $\ell=x_3$.
We obtain the equations:
\begin{eqnarray*}
A(t+s)-e^{\delta s} A(t) &=& e^{\alpha t} A(s) \\
B(t+s)-e^{\delta s} B(t) &=& B(s)  \\
C(t+s)-e^{\delta s} C(t) &=& e^{-\alpha t} C(s) \\
a(t+s)-e^{\delta s} a(t) &=& e^{\alpha t} a(s)+ 2 t e^{\alpha t} A(s) \\
b(t+s)-e^{\delta s} b(t) &=& e^{\alpha t} b(s) + t e^{\alpha t} a(s) + t^2 e^{\alpha t} A(s) \\
c(t+s)-e^{\delta s} c(t) &=& c(s) + 2 t B(s).
\end{eqnarray*}
Therefore:
\begin{eqnarray*}
A(t) &=& A (e^{\alpha t}-e^{\delta t}) \quad \mathrm{(or} \quad A t e^{\delta t} \quad \mathrm{for} \quad \delta=\alpha) \\
B(t) &=& B (1-e^{\delta t}) \quad \mathrm{(or} \quad B t \quad \mathrm{for} \quad \delta=0)\\
C(t) &=& C (e^{-\alpha t}-e^{\delta t}) \quad \mathrm{(or} \quad C t e^{\delta t} \quad \mathrm{for} \quad \delta=-\alpha) \\
a(t) &=& a (e^{\alpha t}-e^{\delta t}) + 2 A t e^{\alpha t} \quad \mathrm{(or} \quad (at+A t^2) e^{\delta t} \quad \mathrm{for} \quad \delta=\alpha)\\
b(t) &=& b (e^{\alpha t}-e^{\delta t}) + a t e^{\alpha t} + A t^2 e^{\alpha t} \quad \mathrm{(or} \quad (b t + \frac{a}{2} t^2 + 
\frac{A}{3} t^3) e^{\delta t} \quad \mathrm{for} \quad \delta=\alpha) \\
c(t) &=& c(1-e^{\delta t})+ 2 t B  \quad \mathrm{(or} \quad 
c t + B t^2 \quad \mathrm{for} \quad \delta=0),
\end{eqnarray*} 
which yield the flows 
\begin{itemize}
\item[$\bullet$] $\varphi_t(x)=[e^{\delta t} x_0x_2+  \big(a(e^{\alpha t}-e^{\delta t})+2 A t e^{\alpha t} \big)x_1x_2+
\big(b(e^{\alpha t}-e^{\delta t})+a t e^{\alpha t}+ A t^2 e^{\alpha t}\big) x^2_2+\big(c(1-e^{\delta t})+2 t B \big)x_2 x_3+ A(e^{\alpha t}-e^{\delta t})x_1^2+2 B (1-e^{\delta t})x_1 x_3+ C(e^{-\alpha t}-e^{\delta t})x_3^2, e^{\alpha t} (x_1+tx_2)x_2, e^{\alpha t} x_2^2, x_2x_3]$ with $\delta \neq 0$,
\item[$\bullet$] $\varphi_t(x)=[ x_0x_2+ \big(a(e^{\alpha t}-1)+2 A t e^{\alpha t} \big)x_1x_2+
\big(b(e^{\alpha t}-1)+a t e^{\alpha t}+ A t^2 e^{\alpha t}\big) x^2_2+(ct+B t^2) x_2x_3+ A(e^{\alpha t}-1)x_1^2+2 B t x_1 x_3+ C(e^{-\alpha t}-1)x_3^2, e^{\alpha t} (x_1+tx_2)x_2, e^{\alpha t} x_2^2, x_2x_3]$ with $\alpha \neq 0$, 
\item[$\bullet$] $\varphi_t(x)=[e^{\delta t} x_0x_2+  (a t + A t^2) e^{\delta t}  x_1x_2+
\big( bt+\frac{a}{2} t^2+ \frac{A}{3} t^3\big)e^{\delta t} x^2_2+\big(c(1-e^{\delta t})+2 t B \big)x_2 x_3+ A t e^{\delta t} x_1^2+2 B (1-e^{\delta t})x_1 x_3+ C(e^{-\delta t}-e^{\delta t})x_3^2, e^{\delta t} (x_1+tx_2)x_2, e^{\delta t} x_2^2, x_2x_3]$ with $\delta \neq 0$,
\item[$\bullet$] $\varphi_t(x)=[e^{\delta t} x_0x_2+  \big(a(e^{-\delta t}-e^{\delta t})+2 A t e^{-\delta t} \big)x_1x_2+
\big(b(e^{-\delta t}-e^{\delta t})+a t e^{-\delta t}+ A t^2 e^{-\delta t}\big) x^2_2+\big(c(1-e^{\delta t})+2 t B \big)x_2 x_3+ A(e^{-\delta t}-e^{\delta t})x_1^2+2 B (1-e^{\delta t})x_1 x_3+ C t e^{\delta t}x_3^2, e^{-\delta t} (x_1+tx_2)x_2, e^{-\delta t} x_2^2, x_2x_3]$ with $\delta \neq 0$.
\end{itemize}
\end{itemize}
\medskip
\item $\ell=x_1$ and $\Psi_t$ of type:
\begin{itemize}
\medskip
%%%%%%%%%%%%%%%%%% CAS 3) i)
 \item[\bf i)] Then $\lambda(t)=e^{\alpha t}$ and 
$$q_t(x)=x_1 v_t + \tilde{q}_t(x)$$
where $\tilde{q}_t\in A_2(x_2,x_3)$ and $v_t\in A_1(x_1,x_2,x_3)$. Equation \ref{eq:LaEqB} is written as
\begin{equation}
x_1(v_{t+s} -e^{\delta s} v_t) + (\tilde{q}_{t+s} -e^{\delta s} \tilde{q}_t)=x_1 v_s({\ell_i}_t) + e^{-\alpha t} \tilde{q}_s({\ell_i}_t).
\end{equation}
We denote $\tilde{q}_t=A(t) x_2^2+ 2 B(t) x_2x_3+ C(t) x_3^2$ and $v_t(x)=a(t) x_1+ b(t) x_2 + c(t) x_3$. Then
equation \ref{eq:hier} is written as 
\begin{eqnarray*}
A(t+s)-e^{\delta s} A(t) &=& e^{-\alpha t} A(s) \\
B(t+s)-e^{\delta s} B(t) &=& e^{-\alpha t}(B(s) + t A(s)) \\
C(t+s)-e^{\delta s} C(t) &=& e^{-\alpha t} (C(s) + 2 t B(s)+ t^2 A (s)) \\
a(t+s)-e^{\delta s} a(t) &=& e^{\alpha t} a(s) \\
b(t+s)-e^{\delta s} b(t) &=& b(s) \\
c(t+s)-e^{\delta s} c(t) &=& c(s) + t b(s).
\end{eqnarray*}
with solutions:
\begin{eqnarray*}
A(t) &=& A (e^{-\alpha t}-e^{\delta t}) \quad \mathrm{(or} \quad A t e^{\delta t} \quad \mathrm{for} \quad \delta=-\alpha)  \\
B(t) &=& B (e^{-\alpha t}-e^{\delta t})+ A t e^{-\alpha t} \quad \mathrm{(or} \quad (B t+ \frac{A}{2} t^2)e^{\delta t} \quad \mathrm{for} \quad \delta=-\alpha)  \\
C(t) &=& C (e^{-\alpha t}-e^{\delta t})+ 2 B t e^{-\alpha t} + A t^2 e^{-\alpha t} \quad \mathrm{(or} \quad 
(C t + B t^2 + \frac{A}{3} t^3 )e^{\delta t} \quad \mathrm{for} \quad \delta=-\alpha)  \\
a(t) &=& a (e^{\alpha t }-e^{\delta t})  \quad \mathrm{(or} \quad ate^{\delta t} \quad \mathrm{for} \quad \alpha=\delta)  \\
b(t) &=& b (1-e^{\delta t})  \quad \mathrm{(or} \quad 
b t \quad \mathrm{for} \quad \delta=0) \\
c(t) &=& c(1-e^{\delta t})+b t \quad \mathrm{(or} \quad 
c t +\frac{b}{2} t^2 \quad \mathrm{for} \quad \delta=0),
\end{eqnarray*}
which yield the flows
\begin{itemize}
\item[$\bullet$]  $\varphi_t(x)=[e^{\delta t}x_0x_1+ a(e^{\alpha t}-e^{\delta t})x_1^2+b(1-e^{\delta t})x_1x_2+(c(1-e^{\delta t})+tb)x_1x_3+
A(e^{-\alpha t}-e^{\delta t})x_2^2+2(B(e^{-\alpha t}-e^{\delta t})+A t e^{-\alpha t}) x_2x_3+ \big( C(e^{-\alpha t}-e^{\delta t})+(At^2+2 t B)e^{-\alpha t} \big)x_3^2, 
e^{\alpha t} x_1^2, (x_2+tx_3)x_1,x_3x_1]$ with $\delta \neq 0$, 
\item[$\bullet$]  $\varphi_t(x)=[e^{\delta t}x_0x_1+ a(e^{-\delta t}-e^{\delta t})x_1^2+b(1-e^{\delta t})x_1x_2+(c(1-e^{\delta t})+tb)x_1x_3
+A t e^{\delta t} x_2^2+2(B t + \frac{A}{2} t^2)e^{\delta t} x_2x_3+ 
(C t + B t^2 + \frac{A}{3} t^3)e^{\delta t} x_3^2, e^{-\delta t} x_1^2, (x_2+tx_3)x_1,x_3x_1]$ with $\delta \neq 0$, 
\item[$\bullet$]  $\varphi_t(x)=[e^{\delta t}x_0x_1+ a t e^{\delta t}x_1^2+b(1-e^{\delta t})x_1x_2+(c(1-e^{\delta t})+tb)x_1x_3+
A(e^{-\delta t}-e^{\delta t})x_2^2+2(B(e^{-\delta t}-e^{\delta t})+A t e^{\delta t}) x_2x_3+ 
\big( C(e^{-\delta t}-e^{\delta t})+(At^2+2 t B)e^{-\delta t} \big)x_3^2, e^{\delta t} x_1^2, (x_2+tx_3)x_1,x_3x_1]$ with $\delta \neq 0$, 
\item[$\bullet$]  $\varphi_t(x)=[x_0x_1+ a(e^{\alpha t}-1)x_1^2+b t x_1x_2+(c t+\frac{b}{2}t^2)x_1x_3+A(e^{-\alpha t }-1)x_2^2+
2(B(e^{-\alpha t}-1)+A t e^{-\alpha t}) x_2x_3
+\big( C(e^{-\alpha t}-1)+(At^2+2 t B)e^{-\alpha t} \big)x_3^2, e^{\alpha t} x_1^2, (x_2+tx_3)x_1,x_3x_1]$ with $\alpha \neq 0$,
\item[$\bullet$]  $\varphi_t(x)=[x_0x_1+ a t x_1^2+b t x_1x_2+(c t+\frac{b t^2}{2})x_1x_3+A t x_2^2+
\big(2B t+A t^2\big) x_2x_3+\big(C t+ B t^2 + \frac{A}{3} t^3\big)x_3^2, e^{\alpha t} x_1^2, (x_2+tx_3)x_1,x_3x_1]$.
\end{itemize}
\end{itemize}
\end{enumerate}

% ------------------------------------------------------------------------
%GATHER{Xbib.bib}   % For Gather Purpose Only
%GATHER{Paper.bbl}  % For Gather Purpose Only
\bibliographystyle{amsplain}
\bibliography{cremona}
\end{document}